\newtheorem{thm}{Theorem}[section]
\newtheorem{cor}[thm]{Corollary}
\newtheorem{lemma}[thm]{Lemma}
\newtheorem{prop}[thm]{Proposition}
\newtheorem{rem}[thm]{Remark}
\numberwithin{equation}{section}
\def \bN {\Bbb N}
\def \and {\, \mbox{\rm and}\, }
\def \span {\,{\rm span}\,}
\begin{document}

\title{\bf Representer Theorems in Banach Spaces: Minimum Norm Interpolation, Regularized Learning and Semi-Discrete Inverse Problems}
\author{Rui Wang\thanks{School of Mathematics, Jilin University, Changchun 130012, P. R. China. E-mail address: {\it rwang11@jlu.edu.cn}. } \quad and \ Yuesheng Xu\thanks{
Department of Mathematics and Statistics, Old Dominion University, Norfolk, VA 23529, USA. This author is also a Professor Emeritus of Mathematics, Syracuse University, Syracuse, NY 13244, USA. E-mail address: {\it y1xu@odu.edu.} All correspondence should be sent to this author.} }

\date{}

\maketitle{}

\begin{abstract}
Constructing or learning a function from a finite number of sampled data points (measurements) is a fundamental problem in science and engineering. This is often formulated as a minimum norm interpolation problem, regularized learning problem or, in general, a semi-discrete inverse problem, in certain functional spaces. The choice of an appropriate space is crucial for solutions of these problems. When they are considered in an infinite dimensional Hilbert space, which gives a linear method for their solutions, the remarkable representer theorem reduces their solutions to finding a {\it finite} number of coefficients of elements in the space. Motivated by sparse representations of the reconstructed functions such as compressed sensing and sparse learning, much of the recent research interest has been directed to considering these problems in certain Banach spaces in order to obtain their sparse solutions, which is a feasible approach to overcome challenges coming from the big data nature of most practical applications. It is the goal of this paper to provide a systematic study of the representer theorems for these problems in Banach spaces. There are a few existing results for these problems in a Banach space, with all of them regarding implicit representer theorems. We aim at obtaining explicit representer theorems based on which convenient solution methods will then be developed. For the minimum norm interpolation, the explicit representer theorems enable us to express the infimum in terms of the norm of the linear combination of the interpolation functionals. For the purpose of developing efficient computational algorithms, we establish the fixed-point equation formulation of solutions of these problems. We reveal that unlike in a Hilbert space, in general, solutions of these problems in a Banach space may not be able to be reduced to {\it truly} finite dimensional problems (with certain infinite dimensional components hidden). We demonstrate how this obstacle can be removed, reducing the original problem to a truly finite dimensional one, in the special case when the Banach space is $\ell_1(\mathbb{N})$.
\end{abstract}

\textbf{Key words}: Representer theorem, minimum norm interpolation, regularized learning, sparse learning, semi-discrete inverse problem, Banach space.

\textbf{2010 Mathematics Subject Classification}: Primary: 68Q32, 41A05, 46A22, 45Q05, 65D10; Secondary: 46B45.
\section{Introduction}

A core issue in data science is to learn or construct a function from a finite number of sampled data points. This may be modeled as an interpolation problem, an optimization problem or, a semi-discrete inverse problem. Learning such a function is an ill-posed problem in the sense that a small error in the sampled data will result in a large error in the reconstructed function. Because sampled data inevitably contain noise, the ill-posedness of problems of this type is unavoidable. It is well-recognized that minimum norm interpolation and the regularization method are effective approaches to treat the ill-posedness. A typical regularization problem consists of two terms, a fidelity and a regularization. The fidelity term is designed to measure the data fidelity while the regularization term is taken as a constraint on the space from which the target function is to be chosen. The goal of this paper is to systematically study the solution representation of the three types of problems: minimum norm interpolation, regularized learning and regularized semi-discrete inverse problems in Banach spaces. Regularized learning and regularized semi-discrete inverse problems are originated from different sources. The semi-discrete inverse problem often refers to a physical problem described by a physical law expressed via certain integral equation, which relates its solution with a finite number of measurements. While a regularized learning problem has a fidelity term describing certain learning network (not necessarily a physical law). However, these two types of problems have the same general mathematical formulation in the sense that both of them have a fidelity term involving a finite number of measurements and a regularization term specifying prior solution information. For this reason, we will not distinguish them in this paper since as far as the solution representation is concerned, there is little distinct between them. For simplicity we may use the term the ``regularization problem" to refer to both of these problems, when necessary.

In the regularization problem, classical regularization methods aim at finding a target function in a reproducing kernel Hilbert space (RKHS) on which point-evaluation functionals are continuous, from a finite number of point-evaluation functionals. The point-evaluation functionals on an RKHS can be represented by the reproducing kernel \cite{Aron}. The success of these regularization methods lies on the celebrated representer theorem \cite{AMP, CO, CS, KW70, SHS, Wen}, which states that a solution of the regularization problem is a linear combination of the kernel function with one of its variable evaluated at the given data points. The earliest form of the representer theorem in a Hilbert space may be traced back to \cite{deBoor}.

To motivate the research problem studied in this paper, we review several classical results of representer theorems in Hilbert spaces. To our best knowledge, the first repersenter theorem for minimum norm interpolation in a Hilbert space was established by de Boor and Lynch in 1966 \cite{deBoor} in the context of spline interpolation. We give below a brief review of this inspiring result. This practical example of minimum norm interpolation is for geometric shape design \cite{deBoor77}. Suppose that we are given values $y_j$, $j=0, 1,\dots, m+1,$ of a real-valued function $s$, defined on $[0,1]$, at a finite number of points $0=x_0<x_1<\cdots<x_{m+1}=1$. We wish to construct $s$ so that its graph has the minimum concavity. Mathematically, this problem can be described by finding a function $s\in C^2[0,1]\cap C^4([0,1]\setminus \Omega)$, with $\Omega:=\{x_j: j=1,2,\dots,m\}$ such that
\begin{equation}\label{cubic}
\|s''\|_{L_2[0,1]}=\min\left\{\|f''\|_{L_2[0,1]}:  f\in C^2[0,1]\cap C^4([0,1]\setminus \Omega), f(x_j)=y_j, j=0, 1,\dots, m+1\right\}.
\end{equation}
The minimization problem \eqref{cubic} seeks its solution in the infinite dimensional space $C^2[0,1]\cap C^4([0,1]\setminus \Omega)$. However, it was proved in \cite{deBoor} that the solution of this problem lies in a finite dimensional subspace. In fact, $s$ is a cubic spline with knots $\Omega$ satisfying the interpolation conditions
$$
s(x_j)=y_j, \ \ \mbox{for}\ \ j=0,1,\dots, m+1.
$$
A general form of this problem was investigated in \cite{deBoor}.
About the same time, Kimeldorf and Wahba \cite{KW70} considered the univariate L-spline smoothing problem in a Hilbert space and obtained a representer theorem for its solution. The representer theorem of Kimeldorf and Wahba has been found applicable to the solution of the semi-discrete inverse problem \cite{KLW, Wen}. A multivariate version of the L-spline smoothing problem was investigated by De Figueiredo and Chen in \cite{GChen}, giving a representer theorem for its solution. In the context of machine learning, the representer theorem for the solution of the regularized empirical risk minimization on an RKHS was established by Sch\"olkopf, Herbrich, and Smola in \cite{SHS}. Argyriou, Micchelli and Pontil in \cite{AMP} gave necessary and sufficient conditions to ensure a general regularized empirical risk minimization problem in an RKHS has a representer theorem. Moreover, we obtained in our recent work \cite{WX, WX1} representer theorems for solutions of the regularized semi-discrete inverse problems in the functional reproducing kernel Hilbert spaces naturally introduced by the inverse problems.

The representer theorem is useful in both theory and computation. From a theoretical point of view, representer theorems for the regularization problem reveal exactly in what sub-class its solution lies. According to the representer theorem, a solution of the problem is a linear combination of the reproducing kernel with one of its variable evaluated at given data points. This leads to the study of universality of a kernel \cite{MXZ06}, which gives necessary and sufficient conditions on a kernel so that a linear combination of the kernel with one of its variable evaluated at given data points can arbitrarily approximate a given continuous function. Moreover, motivated by representing a solution of the regularized learning in a multiscale manner, refinement of a reproducing kernel was studied in \cite{XZ2007, XZ2009, ZXZ2012}. From a practical standpoint, representer theorems for either minimum interpolation or regularization problems are useful, because they dramatically reduce an infinite dimensional problem to a finite dimensional one whose solution can be obtained by solving either a linear system or a finite dimensional optimization problem. In other words, representer theorems provide a theoretical basis for the reduction of the general machine learning problem or the regularized semi-discrete inverse problem in Hilbert spaces to discrete problems that can be solved by implementable computer algorithms.

Compared to Hilbert spaces, Banach spaces with more choices of norms enjoy more geometric structures, which will be beneficial for developing effective methods for solving the function reconstruction problem. Several recent research directions point to consideration of minimum norm interpolation, regularized learning or semi-discrete inverse problems in Banach spaces. Compress sensing \cite{CRT, D} motivates us to study minimum norm interpolation or the regularized learning problem in Banach spaces. Image restoration using TV norms for regularization \cite{CDOS, ROF} leads to searching an optimization solution in a Banach space. In the area of inverse problems, semi-discrete inverse problems were considered in Banach spaces \cite{SRS}. Regularized learning in Banach spaces was originated in \cite{MP}. Since then, regularized learning in a Banach space and a desired representer theorem of its solutions have received considerable attention in the literature. The minimum norm interpolation or its related regularization problem in a Banach space is also motivated from a theoretical point of view: the functional extension problem in such a space. Recently, extension of a given function on a finite set in  $\mathbb{R}^n$ to a function on the entired $\mathbb{R}^n$ was studied in a series of papers \cite{Fefferman05, FeffermanIII, FeffermanI, FeffermanII}. Specifically, Fefferman considered in \cite{Fefferman05} the extension of a function defined on a finite subset $E$ of $\mathbb{R}^n$ to a $C^m$ function $F: \mathbb{R}^n\rightarrow\mathbb{R}$ with the $C^m$ norm of the smallest possible order of magnitude. A sufficient condition ensuring the existence of the desired extension was proved in that paper. In \cite{FeffermanIII, FeffermanI, FeffermanII}, Fefferman and Klartag exhibited algorithms for constructing such an extension function $F$ and for computing the order of magnitude of its $C^m$ norm.

The notion of reproducing kernel Banach space (RKBS) was originally introduced in \cite{ZXZ} and further developed in \cite{SZ, SZH, ZZ}. In the framework of a semi-inner-product RKBS, the representer theorem of the solutions of the regularized learning problem was derived from the dual elements and the semi-inner products \cite{ZXZ,ZZ}. In \cite{XY}, an alternative definition of RKBS was provided by the dual bilinear form. In this paper, for a reflexive and smooth RKBS, the representer theorem of the solutions of the regularized learning problem was also obtained using the G\^{a}teaux derivative of the norm function and the reproducing kernel. The above RKBSs, in which the representer theorem was well established for the regularized learning problem, are all reflexive and smooth. In fact, the reflexivity guarantees the existence of solutions of the regularized learning problem and the smoothness allows us to use the G\^{a}teaux derivative of the norm function to describe the representer theorem. In the special case of a semi-inner-product RKBS, the G\^{a}teaux derivative can be represented by the semi-inner-product. In addition, the reproducing kernel provides a closed-form function representation for the point-evaluation functionals. The representer theorem was generalized to a non-reflexive and non-smooth Banach space which has a pre-dual space \cite{HLTY, Unser2016, Unser}. Having the pre-dual space guarantees that the Banach space has the weak$^{*}$ topology, which together with the continuity of the loss function and the regularizer, also leads to the existence of the solutions. Due to lack of the G\^{a}teaux derivative, other tools need to be used to describe the representer theorem. The representer theorem was obtained in \cite{HLTY} by employing the subdifferential of the norm function for a lower semi-continuous loss function and the quadratic regularizer. The representer theorem was established in \cite{Unser} by the duality mapping for a class of inverse problems with a convex and continuous loss function and regularizer. Moreover, representer theorems for deep kernel learning and deep neural networks were obtained in \cite{BRG19} and \cite{Unser2019}, respectively.

It is the main purpose of this paper to understand the solution representation of the minimum norm interpolation and the regularization problem in a Banach space. In the literature there are a few existing representer theorems for regularized learning problems in a Banach space. However, all of them are in an {\it implicit} form, which makes them not direct for solution representations. We will first bridge different approaches used in the literature for establishing representer theorems for a solution of the minimum norm interpolation problem in a Banach space and its related regularization problem, to deepen the understanding of the underlying mathematical ideas. As such, we will provide novel {\it explicit} representer theorems in a general setting, potentially useful for direct solution representations. We aim at expressing the {\it simplicity,  beauty, generality and unity} of the representer theorem and commit to developing solution representations of these problems suitable for further designing their numerical algorithms.

The minimum norm interpolation problem is closely related to the regularized learning problem. Relations between them were investigated in \cite{MP}. We first establish solution representations for the minimum norm interpolation problem. We then convert the resulting representer theorems to the regularization problem through the relation between the two problems. The essence of the representer theorem refers to that the original optimization problem in an infinite dimensional space can be reduced to one possibly in a finite dimensional space. This profits from the fact that the number of data points used to learn a function is finite. A crucial issue about the representer theorem concerns how to characterize the relation between the solutions of the original infinite dimensional optimization problem and the finite dimensional optimization problem. To address it, we characterize the minimum norm interpolation problem through two different approaches. Firstly, the minimum norm interpolation problem is interpreted as a problem of best approximation. Due to the Hahn-Banach theorem, the latter can be characterized by the functionals which peak at the best approximation point. The set of such functionals is defined by the value of the duality mapping at the best approximation point. Accordingly, the duality mapping become a suitable tool for the representer theorem of the solutions of the minimum norm interpolation problem and then the regularization problem. Secondly, as a classical optimization problem with constraints, the minimum norm interpolation problem can be solved by the Lagrange multiplier method. Due to lack of the smoothness of general Banach spaces (not necessarily smooth), the subdifferential of the norm function needs to be used here. In a special case that the Banach space is smooth, the duality mapping and the subdifferential of the norm function are both reduced to the G\^{a}teaux derivative of the norm function. The representer theorem for this case has a simple form which is described by the G\^{a}teaux derivative. In summary, the fact that the number of data used to learn a function is finite leads to the desired representer theorems and the mathematical tools, such as the duality mapping, the subdifferential and the G\^{a}teaux derivative of the norm function help us describe the representer theorems.

It is desirable to develop solution representations of the minimum norm interpolation problem and the regularization problem convenient for algorithmic development. Inspired by the success of the fixed-point approach used in solving several types of finite dimensional problems such as machine learning \cite{AMPSX, LMX, Li-Song-Xu2018, Li-Song-Xu2019, PSW}, image processing \cite{CHZ, LMSX, LSXZ, LSXX, MSX}, medical imaging \cite{KLSX, LZKSVSLFX, ZLKSZX} and solutions of semi-discrete inverse problems \cite{FJL, JL}, we develop solution representations for the minimum norm interpolation problem and the regularization problem by using a fixed-point formulation via the proximity operator of the functions appearing in the objective function or constraints. This formulation has great potential for convenience of designing iterative algorithms for solving these problems. Difficulty of extending the existing work which is either in a finite dimensional space or in a Hilbert space to the current setting lies in the infinite dimensional component of the Banach space. In particular, we reformulate solutions of the minimum norm interpolation problem and the regularization problem in the special Banach space $\ell_1(\mathbb{N})$ as fixed-points of a nonlinear map defined on a finite dimensional space by making use of special properties of the pre-dual space of $\ell_1(\mathbb{N})$, leading to implementable iterative algorithms for solving the problem. Extension of this approach to either the minimum norm interpolation problem or the regularization problem in a general Banach space will be a future research topic. We remark that a solution method for the minimum norm interpolation problem in $\ell_1(\mathbb{N})$ was proposed in \cite{CX} by reformulating it as a linear programming problem. However, solving the resulting linear programming problem requires an exponential computational cost, and thus the method is not feasible for practical computation in the context of big data analytic. The fixed-point equation approach presented in this paper overcomes this difficulty.

In passing, we would like to point it out that although the representer theorem reduces an infinite dimensional problem to a finite dimensional one, in general, often certain infinite dimensional component is hidden in the resulting finite dimensional problem. We will single out these hidden infinite dimensional component and for certain special cases of practical importance, we will show how this obstacle can be removed to obtain a truly finite dimensional one.

This paper is organized in nine sections. In section 2, we describe the minimum norm interpolation problem in a Banach space and present a sufficient condition to ensure the existence of its solutions. We characterize in section 3 a solution of the minimum norm interpolation problem by two different approaches in which either of the duality mapping or the subdifferentail of the norm function are used to describe the representer theorem for the problem. We first establish implicit representer theorems and identify the relation of our results with those existing in the literature. We then enact explicit representer theorems by using duality arguments. We also consider several special cases of practical importance and provide special results for these cases. In section 4, we develop approaches to determine the coefficients involved in the representer theorems when the Banach space has a pre-dual space and the linear functionals appearing in the minimum norm interpolation problem belong to the pre-dial space. These approaches of determining the coefficients lead to solution methods for solving the minimum norm interpolation problem. In section 5, we present the infimum of the minimum norm interpolation in a Banach space. In section 6, we propose fixed-point equations for the minimum norm interpolation problem in a Banach space. In particular, when the Banach space $\mathcal{B}$ is the special space $\ell_1(\mathbb{N})$, we design implementable fixed-point equations for finding a solution of this problem. This fixed-point formulation will serve as a basis for further development of efficient convergence guaranteed algorithms. We describe in section 7 the regularization problem in a Banach space and discuss a sufficient condition that ensures the existence of its solutions. We also elaborate an intrinsic relation between the regularization problem and a related minimum norm interpolation problem. In section 8, we establish both implicit and explicit representer theorems for the regularization problem. We also deliver special results for several cases of practical importance. Moreover, the second portion of
section 8 is devoted to the presentation of solutions of regularization problems in Banach spaces. We present the representer theorems based solution representations and as well as the fixed-point formulation for the regularization problems. Finally, we make conclusion remarks in section 9, discussing the main contribution of this paper.

\section{Minimum Norm Interpolation in a Banach Space}
Minimum norm interpolation aims at finding an element, in a suitable space, having the smallest norm and interpolating a given set of sampled data. In this section, we describe the minimum norm interpolation problem in a Banach space and present a sufficient condition which ensures the existence of its solutions.

We first describe the minimum norm interpolation problem in a Banach space. Let $\mathcal{B}$ denote a real Banach space with norm $\|\cdot\|_{\mathcal{B}}$. By $\mathcal{B}^*$ we denote the dual space of $\mathcal{B}$, the space of all continuous linear functionals on $\mathcal{B}$ with the norm
$$
\|\nu\|_{\mathcal{B}^*}:=\sup_{f\in\mathcal{B},f\neq0}
\frac{|\nu(f)|}{\|f\|_{\mathcal{B}}},
\ \ \mbox{for all}\ \ \nu\in\mathcal{B}^*.
$$
The dual bilinear form $\langle\cdot,\cdot\rangle_{\mathcal{B}}$ on $\mathcal{B}^*\times\mathcal{B}$ is defined as
$$
\langle\nu,f\rangle_{\mathcal{B}}:=\nu(f),
\ \mbox{for all}\  \nu\in\mathcal{B}^*
\ \mbox{and all}\ f\in\mathcal{B}.
$$
For each $m\in\mathbb{N}$, let $\mathbb{N}_m:=\{1,2,\dots,m\}$. Suppose that $\nu_j \in \mathcal{B}^*$, $j\in\mathbb{N}_m,$ are a finite number of linearly independent elements. Associated with these functionals, we introduce an operator $\mathcal{L}:\mathcal{B}\rightarrow\mathbb{R}^m$ by
\begin{equation}\label{functional-operator}
\mathcal{L}(f):=[\langle\nu_j,f\rangle_{\mathcal{B}}:j\in\mathbb{N}_m],
\ \ \mbox {for all}\ \ f\in\mathcal{B}.
\end{equation}
According to the continuity of the linear functionals $\nu_j$, $j\in\mathbb{N}_m,$ on $\mathcal{B}$, we have for each $f\in\mathcal{B}$ that
$$
\|\mathcal{L}(f)\|_{\mathbb{R}^m}
=\left(\sum_{j\in\mathbb{N}_m}|\langle\nu_j,f\rangle_{\mathcal{B}}|^2\right)^{1/2}
\leq\left(\sum_{j\in\mathbb{N}_m}\|\nu_j\|_{\mathcal{B}^*}^2\right)^{1/2}
\|f\|_{\mathcal{B}},
$$
which yields that
$$
\|\mathcal{L}\|\leq\left(\sum_{j\in\mathbb{N}_m}\|\nu_j\|_{\mathcal{B}^*}^2\right)^{1/2}.
$$
For a given vector $\mathbf{y}:=[y_j: j\in\mathbb{N}_m]\in\mathbb{R}^m$, we set
\begin{equation}\label{My}
\mathcal{M}_{\mathbf{y}}:=\{f\in\mathcal{B}:\mathcal{L}(f)=\mathbf{y}\}.
\end{equation}
In particular, when $\mathbf{y}$ is the zero vector, we write $\mathcal{M}_0$.
The minimum norm interpolation problem with given data $\{(\nu_j, y_j):j\in\mathbb{N}_m\}$ has the form
\begin{equation}\label{mni}
\inf\{\|f\|_{\mathcal{B}}:f\in\mathcal{M}_{\mathbf{y}}\}.
\end{equation}

We now consider the existence of a solution of the minimum norm interpolation problem \eqref{mni}. The linear independence of the functionals $\nu_j$, $j\in\mathbb{N}_m,$ ensures that $\mathcal{M}_{\mathbf{y}}$ is nonempty for any given $\mathbf{y}\in\mathbb{R}^m$. By employing standard arguments in convex analysis \cite{ET,Zalinescu}, we establish a sufficient condition that ensures the existence of a solution of the problem. To this end, we recall some notions in Banach spaces. Since the natural map is the isometrically imbedding map from $\mathcal{B}$ into $\mathcal{B}^{**}$, there holds
\begin{equation}\label{natural-map}
\langle\nu,f\rangle_{\mathcal{B}}=\langle f,\nu\rangle_{\mathcal{B}^*},
\ \ \mbox{for all}\ \ f\in\mathcal{B}\ \ \mbox{and all}\ \ \nu\in\mathcal{B}^*.
\end{equation}
The weak$^*$ topology of the dual space $\mathcal{B}^{*}$ is the smallest topology for $\mathcal{B}^{*}$ such that, for each $f\in\mathcal{B}$, the linear functional $\nu\rightarrow \langle\nu,f\rangle_\mathcal{B}$ on $\mathcal{B}^*$ is continuous with respect to the topology. A sequence $\nu_n$, $n\in\mathbb{N},$ in $\mathcal{B}^{*}$ is said to converge weakly$^{*}$ to $\nu\in\mathcal{B}^{*}$ if
$$
\displaystyle{\lim_{n\rightarrow+\infty}\langle\nu_n,f\rangle_{\mathcal{B}}
=\langle\nu,f\rangle_{\mathcal{B}}},\ \ \mbox{for all}\ \ f\in\mathcal{B}.
$$
A normed space $\mathcal{B}_{*}$ is called a pre-dual space of a Banach space $\mathcal{B}$ if $(\mathcal{B}_{*})^{*}=\mathcal{B}.$ It follows from equation \eqref{natural-map} with $\mathcal{B}$ being replaced by $\mathcal{B}_{*}$ that
\begin{equation}\label{natural-map-predual}
\langle\nu,f\rangle_{\mathcal{B}}=\langle f,\nu\rangle_{\mathcal{B}_*},
\ \ \mbox{for all}\ \ f\in\mathcal{B}\ \ \mbox{and all}\ \ \nu\in\mathcal{B}_*.
\end{equation}
The pre-dual space $\mathcal{B}_{*}$ guarantees that the Banach space $\mathcal{B}$ enjoys the weak$^{*}$ topology. The Banach-Alaoglu theorem \cite{Meg} ensures that if a Banach space $\mathcal{B}$ has a pre-dual space $\mathcal{B}_{*}$, then any bounded sequence $f_n$, $n\in\mathbb{N},$ in $\mathcal{B}$ has a weak$^{*}$ accumulation point $f\in\mathcal{B}$. That is, there exists a subsequence $f_{n_k}$, $k\in\mathbb{N}$, such that
$$
\lim_{k\rightarrow+\infty}\langle\nu,f_{n_k}\rangle_{\mathcal{B}}
=\langle\nu,f\rangle_{\mathcal{B}},
\ \ \mbox{for all}\ \ \nu\in\mathcal{B}_*.
$$
A Banach space $\mathcal{B}$ is said to be reflexive if $(\mathcal{B}^{*})^{*}=\mathcal{B}.$
It is clear that a reflexive Banach space $\mathcal{B}$ always takes the dual space $\mathcal{B}^{*}$ as a pre-dual space $\mathcal{B}_{*}$. However, a  Banach space $\mathcal{B}$ having a pre-dual space may not be reflexive. For example, the Banach space $\ell_1(\mathbb{N})$ of all real sequences
$$
\mathbf{x}:=(x_j: j\in \mathbb{N}),\ \ \mbox{with}\ \ \|\mathbf{x}\|_1:=\sum_{j\in\mathbb{N}}|x_j|<+\infty,
$$
has $c_0$ as its pre-dual space, where $c_0$ denotes the space of all real sequences
$\mathbf{u}:=(u_j: j\in \mathbb{N})$ such that $\displaystyle{\lim_{j\rightarrow+\infty}u_j=0}$, with
$$
\|\mathbf{u}\|_\infty := \sup\{|u_j|: j\in\mathbb{N}\}<+\infty.
$$
Clearly, the Banach space $\ell_1(\mathbb{N})$ has a pre-dual space but it is not reflexive.

We now turn to considering the existence of a solution of the minimum norm interpolation problem \eqref{mni} in a Banach space $\mathcal{B}$ having the pre-dual space $\mathcal{B}_*$. In this case, the linear functionals appearing in \eqref{mni} need to be restricted to the pre-dual space $\mathcal{B}_*$.

\begin{prop}\label{existence-mni}
If the Banach space $\mathcal{B}$ has the pre-dual space $\mathcal{B}_{*}$ and $\nu_j\in\mathcal{B}_{*}$, $j\in\mathbb{N}_m,$ are linearly independent, then for any $\mathbf{y}\in\mathbb{R}^m$ the minimum norm interpolation problem \eqref{mni} has at least one solution.
\end{prop}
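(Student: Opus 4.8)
The plan is to use the standard direct method of the calculus of variations, exploiting the weak$^*$ compactness guaranteed by the presence of a pre-dual space. First I would observe that the feasible set $\mathcal{M}_{\mathbf{y}}$ is nonempty: since the functionals $\nu_j$, $j\in\mathbb{N}_m$, are linearly independent, the operator $\mathcal{L}$ maps $\mathcal{B}$ onto $\mathbb{R}^m$, so for any $\mathbf{y}\in\mathbb{R}^m$ there exists at least one $f\in\mathcal{B}$ with $\mathcal{L}(f)=\mathbf{y}$. Consequently the infimum in \eqref{mni} is a well-defined nonnegative real number; call it $d:=\inf\{\|f\|_{\mathcal{B}}:f\in\mathcal{M}_{\mathbf{y}}\}$.

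Next I would choose a minimizing sequence $f_n\in\mathcal{M}_{\mathbf{y}}$, $n\in\mathbb{N}$, so that $\|f_n\|_{\mathcal{B}}\to d$. This sequence is bounded in $\mathcal{B}$, and since $\mathcal{B}$ has the pre-dual space $\mathcal{B}_*$, the Banach--Alaoglu theorem (as recalled in the excerpt) yields a subsequence $f_{n_k}$, $k\in\mathbb{N}$, converging weakly$^*$ to some $f^*\in\mathcal{B}$; that is,
\[
\lim_{k\to+\infty}\langle\nu,f_{n_k}\rangle_{\mathcal{B}}
=\langle\nu,f^*\rangle_{\mathcal{B}},
\ \ \mbox{for all}\ \ \nu\in\mathcal{B}_*.
\]

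The crux of the argument is then to verify two properties of the weak$^*$ limit $f^*$. First, feasibility: because each interpolation functional $\nu_j$ lies in the pre-dual space $\mathcal{B}_*$, we may take $\nu=\nu_j$ in the displayed limit to get $\langle\nu_j,f^*\rangle_{\mathcal{B}}=\lim_k\langle\nu_j,f_{n_k}\rangle_{\mathcal{B}}=y_j$ for every $j\in\mathbb{N}_m$, whence $\mathcal{L}(f^*)=\mathbf{y}$ and $f^*\in\mathcal{M}_{\mathbf{y}}$. This is precisely where the hypothesis $\nu_j\in\mathcal{B}_*$ is indispensable: weak$^*$ convergence only tests against elements of $\mathcal{B}_*$, so without it one could not conclude that the limit still interpolates the data. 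Second, norm minimality: the norm on a dual Banach space is weak$^*$ lower semicontinuous, so $\|f^*\|_{\mathcal{B}}\le\liminf_k\|f_{n_k}\|_{\mathcal{B}}=d$. Combined with $f^*\in\mathcal{M}_{\mathbf{y}}$, which forces $\|f^*\|_{\mathcal{B}}\ge d$, this gives $\|f^*\|_{\mathcal{B}}=d$, so $f^*$ is a solution.

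The main obstacle to watch is the weak$^*$ lower semicontinuity of the norm; I would establish it directly from the definition of the dual norm as a supremum. For any $g$ in the unit ball of $\mathcal{B}_*$ one has $\langle g,f^*\rangle_{\mathcal{B}}=\lim_k\langle g,f_{n_k}\rangle_{\mathcal{B}}\le\liminf_k\|f_{n_k}\|_{\mathcal{B}}$, and taking the supremum over all such $g$ recovers $\|f^*\|_{\mathcal{B}}$ on the left, yielding the desired inequality. The feasibility step, by contrast, is routine once one notes that the finite family $\{\nu_j\}$ sits in $\mathcal{B}_*$.
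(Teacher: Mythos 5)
Your proposal is correct and follows essentially the same route as the paper's proof: a minimizing sequence is bounded, the Banach--Alaoglu theorem (available because $\mathcal{B}=(\mathcal{B}_*)^*$) extracts a weak$^*$ convergent subsequence, the hypothesis $\nu_j\in\mathcal{B}_*$ gives weak$^*$ continuity of the interpolation constraints so the limit lies in $\mathcal{M}_{\mathbf{y}}$, and weak$^*$ lower semicontinuity of the norm yields minimality. The only difference is that you spell out the lower semicontinuity via the dual-norm supremum over the unit ball of $\mathcal{B}_*$, a fact the paper simply cites; this is a harmless (indeed welcome) extra detail, not a different argument.
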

\begin{proof}
Since for any $\mathbf{y}\in\mathbb{R}^m$, the set $\mathcal{M}_{\mathbf{y}}$ is nonempty, there exists a sequence $f_n$, $n\in\mathbb{N},$ in $\mathcal{M}_{\mathbf{y}}$ satisfying
\begin{equation}\label{minimizing-mni}
\lim_{n\rightarrow+\infty}\|f_n\|_{\mathcal{B}}
=\inf\{\|f\|_{\mathcal{B}}:f\in\mathcal{M}_{\mathbf{y}}\}.
\end{equation}
This ensures that the sequence $f_n$, $n\in\mathbb{N}$, is bounded. By the Banach-Alaoglu theorem, there exists a subsequence $f_{n_k}$, $k\in\mathbb{N}$, which weakly$^{*}$ converges to $\hat{f}\in\mathcal{B}$. It suffices to prove that the weak$^{*}$ accumulation point $\hat{f}$ is a solution of the minimum norm interpolation problem \eqref{mni}.

We first verify that  $\hat{f}$ satisfies the interpolation condition. Since $\nu_j\in\mathcal{B}_{*}$, $j\in\mathbb{N}_m,$ the linear functionals $\nu_j,j\in\mathbb{N}_m,$ are weakly$^{*}$ continuous. This leads to
$$
\langle\nu_j,\hat{f}\rangle_{\mathcal{B}}
=\lim_{k\rightarrow+\infty}\langle\nu_j,f_{n_k}\rangle_{\mathcal{B}},\ \ \mbox{for all}\ \ j\in\mathbb{N}_m.
$$
By the fact that $f_n\in\mathcal{M}_{\mathbf{y}}$, $n\in\mathbb{N}$, we get $\mathcal{L}(\hat{f})=\mathbf{y}$. That is, the interpolation condition holds. Note that the norm $\|\cdot\|_{\mathcal{B}}$ is weakly$^{*}$ lower semi-continuous on $\mathcal{B}$. According to the weak$^{*}$ convergence of $f_{n_k}$, $k\in\mathbb{N},$ there holds
$$
\|\hat{f}\|_{\mathcal{B}}\leq\liminf_{k\rightarrow+\infty}\|f_{n_k}\|_{\mathcal{B}},
$$
which together with \eqref{minimizing-mni} yields that $\hat{f}$ is a solution of \eqref{mni}.
\end{proof}

In the special case that $\mathcal{B}=\ell_1(\mathbb{N})$, the existence of a solution of the minimum norm interpolation problem \eqref{mni} was given in \cite{CX} by an elementary argument. As a consequence of the above proposition, we also have the existence of a solution of \eqref{mni} when $\mathcal{B}$ is reflexive.

\begin{cor}\label{existence-mni-reflexive}
If the Banach space $\mathcal{B}$ is reflexive and $\nu_j\in\mathcal{B}^{*}$, $j\in\mathbb{N}_m,$ are linearly independent, then for any $\mathbf{y}\in\mathbb{R}^m$ the minimum norm interpolation problem \eqref{mni} has at least one solution.
\end{cor}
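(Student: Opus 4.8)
The plan is to obtain this corollary as an immediate specialization of Proposition \ref{existence-mni}, so almost no new work is required. The one conceptual point to record is the identification of a suitable pre-dual space. Since $\mathcal{B}$ is reflexive, by definition $(\mathcal{B}^{*})^{*}=\mathcal{B}$ under the natural embedding, and therefore the dual space $\mathcal{B}^{*}$ itself qualifies as a pre-dual space of $\mathcal{B}$; that is, we may set $\mathcal{B}_{*}:=\mathcal{B}^{*}$, so that $(\mathcal{B}_{*})^{*}=(\mathcal{B}^{*})^{*}=\mathcal{B}$. This is precisely the remark made before the statement that every reflexive Banach space takes its dual as a pre-dual.

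With this identification in hand, I would verify that the hypotheses of Proposition \ref{existence-mni} are met. The functionals $\nu_j$, $j\in\mathbb{N}_m$, are assumed to lie in $\mathcal{B}^{*}=\mathcal{B}_{*}$ and to be linearly independent, which is exactly the requirement $\nu_j\in\mathcal{B}_{*}$ of the proposition. Hence Proposition \ref{existence-mni} applies verbatim to this choice of pre-dual and yields, for every $\mathbf{y}\in\mathbb{R}^m$, at least one solution of \eqref{mni}. For completeness one should note that, because $\mathcal{B}$ is reflexive, the weak$^{*}$ topology on $\mathcal{B}$ determined by the pre-dual $\mathcal{B}_{*}=\mathcal{B}^{*}$ coincides with the weak topology, so the Banach--Alaoglu extraction of a weak$^{*}$-convergent subsequence and the weak$^{*}$ lower semicontinuity of the norm employed inside the proof of Proposition \ref{existence-mni} are the familiar reflexive-space facts.

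There is no genuine obstacle here; the only thing to be careful about is presentational. I would make sure the statement ``$\mathcal{B}^{*}$ is a pre-dual of $\mathcal{B}$'' is justified by reflexivity rather than merely asserted, and I would flag that the subsequence $f_{n_k}$ furnished by the proposition is the bounded-sequence-has-a-weakly-convergent-subsequence property of reflexive spaces in disguise. With those remarks, the entire proof reduces to the single sentence: take $\mathcal{B}_{*}=\mathcal{B}^{*}$ and invoke Proposition \ref{existence-mni}.
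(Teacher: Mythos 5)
Your proposal is correct and matches the paper's own proof essentially verbatim: both set $\mathcal{B}_{*}:=\mathcal{B}^{*}$, justified by reflexivity, and then invoke Proposition \ref{existence-mni}. Your additional remarks (weak$^{*}$ topology coinciding with the weak topology, the Banach--Alaoglu step being the reflexive-space weak compactness in disguise) are accurate but not needed beyond what the paper records.
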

\begin{proof}
If $\mathcal{B}$ is reflexive then the dual space $\mathcal{B}^{*}$ is its pre-dual space. In this special case, the existence of a solution of the problem \eqref{mni} follows directly from Proposition \ref{existence-mni}.
\end{proof}

We note that solutions of the problem \eqref{mni} may not be unique unless the Banach space $\mathcal{B}$ is strictly convex.

\section{Representer Theorems for Minimum Norm Interpolation}

In this section, we establish several representer theorems for a solution of the minimum norm interpolation problem (\ref{mni}). The resulting representer theorems show that even though the minimum norm interpolation problem with a finite number of data points is a minimization problem in an {\it infinite dimensional} space, it can be transferred to one possibly in a {\it finite dimensional} space. We then identify the relation of the representer theorems obtained here with those existing in the literature.

The representer theorems for the minimum norm interpolation problem established in the literature are often stated with restricted assumptions on the Banach space \cite{XY, ZXZ}. We realize that most of the assumptions are used to ensure the existence and uniqueness of the solution of the minimum norm interpolation problem. For example, as we have established in the last section, if $\mathcal{B}$ is a Banach space having a pre-dual space or being reflexive, then the minimum norm interpolation problem has at least a solution. If $\mathcal{B}$ is strictly convex, then there exists at most a solution of the minimum norm interpolation problem. The smoothness of the Banach space allows us to describe the representer theorem by using the G\^{a}teaux derivative of the norm function. We shall clarify in this section that the validity of the representer theorem does not depend on the these properties of the Banach space.

\subsection{Implicit Representer Theorems}

We first present implicit representer theorems for a solution of the minimum norm interpolation problem (\ref{mni}).

We treat the minimum norm interpolation problem via two different approaches: a functional analytic approach and a convex analytic approach. In the functional analytic approach, we convert the minimum norm interpolation problem \eqref{mni} as a best approximation problem and then use the duality theory to characterize the best approximation from a linear translate of a subspace. The duality theory was used extensively in the literature \cite{Bra, CDW, FD, DUX, DUWX, MSSW, SWX90, SWX90-2, SWX91, UX, Xu} to characterize a best approximation from a convex set or a subspace in Banach spaces.
For a nonempty subset $\mathcal{M}$ of $\mathcal{B}$, we define the distance from $f\in\mathcal{B}$ to $\mathcal{M}$ by
$$
\mathrm{d}(f,\mathcal{M}):=\inf\{\|f-h\|_{\mathcal{B}}:h\in\mathcal{M}\}.
$$
An element $f_0\in\mathcal{M}$ is said to be a best approximation to $f$ from $\mathcal{M}$ if
$$
\|f-f_0\|_{\mathcal{B}}=\mathrm{d}(f,\mathcal{M}).
$$
A subset $\mathcal{M}$ of $\mathcal{B}$ is called a convex set if
$$
tf+(1-t)g\in\mathcal{M}, \ \ \mbox{for all}\ \ f,g\in\mathcal{M}
\ \ \mbox{and all}\ \ t\in[0,1].
$$
It is easy to see that for any $\mathbf{y}\in\mathbb{R}^m\backslash\{0\}$, $\mathcal{M}_{\mathbf{y}}$ is a closed convex subset and $\mathcal{M}_0$ is a closed subspace of $\mathcal{B}$. In fact, $\mathcal{M}_{\mathbf{y}}$ is a translate of $\mathcal{M}_0$. Moreover,  there holds that
\begin{equation}\label{relation-My¡ªM0}
\mathcal{M}_0+f=\mathcal{M}_{\mathbf{y}},
\ \ \mbox{for each}\ \ f\in\mathcal{M}_{\mathbf{y}}.
\end{equation}
An obvious relation between subsets $\mathcal{M}_{\mathbf{y}}$ and $\mathcal{M}_0$ is given in the following lemma.

\begin{lemma}\label{relation}
If $\mathbf{y}\in\mathbb{R}^m\backslash\{0\}$, then for any $f\in\mathcal{M}_{\mathbf{y}}$ and any $g\in\mathcal{M}_0$, there holds
$$
\mathrm{d}(f,\mathcal{M}_0)=\mathrm{d}(g,\mathcal{M}_{\mathbf{y}}).
$$
\end{lemma}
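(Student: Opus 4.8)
The plan is to show that both distances in the claimed identity coincide with the optimal value of the minimum norm interpolation problem \eqref{mni}, namely $\inf\{\|u\|_{\mathcal{B}}:u\in\mathcal{M}_{\mathbf{y}}\}$; the desired equality then follows at once. The only ingredients needed are the linearity of the operator $\mathcal{L}$, the translation relation $\mathcal{M}_0+f=\mathcal{M}_{\mathbf{y}}$ valid for each $f\in\mathcal{M}_{\mathbf{y}}$ (established just above the lemma), and the symmetry $\|u\|_{\mathcal{B}}=\|-u\|_{\mathcal{B}}$ of the norm.

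First I would compute $\mathrm{d}(f,\mathcal{M}_0)$ for $f\in\mathcal{M}_{\mathbf{y}}$. Setting $u:=f-h$ with $h\in\mathcal{M}_0$, the linearity of $\mathcal{L}$ gives $\mathcal{L}(u)=\mathcal{L}(f)-\mathcal{L}(h)=\mathbf{y}$, so $u\in\mathcal{M}_{\mathbf{y}}$. Conversely, since $\mathcal{M}_0$ is a subspace (hence $-\mathcal{M}_0=\mathcal{M}_0$) and $f+\mathcal{M}_0=\mathcal{M}_{\mathbf{y}}$, the element $u=f-h$ ranges over all of $\mathcal{M}_{\mathbf{y}}$ as $h$ ranges over $\mathcal{M}_0$. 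Hence $\mathrm{d}(f,\mathcal{M}_0)=\inf\{\|u\|_{\mathcal{B}}:u\in\mathcal{M}_{\mathbf{y}}\}$, independently of the choice of $f$.

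Next I would treat $\mathrm{d}(g,\mathcal{M}_{\mathbf{y}})$ for $g\in\mathcal{M}_0$ in the same spirit. Setting $w:=h-g$ with $h\in\mathcal{M}_{\mathbf{y}}$ and using $\|g-h\|_{\mathcal{B}}=\|w\|_{\mathcal{B}}$, the linearity of $\mathcal{L}$ again yields $\mathcal{L}(w)=\mathbf{y}-0=\mathbf{y}$, so $w\in\mathcal{M}_{\mathbf{y}}$. Since $g\in\mathcal{M}_0$, the translate $\mathcal{M}_{\mathbf{y}}-g$ coincides with $\mathcal{M}_{\mathbf{y}}$ (again by the translation relation, as $f_0-g\in\mathcal{M}_{\mathbf{y}}$ for any fixed $f_0\in\mathcal{M}_{\mathbf{y}}$), so $w$ sweeps out all of $\mathcal{M}_{\mathbf{y}}$ as $h$ does. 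This gives $\mathrm{d}(g,\mathcal{M}_{\mathbf{y}})=\inf\{\|w\|_{\mathcal{B}}:w\in\mathcal{M}_{\mathbf{y}}\}$; comparing with the previous paragraph establishes the claim.

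The computations are routine, and the only point requiring care is verifying the set identities, that is, checking that as $h$ runs over its source set the shifted variable ($f-h$, respectively $h-g$) \emph{surjects} onto $\mathcal{M}_{\mathbf{y}}$ rather than merely landing inside it. Both surjectivity claims reduce to the translation relation together with $-\mathcal{M}_0=\mathcal{M}_0$, so I anticipate no genuine obstacle. The hypothesis $\mathbf{y}\neq 0$ is not essential for this argument: for $\mathbf{y}=0$ one has $\mathcal{M}_{\mathbf{y}}=\mathcal{M}_0$ and both distances vanish.
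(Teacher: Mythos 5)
Your proposal is correct and is essentially the paper's proof in a slightly different arrangement: the paper turns $\mathrm{d}(g,\mathcal{M}_{\mathbf{y}})$ into $\mathrm{d}(f,\mathcal{M}_0)$ by a single chain of substitutions using the translation relation $\mathcal{M}_0+f=\mathcal{M}_{\mathbf{y}}$, the subspace property of $\mathcal{M}_0$, and the symmetry of the norm, whereas you use exactly the same three ingredients to identify each distance separately with the common value $\inf\{\|u\|_{\mathcal{B}}:u\in\mathcal{M}_{\mathbf{y}}\}=\mathrm{d}(0,\mathcal{M}_{\mathbf{y}})$. Your surjectivity checks are the right points of care and go through as you indicate, and your closing observation that the hypothesis $\mathbf{y}\neq 0$ is inessential (for $\mathbf{y}=0$ one has $\mathcal{M}_{\mathbf{y}}=\mathcal{M}_0$ and both distances vanish) is also correct.
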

\begin{proof}
Suppose that $f\in\mathcal{M}_{\mathbf{y}}$ and $g\in\mathcal{M}_0$. It follows from  \eqref{relation-My¡ªM0} that there holds
\begin{eqnarray*}
\mathrm{d}(g,\mathcal{M}_{\mathbf{y}})&=&\inf\{\|g-h\|:h\in\mathcal{M}_{\mathbf{y}}\}\\
&=&\inf\{\|g-(f+\tilde{h})\|:\tilde{h}\in\mathcal{M}_0\}\\
&=&\inf\{\|f-(g-\tilde{h})\|:\tilde{h}\in\mathcal{M}_0\}.
\end{eqnarray*}
Since $\mathcal{M}_0$ is a closed subspace of $\mathcal{B}$, we get that
$$
\mathrm{d}(g,\mathcal{M}_{\mathbf{y}})
=\inf\{\|f-\tilde{h}\|:\tilde{h}\in\mathcal{M}_0\}
=\mathrm{d}(f,\mathcal{M}_0),
$$
which completes the proof.
\end{proof}

Lemma \ref{relation} allows us to develop a characterization of solutions of the minimum norm interpolation problem \eqref{mni} in terms of best approximation from a subspace.

\begin{prop}\label{chatacterization-best-app}
Suppose that $\mathcal{B}$ is a Banach space with the dual space $\mathcal{B}^*$ and $\nu_j\in\mathcal{B}^*$, $j\in\mathbb{N}_m$, are linearly independent. If $\mathbf{y}\in\mathbb{R}^m\backslash\{0\}$, and $\mathcal{M}_{\mathbf{y}}$ and $\mathcal{M}_0$ are defined by \eqref{My}, then $\hat{f}\in\mathcal{B}$ is a solution of the minimum norm interpolation problem \eqref{mni} with $\mathbf{y}$ if and only if $\hat{f}\in\mathcal{M}_{\mathbf{y}}$ and $0$ is a best approximation to $\hat{f}$ from $\mathcal{M}_0$.
\end{prop}
\begin{proof}
An element $\hat{f}\in\mathcal{B}$ is a solution of \eqref{mni} with $\mathbf{y}$ if and only if $\hat{f}\in\mathcal{M}_{\mathbf{y}}$ and
$$
\|\hat{f}\|_{\mathcal{B}}=\inf\{\|f\|_{\mathcal{B}}:f\in\mathcal{M}_{\mathbf{y}}\}
=\mathrm{d}(0,\mathcal{M}_{\mathbf{y}}).
$$
By Lemma \ref{relation}, we get that $\hat{f}$ is a solution of \eqref{mni} if and only if
$$
\hat{f}\in\mathcal{M}_{\mathbf{y}}\ \ \mbox{and}\ \ \|\hat{f}\|_{\mathcal{B}}=\mathrm{d}(\hat{f},\mathcal{M}_0).
$$
The latter is equivalent to the fact that $\hat{f}\in\mathcal{M}_{\mathbf{y}}$ and $0$ is a best approximation to $\hat{f}$ from $\mathcal{M}_0$.
\end{proof}

Proposition \ref{chatacterization-best-app} makes it possible for us to characterize a solution of the minimum norm interpolation problem \eqref{mni} by using the duality approach \cite{Bra}. In other words, Proposition \ref{chatacterization-best-app} enables us to characterize a solution of the problem \eqref{mni} via identifying a best approximation from $\mathcal{M}_0$ with continuous linear functionals by the well-known Hahn-Banach theorem.

We next review a corollary of the Hahn-Banach theorem in the context of best approximation.
To this end, we recall the definition of annihilators of subsets in Banach spaces. Let $\mathcal{M}$ and $\mathcal{M}'$ be subsets of $\mathcal{B}$ and $\mathcal{B}^*$, respectively. According to \cite{Meg}, the annihilator, in $\mathcal{B}^*$, of $\mathcal{M}$ is defined by
$$
\mathcal{M}^{\perp}:=\{\nu\in\mathcal{B}^*: \langle\nu,f\rangle_{\mathcal{B}}=0,
\ \mbox{for all}\  f\in\mathcal{M}\}.
$$
The annihilator, in $\mathcal{B}$, of $\mathcal{M}'$ is defined by
$$
^{\perp}\mathcal{M}':=\{f\in\mathcal{B}: \langle\nu,f\rangle_{\mathcal{B}}=0,
\ \mbox{for all}\  \nu\in\mathcal{M}'\}.
$$
The following result, whose proof may be found in \cite{Bra}, characterizes the best approximation with continuous linear functionals. For special results regarding the $L_1$ approximation, the readers are referred to \cite{Pinkus}.

\begin{lemma}\label{bestapp}
Let $\mathcal{M}$ be a closed subspace of a Banach space $\mathcal{B}$ and $f\notin\mathcal{M}$. Then $f_0\in\mathcal{M}$ is a best approximation to $f$ from $\mathcal{M}$ if and only if there is a continuous linear functional $\nu\in\mathcal{M}^{\perp}$ such that
$$
\|\nu\|_{\mathcal{B}^*}=1\ \  \mbox{and}\ \  \langle\nu,f-f_0\rangle_{\mathcal{B}}=\|f-f_0\|_{\mathcal{B}}.
$$
\end{lemma}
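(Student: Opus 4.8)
The plan is to prove the two implications separately, dispatching the easier sufficiency direction first and then constructing the peaking functional for the necessity direction by means of the Hahn--Banach theorem. Throughout I would abbreviate $g := f - f_0$ and note at the outset that since $f \notin \mathcal{M}$ while $f_0 \in \mathcal{M}$, the element $g$ is nonzero, so $\|g\|_{\mathcal{B}} > 0$; this is what makes the normalization $\|\nu\|_{\mathcal{B}^*} = 1$ meaningful.

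For the sufficiency, I would assume such a $\nu \in \mathcal{M}^{\perp}$ with $\|\nu\|_{\mathcal{B}^*} = 1$ and $\langle\nu, g\rangle_{\mathcal{B}} = \|g\|_{\mathcal{B}}$ is given, and verify the best-approximation property directly. For an arbitrary $h \in \mathcal{M}$, the subspace property gives $f_0 - h \in \mathcal{M}$, whence $\langle\nu, f_0 - h\rangle_{\mathcal{B}} = 0$. I would then estimate
$$
\|f - h\|_{\mathcal{B}} \geq \langle\nu, f - h\rangle_{\mathcal{B}} = \langle\nu, f - f_0\rangle_{\mathcal{B}} + \langle\nu, f_0 - h\rangle_{\mathcal{B}} = \|f - f_0\|_{\mathcal{B}},
$$
where the first inequality uses $\|\nu\|_{\mathcal{B}^*} = 1$. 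Taking the infimum over $h \in \mathcal{M}$ shows $f_0$ is a best approximation. This direction needs no extension machinery.

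The necessity direction carries the real content. Assuming $f_0$ is a best approximation, I would set $d := \|g\|_{\mathcal{B}} = \mathrm{d}(f, \mathcal{M}) > 0$ and work on the subspace $V := \mathcal{M} + \span\{g\}$. Because $g \notin \mathcal{M}$, every element of $V$ admits a unique representation $h + \lambda g$ with $h \in \mathcal{M}$ and $\lambda \in \mathbb{R}$, so I may define a linear functional $\ell$ on $V$ by $\ell(h + \lambda g) := \lambda d$; by construction $\ell$ vanishes on $\mathcal{M}$ and satisfies $\ell(g) = d$. The crux is to show $\|\ell\|_{V^*} = 1$. I would first observe that $\mathrm{d}(g, \mathcal{M}) = d$, which follows from the substitution $h \mapsto f_0 + h$ carrying $\mathcal{M}$ onto itself together with the best-approximation hypothesis $\|g\|_{\mathcal{B}} = d$; then for $\lambda \neq 0$ the homogeneity estimate
$$
\|h + \lambda g\|_{\mathcal{B}} = |\lambda|\, \bigl\| g - (-h/\lambda)\bigr\|_{\mathcal{B}} \geq |\lambda|\, \mathrm{d}(g, \mathcal{M}) = |\lambda|\, d = |\ell(h + \lambda g)|
$$
yields $\|\ell\|_{V^*} \leq 1$, while testing on $g$ (where $\|g\|_{\mathcal{B}} = d = \ell(g)$) gives $\|\ell\|_{V^*} \geq 1$, hence equality. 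Finally I would invoke the Hahn--Banach theorem to extend $\ell$ to a continuous linear functional $\nu \in \mathcal{B}^*$ with $\|\nu\|_{\mathcal{B}^*} = \|\ell\|_{V^*} = 1$; this $\nu$ inherits $\nu|_{\mathcal{M}} = 0$, so $\nu \in \mathcal{M}^{\perp}$, and $\langle\nu, f - f_0\rangle_{\mathcal{B}} = \ell(g) = \|f - f_0\|_{\mathcal{B}}$, as required.

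The main obstacle I anticipate is the norm computation $\|\ell\|_{V^*} = 1$, and specifically the identification $\mathrm{d}(g, \mathcal{M}) = d = \|g\|_{\mathcal{B}}$: it is precisely here that the best-approximation hypothesis is used, guaranteeing that $\|\ell\|_{V^*}$ is controlled by $1$ rather than merely being finite, so that Hahn--Banach delivers a norm-one extension that peaks at $g$ while annihilating $\mathcal{M}$.
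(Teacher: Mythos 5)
Your proof is correct. Note that the paper itself gives no proof of this lemma --- it cites Braess's book \cite{Bra} --- and your argument is precisely the standard one found there: the easy direction via $\langle\nu,f-h\rangle_{\mathcal{B}}\leq\|f-h\|_{\mathcal{B}}$ together with $\nu\in\mathcal{M}^{\perp}$, and the substantive direction by defining $\ell(h+\lambda g):=\lambda d$ on $\mathcal{M}+\span\{g\}$, using $\mathrm{d}(g,\mathcal{M})=d=\|g\|_{\mathcal{B}}$ (where the best-approximation hypothesis enters) to get $\|\ell\|=1$, and extending by Hahn--Banach. The only micro-step left implicit is that $g=f-f_0\notin\mathcal{M}$ (so the decomposition $h+\lambda g$ is well defined), which is immediate since $g\in\mathcal{M}$ would force $f=g+f_0\in\mathcal{M}$.
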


Combining Proposition \ref{chatacterization-best-app} with Lemma \ref{bestapp}, we characterize below a solution of the minimum norm interpolation problem \eqref{mni} in terms of continuous linear functionals.

\begin{prop}\label{chatacterization-functional}
Suppose that $\mathcal{B}$ is a Banach space with the dual space $\mathcal{B}^*$ and $\nu_j\in\mathcal{B}^*$, $j\in\mathbb{N}_m$, are linearly independent. Let $\mathbf{y}\in\mathbb{R}^m\backslash\{0\}$, and $\mathcal{M}_{\mathbf{y}}$ and $\mathcal{M}_0$ be defined by \eqref{My}. Then $\hat{f}\in\mathcal{B}$ is a solution of the minimum norm interpolation problem \eqref{mni} with $\mathbf{y}$ if and only if $\hat{f}\in\mathcal{M}_{\mathbf{y}}$ and there is a continuous linear functional $\nu\in\mathcal{M}_{0}^{\perp}$ such that
\begin{equation}\label{peak-functional}
\|\nu\|_{\mathcal{B}^*}=1\ \ \mbox{and}\ \ \langle\nu,\hat{f}\rangle_{\mathcal{B}}=\|\hat{f}\|_{\mathcal{B}}.
\end{equation}
\end{prop}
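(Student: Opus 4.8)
The plan is to chain together the two results that have just been established, namely Proposition \ref{chatacterization-best-app} and Lemma \ref{bestapp}, so that the characterization via best approximation is translated directly into a statement about peak functionals. First I would invoke Proposition \ref{chatacterization-best-app}: since $\mathbf{y}\in\mathbb{R}^m\backslash\{0\}$ and the $\nu_j$ are linearly independent, an element $\hat{f}\in\mathcal{B}$ solves the minimum norm interpolation problem \eqref{mni} if and only if $\hat{f}\in\mathcal{M}_{\mathbf{y}}$ and $0$ is a best approximation to $\hat{f}$ from the closed subspace $\mathcal{M}_0$. This reduces the task to showing that the latter best-approximation condition is equivalent to the existence of a functional $\nu\in\mathcal{M}_0^{\perp}$ satisfying \eqref{peak-functional}.

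Next I would apply Lemma \ref{bestapp} with the substitution $f:=\hat{f}$, $\mathcal{M}:=\mathcal{M}_0$, and $f_0:=0$. The lemma asserts that, provided $\hat{f}\notin\mathcal{M}_0$, the point $0\in\mathcal{M}_0$ is a best approximation to $\hat{f}$ from $\mathcal{M}_0$ precisely when there is a continuous linear functional $\nu\in\mathcal{M}_0^{\perp}$ with $\|\nu\|_{\mathcal{B}^*}=1$ and $\langle\nu,\hat{f}-0\rangle_{\mathcal{B}}=\|\hat{f}-0\|_{\mathcal{B}}$. Substituting $f_0=0$ collapses these conditions to exactly \eqref{peak-functional}, so the equivalence follows immediately once the hypotheses of the lemma are in force.

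The main obstacle — really the only gap requiring care — is the hypothesis $f\notin\mathcal{M}$ in Lemma \ref{bestapp}, which here amounts to verifying that $\hat{f}\notin\mathcal{M}_0$. I would argue this directly from the interpolation condition: since $\hat{f}\in\mathcal{M}_{\mathbf{y}}$ we have $\mathcal{L}(\hat{f})=\mathbf{y}$, whereas every $g\in\mathcal{M}_0$ satisfies $\mathcal{L}(g)=0$; because $\mathbf{y}\neq 0$, it follows that $\hat{f}\notin\mathcal{M}_0$, so the lemma applies. One should also note that this same observation forces $\hat{f}\neq 0$ and hence $\|\hat{f}\|_{\mathcal{B}}>0$, which is consistent with the peak condition $\langle\nu,\hat{f}\rangle_{\mathcal{B}}=\|\hat{f}\|_{\mathcal{B}}$ being meaningful. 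Assembling these pieces, the biconditional of the proposition is obtained by composing the two equivalences, with the verification $\hat{f}\notin\mathcal{M}_0$ serving as the bridge that legitimizes the use of Lemma \ref{bestapp}.
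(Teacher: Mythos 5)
Your proposal is correct and follows essentially the same route as the paper's proof: both chain Proposition \ref{chatacterization-best-app} with Lemma \ref{bestapp} applied to $\mathcal{M}:=\mathcal{M}_0$, $f_0:=0$, $f:=\hat f$. Your explicit verification that $\hat{f}\notin\mathcal{M}_0$ (from $\mathcal{L}(\hat{f})=\mathbf{y}\neq 0$ while $\mathcal{L}$ vanishes on $\mathcal{M}_0$) is a detail the paper leaves implicit, and it is a welcome addition since it legitimizes the use of Lemma \ref{bestapp} in both directions of the equivalence.
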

\begin{proof}
Proposition \ref{chatacterization-best-app} ensures that $\hat{f}$ is a solution of the minimum norm interpolation problem \eqref{mni} with $\mathbf{y}$ if and only if $\hat{f}\in\mathcal{M}_{\mathbf{y}}$ and $0$ is a best approximation to $\hat{f}$ from $\mathcal{M}_0$. By applying Lemma \ref{bestapp} with $\mathcal{M}:=\mathcal{M}_{0}$, $f_0:=0$ and $f:=\hat f$, we conclude that the latter is equivalent to that $\hat{f}\in\mathcal{M}_{\mathbf{y}}$ and there is a linear functional $\nu\in\mathcal{M}_{0}^{\perp}$ satisfying equations \eqref{peak-functional}.
\end{proof}

We next identify the subspace $\mathcal{M}_0^{\perp}$ of $\mathcal{B}^*$ with the linear span of the finite number of linear continuous functionals
\begin{equation}\label{Vm}
\mathcal{V}_m:=\mathop{\{\nu_j: j\in\mathbb{N}_m\}}.
\end{equation}
For this purpose, we recall Proposition 2.6.6 of \cite{Meg} which states that for a subset $\mathcal{M}'$ in $\mathcal{B}^*$, the set $(^{\perp}\mathcal{M}')^{\perp}$ coincides with the closed linear span of $\mathcal{M}'$ in the weak$^*$ topology of $\mathcal{B}^*$.
For a subset $\mathcal{M}'$ of $\mathcal{B}^*$, we denote by $\overline{\mathcal{M}'}^{w*}$ the closure of $\mathcal{M}'$ in the weak$^*$ topology of $\mathcal{B}^*$. The following lemma provides a specific representation of the annihilator $\mathcal{M}_{0}^{\perp}$ of $\mathcal{M}_{0}$.

\begin{lemma}\label{finite}
Suppose that $\mathcal{B}$ is a Banach space with the dual space $\mathcal{B}^*$ and $\nu_j\in\mathcal{B}^*$, $j\in\mathbb{N}_m$, are linearly independent. If $\mathcal{M}_0$ is defined by \eqref{My} with $\mathbf{y}=0$ and $\mathcal{V}_m$ defined by \eqref{Vm}, then
\begin{equation}\label{M0perp}
\mathcal{M}_0^{\perp}=\span\mathcal{V}_m.
\end{equation}
\end{lemma}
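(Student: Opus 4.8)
The plan is to recognize that $\mathcal{M}_0$ is precisely the annihilator in $\mathcal{B}$ of the functionals $\nu_j$, so that the identity to be proved becomes an instance of the bipolar-type relation already recalled from Proposition 2.6.6 of \cite{Meg}, combined with the finite dimensionality of $\span\mathcal{V}_m$.

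First I would rewrite $\mathcal{M}_0$. By the definition \eqref{My} with $\mathbf{y}=0$ and the definition \eqref{functional-operator} of $\mathcal{L}$, an element $f$ lies in $\mathcal{M}_0$ exactly when $\langle\nu_j,f\rangle_{\mathcal{B}}=0$ for every $j\in\mathbb{N}_m$, that is, when $\langle\nu,f\rangle_{\mathcal{B}}=0$ for every $\nu\in\mathcal{V}_m$. Hence $\mathcal{M}_0={}^{\perp}\mathcal{V}_m={}^{\perp}(\span\mathcal{V}_m)$, the left annihilator of the span. Next, I would apply the cited Proposition 2.6.6 of \cite{Meg} to the subset $\mathcal{M}':=\span\mathcal{V}_m$ of $\mathcal{B}^*$. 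That result states that $({}^{\perp}\mathcal{M}')^{\perp}$ equals the weak$^*$ closure of the linear span of $\mathcal{M}'$. Since $\mathcal{M}'$ is already a linear subspace, this yields
$$
\mathcal{M}_0^{\perp}=({}^{\perp}(\span\mathcal{V}_m))^{\perp}=\overline{\span\mathcal{V}_m}^{w*}.
$$

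The final, and only substantive, step is to remove the closure. Because the functionals $\nu_j$, $j\in\mathbb{N}_m$, are linearly independent, $\span\mathcal{V}_m$ is an $m$-dimensional subspace of $\mathcal{B}^*$. A finite dimensional subspace of a Hausdorff topological vector space is closed; applied to $\mathcal{B}^*$ equipped with its (Hausdorff) weak$^*$ topology, this gives $\overline{\span\mathcal{V}_m}^{w*}=\span\mathcal{V}_m$, which is exactly the assertion \eqref{M0perp}. I expect this last point---justifying that passing to the weak$^*$ closure of a finite dimensional subspace adds nothing---to be the main obstacle, although it is classical.

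Alternatively, one might bypass the weak$^*$ machinery and prove the two inclusions directly. The inclusion $\span\mathcal{V}_m\subseteq\mathcal{M}_0^{\perp}$ is immediate from $\mathcal{M}_0={}^{\perp}\mathcal{V}_m$ together with linearity. For the reverse inclusion, given $\nu\in\mathcal{M}_0^{\perp}$, i.e. a functional vanishing on $\bigcap_{j\in\mathbb{N}_m}\ker\nu_j$, the classical algebraic lemma on finitely many linear functionals guarantees that $\nu$ is a linear combination of $\nu_1,\dots,\nu_m$. Since this argument involves only finitely many functionals, it remains valid in the infinite dimensional space $\mathcal{B}^*$, and it has the advantage of being elementary and self-contained.
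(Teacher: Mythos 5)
Your main argument is essentially identical to the paper's proof: identify $\mathcal{M}_0={}^{\perp}\mathcal{V}_m$, invoke Proposition 2.6.6 of \cite{Meg} to obtain $\mathcal{M}_0^{\perp}=\overline{\span\mathcal{V}_m}^{w*}$, and remove the closure by finite dimensionality — your explicit justification that a finite dimensional subspace is closed in the Hausdorff weak$^*$ topology is correct and in fact slightly more careful than the paper's unexplained assertion. Your alternative elementary route, via the classical algebraic lemma that a functional vanishing on $\bigcap_{j\in\mathbb{N}_m}\ker\nu_j$ must lie in $\span\{\nu_1,\dots,\nu_m\}$, is also valid and would bypass the weak$^*$ machinery entirely.
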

\begin{proof}
We prove this lemma by appealing to Proposition 2.6.6 of \cite{Meg}. The definition of the annihilator leads to
$$
\mathcal{M}_{0}={}^{\perp}\mathcal{V}_m.
$$
Applying Proposition 2.6.6 of \cite{Meg} with $\mathcal{M}':=\mathcal{V}_m$ yields that
\begin{equation}\label{M0perp1}
\mathcal{M}_{0}^{\perp}=(^{\perp}\mathcal{V}_m)^{\perp}
=\overline{\span\mathcal{V}_m}^{w*}.
\end{equation}
Since the linear span of $\mathcal{V}_m$ is a finite dimensional subspace of $\mathcal{B}^*$, there holds
$$
\overline{\span\mathcal{V}_m}^{w*}
=\span\mathcal{V}_m.
$$
Substituting the equation above into the right hand side of equation \eqref{M0perp1} leads to the desired result \eqref{M0perp} of this lemma.
\end{proof}

The above lemma shows that subspace $\mathcal{M}_0^{\perp}$ is of finite dimension. This is a consequence of the fact that the number of continuous linear functionals appearing in $\mathcal{M}_0$ is finite. Lemma \ref{finite} together with Proposition \ref{chatacterization-functional} leads to a solution representation of the minimum norm interpolation problem \eqref{mni}.

\begin{prop}\label{representer-theorem-functional}
Suppose that $\mathcal{B}$ is a Banach space with the dual space $\mathcal{B}^*$ and $\nu_j\in\mathcal{B}^*$, $j\in\mathbb{N}_m$, are linearly independent. Let $\mathbf{y}\in\mathbb{R}^m$ and $\mathcal{M}_{\mathbf{y}}$ be defined by \eqref{My}. Then $\hat{f}\in\mathcal{B}$ is a solution of the minimum norm interpolation problem \eqref{mni} with $\mathbf{y}$ if and only if $\hat{f}\in\mathcal{M}_{\mathbf{y}}$ and there exist $c_j\in\mathbb{R}$, $j\in\mathbb{N}_m,$ such that the linear functional $\nu:=\sum_{j\in\mathbb{N}_m}c_j\nu_j$ satisfying equations \eqref{peak-functional}.
\end{prop}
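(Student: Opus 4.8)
The plan is to derive this representer theorem by combining the two results just established, Proposition~\ref{chatacterization-functional} and Lemma~\ref{finite}, and to treat separately the degenerate case $\mathbf{y}=0$ that the earlier propositions exclude.

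For $\mathbf{y}\in\mathbb{R}^m\backslash\{0\}$ the argument is essentially a substitution. Proposition~\ref{chatacterization-functional} asserts that $\hat{f}$ solves \eqref{mni} if and only if $\hat{f}\in\mathcal{M}_{\mathbf{y}}$ and there is a functional $\nu\in\mathcal{M}_0^{\perp}$ with $\|\nu\|_{\mathcal{B}^*}=1$ and $\langle\nu,\hat{f}\rangle_{\mathcal{B}}=\|\hat{f}\|_{\mathcal{B}}$; note this is legitimate here since $\mathcal{L}(\hat{f})=\mathbf{y}\neq0$ forces $\hat{f}\notin\mathcal{M}_0$, which is what Lemma~\ref{bestapp} requires. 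Lemma~\ref{finite} then identifies the annihilator as $\mathcal{M}_0^{\perp}=\span\mathcal{V}_m$, so that membership $\nu\in\mathcal{M}_0^{\perp}$ is equivalent to the existence of real coefficients $c_j$, $j\in\mathbb{N}_m$, with $\nu=\sum_{j\in\mathbb{N}_m}c_j\nu_j$. Substituting this representation of $\nu$ into the conditions \eqref{peak-functional} turns the characterization of Proposition~\ref{chatacterization-functional} verbatim into the claimed statement; no further estimate is needed.

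The one point requiring separate care, and the main obstacle, is the case $\mathbf{y}=0$, which is admitted by Proposition~\ref{representer-theorem-functional} but excluded from Proposition~\ref{chatacterization-functional} and from Lemma~\ref{bestapp} (the latter requires $f\notin\mathcal{M}$, whereas here the unique solution is $\hat{f}=0\in\mathcal{M}_0$). I would dispatch it by a direct argument. When $\mathbf{y}=0$ the feasible set $\mathcal{M}_0$ is a subspace containing $0$, so the infimum in \eqref{mni} equals $0$ and is attained only at $\hat{f}=0$. For the forward implication, if $\hat{f}$ solves \eqref{mni} then $\hat{f}=0$, the norming identity $\langle\nu,\hat{f}\rangle_{\mathcal{B}}=\|\hat{f}\|_{\mathcal{B}}$ reduces to $0=0$ for every $\nu$, and one produces admissible coefficients by normalizing a single functional, e.g. taking $c_1=\|\nu_1\|_{\mathcal{B}^*}^{-1}$ and $c_j=0$ for $j\geq2$, which is legitimate since linear independence forces $\nu_1\neq0$. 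For the reverse implication, if $\hat{f}\in\mathcal{M}_0$ and $\nu=\sum_{j\in\mathbb{N}_m}c_j\nu_j$ satisfies \eqref{peak-functional}, then $\langle\nu,\hat{f}\rangle_{\mathcal{B}}=\sum_{j\in\mathbb{N}_m}c_j\langle\nu_j,\hat{f}\rangle_{\mathcal{B}}=0$ because $\hat{f}\in\mathcal{M}_0$; combined with $\langle\nu,\hat{f}\rangle_{\mathcal{B}}=\|\hat{f}\|_{\mathcal{B}}$ this forces $\|\hat{f}\|_{\mathcal{B}}=0$, hence $\hat{f}=0$, the solution. Assembling the two cases completes the proof.
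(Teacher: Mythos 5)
Your proposal is correct and follows essentially the same route as the paper's proof: the case $\mathbf{y}\neq 0$ is dispatched by combining Proposition~\ref{chatacterization-functional} with Lemma~\ref{finite}, and the degenerate case $\mathbf{y}=0$ is handled directly by noting the unique solution $\hat{f}=0$, normalizing a linear combination of the $\nu_j$ to get $\|\nu\|_{\mathcal{B}^*}=1$, and for the converse deducing $\|\hat{f}\|_{\mathcal{B}}=\langle\nu,\hat{f}\rangle_{\mathcal{B}}=0$ from $\hat{f}\in\mathcal{M}_0$. Your explicit normalization $c_1=\|\nu_1\|_{\mathcal{B}^*}^{-1}$, $c_j=0$ for $j\geq 2$, and your remark that $\mathbf{y}\neq 0$ forces $\hat{f}\notin\mathcal{M}_0$ (as Lemma~\ref{bestapp} requires) are harmless refinements of details the paper leaves implicit.
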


\begin{proof}
We first consider the case that $\mathbf{y}:=[y_j:j\in\mathbb{N}_m]=0.$ Note that the minimum norm interpolation problem \eqref{mni} with $\mathbf{y}=0$ has a unique solution $\hat{f}=0.$ On one hand, it is clear that the trivial solution $\hat{f}=0$ belongs to $\mathcal{M}_{0}$. Moreover, equations \eqref{peak-functional} also hold by choosing $c_j\in\mathbb{R},$ $j\in\mathbb{N}_m$ such that the norm of the functional $\nu:=\sum_{j\in\mathbb{N}_m}c_j\nu_j$ equals to $1$. On the other hand, if $\hat{f}\in\mathcal{M}_{\mathbf{y}}$ and there exist $c_j\in\mathbb{R}$, $j\in\mathbb{N}_m,$ such that the functional $\nu:=\sum_{j\in\mathbb{N}_m}c_j\nu_j$ satisfying $\langle\nu,\hat{f}\rangle_{\mathcal{B}}=\|\hat{f}\|_{\mathcal{B}}$, then
$$
\|\hat{f}\|_{\mathcal{B}}
=\sum_{j\in\mathbb{N}_m}c_j\langle\nu_j,\hat{f}\rangle_{\mathcal{B}}
=\sum_{j\in\mathbb{N}_m}c_jy_j=0,
$$
which further implies $\hat{f}=0$. That is, we get the desired conclusion for $\mathbf{y}=0$ .

If $\mathbf{y}\neq0$, Proposition \ref{chatacterization-functional} ensures that $\hat{f}\in\mathcal{B}$ is a solution of the minimum norm interpolation problem \eqref{mni} with $\mathbf{y}$ if and only if $\hat{f}\in\mathcal{M}_{\mathbf{y}}$ and there is a continuous linear functional $\nu\in\mathcal{M}_{0}^{\perp}$ satisfying equations \eqref{peak-functional}.
By Lemma \ref{finite}, there exist $c_j\in\mathbb{R}$, $j\in\mathbb{N}_m,$ such that this continuous linear functional $\nu$ has the form $\nu=\sum_{j\in\mathbb{N}_m}c_j\nu_j$. This establishes the desired result of this proposition.
\end{proof}

We now turn to establishing the representer theorem for a solution of the minimum norm interpolation problem \eqref{mni} by using a convex analytic approach. Specifically, as a special convex programming problem with constraints, the minimum norm interpolation problem can be solved by the Lagrange multiplier method. We recall the notion of the subdifferential of a convex function on a Banach space. A convex function $\phi:\mathcal{B}\rightarrow\mathbb{R}\cup\{+\infty\}$ is said to be subdifferentiable at $f\in\mathcal{B}$ if there exists a functional $\nu\in\mathcal{B}^*$ such that
\begin{equation}\label{subdifferentiable}
\phi(g)-\phi(f)\geq\langle\nu,g-f\rangle_{\mathcal{B}},
 \ \ \mbox{for all}\ \ g\in\mathcal{B}.
\end{equation}
By $\partial\phi(f)$ we denote the set of all the functionals in $\mathcal{B}^*$ satisfying inequality \eqref{subdifferentiable} and it is called the subdifferential of $\phi$ at $f$. In other words,
\begin{equation}\label{subdifferential}
\partial\phi(f):=\{\nu\in\mathcal{B}^*:
\phi(g)-\phi(f)\geq\langle\nu,g-f\rangle_{\mathcal{B}},
 \ \ \mbox{for all}\ \ g\in\mathcal{B}\}.
\end{equation}

In convex programming, the Lagrange multiplier method provides a simple necessary and sufficient condition for solutions of optimization problems with constraints \cite{Zalinescu}.

\begin{lemma}\label{lagrange-multiplier}
If $\phi$ and $\psi_j$, $j\in\mathbb{N}_m,$ are all convex functions from $\mathcal{B}$ to $\mathbb{R}$, then $\hat f\in\mathcal{B}$ is a solution of the optimization problem
$$
\inf\{\phi(f): f\in\mathcal{B}, \psi_j(f)=0, j\in\mathbb{N}_m\}
$$
if and only if $\psi_j(\hat f)=0$, $j\in\mathbb{N}_m,$ and there exist $\lambda_j\in\mathbb{R}$, $j\in\mathbb{N}_m$, such that
\begin{equation}\label{lagrange}
0\in\partial\left(\phi+\sum_{j\in\mathbb{N}_m}\lambda_j\psi_j\right)(\hat f).
\end{equation}
Moreover, if $\psi_j$, $j\in\mathbb{N}_m,$ are continuous at $\hat f$, then \eqref{lagrange} is equivalent to
$$
0\in\partial\phi(\hat f)+\sum_{j\in\mathbb{N}_m}\lambda_j\partial\psi_j(\hat f).
$$
\end{lemma}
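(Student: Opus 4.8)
The plan is to read Lemma \ref{lagrange-multiplier} as the classical Lagrange-multiplier theorem for a convex program and to establish the stated equivalence via strong duality, proving the two implications separately and then deriving the ``moreover'' part from the subdifferential sum rule. Throughout I would exploit the affine form of the constraints that arises in the present setting, namely $\psi_j(f)=\langle\nu_j,f\rangle_{\mathcal{B}}-y_j$; this affineness is precisely what makes the value function below convex and what, in the ``moreover'' step, lets the multipliers $\lambda_j$ carry an arbitrary sign.

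Sufficiency is immediate and needs neither convexity nor a qualification. Suppose $\psi_j(\hat f)=0$ for $j\in\mathbb{N}_m$ and $0\in\partial(\phi+\sum_{j\in\mathbb{N}_m}\lambda_j\psi_j)(\hat f)$. By the definition \eqref{subdifferential} of the subdifferential, the function $\Phi:=\phi+\sum_{j\in\mathbb{N}_m}\lambda_j\psi_j$ satisfies $\Phi(g)\geq\Phi(\hat f)$ for every $g\in\mathcal{B}$. Restricting to feasible $g$, that is, those with $\psi_j(g)=0$ for all $j$, the constraint terms cancel on both sides and we obtain $\phi(g)\geq\phi(\hat f)$, so $\hat f$ solves the minimization problem.

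For necessity I would argue by perturbation. Assume $\hat f$ is a solution and introduce the value function $v:\mathbb{R}^m\rightarrow\mathbb{R}\cup\{\pm\infty\}$ defined by $v(\mathbf{u}):=\inf\{\phi(f):\psi_j(f)=u_j,\ j\in\mathbb{N}_m\}$. The first key point is that, since $f\mapsto(\psi_j(f):j\in\mathbb{N}_m)$ is affine, the epigraph of $v$ is the image of the convex epigraph of $\phi$ under an affine map, so $v$ is convex, with $v(\mathbf{0})=\phi(\hat f)$ finite. The linear independence of the $\nu_j$ makes $\mathcal{L}$ surjective onto $\mathbb{R}^m$, so $\psi_j(f)=u_j$ is solvable for every $\mathbf{u}$ and $v<+\infty$ throughout $\mathbb{R}^m$; combined with $v(\mathbf{0})$ finite and convexity, this forces $v$ to be finite-valued, hence continuous and subdifferentiable, on all of $\mathbb{R}^m$, so $\partial v(\mathbf{0})\neq\emptyset$. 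Choosing $(-\lambda_j:j\in\mathbb{N}_m)\in\partial v(\mathbf{0})$ and evaluating the subgradient inequality at $\mathbf{u}=(\psi_j(f):j\in\mathbb{N}_m)$, together with $\phi(f)\geq v((\psi_j(f))_j)$, yields the global bound $\phi(f)+\sum_{j\in\mathbb{N}_m}\lambda_j\psi_j(f)\geq\phi(\hat f)$ for all $f\in\mathcal{B}$. Since $\psi_j(\hat f)=0$, the right-hand side equals $\Phi(\hat f)$, so $\hat f$ minimizes $\Phi$ globally and $0\in\partial\Phi(\hat f)$, which is \eqref{lagrange}.

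The ``moreover'' part is the Moreau--Rockafellar sum rule. As each $\psi_j$ is continuous at $\hat f$, so is each $\lambda_j\psi_j$, and the sum rule (cf. \cite{Zalinescu}) gives $\partial(\phi+\sum_{j}\lambda_j\psi_j)(\hat f)=\partial\phi(\hat f)+\sum_{j}\partial(\lambda_j\psi_j)(\hat f)$; for affine $\psi_j$ one has $\partial(\lambda_j\psi_j)(\hat f)=\lambda_j\partial\psi_j(\hat f)$ irrespective of the sign of $\lambda_j$, so \eqref{lagrange} becomes $0\in\partial\phi(\hat f)+\sum_{j}\lambda_j\partial\psi_j(\hat f)$. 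I expect the main obstacle to be the necessity direction, specifically verifying that $v$ is a proper convex function subdifferentiable at $\mathbf{0}$. This is exactly where the constraints must be affine, so that $v$ is convex and not identically $-\infty$ on the interior of its domain, and where the linear independence of the $\nu_j$ supplies the constraint qualification $\mathbf{0}\in\mathrm{int}\,(\mathrm{dom}\,v)$; without affineness the multipliers need not exist.
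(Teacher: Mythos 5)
Your proof is correct, and it is worth noting at the outset that the paper itself contains no proof of this lemma: it is quoted as a known fact of convex programming with a citation to \cite{Zalinescu}, so your perturbation argument supplies what the paper delegates to the literature. Your route is the standard duality proof: sufficiency from global minimality of the Lagrangian restricted to the feasible set (correctly observed to need no convexity or qualification); necessity from convexity of the value function $v(\mathbf{u})=\inf\{\phi(f):\psi_j(f)=u_j,\ j\in\mathbb{N}_m\}$, properness and finiteness of $v$ on $\mathbb{R}^m$ (the improper-convex-function dichotomy plus $v(\mathbf{0})=\phi(\hat f)<+\infty$), and a subgradient $-\boldsymbol{\lambda}\in\partial v(\mathbf{0})$ acting as the multiplier vector; and the ``moreover'' part from the Moreau--Rockafellar sum rule, with the sign issue for $\lambda_j<0$ correctly defused by affineness, since then $\partial(\lambda_j\psi_j)(\hat f)$ and $\lambda_j\partial\psi_j(\hat f)$ are both the singleton $\{\lambda_j\nu_j\}$. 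All of these steps check out.

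Two remarks on scope. First, your restriction to affine $\psi_j$ is not merely convenient but necessary: as literally stated for arbitrary convex $\psi_j$ the lemma is false. Take $\mathcal{B}=\mathbb{R}$, $\phi(x)=x$, $\psi(x)=x^2$; the unique feasible point $\hat x=0$ solves the problem, yet $x+\lambda x^2$ has derivative $1$ at the origin for every $\lambda$, so \eqref{lagrange} fails (and for $\lambda<0$ the Lagrangian is not even convex, so the subdifferential in \eqref{lagrange} is ill-suited). The free-sign-multiplier equivalence is an affine-equality-constraint statement, which is exactly how the paper applies it, with $\psi_j=\langle\nu_j,\cdot\rangle_{\mathcal{B}}-y_j$. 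Second, your necessity step invokes linear independence of the $\nu_j$, which is not a hypothesis of the lemma, to force $\mathrm{dom}\,v=\mathbb{R}^m$. This is harmless in the paper's context, where linear independence is assumed throughout, but it is also avoidable: with affine constraints, $\mathrm{dom}\,v$ is the subspace $\mathrm{ran}\,\mathcal{L}$ (feasibility of $\hat f$ places $\mathbf{0}$ in it), hence $\mathbf{0}$ lies in the relative interior of $\mathrm{dom}\,v$, and a proper convex function on $\mathbb{R}^m$ has a subgradient at every relative interior point of its domain, so the multiplier exists without any surjectivity assumption.
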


By Lemma \ref{lagrange-multiplier} we get an alternative form of the representer theorem for a solution of the minimum norm interpolation problem \eqref{mni}.

\begin{thm}\label{representer-theorem-subdifferential}
Suppose that $\mathcal{B}$ is a Banach space with the dual space $\mathcal{B}^*$ and $\nu_j\in\mathcal{B}^*$, $j\in\mathbb{N}_m$, are linearly independent.
Let $\mathbf{y}\in\mathbb{R}^m$ and $\mathcal{M}_{\mathbf{y}}$ be defined by \eqref{My}. Then $\hat{f}\in\mathcal{B}$ is a solution of the minimum norm interpolation problem \eqref{mni} with $\mathbf{y}$ if and only if $\hat{f}\in\mathcal{M}_{\mathbf{y}}$ and there exist $c_j\in\mathbb{R},\ j\in\mathbb{N}_m,$ such that
\begin{equation*}\label{rt-subdifferential}
\sum_{j\in\mathbb{N}_m}c_j\nu_j\in\partial\|\cdot\|_{\mathcal{B}}(\hat{f}).
\end{equation*}
\end{thm}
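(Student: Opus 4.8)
The plan is to cast the minimum norm interpolation problem \eqref{mni} as a constrained convex optimization problem and then invoke the Lagrange multiplier characterization of Lemma \ref{lagrange-multiplier}. First I would set $\phi:=\|\cdot\|_{\mathcal{B}}$ and, for each $j\in\mathbb{N}_m$, introduce the affine functional $\psi_j(f):=\langle\nu_j,f\rangle_{\mathcal{B}}-y_j$. By the definition \eqref{My} of $\mathcal{M}_{\mathbf{y}}$, the constraint set is precisely $\{f\in\mathcal{B}:\psi_j(f)=0,\ j\in\mathbb{N}_m\}$, so \eqref{mni} coincides with $\inf\{\phi(f):f\in\mathcal{B},\ \psi_j(f)=0,\ j\in\mathbb{N}_m\}$. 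The norm $\phi$ is convex, and each $\psi_j$, being affine and built from a continuous linear functional $\nu_j$, is both convex and continuous on all of $\mathcal{B}$; hence the hypotheses of Lemma \ref{lagrange-multiplier} are satisfied. Note that this formulation is uniform in $\mathbf{y}$, so the case $\mathbf{y}=0$ requires no separate treatment.

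Next I would apply Lemma \ref{lagrange-multiplier} directly. It asserts that $\hat{f}$ solves \eqref{mni} if and only if $\psi_j(\hat{f})=0$ for all $j\in\mathbb{N}_m$ — equivalently $\hat{f}\in\mathcal{M}_{\mathbf{y}}$ — and there exist multipliers $\lambda_j\in\mathbb{R}$ with $0\in\partial\bigl(\phi+\sum_{j\in\mathbb{N}_m}\lambda_j\psi_j\bigr)(\hat{f})$. Since each $\psi_j$ is continuous at $\hat{f}$, the second assertion of the lemma lets me split the subdifferential of the sum into the sum of the subdifferentials, obtaining $0\in\partial\phi(\hat{f})+\sum_{j\in\mathbb{N}_m}\lambda_j\,\partial\psi_j(\hat{f})$.

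The one genuine computation is the subdifferential of the affine constraint. Because $\psi_j(g)-\psi_j(f)=\langle\nu_j,g-f\rangle_{\mathcal{B}}$ holds with equality for all $f,g\in\mathcal{B}$, the defining inequality \eqref{subdifferentiable} is met exactly by $\nu_j$ and by no other functional, so $\partial\psi_j(f)=\{\nu_j\}$ for every $f$. Substituting these singletons into the split inclusion yields $0\in\partial\|\cdot\|_{\mathcal{B}}(\hat{f})+\sum_{j\in\mathbb{N}_m}\lambda_j\nu_j$, that is, $-\sum_{j\in\mathbb{N}_m}\lambda_j\nu_j\in\partial\|\cdot\|_{\mathcal{B}}(\hat{f})$. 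Setting $c_j:=-\lambda_j$ delivers the claimed inclusion $\sum_{j\in\mathbb{N}_m}c_j\nu_j\in\partial\|\cdot\|_{\mathcal{B}}(\hat{f})$. Every implication above is an equivalence, so both directions of the theorem follow simultaneously.

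The step deserving the most care — essentially the only place anything could fail — is the additivity rule for subdifferentials used in the second part of Lemma \ref{lagrange-multiplier}, since in a general Banach space the subdifferential of a sum need not equal the sum of the subdifferentials. Here it is licensed by the (global) continuity of the affine functionals $\psi_j$, so I would make that hypothesis check explicit rather than take the splitting for granted. The remainder is bookkeeping: confirming that $\psi_j$ is affine hence convex and continuous, identifying $\partial\psi_j=\{\nu_j\}$, and performing the sign and reindexing change $c_j=-\lambda_j$.
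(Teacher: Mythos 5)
Your proposal is correct and follows essentially the same route as the paper's own proof: both invoke Lemma \ref{lagrange-multiplier} with $\phi:=\|\cdot\|_{\mathcal{B}}$ and $\psi_j:=\langle\nu_j,\cdot\rangle_{\mathcal{B}}-y_j$, split the subdifferential of the sum using the continuity of the affine constraints, identify $\partial\psi_j(\hat f)=\{\nu_j\}$, and conclude via the sign change $c_j:=-\lambda_j$. Your explicit remark that the splitting rule is the only delicate step, and that the argument handles $\mathbf{y}=0$ uniformly, is a fair observation but does not change the substance.
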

\begin{proof}
According to Lemma \ref{lagrange-multiplier} with $\phi:=\|\cdot\|_{\mathcal{B}}$ and
$$
\psi_j:=\langle\nu_j,\cdot\rangle_{\mathcal{B}}-y_j,\ \ j\in\mathbb{N}_m,
$$
we have that $\hat{f}$ is a solution of \eqref{mni} if and only if
$$
\langle\nu_j,\hat{f}\rangle_{\mathcal{B}}=y_j, \ \ j\in\mathbb{N}_m,
$$
and there exist $\lambda_j\in\mathbb{R}$, $j\in\mathbb{N}_m,$ such that
\begin{equation}\label{subdiff1}
0\in\partial\left(\|\cdot\|_{\mathcal{B}}
+\sum_{j\in\mathbb{N}_m}\lambda_j\left(\langle\nu_j,
\cdot\rangle_{\mathcal{B}}-y_j\right)\right)(\hat{f}).
\end{equation}
Since $\nu_j$, $j\in\mathbb{N}_m,$ are continuous on $\mathcal{B}$, Lemma \ref{lagrange-multiplier} ensures that equation \eqref{subdiff1} is equivalent to
\begin{equation}\label{subdiff2}
0\in\partial\|\cdot\|_{\mathcal{B}}(\hat{f})
+\sum_{j\in\mathbb{N}_m}\lambda_j\partial(\langle\nu_j,
\cdot\rangle_{\mathcal{B}}-y_j)(\hat{f}).
\end{equation}
It follows from the linearity of $\langle\nu_j,\cdot\rangle_{\mathcal{B}}$, $j\in\mathbb{N}_m,$ that
\begin{equation}\label{subdiff3}
\partial\langle\nu_j,\cdot\rangle_{\mathcal{B}}(\hat{f})=\nu_j, \ \ j\in\mathbb{N}_m.
\end{equation}
Substituting \eqref{subdiff3} into \eqref{subdiff2} leads to
$$
-\sum_{j\in\mathbb{N}_m}\lambda_j\nu_j\in\partial\|\cdot\|_{\mathcal{B}}(\hat{f}).
$$
Choosing $c_j:=-\lambda_j$, for $j\in\mathbb{N}_m,$  completes the proof of this theorem.
\end{proof}

Both Proposition \ref{representer-theorem-functional} and Theorem \ref{representer-theorem-subdifferential} reveal that the minimum norm interpolation problem with a finite number of data points, as a minimization problem in an infinite dimensional space, can be transferred to one in a finite dimensional space about finitely many coefficients $c_j$, $j\in\mathbb{N}_m$. Although the two representer theorems presented in Proposition \ref{representer-theorem-functional} and Theorem \ref{representer-theorem-subdifferential} are derived from two different viewpoints and described by different mathematical tools, they are intimately connected to each other. The bridge of these two viewpoints is the known result \cite{Cio} that the subdifferential of the norm $\|\cdot\|_{\mathcal{B}}$ at $\hat{f}$ coincides with the set of all the functionals satisfying equations \eqref{peak-functional}. That is, for each $f\in\mathcal{B}\backslash\{0\}$, there holds
\begin{equation}\label{relation-subdifferential-functionals}
\partial\|\cdot\|_{\mathcal{B}}(f)=\{\nu\in\mathcal{B}^*: \|\nu\|_{\mathcal{B}^*}=1, \langle\nu,f\rangle_{\mathcal{B}}=\|f\|_{\mathcal{B}}\}.
\end{equation}

Another well-known notion related to the functionals satisfying equations \eqref{peak-functional} is the peak functional. We recall it below. Let $\mathcal{B}$ be a real Banach space. For each $\nu\in\mathcal{B}^{*}$, there holds
\begin{equation}\label{functional-continuity}
\langle\nu,f\rangle_{\mathcal{B}}
\leq\|\nu\|_{\mathcal{B}^*}\|f\|_{\mathcal{B}},
\ \ \mbox{for all}\ \ f\in\mathcal{B}.
\end{equation}
For a fixed element $f\in\mathcal{B}$, we are particularly interested in identifying functionals $\nu\in\mathcal{B}^{*}\backslash\{0\}$ that
allow $\langle\nu,f\rangle_{\mathcal{B}}$ assuming the upper bound in the inequality \eqref{functional-continuity}.
We say that a functional $\nu\in\mathcal{B}^{*}$ peaks at an element $f\in\mathcal{B}$ if
$$
\langle\nu,f\rangle_{\mathcal{B}}=\|\nu\|_{\mathcal{B}^*}\|f\|_{\mathcal{B}}.
$$
In other words, peak functionals are those whose applications to $f$ are equal to the products of their norms with the norm of $f$. Their extremal property matters.
This gives rise to the notion of the duality mapping on a Banach space \cite{Cio}. Specifically, the duality mapping $\mathcal{J}$ from $\mathcal{B}$ to the collection of all subsets in $\mathcal{B}^*$ is defined for all $f\in\mathcal{B}$ by
$$
\mathcal{J}(f):=\{\nu\in\mathcal{B}^*: \|\nu\|_{\mathcal{B}^*}=\|f\|_{\mathcal{B}}, \langle\nu,f\rangle_{\mathcal{B}}=\|\nu\|_{\mathcal{B}^*}
\|f\|_{\mathcal{B}}\}.
$$
A solution of the minimum norm interpolation problem \eqref{mni} may be described by these functionals.

In the following proposition, we summarize various solution representations of the minimum norm interpolation problem \eqref{mni}, developed by using various notions and tools.

\begin{prop}\label{representer-theorem-summarize}
Suppose that $\mathcal{B}$ is a Banach space with the dual space $\mathcal{B}^*$ and $\nu_j\in\mathcal{B}^*$, $j\in\mathbb{N}_m$, are linearly independent. Let $\mathbf{y}\in\mathbb{R}^m$, $\mathcal{M}_{\mathbf{y}}$ be defined by \eqref{My} and $\hat{f}\in\mathcal{B}$. Then the following statements are equivalent:

(i) $\hat{f}$ is a solution of the minimum norm interpolation problem \eqref{mni} with $\mathbf{y}$.

(ii) $\hat{f}\in\mathcal{M}_{\mathbf{y}}$ and there exist $c_j\in\mathbb{R}$, $j\in\mathbb{N}_m,$ such that the linear functional $\nu:=\sum_{j\in\mathbb{N}_m}c_j\nu_j$ satisfies
\begin{equation}\label{rt-functional}
\|\nu\|_{\mathcal{B}^*}=1\ \ \mbox{and}
\ \ \langle\nu,\hat{f}\rangle_{\mathcal{B}}
=\|\hat{f}\|_{\mathcal{B}}.
\end{equation}

(iii) $\hat{f}\in\mathcal{M}_{\mathbf{y}}$ and there exist $c_j\in\mathbb{R}$, $j\in\mathbb{N}_m,$ such that the linear functional $\nu:=\sum_{j\in\mathbb{N}_m}c_j\nu_j$ peaks at $\hat{f}$, that is,
\begin{equation}\label{rt-peak-functional}
\langle\nu,\hat{f}\rangle_{\mathcal{B}}
=\left\|\nu\right\|_{\mathcal{B}^*}\|\hat{f}\|_{\mathcal{B}}.
\end{equation}

(iv) $\hat{f}\in\mathcal{M}_{\mathbf{y}}$ and there exist $c_j\in\mathbb{R}$, $j\in\mathbb{N}_m,$ such that
\begin{equation}\label{rt-duality}
\sum_{j\in\mathbb{N}_m}c_j\nu_j\in\mathcal{J}(\hat{f}).
\end{equation}

(v) $\hat{f}\in\mathcal{M}_{\mathbf{y}}$ and there exist $c_j\in\mathbb{R}$, $j\in\mathbb{N}_m,$ such that
\begin{equation*}\label{rt-subdifferential1}
\sum_{j\in\mathbb{N}_m}c_j\nu_j\in\partial\|\cdot\|_{\mathcal{B}}(\hat{f}).
\end{equation*}
\end{prop}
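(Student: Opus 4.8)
The plan is to anchor the chain of equivalences on the two representer theorems already in hand—Proposition \ref{representer-theorem-functional}, which gives (i)$\Leftrightarrow$(ii), and Theorem \ref{representer-theorem-subdifferential}, which gives (i)$\Leftrightarrow$(v)—and then to fold the peak-functional statement (iii) and the duality-mapping statement (iv) into the chain by elementary rescaling of the coefficients $c_j$. Since the constraint $\hat{f}\in\mathcal{M}_{\mathbf{y}}$ appears identically in (i)--(v) (for (i) it is part of being a solution), the argument reduces to comparing, for a fixed $\hat{f}\in\mathcal{M}_{\mathbf{y}}$, the three normalization conditions imposed on a functional $\nu:=\sum_{j\in\mathbb{N}_m}c_j\nu_j$ lying in $\span\mathcal{V}_m$: in (ii) one asks $\|\nu\|_{\mathcal{B}^*}=1$ together with $\langle\nu,\hat{f}\rangle_{\mathcal{B}}=\|\hat{f}\|_{\mathcal{B}}$; in (iii) one asks only that $\nu$ (taken nonzero, as in the definition of peak functional) satisfy $\langle\nu,\hat{f}\rangle_{\mathcal{B}}=\|\nu\|_{\mathcal{B}^*}\|\hat{f}\|_{\mathcal{B}}$; and in (iv) one asks $\nu\in\mathcal{J}(\hat{f})$, i.e. $\|\nu\|_{\mathcal{B}^*}=\|\hat{f}\|_{\mathcal{B}}$ together with the same peak equation.

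First I would dispose of the trivial case $\hat{f}=0$, which by Proposition \ref{representer-theorem-functional} corresponds to $\mathbf{y}=0$. Here one checks directly that each of (ii)--(v) can be met: (ii) holds by choosing any $c_j$ making $\|\nu\|_{\mathcal{B}^*}=1$, which is possible since the linear independence of the $\nu_j$ makes $\span\mathcal{V}_m$ nontrivial; (iii) holds because the peak equation at $0$ is vacuous for any nonzero $\nu\in\span\mathcal{V}_m$; (v) holds with $c_j=0$ since $\partial\|\cdot\|_{\mathcal{B}}(0)$ is the closed unit ball; and (iv) holds with $c_j=0$ because $\mathcal{J}(0)=\{0\}$. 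Thus all five statements hold simultaneously when $\hat{f}=0$, matching (i).

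For the main case $\hat{f}\neq 0$, I would run the cycle (ii)$\Rightarrow$(iii)$\Rightarrow$(iv)$\Rightarrow$(ii). The implication (ii)$\Rightarrow$(iii) is immediate, since $\|\nu\|_{\mathcal{B}^*}=1$ turns $\langle\nu,\hat{f}\rangle_{\mathcal{B}}=\|\hat{f}\|_{\mathcal{B}}$ into the peak equation. For (iii)$\Rightarrow$(iv), given a nonzero peak functional $\nu=\sum_{j\in\mathbb{N}_m}c_j\nu_j$ I would rescale by the factor $\|\hat{f}\|_{\mathcal{B}}/\|\nu\|_{\mathcal{B}^*}$, i.e. replace each $c_j$ by $c_j\|\hat{f}\|_{\mathcal{B}}/\|\nu\|_{\mathcal{B}^*}$, producing a functional of norm $\|\hat{f}\|_{\mathcal{B}}$ that still satisfies the peak equation, hence lies in $\mathcal{J}(\hat{f})$. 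For (iv)$\Rightarrow$(ii), since $\nu\in\mathcal{J}(\hat{f})$ forces $\|\nu\|_{\mathcal{B}^*}=\|\hat{f}\|_{\mathcal{B}}\neq 0$, I would rescale by $1/\|\hat{f}\|_{\mathcal{B}}$ to obtain a norm-one functional with $\langle\nu,\hat{f}\rangle_{\mathcal{B}}=\|\hat{f}\|_{\mathcal{B}}$. This closes the loop among (ii), (iii), (iv); combined with Proposition \ref{representer-theorem-functional} and Theorem \ref{representer-theorem-subdifferential}, all five statements become equivalent. As a cross-check tying (ii) to (v) directly, one may invoke the identity \eqref{relation-subdifferential-functionals}, which identifies $\partial\|\cdot\|_{\mathcal{B}}(\hat{f})$ with precisely the set of functionals appearing in (ii).

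The step I expect to require the most care is not any single implication but the bookkeeping of the degenerate possibilities $\hat{f}=0$ and $\nu=0$: the rescaling that converts one normalization into another breaks down exactly when $\hat{f}$ or $\nu$ vanishes, and the definition of peak functional must be read as restricting to nonzero $\nu$ (as the surrounding text does, singling out $\nu\in\mathcal{B}^*\setminus\{0\}$) for (iii) to carry genuine content rather than being satisfied vacuously by $\nu=0$. Isolating the case $\hat{f}=0$ at the outset is what keeps the rescaling arguments in the main case clean.
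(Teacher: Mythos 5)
Your proposal is correct and follows essentially the same route as the paper: both anchor (i)$\Leftrightarrow$(ii) and (i)$\Leftrightarrow$(v) on Proposition \ref{representer-theorem-functional} and Theorem \ref{representer-theorem-subdifferential}, isolate the degenerate case $\hat{f}=0$ (equivalently $\mathbf{y}=0$), and connect (ii), (iii), (iv) by the same coefficient rescalings (by $1/\|\nu\|_{\mathcal{B}^*}$ and by $\|\hat{f}\|_{\mathcal{B}}$), the only cosmetic difference being that you close a cycle (ii)$\Rightarrow$(iii)$\Rightarrow$(iv)$\Rightarrow$(ii) where the paper verifies (ii)$\Leftrightarrow$(iii) and (ii)$\Leftrightarrow$(iv) pairwise. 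Your explicit attention to the degeneracies --- reading ``peaks at'' as requiring $\nu\neq 0$ so that (iii) is not vacuously satisfied by $c_j=0$, and noting $\mathcal{J}(0)=\{0\}$ --- is in fact slightly more careful than the paper, which inserts the word ``nonzero'' in its proof of (iii)$\Rightarrow$(ii) without comment.
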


\begin{proof}
The equivalence of statements (i) and (ii) and that of (i) and (v) have been proved in Proposition \ref{representer-theorem-functional} and Theorem  \ref{representer-theorem-subdifferential}, respectively. It remains to verify the equivalence of statements (ii), (iii) and (iv).

We first show the equivalence of statements (ii) and (iii). On one hand, if statement (ii) holds, there exist $c_j\in\mathbb{R}$, $j\in\mathbb{N}_m,$ such that the linear functional $\nu:=\sum_{j\in\mathbb{N}_m}c_j\nu_j$ satisfies \eqref{rt-functional}. It is clear that this functional $\nu$ peaks at $\hat{f}$. That is, statement (iii) holds. On the other hand,
if statement (iii) holds, there exist $c_j\in\mathbb{R}$, $j\in\mathbb{N}_m,$ such that the nonzero linear functional $\nu:=\sum_{j\in\mathbb{N}_m}c_j\nu_j$ satisfies
\eqref{rt-peak-functional}. By setting
$$
\tilde{\mathbf{c}}:=\frac{\mathbf{c}}{\|\nu\|_{\mathcal{B}^{*}}}
\ \ \mbox{and}\ \
\tilde{\nu}:=\sum_{j\in\mathbb{N}_m}\tilde{c}_j\nu_j,
$$
we get that $\tilde{\nu}$ satisfies \eqref{rt-functional} and thus statement (ii) holds.

We next prove the equivalence of statements (ii) and (iv). Note that if $\hat{f}=0$, statements (ii) and (iv) both hold without any assumptions. Specifically, equations \eqref{rt-functional} hold by choosing $c_j\in\mathbb{R},$ $j\in\mathbb{N}_m,$ such that the norm of $\nu:=\sum_{j\in\mathbb{N}_m}c_j\nu_j$ equals to $1$. On the other hand, equation \eqref{rt-duality} can be obtained by choosing $c_j=0$, $j\in\mathbb{N}_m$. Hence, it remains to prove the equivalence for the case that $\hat{f}\neq0$. For each $\mathbf{c}:=[c_j:j\in\mathbb{N}_m]\in\mathbb{R}^m$, we scale it by setting $\tilde{\mathbf{c}}:=\|\hat{f}\|_{\mathcal{B}}\mathbf{c}$. Set
$$
\nu:=\sum_{j\in\mathbb{N}_m}c_j\nu_j\ \ \mbox{and}\ \ \tilde{\nu}:=\sum_{j\in\mathbb{N}_m}\tilde{c}_j\nu_j.
$$
Clearly, $\nu$ satisfies \eqref{rt-functional} if and only if $\tilde{\nu}$ satisfies that
$$
\|\tilde{\nu}\|_{\mathcal{B}^*}=\|\hat{f}\|_{\mathcal{B}}
\ \ \mbox{and}\ \
\langle\tilde{\nu},\hat{f}\rangle_{\mathcal{B}}
=\|\tilde{\nu}\|_{\mathcal{B}^*}\|\hat{f}\|_{\mathcal{B}},
$$
which is equivalent to that $\tilde{\nu}$ satisfies \eqref{rt-duality}.
\end{proof}

Proposition \ref{representer-theorem-summarize} gives four characterizations of a solution of the minimum norm interpolation \eqref{mni}, which will serve as a basis for further developing the representer theorems.
We remark that the equivalence of statements (i) and (iii) has been established in \cite{MP}.

We next consider the special case when $\mathcal{B}$ is a smooth Banach space. In this case, the representer theorem can enjoy a nice simple form. To this end, we recall the notion of the G\^{a}teaux derivative of the norm function. The norm $\|\cdot\|_{\mathcal{B}}$ is said to be G\^{a}teaux differentiable at $f\in\mathcal{B}\setminus\{0\}$ if for all $h\in\mathcal{B}$, the limits
\begin{equation}\label{GateauxDiff-limit}
\lim_{t\rightarrow 0}\frac{\|f+th\|_{\mathcal{B}}-\|f\|_{\mathcal{B}}}{t}
\end{equation}
exist. If the norm $\|\cdot\|_{\mathcal{B}}$ is G\^{a}teaux differentiable at $f\in\mathcal{B}\setminus\{0\}$, then there exists a continuous linear functional, denoted by $\mathcal{G}(f)$, in $\mathcal{B}^*$ such that
\begin{equation}\label{GateauxDiff}
\langle\mathcal{G}(f),h\rangle_{\mathcal{B}}=\lim_{t\rightarrow 0}\frac{\|f+th\|_{\mathcal{B}}-\|f\|_{\mathcal{B}}}{t},
\ \ \mbox{for all}\ \ h\in\mathcal{B}.
\end{equation}
We call $\mathcal{G}(f)$ the G\^{a}teaux derivative of the norm $\|\cdot\|_{\mathcal{B}}$ at $f\in\mathcal{B}$. It follows from \eqref{GateauxDiff} that
\begin{equation}\label{relation-norm-GateauxDiff1}
|\langle\mathcal{G}(f),h\rangle_{\mathcal{B}}|
\leq\|h\|_{\mathcal{B}}
\end{equation}
and
\begin{equation}\label{relation-norm-GateauxDiff}
\langle\mathcal{G}(f),f\rangle_{\mathcal{B}}
=\|f\|_{\mathcal{B}}.
\end{equation}
According to inequality \eqref{relation-norm-GateauxDiff1} and equation \eqref{relation-norm-GateauxDiff}, we have that
\begin{equation}\label{norm-GateauxDiff}
\|\mathcal{G}(f)\|_{\mathcal{B}^*}=1.
\end{equation}
Since for $f=0$, the limit defined as in \eqref{GateauxDiff-limit} does not exist, the G\^{a}teaux derivative of the norm $\|\cdot\|_{\mathcal{B}}$ can not be defined at $f=0$. To simplify the presentation, we define the G\^{a}teaux derivative of the norm $\|\cdot\|_{\mathcal{B}}$ at $f=0$ by $\mathcal{G}(f):=0$. A Banach space $\mathcal{B}$ is said to be smooth if the norm $\|\cdot\|_{\mathcal{B}}$ is G\^{a}teaux differentiable at every $f\in\mathcal{B}\backslash\{0\}$. We have the following special result when $\mathcal{B}$ is a smooth Banach space.

\begin{thm}\label{representer-theorembyGateaux}
Suppose that $\mathcal{B}$ is a smooth Banach space with the dual space $\mathcal{B}^*$ and $\nu_j\in\mathcal{B}^*$, $j\in\mathbb{N}_m$, are linearly independent. Let $\mathbf{y}\in\mathbb{R}^m$ and $\mathcal{M}_{\mathbf{y}}$ be defined by \eqref{My}. Then $\hat{f}\in\mathcal{B}$ is a solution of the minimum norm interpolation problem \eqref{mni} with $\mathbf{y}$ if and only if  $\hat{f}\in\mathcal{M}_{\mathbf{y}}$ and there exist $c_j\in\mathbb{R}$, $j\in\mathbb{N}_m,$ such that
\begin{equation}\label{rt-Gateaux}
\mathcal{G}(\hat{f})=\sum_{j\in\mathbb{N}_m}c_j\nu_j.
\end{equation}
\end{thm}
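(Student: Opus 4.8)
The plan is to reduce Theorem \ref{representer-theorembyGateaux} to the already-established characterization in Theorem \ref{representer-theorem-subdifferential} (equivalently, statement (v) of Proposition \ref{representer-theorem-summarize}), by showing that in a smooth Banach space the subdifferential of the norm collapses to a single element, namely the G\^{a}teaux derivative. Concretely, the entire theorem follows once we verify the set identity
\begin{equation*}
\partial\|\cdot\|_{\mathcal{B}}(f)=\{\mathcal{G}(f)\},\ \ \mbox{for all}\ \ f\in\mathcal{B}\setminus\{0\}.
\end{equation*}
Granting this, the condition $\sum_{j\in\mathbb{N}_m}c_j\nu_j\in\partial\|\cdot\|_{\mathcal{B}}(\hat f)$ from Theorem \ref{representer-theorem-subdifferential} becomes exactly the equation $\sum_{j\in\mathbb{N}_m}c_j\nu_j=\mathcal{G}(\hat f)$, which is \eqref{rt-Gateaux}. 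The case $\hat f=0$ must be handled separately but is trivial: as noted in the proof of Proposition \ref{representer-theorem-functional}, the only solution of \eqref{mni} with $\mathbf y=0$ is $\hat f=0$, for which \eqref{rt-Gateaux} holds by the convention $\mathcal{G}(0):=0$ with the choice $c_j=0$.

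First I would establish the inclusion $\mathcal{G}(f)\in\partial\|\cdot\|_{\mathcal{B}}(f)$ for $f\neq 0$. This is the standard fact that the G\^{a}teaux derivative of a convex function is a subgradient: by convexity of $t\mapsto\|f+th\|_{\mathcal{B}}$, the difference quotient in \eqref{GateauxDiff-limit} is monotone in $t$, so for any $g\in\mathcal{B}$, setting $h:=g-f$ and taking $t\to 0^{+}$ gives
\begin{equation*}
\langle\mathcal{G}(f),g-f\rangle_{\mathcal{B}}\leq\|f+(g-f)\|_{\mathcal{B}}-\|f\|_{\mathcal{B}}=\|g\|_{\mathcal{B}}-\|f\|_{\mathcal{B}},
\end{equation*}
which is precisely the subdifferential inequality \eqref{subdifferentiable} defining membership in $\partial\|\cdot\|_{\mathcal{B}}(f)$. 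Then I would prove the reverse inclusion, that $\partial\|\cdot\|_{\mathcal{B}}(f)$ contains no other element. Here I can invoke the identity \eqref{relation-subdifferential-functionals} already recorded in the excerpt: every $\nu\in\partial\|\cdot\|_{\mathcal{B}}(f)$ satisfies $\|\nu\|_{\mathcal{B}^*}=1$ and $\langle\nu,f\rangle_{\mathcal{B}}=\|f\|_{\mathcal{B}}$. Given any such $\nu$ and any $h\in\mathcal{B}$, the subgradient inequality applied at $g=f\pm th$ yields $\pm t\langle\nu,h\rangle_{\mathcal{B}}\leq\|f\pm th\|_{\mathcal{B}}-\|f\|_{\mathcal{B}}$; dividing by $t>0$ and letting $t\to 0$ sandwiches $\langle\nu,h\rangle_{\mathcal{B}}$ between the two one-sided limits, which coincide and equal $\langle\mathcal{G}(f),h\rangle_{\mathcal{B}}$ by smoothness. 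Since $h$ is arbitrary, $\nu=\mathcal{G}(f)$.

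The main obstacle, and the only place smoothness is genuinely used, is this uniqueness step: without G\^{a}teaux differentiability the two one-sided difference quotients need not agree, and the subdifferential can be a larger set (as in the non-smooth case treated by the general Theorem \ref{representer-theorem-subdifferential}). It is precisely the existence of the limit \eqref{GateauxDiff-limit} for every direction $h$ that forces any subgradient to be uniquely determined on all of $\mathcal{B}$, collapsing the set-valued condition to the single-valued equation \eqref{rt-Gateaux}. I would therefore present the proof as: handle $\hat f=0$ by the convention; for $\hat f\neq 0$ invoke Theorem \ref{representer-theorem-subdifferential} to get $\sum_{j\in\mathbb{N}_m}c_j\nu_j\in\partial\|\cdot\|_{\mathcal{B}}(\hat f)$, then apply the set identity $\partial\|\cdot\|_{\mathcal{B}}(\hat f)=\{\mathcal{G}(\hat f)\}$ to conclude, noting the equivalence runs in both directions because the identity is an equality of sets.
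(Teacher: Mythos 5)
Your proposal is correct and follows essentially the same route as the paper: the paper likewise reduces the theorem to the subdifferential characterization (condition (v) of Proposition \ref{representer-theorem-summarize}, i.e., Theorem \ref{representer-theorem-subdifferential}), proves the singleton identity $\partial\|\cdot\|_{\mathcal{B}}(f)=\{\mathcal{G}(f)\}$ for $f\neq 0$ by the same one-sided difference-quotient sandwich exploiting smoothness, and treats the degenerate case ($\mathbf{y}=0$, equivalently $\hat f=0$) separately using $\langle\mathcal{G}(f),f\rangle_{\mathcal{B}}=\|f\|_{\mathcal{B}}$. Your only addition is the explicit verification of the inclusion $\mathcal{G}(f)\in\partial\|\cdot\|_{\mathcal{B}}(f)$ via monotonicity of convex difference quotients, a point the paper leaves implicit (nonemptiness of the subdifferential of the continuous convex norm), so this is a minor completeness improvement rather than a different argument.
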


\begin{proof}
If $\mathbf{y}:=[y_j:j\in\mathbb{N}_m]=0,$ the minimum norm interpolation problem \eqref{mni} has a unique solution $\hat{f}=0.$ It is clear that the trivial solution $\hat{f}\in\mathcal{M}_{0}$ and equation \eqref{rt-Gateaux} holds by choosing $c_j=0,$ $j\in\mathbb{N}_m$. Conversely, if $\hat{f}\in\mathcal{M}_{0}$ and satisfies \eqref{rt-Gateaux} for some $c_j\in\mathbb{R}$, $j\in\mathbb{N}_m,$ we have that
$$
\langle\mathcal{G}(\hat{f}),\hat{f}\rangle_{\mathcal{B}}
=\sum_{j\in\mathbb{N}_m}c_j\langle\nu_j,\hat{f}\rangle_{\mathcal{B}}
=\sum_{j\in\mathbb{N}_m}c_jy_j=0,
$$
which together with \eqref{relation-norm-GateauxDiff} implies $\hat{f}=0$.

We prove this theorem for the case that $\mathbf{y}\neq0$ by employing the equivalent conditions (i) and (v) in Proposition \ref{representer-theorem-summarize}.
To this end, we first show that the subdifferential of the norm $\|\cdot\|_{\mathcal{B}}$ at any $f\in\mathcal{B}\backslash\{0\}$ is the singleton $\mathcal{G}(f)$, that is,
\begin{equation}\label{singleton}
\partial\|\cdot\|_{\mathcal{B}}(f)=\{\mathcal{G}(f)\}.
\end{equation}
Suppose that $\nu\in\partial\|\cdot\|_{\mathcal{B}}(f)$. Let $t\in\mathbb{R}$ and $h\in\mathcal{B}$. Then equation \eqref{subdifferentiable} with $\phi:=\|\cdot\|_{\mathcal{B}}$ and $g:=f+th$ leads to
$$
t\langle\nu,h\rangle_{\mathcal{B}}\leq\|f+th\|_{\mathcal{B}}-\|f\|_{\mathcal{B}},
$$
which further implies
$$
\lim_{t\rightarrow 0^{-}}\frac{\|f+th\|_{\mathcal{B}}-\|f\|_{\mathcal{B}}}{t}
\leq\langle\nu,h\rangle_{\mathcal{B}}
\leq\lim_{t\rightarrow 0^{+}}\frac{\|f+th\|_{\mathcal{B}}-\|f\|_{\mathcal{B}}}{t}.
$$
Since $\mathcal{B}$ is smooth, the norm $\|\cdot\|_{\mathcal{B}}$ is G\^{a}teaux differentiable at $f$. Hence, we get that
$$
\langle\nu,h\rangle_{\mathcal{B}}
=\lim_{t\rightarrow 0}\frac{\|f+th\|_{\mathcal{B}}-\|f\|_{\mathcal{B}}}{t}.
$$
It follows from equation \eqref{GateauxDiff} that $\nu=\mathcal{G}(f)$. Due to the arbitrariness of $\nu\in\partial\|\cdot\|_{\mathcal{B}}(f)$, we obtain equation \eqref{singleton}.

According to Proposition \ref{representer-theorem-summarize}, $\hat{f}\in\mathcal{B}$ is a solution of the minimum norm interpolation problem \eqref{mni} with $\mathbf{y}$ if and only if $\hat{f}\in\mathcal{M}_{\mathbf{y}}$ and there exist $c_j\in\mathbb{R},\ j\in\mathbb{N}_m,$ such that the functional $\nu:=\sum_{j\in\mathbb{N}_m}c_j\nu_j$ belongs to $\partial\|\cdot\|_{\mathcal{B}}(\hat{f})$. By equation \eqref{singleton} and noting that $\hat{f}\neq0$, the functional $\nu$ coincides with the G\^{a}teaux derivative of the norm $\|\cdot\|_{\mathcal{B}}$ at $\hat{f}$, which completes the proof of the desired result.
\end{proof}

\subsection{Explicit Representer Theorems}

We derive explicit representer theorems for the minimum norm interpolation problem \eqref{mni} in this subsection. Theorems \ref{representer-theorem-subdifferential} and \ref{representer-theorembyGateaux} provide implicit representer theorems for a solution $\hat f$ of the minimum norm interpolation problem \eqref{mni}. It would be more informative to have an explicit representation. For this purpose, we first present a duality lemma which enables us to ``solve'' for $\hat f$ from the implicit representer theorems.

\begin{lemma}\label{subdifferential-solvable-thm}
Suppose that $\mathcal{B}$ is a Banach space with the dual space $\mathcal{B}^*$. Let $f\in\mathcal{B}\backslash\{0\}$ and $\nu\in\mathcal{B}^{*}\backslash\{0\}$. Then
\begin{equation}\label{subdifferential-solvable}
\frac{\nu}{\|\nu\|_{\mathcal{B}^{*}}}\in\partial\|\cdot\|_{\mathcal{B}}(f)
\end{equation}
if and only if
\begin{equation}\label{subdifferential-solvable1}
\frac{f}{\|f\|_{\mathcal{B}}}\in\partial\|\cdot\|_{\mathcal{B}^{*}}(\nu).
\end{equation}
\end{lemma}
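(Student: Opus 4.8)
The plan is to reduce both membership conditions \eqref{subdifferential-solvable} and \eqref{subdifferential-solvable1} to one and the same scalar identity, namely the peaking relation
$$
\langle\nu,f\rangle_{\mathcal{B}}=\|\nu\|_{\mathcal{B}^*}\|f\|_{\mathcal{B}},
$$
and thereby obtain the desired equivalence. The principal tool is the characterization \eqref{relation-subdifferential-functionals} of the subdifferential of the norm at a nonzero point, which I would invoke twice: once for $\|\cdot\|_{\mathcal{B}}$ at the point $f\in\mathcal{B}\backslash\{0\}$, and once for $\|\cdot\|_{\mathcal{B}^*}$ at the point $\nu\in\mathcal{B}^*\backslash\{0\}$. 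No geometric hypothesis on $\mathcal{B}$ (reflexivity, smoothness, or strict convexity) should be required.

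First I would handle the left condition \eqref{subdifferential-solvable}. Since the functional $\nu/\|\nu\|_{\mathcal{B}^*}$ has $\mathcal{B}^*$-norm equal to $1$ automatically, formula \eqref{relation-subdifferential-functionals} applied to $\|\cdot\|_{\mathcal{B}}$ at $f$ shows that \eqref{subdifferential-solvable} holds if and only if $\langle \nu/\|\nu\|_{\mathcal{B}^*},f\rangle_{\mathcal{B}}=\|f\|_{\mathcal{B}}$; clearing the positive scalar $\|\nu\|_{\mathcal{B}^*}$ yields exactly the peaking relation above.

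Next I would treat the right condition \eqref{subdifferential-solvable1}, where the only genuine subtlety lies. The subdifferential $\partial\|\cdot\|_{\mathcal{B}^*}(\nu)$ is a subset of $(\mathcal{B}^*)^*=\mathcal{B}^{**}$, so the statement $f/\|f\|_{\mathcal{B}}\in\partial\|\cdot\|_{\mathcal{B}^*}(\nu)$ must be read through the isometric embedding of $\mathcal{B}$ into $\mathcal{B}^{**}$ supplied by the natural map \eqref{natural-map}. Applying \eqref{relation-subdifferential-functionals} to the norm $\|\cdot\|_{\mathcal{B}^*}$ at $\nu$, and using that the embedding is isometric so that $f/\|f\|_{\mathcal{B}}$ has $\mathcal{B}^{**}$-norm equal to $1$, I obtain that \eqref{subdifferential-solvable1} is equivalent to $\langle f/\|f\|_{\mathcal{B}},\nu\rangle_{\mathcal{B}^*}=\|\nu\|_{\mathcal{B}^*}$. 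Invoking \eqref{natural-map} to rewrite $\langle f,\nu\rangle_{\mathcal{B}^*}=\langle\nu,f\rangle_{\mathcal{B}}$ and clearing the positive scalar $\|f\|_{\mathcal{B}}$ turns this into the same peaking relation.

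Since both \eqref{subdifferential-solvable} and \eqref{subdifferential-solvable1} are thus equivalent to $\langle\nu,f\rangle_{\mathcal{B}}=\|\nu\|_{\mathcal{B}^*}\|f\|_{\mathcal{B}}$, they are equivalent to each other, which completes the argument. The hard part is purely bookkeeping: one must correctly interpret \eqref{subdifferential-solvable1} via the natural embedding and identify the pairing $\langle f,\nu\rangle_{\mathcal{B}^*}$ with $\langle\nu,f\rangle_{\mathcal{B}}$ through \eqref{natural-map}. Once this identification is made, the two sides become manifestly symmetric, and the equivalence is immediate.
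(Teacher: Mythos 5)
Your proposal is correct and follows essentially the same route as the paper's proof: both reduce the two membership conditions to the common peaking identity $\langle\nu,f\rangle_{\mathcal{B}}=\|\nu\|_{\mathcal{B}^*}\|f\|_{\mathcal{B}}$ by applying the characterization \eqref{relation-subdifferential-functionals} twice (once in $\mathcal{B}$, once in $\mathcal{B}^*$) and passing between the pairings via \eqref{natural-map}. The only difference is that you make explicit the interpretation of \eqref{subdifferential-solvable1} through the isometric natural embedding of $\mathcal{B}$ into $\mathcal{B}^{**}$ and the resulting automatic norm-one condition, a point the paper leaves implicit.
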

\begin{proof}
According to equation \eqref{relation-subdifferential-functionals}, $\nu\in\mathcal{B}^*$ satisfies \eqref{subdifferential-solvable} if and only if
$$
\left\langle\frac{\nu}{\|\nu\|_{\mathcal{B}^{*}}},f\right\rangle_{\mathcal{B}}
=\|f\|_{\mathcal{B}},
$$
which is equivalent to
\begin{equation}\label{subdifferential-solvable2}
\langle\nu,f\rangle_{\mathcal{B}}=\|\nu\|_{\mathcal{B}^{*}}\|f\|_{\mathcal{B}}.
\end{equation}
It follows from equation \eqref{natural-map} that \eqref{subdifferential-solvable2} is equivalent to
\begin{equation}\label{subdifferential-solvable3}
\langle f,\nu\rangle_{\mathcal{B}^*}=\|\nu\|_{\mathcal{B}^{*}}\|f\|_{\mathcal{B}}.
\end{equation}
Equation \eqref{subdifferential-solvable3} holds if and only if
$$
\left\langle\frac{f}{\|f\|_{\mathcal{B}}},\nu\right\rangle_{\mathcal{B}^*}
=\|\nu\|_{\mathcal{B}^*}.
$$
Again by equation \eqref{relation-subdifferential-functionals} with $\mathcal{B}$ being replaced by $\mathcal{B}^*$, the above equation is equivalent to that $f\in\mathcal{B}$ satisfies \eqref{subdifferential-solvable1}. Consequently, we obtain the equivalence between \eqref{subdifferential-solvable} and \eqref{subdifferential-solvable1}.
\end{proof}

Combining Lemma \ref{subdifferential-solvable-thm} with the equivalence conditions (i) and (v) in Proposition \ref{representer-theorem-summarize}, we obtain the following explicit representer theorem.

\begin{thm}\label{representer-theorem-subdifferential-explicit}
Suppose that $\mathcal{B}$ is a Banach space with the dual space $\mathcal{B}^*$ and $\nu_j\in\mathcal{B}^*$, $j\in\mathbb{N}_m$, are linearly independent. Let $\mathbf{y}\in\mathbb{R}^m$ and $\mathcal{M}_{\mathbf{y}}$ be defined by \eqref{My}. Then $\hat{f}\in\mathcal{B}$ is a solution of the minimum norm interpolation problem \eqref{mni} with $\mathbf{y}$ if and only if $\hat{f}\in\mathcal{M}_{\mathbf{y}}$ and there exist $c_j\in\mathbb{R}$, $j\in\mathbb{N}_m,$ such that
\begin{equation}\label{rt-subdifferential-explicit}
\hat{f}\in\gamma\partial\|\cdot\|_{\mathcal{B}^*}
\left(\sum_{j\in\mathbb{N}_m}c_j\nu_j\right),
\end{equation}
with $\gamma:=\left\|\sum_{j\in\mathbb{N}_m}c_j\nu_j\right\|_{\mathcal{B}^*}.$
\end{thm}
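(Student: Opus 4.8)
The plan is to combine the implicit representer theorem in Theorem~\ref{representer-theorem-subdifferential} with the duality lemma (Lemma~\ref{subdifferential-solvable-thm}) to ``invert'' the subdifferential relation and thereby express $\hat f$ explicitly. By Theorem~\ref{representer-theorem-subdifferential}, $\hat f$ is a solution of \eqref{mni} with $\mathbf{y}$ if and only if $\hat f\in\mathcal{M}_{\mathbf{y}}$ and there exist $c_j\in\mathbb{R}$, $j\in\mathbb{N}_m$, such that $\sum_{j\in\mathbb{N}_m}c_j\nu_j\in\partial\|\cdot\|_{\mathcal{B}}(\hat f)$. The whole point of Lemma~\ref{subdifferential-solvable-thm} is that a relation of the form $\nu/\|\nu\|_{\mathcal{B}^*}\in\partial\|\cdot\|_{\mathcal{B}}(f)$ can be flipped into $f/\|f\|_{\mathcal{B}}\in\partial\|\cdot\|_{\mathcal{B}^*}(\nu)$, so the task reduces to massaging the condition of Theorem~\ref{representer-theorem-subdifferential} into the normalized form required by the lemma and then reading off $\hat f$.

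First I would dispose of the degenerate cases separately. If $\mathbf{y}=0$, then the unique solution is $\hat f=0$, and one checks directly (as in the proofs of Theorem~\ref{representer-theorem-subdifferential} and Theorem~\ref{representer-theorembyGateaux}) that \eqref{rt-subdifferential-explicit} holds trivially, e.g.\ by choosing $c_j=0$ so that $\gamma=0$ and the right-hand side is $\{0\}$. Similarly, if $\hat f=0$ or if the functional $\sum_{j}c_j\nu_j$ vanishes, the statement must be handled by inspection, since Lemma~\ref{subdifferential-solvable-thm} assumes $f\neq 0$ and $\nu\neq 0$. For the main case $\mathbf{y}\neq 0$, any solution $\hat f$ is nonzero (as $0\notin\mathcal{M}_{\mathbf{y}}$), and the functional $\nu:=\sum_{j\in\mathbb{N}_m}c_j\nu_j$ appearing in Theorem~\ref{representer-theorem-subdifferential} must be nonzero, because by \eqref{relation-subdifferential-functionals} it satisfies $\|\nu\|_{\mathcal{B}^*}=1$; so both hypotheses of the lemma are met.

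Next I would rescale to match the normalization in Lemma~\ref{subdifferential-solvable-thm}. Given the coefficients $c_j$ produced by Theorem~\ref{representer-theorem-subdifferential}, set $\nu:=\sum_{j\in\mathbb{N}_m}c_j\nu_j$, so that $\nu/\|\nu\|_{\mathcal{B}^*}=\nu$ already has unit norm and $\nu\in\partial\|\cdot\|_{\mathcal{B}}(\hat f)$ means precisely $\nu/\|\nu\|_{\mathcal{B}^*}\in\partial\|\cdot\|_{\mathcal{B}}(\hat f)$. Applying Lemma~\ref{subdifferential-solvable-thm} with $f:=\hat f$ gives the equivalent statement $\hat f/\|\hat f\|_{\mathcal{B}}\in\partial\|\cdot\|_{\mathcal{B}^*}(\nu)$, i.e.\ $\hat f\in\|\hat f\|_{\mathcal{B}}\,\partial\|\cdot\|_{\mathcal{B}^*}(\nu)$. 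To produce the clean form \eqref{rt-subdifferential-explicit} with $\gamma=\|\nu\|_{\mathcal{B}^*}$ rather than $\|\hat f\|_{\mathcal{B}}$, I would absorb the scalar into the coefficients: replacing $c_j$ by $\tilde c_j:=c_j\|\hat f\|_{\mathcal{B}}/\|\nu\|_{\mathcal{B}^*}$ rescales $\nu$ to $\tilde\nu:=\sum_j\tilde c_j\nu_j$ with $\|\tilde\nu\|_{\mathcal{B}^*}=\|\hat f\|_{\mathcal{B}}$, while leaving the subdifferential inclusion invariant since $\partial\|\cdot\|_{\mathcal{B}^*}$ is constant along positive scalar multiples of its argument on the unit sphere (equivalently, the peak/duality condition is scale-homogeneous). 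With this choice $\gamma=\|\tilde\nu\|_{\mathcal{B}^*}=\|\hat f\|_{\mathcal{B}}$ and the inclusion becomes exactly $\hat f\in\gamma\,\partial\|\cdot\|_{\mathcal{B}^*}(\tilde\nu)$, as claimed.

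The main obstacle, and the step I would spell out most carefully, is the bookkeeping of the scalar normalizations so that the coefficients in the final statement are genuinely free real numbers and $\gamma$ comes out to be $\|\sum_j c_j\nu_j\|_{\mathcal{B}^*}$ rather than the a priori unknown quantity $\|\hat f\|_{\mathcal{B}}$. One must verify that the scaling used to pass from $\|\hat f\|_{\mathcal{B}}$ to $\|\tilde\nu\|_{\mathcal{B}^*}$ respects the homogeneity of the subdifferential of the norm: since for a norm $\partial\|\cdot\|_{\mathcal{B}^*}(\lambda\nu)=\partial\|\cdot\|_{\mathcal{B}^*}(\nu)$ for every $\lambda>0$, rescaling the functional does not change the right-hand set, which is what makes the identification of $\gamma$ with $\|\tilde\nu\|_{\mathcal{B}^*}$ legitimate. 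The reverse implication is obtained by running the same equivalences backward through Lemma~\ref{subdifferential-solvable-thm} and Theorem~\ref{representer-theorem-subdifferential}, so no new ingredient is needed there.
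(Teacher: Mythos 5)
Your proposal is correct and follows essentially the same route as the paper's own proof: the paper likewise splits off the degenerate case $\mathbf{y}=0$, invokes the implicit representer theorem (condition (v) of Proposition \ref{representer-theorem-summarize}, i.e.\ Theorem \ref{representer-theorem-subdifferential}) together with the duality Lemma \ref{subdifferential-solvable-thm}, and then rescales the coefficients by $\|\hat f\|_{\mathcal{B}}$, using exactly the invariance of $\partial\|\cdot\|_{\mathcal{B}^*}$ under positive scaling of its argument (via \eqref{relation-subdifferential-functionals}) to identify $\gamma=\bigl\|\sum_{j\in\mathbb{N}_m}c_j\nu_j\bigr\|_{\mathcal{B}^*}$ with $\|\hat f\|_{\mathcal{B}}$. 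The one detail you defer to ``inspection''---that for $\mathbf{y}=0$ the inclusion \eqref{rt-subdifferential-explicit} with $\hat f\in\mathcal{M}_0$ forces $\hat f=0$---is settled in the paper in a few lines by pairing $\nu:=\sum_{j\in\mathbb{N}_m}c_j\nu_j$ with $\hat f$ to obtain $\langle\nu,\hat f\rangle_{\mathcal{B}}=\|\nu\|_{\mathcal{B}^*}^2$, which contradicts $\langle\nu,\hat f\rangle_{\mathcal{B}}=0$ unless $\nu=0$, whence $\hat f=0$; your sketch closes in the same way.
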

\begin{proof}
We first prove this result for $\mathbf{y}=0.$ In this case, the minimum norm interpolation problem \eqref{mni} has a unique solution $\hat{f}=0.$ On one hand, if $\hat{f}=0$, there hold $\hat{f}\in\mathcal{M}_{0}$ and equation \eqref{rt-subdifferential-explicit}
with $c_j=0$, $j\in\mathbb{N}_m.$ On the other hand, suppose that $\hat{f}\in\mathcal{M}_{0}$ and equation \eqref{rt-subdifferential-explicit} holds for some $c_j\in\mathbb{R}$, $j\in\mathbb{N}_m.$ Set $\nu:=\sum_{j\in\mathbb{N}_m}c_j\nu_j$. It follows that
$\langle\nu,\hat{f}\rangle_{\mathcal{B}}=0$ and
\begin{equation}\label{rt-subdifferential0}  \hat{f}\in\|\nu\|_{\mathcal{B}^*}\partial\|\cdot\|_{\mathcal{B}^*}(\nu).
\end{equation}
If $\nu=0$, equation \eqref{rt-subdifferential0} leads to $\hat{f}=0$. If $\nu\neq0$, by \eqref{rt-subdifferential0} we have that
$$
\left\langle\frac{\hat{f}}{\|\nu\|_{\mathcal{B}^*}},\nu\right\rangle_{\mathcal{B}^*}
=\|\nu\|_{\mathcal{B}^*},
$$
which together with \eqref{natural-map} leads to
$\langle\nu,\hat{f}\rangle_{\mathcal{B}}=\|\nu\|_{\mathcal{B}^*}^2.$ Since $\langle\nu,\hat{f}\rangle_{\mathcal{B}}=0$, we get that $\nu=0$, which is a contradiction.

We prove this theorem for $\mathbf{y}\neq0$ by employing condition (v) in Proposition \ref{representer-theorem-summarize} and Lemma \ref{subdifferential-solvable-thm}. Proposition \ref{representer-theorem-summarize} ensures that $\hat{f}\in\mathcal{B}$ is a solution of the minimum norm interpolation problem \eqref{mni} if and only if $\hat{f}\in\mathcal{M}_{\mathbf{y}}$ and there exist $\hat{c}_j\in\mathbb{R}$, $j\in\mathbb{N}_m,$ such that
\begin{equation}\label{rt-subdifferential1}
\sum_{j\in\mathbb{N}_m}\hat{c}_j\nu_j\in\partial\|\cdot\|_{\mathcal{B}}(\hat{f}).
\end{equation}
Set $c_j:=\|\hat{f}\|_{\mathcal{B}}\hat{c}_j,$ $j\in\mathbb{N}_m$. It suffices to prove that  $\hat{c}_j\in\mathbb{R}$, $j\in\mathbb{N}_m,$ satisfies
\eqref{rt-subdifferential1} if and only if $c_j\in\mathbb{R},\ j\in\mathbb{N}_m,$ satisfies
\eqref{rt-subdifferential-explicit}.

On one hand, suppose that $\hat{c}_j\in\mathbb{R}$, $j\in\mathbb{N}_m,$ satisfies
\eqref{rt-subdifferential1}. Note that since $\mathbf{y}\neq0$, $\hat{f}\neq0$. Then by equation \eqref{rt-subdifferential1}, the functional $\hat{\nu}:=\sum_{j\in\mathbb{N}_m}\hat{c}_j\nu_j$ satisfies that $\|\hat{\nu}\|_{\mathcal{B}^*}=1$. By Lemma \ref{subdifferential-solvable-thm} we get that
\begin{equation}\label{rt-subdifferential2}
\hat{f}\in\|\hat{f}\|_{\mathcal{B}}\partial\|\cdot\|_{\mathcal{B}^*}(\hat{\nu}).
\end{equation}
Set $\nu:=\sum_{j\in\mathbb{N}_m}c_j\nu_j$. It follows that $\nu=\|\hat{f}\|_{\mathcal{B}}\hat{\nu}$ which together with $\|\hat{\nu}\|_{\mathcal{B}^*}=1$ yields that
\begin{equation}\label{rt-subdifferential3}
\|\nu\|_{\mathcal{B}^*}=\|\hat{f}\|_{\mathcal{B}}.
\end{equation}
Noting by equation \eqref{relation-subdifferential-functionals} that
\begin{equation}\label{rt-subdifferential4}
\partial\|\cdot\|_{\mathcal{B}^*}(\hat{\nu})
=\partial\|\cdot\|_{\mathcal{B}^*}(\nu).
\end{equation}
Substituting \eqref{rt-subdifferential3} and \eqref{rt-subdifferential4} into \eqref{rt-subdifferential2}, we get that
$$
\hat{f}\in\|\nu\|_{\mathcal{B}^*}\partial\|\cdot\|_{\mathcal{B}^*}(\nu).
$$
That is, $c_j\in\mathbb{R}$, $j\in\mathbb{N}_m,$ satisfies
\eqref{rt-subdifferential-explicit}.

On the other hand, suppose that $c_j\in\mathbb{R},\ j\in\mathbb{N}_m,$ satisfies
\eqref{rt-subdifferential-explicit}. Since $\hat{f}\neq0$, we have that the functional $\nu:=\sum_{j\in\mathbb{N}_m}c_j\nu_j$ are nonzero and then there holds \eqref{rt-subdifferential3}. By Lemma \ref{subdifferential-solvable-thm} we obtain that
$$
\frac{\nu}{\|\nu\|_{\mathcal{B}^*}}\in\partial\|\cdot\|_{\mathcal{B}}(\hat{f}),
$$
which together with \eqref{rt-subdifferential3} yields that $\hat{c}_j\in\mathbb{R},\ j\in\mathbb{N}_m,$ satisfies \eqref{rt-subdifferential1}.
\end{proof}


We may get a special representer theorem when $\mathcal{B}$ is a Banach space having the {\it smooth} dual space $\mathcal{B}^*$. Below, we denote by $\mathcal{G}^*(\nu)$ the G\^{a}teaux derivative of the norm $\|\cdot\|_{\mathcal{B}^*}$ at $\nu\in\mathcal{B}^*$.

\begin{thm}\label{representer-theorembyGateaux-explicit}
Suppose that $\mathcal{B}$ is a Banach space having the smooth dual space $\mathcal{B}^{*}$ and $\nu_j\in\mathcal{B}^*$, $j\in\mathbb{N}_m$, are linearly independent. Let $\mathbf{y}\in\mathbb{R}^m$ and $\mathcal{M}_{\mathbf{y}}$ be defined by \eqref{My}. Then $\hat{f}\in\mathcal{B}$ is a solution of the minimum norm interpolation problem \eqref{mni} with $\mathbf{y}$ if and only if $\hat{f}\in\mathcal{M}_{\mathbf{y}}$ and there exist $c_j\in\mathbb{R}$, $j\in\mathbb{N}_m,$ such that
\begin{equation}\label{rt-Gateaux-explicit}
\hat{f}=\rho\mathcal{G}^*\left(\sum_{j\in\mathbb{N}_m}c_j\nu_j\right),
\end{equation}
with $\rho:=\left\|\sum_{j\in\mathbb{N}_m}c_j\nu_j\right\|_{\mathcal{B}^*}$.
\end{thm}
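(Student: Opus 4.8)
The plan is to obtain this smooth-dual version as a direct specialization of the explicit subdifferential representer theorem, Theorem \ref{representer-theorem-subdifferential-explicit}, by showing that the smoothness of $\mathcal{B}^*$ collapses the subdifferential set appearing in \eqref{rt-subdifferential-explicit} into the singleton $\{\mathcal{G}^*(\cdot)\}$. The whole argument is thus a ``one-sided limits squeeze'' transcribed from the primal space to the dual space, followed by routine bookkeeping.

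First I would establish the dual analogue of the singleton identity \eqref{singleton}: for every $\nu\in\mathcal{B}^*\setminus\{0\}$ there holds $\partial\|\cdot\|_{\mathcal{B}^*}(\nu)=\{\mathcal{G}^*(\nu)\}$. This is proved verbatim as in the proof of Theorem \ref{representer-theorembyGateaux}, with $\mathcal{B}$ replaced by $\mathcal{B}^*$ and its dual $(\mathcal{B}^*)^*=\mathcal{B}^{**}$: any $F\in\partial\|\cdot\|_{\mathcal{B}^*}(\nu)$ is squeezed between the one-sided difference quotients of $\|\cdot\|_{\mathcal{B}^*}$ at $\nu$, which coincide by the G\^ateaux differentiability guaranteed by smoothness and identify $F$ with $\mathcal{G}^*(\nu)$ through \eqref{GateauxDiff}. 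Here $\mathcal{G}^*(\nu)\in\mathcal{B}^{**}$, and the natural embedding \eqref{natural-map} is used to read the resulting equation inside $\mathcal{B}^{**}$.

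Next I would invoke Theorem \ref{representer-theorem-subdifferential-explicit} for the case $\mathbf{y}\neq 0$: $\hat f$ solves \eqref{mni} iff $\hat f\in\mathcal{M}_{\mathbf{y}}$ and there exist $c_j\in\mathbb{R}$ with $\hat f\in\gamma\,\partial\|\cdot\|_{\mathcal{B}^*}\!\left(\sum_{j\in\mathbb{N}_m}c_j\nu_j\right)$ and $\gamma=\left\|\sum_{j\in\mathbb{N}_m}c_j\nu_j\right\|_{\mathcal{B}^*}$. Since $\mathbf{y}\neq 0$ forces any $\hat f\in\mathcal{M}_{\mathbf{y}}$ to be nonzero, the associated functional $\nu:=\sum_{j\in\mathbb{N}_m}c_j\nu_j$ must be nonzero as well, for $\nu=0$ would give $\gamma=0$ and hence $\hat f=0$. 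The singleton identity then turns the membership \eqref{rt-subdifferential-explicit} into the equation $\hat f=\gamma\mathcal{G}^*(\nu)=\rho\mathcal{G}^*(\nu)$ with $\gamma=\rho$, which is exactly \eqref{rt-Gateaux-explicit}; conversely, if $\hat f\in\mathcal{M}_{\mathbf{y}}$ satisfies \eqref{rt-Gateaux-explicit} then $\nu\neq 0$ (else $\hat f=0$), so $\mathcal{G}^*(\nu)\in\partial\|\cdot\|_{\mathcal{B}^*}(\nu)$ and \eqref{rt-Gateaux-explicit} is a special instance of \eqref{rt-subdifferential-explicit}, whence $\hat f$ is a solution by Theorem \ref{representer-theorem-subdifferential-explicit}.

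Finally I would dispatch the degenerate case $\mathbf{y}=0$ separately, exactly as in the proofs of Theorems \ref{representer-theorembyGateaux} and \ref{representer-theorem-subdifferential-explicit}: the unique solution is $\hat f=0$, for which \eqref{rt-Gateaux-explicit} holds by taking $c_j=0$, using the convention $\mathcal{G}^*(0)=0$ and $\rho=0$; conversely, if $\hat f\in\mathcal{M}_0$ satisfies \eqref{rt-Gateaux-explicit}, then pairing $\hat f=\rho\mathcal{G}^*(\nu)$ with $\nu$ via \eqref{natural-map} and the dual analogue of \eqref{relation-norm-GateauxDiff} gives $\langle\nu,\hat f\rangle_{\mathcal{B}}=\|\nu\|_{\mathcal{B}^*}^2$, while $\hat f\in\mathcal{M}_0$ forces $\langle\nu,\hat f\rangle_{\mathcal{B}}=0$, so $\nu=0$ and $\hat f=0$. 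The only point demanding genuine care is the bookkeeping around $\nu=0$ versus $\nu\neq 0$ and confirming $\gamma=\rho$ and the $\mathcal{B}\hookrightarrow\mathcal{B}^{**}$ identification; beyond that, the theorem is a smoothness-driven collapse of the preceding explicit result, so I anticipate no substantive obstacle.
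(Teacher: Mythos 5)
Your proposal is correct and follows essentially the same route as the paper's own proof: the case $\mathbf{y}\neq 0$ is handled by specializing Theorem \ref{representer-theorem-subdifferential-explicit} and collapsing the subdifferential in \eqref{rt-subdifferential-explicit} to the singleton $\{\mathcal{G}^*(\cdot)\}$ via \eqref{singleton} transcribed to $\mathcal{B}^*$, while the degenerate case $\mathbf{y}=0$ is dispatched separately by the pairing argument through \eqref{natural-map} and \eqref{relation-norm-GateauxDiff}. Your explicit bookkeeping that $\mathbf{y}\neq 0$ forces $\hat f\neq 0$ and hence $\nu:=\sum_{j\in\mathbb{N}_m}c_j\nu_j\neq 0$ (so the singleton identity applies) is a point the paper leaves implicit, but it is the same argument.
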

\begin{proof}
We first show that the desired conclusion holds for $\mathbf{y}=0.$ The minimum norm interpolation problem \eqref{mni} with $\mathbf{y}=0$ has a unique solution $\hat{f}=0.$ The trivial solution $\hat{f}\in\mathcal{M}_{0}$ and equation \eqref{rt-Gateaux-explicit} holds by choosing $c_j=0,$ $j\in\mathbb{N}_m$. Conversely, suppose that $\hat{f}\in\mathcal{M}_{0}$ and equation \eqref{rt-Gateaux-explicit} holds for some $c_j\in\mathbb{R}$, $j\in\mathbb{N}_m.$ Set $\nu:=\sum_{j\in\mathbb{N}_m}c_j\nu_j$. If $\nu=0$, equation \eqref{rt-Gateaux-explicit} leads directly to $\hat{f}=0$. If $\nu\neq0$, combining \eqref{rt-Gateaux-explicit} with \eqref{natural-map} we have that
$$
\langle\mathcal{G}^*(\nu),\nu\rangle_{\mathcal{B}^*}
=\frac{1}{\|\nu\|_{\mathcal{B}^*}}\langle\hat{f},\nu\rangle_{\mathcal{B}^*}
=\frac{1}{\|\nu\|_{\mathcal{B}^*}}\langle\nu,\hat{f}\rangle_{\mathcal{B}}.
$$
It follows from $\hat{f}\in\mathcal{M}_{0}$ that $\langle\nu,\hat{f}\rangle_{\mathcal{B}}=0.$
Hence, we get that $\langle\mathcal{G}^*(\nu),\nu\rangle_{\mathcal{B}^*}=0$, which together with \eqref{relation-norm-GateauxDiff} implies $\nu=0$. This leads to an contradiction.

We prove this theorem for $\mathbf{y}\neq0$ by Theorem \ref{representer-theorem-subdifferential-explicit}. The minimum norm interpolation problem \eqref{mni} with $\mathbf{y}\neq0$ has no trival solution. According to Theorem \ref{representer-theorem-subdifferential-explicit}, $\hat{f}\in\mathcal{B}$ is a solution of the minimum norm interpolation problem \eqref{mni} with $\mathbf{y}$ if and only if
$\hat{f}\in\mathcal{M}_{\mathbf{y}}$ and there exist $c_j\in\mathbb{R}$, $j\in\mathbb{N}_m,$ such that \eqref{rt-subdifferential-explicit}. Since $\mathcal{B}^*$ is smooth, equation \eqref{singleton} with $\mathcal{B}$ being replaced by $\mathcal{B}^*$ leads to
$$
\partial\|\cdot\|_{\mathcal{B}^*}\left(\sum_{j\in\mathbb{N}_m}c_j\nu_j\right)
=\left\{\mathcal{G}^*\left(\sum_{j\in\mathbb{N}_m}c_j\nu_j\right)\right\}.
$$
Substituting the above equation into \eqref{rt-subdifferential-explicit}, with noting that the set $\partial\|\cdot\|_{\mathcal{B}^*}\left(\sum_{j\in\mathbb{N}_m}c_j\nu_j\right)$ is a singleton yields the formula \eqref{rt-Gateaux-explicit}.
\end{proof}

Next, we establish the representer theorems in a special case when the Banach space $\mathcal{B}$ has the pre-dual space $\mathcal{B}_{*}$ and $\nu_j\in\mathcal{B}_*$, $j\in\mathbb{N}_m$.
\begin{thm}\label{representer-theorem-subdifferential-predual}
Suppose that $\mathcal{B}$ is a Banach space having the pre-dual space $\mathcal{B}_{*}$ and $\nu_j\in\mathcal{B}_*$, $j\in\mathbb{N}_m$, are linearly independent. Let $\mathbf{y}\in\mathbb{R}^m$ and $\mathcal{M}_{\mathbf{y}}$ be defined by \eqref{My}. Then $\hat{f}\in\mathcal{B}$ is a solution of the minimum norm interpolation problem \eqref{mni} with $\mathbf{y}$ if and only if $\hat{f}\in\mathcal{M}_{\mathbf{y}}$ and there exist $c_j\in\mathbb{R}$, $j\in\mathbb{N}_m,$ such that
\begin{equation}\label{rt-subdifferential-predual}
\hat{f}\in\gamma\partial\|\cdot\|_{\mathcal{B}_*}
\left(\sum_{j\in\mathbb{N}_m}c_j\nu_j\right),
\end{equation}
with $\gamma:=\left\|\sum_{j\in\mathbb{N}_m}c_j\nu_j\right\|_{\mathcal{B}_*}$.
\end{thm}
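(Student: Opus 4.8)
The plan is to reduce this statement to the already-established explicit representer Theorem \ref{representer-theorem-subdifferential-explicit} (equivalently, to condition (v) of Proposition \ref{representer-theorem-summarize}), exploiting the fact that, because $(\mathcal{B}_*)^*=\mathcal{B}$, the pre-dual $\mathcal{B}_*$ is itself a Banach space whose dual is $\mathcal{B}$. Thus the duality Lemma \ref{subdifferential-solvable-thm}, which was proved for a generic Banach space and its dual, may be applied with $\mathcal{B}$ replaced by $\mathcal{B}_*$. Concretely, for $\nu\in\mathcal{B}_*\setminus\{0\}$ and $\hat f\in\mathcal{B}\setminus\{0\}$ this yields the pairing-$(\mathcal{B}_*,\mathcal{B})$ version
$$
\frac{\nu}{\|\nu\|_{\mathcal{B}_*}}\in\partial\|\cdot\|_{\mathcal{B}}(\hat f)
\iff
\frac{\hat f}{\|\hat f\|_{\mathcal{B}}}\in\partial\|\cdot\|_{\mathcal{B}_*}(\nu);
$$
the only inputs in its derivation are the norm-subdifferential characterization \eqref{relation-subdifferential-functionals} (valid in every Banach space) and the natural-map identity, which in this pairing is exactly \eqref{natural-map-predual}.

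As in the proof of Theorem \ref{representer-theorem-subdifferential-explicit}, I would first dispose of the trivial case $\mathbf{y}=0$, where \eqref{mni} has the unique solution $\hat f=0$. The forward implication is immediate by choosing $c_j=0$, so that $\nu=0$, $\gamma=0$, and the right-hand side of \eqref{rt-subdifferential-predual} collapses to $\{0\}$. For the converse, writing $\nu:=\sum_{j\in\mathbb{N}_m}c_j\nu_j$, if $\nu=0$ then $\gamma=0$ forces $\hat f=0$, while if $\nu\neq 0$ then the membership $\hat f\in\gamma\,\partial\|\cdot\|_{\mathcal{B}_*}(\nu)$ combined with \eqref{natural-map-predual} and \eqref{relation-subdifferential-functionals} gives $\langle\nu,\hat f\rangle_{\mathcal{B}}=\|\nu\|_{\mathcal{B}_*}^2>0$, contradicting $\langle\nu,\hat f\rangle_{\mathcal{B}}=\sum_{j\in\mathbb{N}_m}c_jy_j=0$ valid for $\hat f\in\mathcal{M}_0$.

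For the main case $\mathbf{y}\neq 0$, where necessarily $\hat f\neq 0$, I would start from condition (v) of Proposition \ref{representer-theorem-summarize}: $\hat f$ solves \eqref{mni} if and only if $\hat f\in\mathcal{M}_{\mathbf{y}}$ and there are $\hat c_j\in\mathbb{R}$ with $\hat\nu:=\sum_{j\in\mathbb{N}_m}\hat c_j\nu_j\in\partial\|\cdot\|_{\mathcal{B}}(\hat f)$. By \eqref{relation-subdifferential-functionals} this gives $\|\hat\nu\|_{\mathcal{B}^*}=1$, hence (by the norm consistency discussed below) $\|\hat\nu\|_{\mathcal{B}_*}=1$, so the pre-dual duality relation converts the membership into $\hat f/\|\hat f\|_{\mathcal{B}}\in\partial\|\cdot\|_{\mathcal{B}_*}(\hat\nu)$. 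Rescaling by $c_j:=\|\hat f\|_{\mathcal{B}}\hat c_j$ and setting $\nu:=\sum_{j\in\mathbb{N}_m}c_j\nu_j=\|\hat f\|_{\mathcal{B}}\hat\nu$, I would use the positive-scaling invariance $\partial\|\cdot\|_{\mathcal{B}_*}(\hat\nu)=\partial\|\cdot\|_{\mathcal{B}_*}(\nu)$ (again by \eqref{relation-subdifferential-functionals}) to reach $\hat f\in\|\hat f\|_{\mathcal{B}}\,\partial\|\cdot\|_{\mathcal{B}_*}(\nu)=\gamma\,\partial\|\cdot\|_{\mathcal{B}_*}(\nu)$ with $\gamma=\|\nu\|_{\mathcal{B}_*}$, which is \eqref{rt-subdifferential-predual}. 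The converse runs in reverse: from \eqref{rt-subdifferential-predual} one reads off $\|\hat f\|_{\mathcal{B}}=\|\nu\|_{\mathcal{B}_*}=\gamma$, applies the pre-dual duality relation to return to $\partial\|\cdot\|_{\mathcal{B}}(\hat f)$, and recovers condition (v) after normalization.

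The step I expect to require the most care is the bookkeeping of the two norms that $\nu$ carries: its norm $\|\nu\|_{\mathcal{B}_*}$ as an element of the pre-dual and its norm $\|\nu\|_{\mathcal{B}^*}$ as a continuous functional on $\mathcal{B}$. The whole argument hinges on the consistency $\|\nu\|_{\mathcal{B}_*}=\|\nu\|_{\mathcal{B}^*}$ for $\nu\in\mathcal{B}_*$, which holds because the canonical embedding $\mathcal{B}_*\hookrightarrow(\mathcal{B}_*)^{**}=\mathcal{B}^*$ is isometric; this is precisely what lets the normalization $\|\hat\nu\|_{\mathcal{B}^*}=1$ produced by condition (v) feed correctly into the pre-dual duality relation, which is phrased in terms of $\|\cdot\|_{\mathcal{B}_*}$. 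Equivalently, and perhaps more transparently, one can verify directly that $\partial\|\cdot\|_{\mathcal{B}_*}(\nu)=\partial\|\cdot\|_{\mathcal{B}^*}(\nu)\cap\mathcal{B}$ for $\nu\in\mathcal{B}_*$ and then simply restrict the conclusion $\hat f\in\gamma\,\partial\|\cdot\|_{\mathcal{B}^*}(\nu)$ of Theorem \ref{representer-theorem-subdifferential-explicit}, using $\hat f\in\mathcal{B}$, to obtain \eqref{rt-subdifferential-predual}.
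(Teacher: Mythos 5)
Your proposal is correct and takes essentially the paper's route: the paper likewise deduces the theorem from Theorem \ref{representer-theorem-subdifferential-explicit}, transferring the membership from $\partial\|\cdot\|_{\mathcal{B}^*}$ to $\partial\|\cdot\|_{\mathcal{B}_*}$ via \eqref{relation-subdifferential-functionals} together with the natural-map identities \eqref{natural-map} and \eqref{natural-map-predual} --- precisely the restriction argument of your final paragraph, your pre-dual instance of Lemma \ref{subdifferential-solvable-thm} being the same computation run one level down, applied from condition (v) instead. Your explicit handling of the norm consistency $\|\nu\|_{\mathcal{B}_*}=\|\nu\|_{\mathcal{B}^*}$ for $\nu\in\mathcal{B}_*$ (and of the case $\mathbf{y}=0$, which the paper inherits from the cited theorem) makes rigorous steps the paper leaves tacit.
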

\begin{proof}
Theorem \ref{representer-theorem-subdifferential-explicit}
ensures that $\hat{f}\in\mathcal{B}$ is a solution of \eqref{mni} with $\mathbf{y}$ if and only if $\hat{f}\in\mathcal{M}_{\mathbf{y}}$ and there exist $c_j\in\mathbb{R}$, $j\in\mathbb{N}_m,$ satisfying \eqref{rt-subdifferential-explicit}. It suffices to show that $\hat{f}\in\mathcal{B}$ satisfies \eqref{rt-subdifferential-explicit} if and only if it satisfies \eqref{rt-subdifferential-predual}. Set $\nu:=\sum_{j\in\mathbb{N}_m}c_j\nu_j.$ According to equation \eqref{relation-subdifferential-functionals}, $\hat{f}$ satisfies \eqref{rt-subdifferential-explicit} if and only if
$$
\langle\hat{f},\nu\rangle_{\mathcal{B}^*}
=\|\nu\|_{\mathcal{B}^*}^2.
$$
Since $\nu\in\mathcal{B}_*$, by equations \eqref{natural-map} and \eqref{natural-map-predual}, the above equation is equivalent to
$$
\langle\hat{f},\nu\rangle_{\mathcal{B}_*}
=\|\nu\|_{\mathcal{B}_*}^2.
$$
Again by \eqref{relation-subdifferential-functionals} with $\mathcal{B}$ being replaced by $\mathcal{B}_*$, we get that the above equation holds if and only if
$\hat{f}$ satisfies \eqref{rt-subdifferential-predual}, proving the desired result.
\end{proof}

Theorem \ref{representer-theorem-subdifferential-predual} may be reduced to a nice simple form when $\mathcal{B}$ is a Banach space having the {\it smooth} pre-dual space $\mathcal{B}_*$. We denote by $\mathcal{G}_*(\nu)$ the G\^{a}teaux derivative of the norm $\|\cdot\|_{\mathcal{B}_*}$ at $\nu\in\mathcal{B}_*$.

\begin{thm}\label{representer-theorembyGateaux-predual}
Suppose that $\mathcal{B}$ is a Banach space having the smooth pre-dual space $\mathcal{B}_{*}$ and $\nu_j\in\mathcal{B}_*$, $j\in\mathbb{N}_m$, are linearly independent. Let $\mathbf{y}\in\mathbb{R}^m$ and $\mathcal{M}_{\mathbf{y}}$ be defined by \eqref{My}. Then $\hat{f}\in\mathcal{B}$ is a solution of the minimum norm interpolation problem \eqref{mni} with $\mathbf{y}$ if and only if $\hat{f}\in\mathcal{M}_{\mathbf{y}}$ and there exist $c_j\in\mathbb{R}$, $j\in\mathbb{N}_m,$ such that
\begin{equation*}\label{rt-Gateaux-predual}
\hat{f}=\rho\mathcal{G}_*\left(\sum_{j\in\mathbb{N}_m}c_j\nu_j\right),
\end{equation*}
with $\rho:=\left\|\sum_{j\in\mathbb{N}_m}c_j\nu_j\right\|_{\mathcal{B}_*}$.
\end{thm}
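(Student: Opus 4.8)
The plan is to follow the template of the proof of Theorem~\ref{representer-theorembyGateaux-explicit}, replacing the dual space $\mathcal{B}^*$ throughout by the pre-dual space $\mathcal{B}_*$ and invoking Theorem~\ref{representer-theorem-subdifferential-predual} in place of Theorem~\ref{representer-theorem-subdifferential-explicit}. The underlying observation is that the smoothness of $\mathcal{B}_*$ collapses the subdifferential inclusion \eqref{rt-subdifferential-predual} into the single-valued equation of the statement, exactly as smoothness of $\mathcal{B}^*$ collapsed \eqref{rt-subdifferential-explicit} into \eqref{rt-Gateaux-explicit} in the explicit G\^ateaux theorem.

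First I would dispose of the degenerate case $\mathbf{y}=0$, whose unique solution is $\hat f = 0$. The forward direction is immediate by choosing $c_j = 0$, $j\in\mathbb{N}_m$, since then $\nu := \sum_{j\in\mathbb{N}_m} c_j\nu_j = 0$, $\rho = 0$, and the convention $\mathcal{G}_*(0)=0$ yields $\rho\,\mathcal{G}_*(\nu) = 0 = \hat f$. For the converse, suppose $\hat f \in \mathcal{M}_0$ satisfies the claimed formula. If $\nu = 0$ then $\hat f = 0$ directly; if $\nu \neq 0$, then pairing $\hat f = \rho\,\mathcal{G}_*(\nu)$ against $\nu$, using \eqref{natural-map-predual} to rewrite $\langle\nu,\hat f\rangle_{\mathcal{B}} = \langle\hat f,\nu\rangle_{\mathcal{B}_*}$ and \eqref{relation-norm-GateauxDiff} applied to $\mathcal{B}_*$ to get $\langle\mathcal{G}_*(\nu),\nu\rangle_{\mathcal{B}_*} = \|\nu\|_{\mathcal{B}_*}$, gives $\langle\nu,\hat f\rangle_{\mathcal{B}} = \|\nu\|_{\mathcal{B}_*}^2$; on the other hand $\hat f \in \mathcal{M}_0$ forces $\langle\nu,\hat f\rangle_{\mathcal{B}} = \sum_{j\in\mathbb{N}_m} c_j\langle\nu_j,\hat f\rangle_{\mathcal{B}} = 0$. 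Hence $\nu = 0$, a contradiction, so $\hat f = 0$, exactly as in the corresponding step of Theorem~\ref{representer-theorembyGateaux-explicit} but with the pre-dual pairing.

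For $\mathbf{y}\neq 0$ I would start from Theorem~\ref{representer-theorem-subdifferential-predual}, which asserts that $\hat f$ is a solution if and only if $\hat f \in \mathcal{M}_{\mathbf{y}}$ and there exist $c_j\in\mathbb{R}$ with $\hat f \in \gamma\,\partial\|\cdot\|_{\mathcal{B}_*}(\nu)$, where $\nu := \sum_{j\in\mathbb{N}_m} c_j\nu_j$ and $\gamma = \|\nu\|_{\mathcal{B}_*}$. Since $\mathbf{y}\neq 0$ forces $\hat f \neq 0$, one first notes $\nu \neq 0$: otherwise $\gamma = 0$ and the inclusion would give $\hat f = 0$. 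Next, because $\mathcal{B}_*$ is smooth, the singleton identity \eqref{singleton} applied with $\mathcal{B}$ replaced by $\mathcal{B}_*$ yields $\partial\|\cdot\|_{\mathcal{B}_*}(\nu) = \{\mathcal{G}_*(\nu)\}$. Substituting this into the inclusion turns it into the desired equation $\hat f = \rho\,\mathcal{G}_*(\nu)$ with $\rho = \gamma$, and since every step is reversible, the equivalence follows.

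The step I expect to need the most care is the transfer of the singleton identity \eqref{singleton} to the pre-dual space: one must check that its derivation in the proof of Theorem~\ref{representer-theorembyGateaux} uses only the norm structure of the base Banach space and its G\^ateaux differentiability, so that the argument applies verbatim with $\mathcal{B}_*$ playing the role of the base space. The only genuinely new ingredient relative to Theorem~\ref{representer-theorembyGateaux-explicit} is the systematic use of the pre-dual pairing identity \eqref{natural-map-predual} in place of \eqref{natural-map}; once the bookkeeping of which pairing, $\langle\cdot,\cdot\rangle_{\mathcal{B}}$ versus $\langle\cdot,\cdot\rangle_{\mathcal{B}_*}$, is kept straight, everything else is routine.
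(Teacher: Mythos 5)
Your proposal is correct and follows essentially the same route as the paper's own (very terse) proof: dispose of $\mathbf{y}=0$ by the pairing argument borrowed from Theorem~\ref{representer-theorembyGateaux-explicit} with the pre-dual pairing \eqref{natural-map-predual}, and for $\mathbf{y}\neq 0$ combine Theorem~\ref{representer-theorem-subdifferential-predual} with the singleton identity \eqref{singleton} applied to the smooth space $\mathcal{B}_*$. Your added checks (that $\nu\neq 0$ when $\hat f\neq 0$, and that the derivation of \eqref{singleton} transfers verbatim to $\mathcal{B}_*$) are exactly the details the paper leaves implicit, and they are handled correctly.
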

\begin{proof}
By arguments similar to those used in the proof of Theorem \ref{representer-theorembyGateaux-explicit}, we can get the desired conclusion in the case that $\mathbf{y}=0$. For the case that $\mathbf{y}\neq 0$, by employing Theorem \ref{representer-theorem-subdifferential-predual} and noting that the subdifferential of the norm $\|\cdot\|_{\mathcal{B}_*}$ at $\nu:=\sum_{j\in\mathbb{N}_m}c_j\nu_j$ is the singleton $\mathcal{G}_*(\nu)$, we obtain the desired result.
\end{proof}

\subsection{Special Cases}

In this subsection, we consider several specific cases of practical interest, present special results for the cases and identify the connection of the representer theorems established here with existing results in the literature.

An immediate consequence of Theorem \ref{representer-theorembyGateaux-explicit} is the classical representer theorem for the minimum norm interpolation in an RKHS \cite{Wen}. In this special case, the functionals $\nu_j\in\mathcal{B}^*$, $j\in\mathbb{N}_m$, are the point-evaluation functionals $\delta_{x_j},$ $j\in\mathbb{N}_m$, where $x_j$, $j\in\mathbb{N}_m,$ are points in an input set $X$. We call a Hilbert space $\mathcal{H}$ of functions on $X$ an RKHS if the point-evaluation functionals are continuous on $\mathcal{H}$. According to the Riesz representation theorem, for each RKHS $\mathcal{H}$ there exists a unique reproducing kernel $K:X\times X\rightarrow\mathbb{R}$ such that $K(x,\cdot)\in \mathcal{H}$ for all $x\in X$ and
\begin{equation}\label{reproducing}
f(x)=\langle f,K(x,\cdot)\rangle_{\mathcal{H}},
\ \ \mbox{for all}\ \ x\in X
\ \ \mbox{and all}\ \ f\in\mathcal{H}.
\end{equation}

\begin{cor}\label{representer-theorem-RKHS}
Suppose that $\mathcal{H}$ is an RKHS on $X$ with the reproducing kernel $K$, $x_j\in X$, $j\in\mathbb{N}_m,$ and $K(x_j,\cdot)$, $j\in\mathbb{N}_m,$ are linearly independent. If $\mathbf{y}\in\mathbb{R}^m$ and $\mathcal{L}$ is defined by \eqref{functional-operator} with $\nu_j:=\delta_{x_j},$ $j\in\mathbb{N}_m,$ then the minimum norm interpolation problem \eqref{mni} with $\mathbf{y}$ has a unique solution $\hat{f}$ in the form
\begin{equation}\label{representer-RKHS}
\hat{f}=\sum_{j\in\mathbb{N}_m}c_jK(x_j,\cdot),
\end{equation}
for some $c_j\in\mathbb{R}$, $j\in\mathbb{N}_m.$
\end{cor}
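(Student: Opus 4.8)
The plan is to deduce this corollary directly from the explicit representer theorem for a smooth dual space, Theorem \ref{representer-theorembyGateaux-explicit}, after identifying the relevant G\^{a}teaux derivative concretely. First I would check the hypotheses of that theorem. Since $\mathcal{H}$ is a Hilbert space, it is reflexive, so its dual $\mathcal{H}^*$ is itself a Hilbert space and in particular smooth, and Theorem \ref{representer-theorembyGateaux-explicit} applies with $\mathcal{B}=\mathcal{H}$. Existence of a solution follows from Corollary \ref{existence-mni-reflexive} via reflexivity, while uniqueness follows from strict convexity of a Hilbert space, as noted after that corollary. The functionals $\nu_j:=\delta_{x_j}$ belong to $\mathcal{H}^*$ and, by the reproducing property \eqref{reproducing}, satisfy $\delta_{x_j}(f)=\langle f,K(x_j,\cdot)\rangle_{\mathcal{H}}$; hence under the Riesz isometry $R:\mathcal{H}\to\mathcal{H}^*$, $R(g):=\langle\cdot,g\rangle_{\mathcal{H}}$, they correspond to the kernel sections $K(x_j,\cdot)$, so their linear independence is precisely the assumed linear independence of $K(x_j,\cdot)$, $j\in\mathbb{N}_m$.

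The heart of the argument is to compute $\mathcal{G}^*(\nu)$, the G\^{a}teaux derivative of the dual norm $\|\cdot\|_{\mathcal{H}^*}$, at a nonzero $\nu\in\mathcal{H}^*$. Writing $\nu=R(g)$ with $g=R^{-1}\nu\in\mathcal{H}$, the isometry gives $\|\nu\|_{\mathcal{H}^*}=\|g\|_{\mathcal{H}}$, and for every $h=R(k)\in\mathcal{H}^*$ a direct differentiation of the Hilbert-space norm yields
\begin{equation*}
\langle\mathcal{G}^*(\nu),h\rangle_{\mathcal{H}^*}
=\lim_{t\rightarrow0}\frac{\|g+tk\|_{\mathcal{H}}-\|g\|_{\mathcal{H}}}{t}
=\frac{\langle g,k\rangle_{\mathcal{H}}}{\|g\|_{\mathcal{H}}}.
\end{equation*}
Reading the left-hand pairing through the natural map \eqref{natural-map} (with $\mathcal{B}$ replaced by $\mathcal{H}^*$) and the Riesz identification $(\mathcal{H}^*)^*=\mathcal{H}$ gives $\langle\mathcal{G}^*(\nu),h\rangle_{\mathcal{H}^*}=\langle\mathcal{G}^*(\nu),k\rangle_{\mathcal{H}}$, so the displayed identity forces $\mathcal{G}^*(\nu)=g/\|g\|_{\mathcal{H}}=R^{-1}\nu/\|\nu\|_{\mathcal{H}^*}$. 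I expect this identification of $\mathcal{G}^*$ to be the main obstacle, since it requires carefully tracking the two conventions simultaneously in play, namely the Riesz representation of $\mathcal{H}^*$ and the dual bilinear form $\langle\cdot,\cdot\rangle_{\mathcal{H}^*}$ on $(\mathcal{H}^*)^*\times\mathcal{H}^*$; once these are set up it reduces to the one-line computation above.

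Finally I would substitute into the formula of Theorem \ref{representer-theorembyGateaux-explicit}. Setting $\nu:=\sum_{j\in\mathbb{N}_m}c_j\nu_j$, linearity of $R$ gives $R^{-1}\nu=\sum_{j\in\mathbb{N}_m}c_jK(x_j,\cdot)$, whence
\begin{equation*}
\hat{f}=\rho\,\mathcal{G}^*(\nu)
=\rho\,\frac{R^{-1}\nu}{\|\nu\|_{\mathcal{H}^*}}
=\sum_{j\in\mathbb{N}_m}c_jK(x_j,\cdot),
\end{equation*}
because $\rho=\|\nu\|_{\mathcal{H}^*}$ cancels the normalization factor. This is exactly the claimed representation \eqref{representer-RKHS}, and together with the existence and uniqueness recorded in the first paragraph it completes the proof.
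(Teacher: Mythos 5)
Your proposal is correct and follows essentially the same route as the paper: both apply Theorem \ref{representer-theorembyGateaux-explicit} after identifying $\delta_{x_j}$ with $K(x_j,\cdot)$ via the Riesz representation, and both reduce to the Hilbert-space identity $\mathcal{G}(f)=f/\|f\|_{\mathcal{H}}$ (equivalently $f=\|f\|_{\mathcal{H}}\mathcal{G}(f)$), so that the norm factor $\rho$ cancels. The only difference is bookkeeping: you carry the Riesz isometry $R$ and compute $\mathcal{G}^*$ explicitly on $\mathcal{H}^*$, whereas the paper silently identifies $\mathcal{H}^*$ with $\mathcal{H}$ and invokes \eqref{GateauxDiff-H} directly.
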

\begin{proof}
Clearly, the minimum norm interpolation problem in the RKHS $\mathcal{H}$ has a unique solution $\hat{f}$. By the reproducing property \eqref{reproducing}, a function value of $\hat f$ is identical to its inner product with the kernel $K$. Theorem \ref{representer-theorembyGateaux-explicit} with $\mathcal{B}:=\mathcal{H}$ and $\nu_j:=K(x_j,\cdot)$, $j\in\mathbb{N}_m$, ensures that there exists $c_j\in\mathbb{R},$ $j\in\mathbb{N}_m$, such that
\begin{equation}\label{representer-RKHS1}
\hat{f}=\left\|\sum_{j\in\mathbb{N}_m}c_jK(x_j,\cdot)\right\|_{\mathcal{H}}
\mathcal{G}\left(\sum_{j\in\mathbb{N}_m}c_jK(x_j,\cdot)\right),
\end{equation}
Note that for any $f\in\mathcal{H}$ there holds
\begin{equation}\label{GateauxDiff-H}
f=\|f\|_{\mathcal{H}}\mathcal{G}(f).
\end{equation}
Substituting \eqref{GateauxDiff-H} with $f:=\sum_{j\in\mathbb{N}_m}c_jK(x_j,\cdot)$ into \eqref{representer-RKHS1}, we get formula \eqref{representer-RKHS}.
\end{proof}

We next consider the minimum norm interpolation in a Banach space $\mathcal{B}$ that has two special properties: uniform Fr\'{e}chet smoothness and uniform convexity. Such a space can provides a semi-inner-product as a useful tools for representing the solution of the minimum norm interpolation problem. The norm of a normed space $\mathcal{B}$ is said to be uniformly Fr\'{e}chet differentiable if the limit in \eqref{GateauxDiff-limit} exists for every $f\in\mathcal{B}\setminus\{0\}$ and for every $h\in\mathcal{B}$, and the convergence is uniform for all $f$, $h$ in the unit sphere of $\mathcal{B}$. Accordingly, a normed space is uniformly Fr\'{e}chet smooth if its norm is uniformly Fr\'{e}chet differentiable. A normed space $\mathcal{B}$ is uniformly convex if for all $\varepsilon>0$ there exists a $\delta>0$ such that
$$
\|f+g\|_{\mathcal{B}}\leq2-\delta
$$
for all $f$, $g$ in the unite sphere of $\mathcal{B}$ with $\|f-g\|_{\mathcal{B}}\geq\varepsilon.$
The Milman-Pettis Theorem \cite{Meg} states that every uniformly convex Banach space $\mathcal{B}$ is reflexive.

It follows from \cite{Giles} that for a smooth Banach space $\mathcal{B}$, there exists a unique semi-inner-product $[\cdot,\cdot]_{\mathcal{B}}:
\mathcal{B}\times\mathcal{B}\rightarrow\mathbb{R}$ that induces its norm by
$$
\|f\|_{\mathcal{B}}:=[f,f]_{\mathcal{B}}^{1/2},\ \ \mbox{for all}\ \ f\in\mathcal{B}.
$$
Note that the semi-inner-product $[\cdot,\cdot]_{\mathcal{B}}$ is {\it not} linear with respect to its second variable. For each $g\in\mathcal{B}$, we introduce the linear functional $\nu$ on $\mathcal{B}$ by
$$
\nu(f):=[f,g]_\mathcal{B},\ \ \mbox{for all}\ \ f\in\mathcal{B}.
$$
Then by the Cauchy-Schwartz inequality, the linear functional $\nu$ is continuous. Following \cite{ZXZ}, this functional, denoted by $g^{\sharp}$, is called the dual element of $g$. That is,
\begin{equation}\label{dual-element}
\langle g^{\sharp},f\rangle_{\mathcal{B}}=[f,g]_{\mathcal{B}},
\ \  \mbox{for all}\ \ f\in\mathcal{B}.
\end{equation}
A generalization of the Riesz Representation Theorem in Banach spaces given in \cite{Giles} states that if $\mathcal{B}$ is a uniformly Fr\'{e}chet smooth and uniformly convex Banach space, then for each $\nu\in\mathcal{B}^*$, there exists a unique $g\in\mathcal{B}$ such that
$$
\nu=g^{\sharp}\ \ \mbox{and} \ \ \|\nu\|_{\mathcal{B}^*}=\|g\|_{\mathcal{B}}.
$$
Accordingly, the mapping $f\rightarrow f^{\sharp}$ is bijective from $\mathcal{B}$ to $\mathcal{B}^*.$

There is a well-known relation between uniform Fr\'{e}chet smoothness and uniform convexity \cite{Meg}. Specifically, a normed space is uniformly convex if and only if its dual space is uniformly Fr\'{e}chet smooth, and is uniformly Fr\'{e}chet smooth if and only if its dual space is uniformly convex. Hence, if a Banach space $\mathcal{B}$ is uniformly Fr\'{e}chet smooth and uniformly convex, then so is its dual space $\mathcal{B}^*$. Accordingly, there also exists a unique semi-inner-product $[\cdot,\cdot]_{\mathcal{B}^*}:\mathcal{B}^*\times\mathcal{B}^*\rightarrow\mathbb{R}$ that induces the norm of $\mathcal{B}^*$. Furthermore, it was pointed out in \cite{Giles} that
\begin{equation}\label{semi-inner-product-B^*}
[f^{\sharp},g^{\sharp}]_{\mathcal{B}^*}=[g,f]_{\mathcal{B}},\ \  \mbox{for all}\ \ f,g\in\mathcal{B},
\end{equation}
defines the semi-inner-product on $\mathcal{B}^*$. Again, the semi-inner-product $[\cdot,\cdot]_{\mathcal{B}^*}$ is {\it not} linear with respect to the second variable.
Note that $\mathcal{B}$ is reflexive. According to the semi-inner-product \eqref{semi-inner-product-B^*} on $\mathcal{B}^*$, we can also define the dual element $\nu^{\sharp}\in\mathcal{B}$ of $\nu\in\mathcal{B}^*$ as
\begin{equation}\label{dual-element-B^*}
\langle\mu,\nu^{\sharp}\rangle_{\mathcal{B}}:=[\mu,\nu]_{\mathcal{B}^*}, \ \  \mbox{for all}\ \ \mu\in\mathcal{B}^*.
\end{equation}

As a consequence of Theorems \ref{representer-theorembyGateaux} and \ref{representer-theorembyGateaux-explicit}, we get the following representer theorems for the minimum norm interpolation problem in a uniformly Fr\'{e}chet smooth and uniformly convex Banach space $\mathcal{B}$. In this case, the linearly independent functionals $\nu_j$ can be identified with $g_j^{\sharp}$ for $g_j\in\mathcal{B}$, $j\in\mathbb{N}_m$.

\begin{thm}\label{representer-theorem-ufuc}
If $\mathcal{B}$ is a uniformly Fr\'{e}chet smooth and uniformly convex Banach space with the dual space $\mathcal{B}^*$ and $g_j\in\mathcal{B}$, $j\in\mathbb{N}_m,$ such that $g_j^{\sharp}\in\mathcal{B}^*$, $j\in\mathbb{N}_m,$ are linearly independent, then the minimum norm interpolation problem \eqref{mni} with $\mathbf{y}\in\mathbb{R}^m$ has a unique solution $\hat{f}$ such that
\begin{equation}\label{representer-ufuc}
\hat{f}^{\sharp}=\sum_{j\in\mathbb{N}_m}c_jg_j^{\sharp},
\end{equation}
for some $c_j\in\mathbb{R}$, $j\in\mathbb{N}_m.$
\end{thm}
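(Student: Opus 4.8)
The plan is to reduce everything to the smooth-space representer theorem already in hand (Theorem \ref{representer-theorembyGateaux}) and then translate its conclusion, phrased in terms of the G\^{a}teaux derivative $\mathcal{G}(\hat{f})$ of the norm, into the dual-element statement \eqref{representer-ufuc} by exploiting the semi-inner-product machinery. First I would dispose of existence and uniqueness. Since $\mathcal{B}$ is uniformly convex, the Milman--Pettis theorem renders it reflexive, so Corollary \ref{existence-mni-reflexive} guarantees at least one solution $\hat{f}$; uniform convexity also implies strict convexity, which as noted at the close of Section 2 forces the solution to be unique. Thus the problem \eqref{mni} admits exactly one solution $\hat{f}$, and it remains only to derive its form.

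Next, uniform Fr\'{e}chet smoothness implies that $\mathcal{B}$ is smooth, so Theorem \ref{representer-theorembyGateaux} applies with the identification $\nu_j := g_j^{\sharp}$ (legitimate because $f \mapsto f^{\sharp}$ is, by Giles' generalized Riesz theorem, a bijection of $\mathcal{B}$ onto $\mathcal{B}^*$, so every functional is of this form). This yields scalars $c_j \in \mathbb{R}$ with $\mathcal{G}(\hat{f}) = \sum_{j\in\mathbb{N}_m} c_j g_j^{\sharp}$. The case $\mathbf{y}=0$ is immediate, as then $\hat{f}=0$, $\hat{f}^{\sharp}=0$, and one takes all $c_j=0$; so I would henceforth assume $\hat{f}\neq 0$.

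The crux is the identity $\hat{f}^{\sharp} = \|\hat{f}\|_{\mathcal{B}}\,\mathcal{G}(\hat{f})$. By \eqref{dual-element}, $\langle \hat{f}^{\sharp},\hat{f}\rangle_{\mathcal{B}} = [\hat{f},\hat{f}]_{\mathcal{B}} = \|\hat{f}\|_{\mathcal{B}}^2$, while Giles' theorem gives $\|\hat{f}^{\sharp}\|_{\mathcal{B}^*} = \|\hat{f}\|_{\mathcal{B}}$; on the other hand \eqref{relation-norm-GateauxDiff} and \eqref{norm-GateauxDiff} give $\langle \|\hat{f}\|_{\mathcal{B}}\mathcal{G}(\hat{f}),\hat{f}\rangle_{\mathcal{B}} = \|\hat{f}\|_{\mathcal{B}}^2$ and $\|\,\|\hat{f}\|_{\mathcal{B}}\mathcal{G}(\hat{f})\|_{\mathcal{B}^*} = \|\hat{f}\|_{\mathcal{B}}$. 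Hence both $\hat{f}^{\sharp}$ and $\|\hat{f}\|_{\mathcal{B}}\mathcal{G}(\hat{f})$ belong to the peaking set $\{\nu : \|\nu\|_{\mathcal{B}^*}=\|\hat{f}\|_{\mathcal{B}},\ \langle\nu,\hat{f}\rangle_{\mathcal{B}}=\|\nu\|_{\mathcal{B}^*}\|\hat{f}\|_{\mathcal{B}}\} = \mathcal{J}(\hat{f})$, which is a singleton since smoothness makes $\partial\|\cdot\|_{\mathcal{B}}(\hat{f})$ a singleton (equation \eqref{singleton}, together with the characterization \eqref{relation-subdifferential-functionals}). The two functionals therefore coincide. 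Multiplying the representer identity $\mathcal{G}(\hat{f}) = \sum_{j\in\mathbb{N}_m} c_j g_j^{\sharp}$ by $\|\hat{f}\|_{\mathcal{B}}$ then gives $\hat{f}^{\sharp} = \sum_{j\in\mathbb{N}_m}(\|\hat{f}\|_{\mathcal{B}}c_j)g_j^{\sharp}$, and relabeling $\|\hat{f}\|_{\mathcal{B}}c_j$ as $c_j$ produces \eqref{representer-ufuc}. I expect this final identification of $\hat{f}^{\sharp}$ with the scaled G\^{a}teaux derivative to be the only genuinely delicate point; once it is in place the representer theorem is a direct rescaling.
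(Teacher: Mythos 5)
Your proof is correct, and its skeleton is the same as the paper's: uniform convexity gives reflexivity (Milman--Pettis) and strict convexity, hence existence via Corollary \ref{existence-mni-reflexive} and uniqueness; Theorem \ref{representer-theorembyGateaux} then yields $\mathcal{G}(\hat{f})=\sum_{j\in\mathbb{N}_m}\tilde{c}_jg_j^{\sharp}$; and the conclusion follows from the identity $\hat{f}^{\sharp}=\|\hat{f}\|_{\mathcal{B}}\,\mathcal{G}(\hat{f})$ after rescaling the coefficients. The one place you genuinely deviate is in how that identity is established. The paper simply quotes Giles' formula for the G\^{a}teaux derivative of the norm in a semi-inner-product space, $\lim_{t\to 0}\bigl(\|g+tf\|_{\mathcal{B}}-\|g\|_{\mathcal{B}}\bigr)/t=[f,g]_{\mathcal{B}}/\|g\|_{\mathcal{B}}$, which combined with \eqref{GateauxDiff} and \eqref{dual-element} gives \eqref{relation-gateaux-dual1} immediately. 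You instead prove the identity from scratch by a uniqueness-of-norming-functional argument: both $\hat{f}^{\sharp}$ and $\|\hat{f}\|_{\mathcal{B}}\mathcal{G}(\hat{f})$ lie in $\mathcal{J}(\hat{f})$ (the former via $\langle\hat{f}^{\sharp},\hat{f}\rangle_{\mathcal{B}}=\|\hat{f}\|_{\mathcal{B}}^2$ and $\|\hat{f}^{\sharp}\|_{\mathcal{B}^*}=\|\hat{f}\|_{\mathcal{B}}$, the latter via \eqref{relation-norm-GateauxDiff} and \eqref{norm-GateauxDiff}), and smoothness makes $\mathcal{J}(\hat{f})$ a singleton through \eqref{singleton} and \eqref{relation-subdifferential-functionals}. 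This is valid: for $\hat{f}\neq0$ one has $\mathcal{J}(\hat{f})=\|\hat{f}\|_{\mathcal{B}}\,\partial\|\cdot\|_{\mathcal{B}}(\hat{f})$, so the singleton claim is exactly \eqref{singleton}. What your route buys is self-containment --- it avoids invoking Giles' derivative formula, using only facts already proved or stated in the paper --- at the cost of being slightly longer; the paper's citation-based route is shorter and also gives the identity \eqref{relation-gateaux-dual1} uniformly in $g$ (including $g=0$ by the convention $\mathcal{G}(0)=0$), which is why the paper does not need your separate treatment of the case $\mathbf{y}=0$, though your handling of that case is also fine.
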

\begin{proof}
Since $\mathcal{B}$ is uniformly convex, we have that $\mathcal{B}$ is reflexive and strict convex. By Corollary \ref{existence-mni-reflexive}, the reflexivity of $\mathcal{B}$ ensures the existence of a solution of \eqref{mni}. The uniqueness of the solution can also be obtained by strict convexity. That is, the minimum norm interpolation problem \eqref{mni} has a unique solution $\hat{f}$.

Since $\mathcal{B}$ is smooth, Theorem \ref{representer-theorembyGateaux} ensures that there exist $\tilde{c}_j\in\mathbb{R},\ j\in\mathbb{N}_m,$ such that
\begin{equation}\label{GateauxDiff-dual}
\mathcal{G}(\hat{f})=\sum_{j\in\mathbb{N}_m}\tilde{c}_jg_j^{\sharp}.
\end{equation}
It suffices to identify the G\^{a}teaux derivative $\mathcal{G}(\hat{f})$ with the dual element $\hat{f}^{\sharp}$ of $\hat{f}$. The relation between the semi-inner-product and the G\^{a}teaux derivative of the norm $\|\cdot\|_{\mathcal{B}}$ was given in \cite{Giles}, that is,
$$
\lim_{t\rightarrow 0}\frac{\|g+tf\|_{\mathcal{B}}-\|g\|_{\mathcal{B}}}{t}
=\frac{[f,g]_{\mathcal{B}}}{\|g\|_{\mathcal{B}}},
\ \ \mbox{for all}\ \ f,g\in\mathcal{B}\ \ \mbox{and}\  \ g\neq0.
$$
This together with \eqref{GateauxDiff} and \eqref{dual-element} leads to
\begin{equation}\label{relation-gateaux-dual}
\mathcal{G}(g)=\frac{g^{\sharp}}{\|g\|_{\mathcal{B}}},\ \ \mbox{for all}\ \ g\in\mathcal{B}\setminus\{0\}.
\end{equation}
Notice that for $g=0$ there holds $g^{\sharp}=0$ and $\mathcal{G}(g)=0$. Hence, we generalize formula
\eqref{relation-gateaux-dual} as
\begin{equation}\label{relation-gateaux-dual1}
g^{\sharp}=\|g\|_{\mathcal{B}}\mathcal{G}(g),\ \ \mbox{for all}\ \ g\in\mathcal{B}.
\end{equation}
Substituting this representation with $g:=\hat{f}$ into the left hand side of equation \eqref{GateauxDiff-dual}, we get that
$$
\hat{f}^{\sharp}=\sum_{j\in\mathbb{N}_m}\|\hat{f}\|_{\mathcal{B}}\tilde{c}_jg_j^{\sharp}.
$$
Choosing $c_j:=\|\hat{f}\|_{\mathcal{B}}\tilde{c}_j$, $j\in\mathbb{N}_m$, we obtain the desired formula \eqref{representer-ufuc}.
\end{proof}

Note that the dual space $\mathcal{B}^*$ is also uniformly Fr\'{e}chet smooth and uniformly convex. In this case, Theorem \ref{representer-theorembyGateaux-explicit} enables the solution $\hat{f}$ of the minimum norm interpolation problem to have an explicit representation. To this end, we denote by $\mathcal{G}^*(\nu)$ the G\^{a}teaux derivative of the norm $\|\cdot\|_{\mathcal{B}^*}$ at $\nu\in\mathcal{B}^*$.

\begin{thm}\label{representer-theorem-ufuc1}
If $\mathcal{B}$ is a uniformly Fr\'{e}chet smooth and uniformly convex Banach space with the dual space $\mathcal{B}^*$ and $g_j\in\mathcal{B}$, $j\in\mathbb{N}_m,$ such that $g_j^{\sharp}\in\mathcal{B}^*$, $j\in\mathbb{N}_m,$ are linearly independent, then the minimum norm interpolation problem \eqref{mni} with $\mathbf{y}\in\mathbb{R}^m$ has a unique solution $\hat{f}$ in the form
\begin{equation}\label{representer-ufuc1}
\hat{f}=\left(\sum_{j\in\mathbb{N}_m}c_jg_j^{\sharp}\right)^{\sharp},
\end{equation}
for some $c_j\in\mathbb{R},\ j\in\mathbb{N}_m.$
\end{thm}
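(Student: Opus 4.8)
The plan is to combine the explicit representer theorem of Theorem \ref{representer-theorembyGateaux-explicit} with the identification of the Gâteaux derivative of the dual norm as a scaled dual element, exactly the device used in the proof of Theorem \ref{representer-theorem-ufuc}, but now carried out in $\mathcal{B}^*$ rather than in $\mathcal{B}$. First I would dispose of existence and uniqueness as in Theorem \ref{representer-theorem-ufuc}: since $\mathcal{B}$ is uniformly convex it is both reflexive (by the Milman--Pettis theorem) and strictly convex, so Corollary \ref{existence-mni-reflexive} yields a solution and strict convexity makes it unique. Call it $\hat f$.

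Next I would invoke the duality between uniform Fréchet smoothness and uniform convexity quoted in the text to conclude that $\mathcal{B}^*$ is itself uniformly Fréchet smooth and uniformly convex; in particular $\mathcal{B}^*$ is smooth, so Theorem \ref{representer-theorembyGateaux-explicit} applies with $\nu_j := g_j^{\sharp}$, which are linearly independent by hypothesis. This produces coefficients $c_j\in\mathbb{R}$, $j\in\mathbb{N}_m$, with
\[
\hat f = \rho\,\mathcal{G}^*\!\left(\sum_{j\in\mathbb{N}_m} c_j g_j^{\sharp}\right),\qquad \rho := \left\|\sum_{j\in\mathbb{N}_m} c_j g_j^{\sharp}\right\|_{\mathcal{B}^*}.
\]

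The decisive step is then to recognize that the right-hand side is precisely the dual element of $\nu := \sum_{j\in\mathbb{N}_m} c_j g_j^{\sharp}$ taken inside $\mathcal{B}^*$. Because $\mathcal{B}^*$ is uniformly Fréchet smooth and uniformly convex, every construction leading to \eqref{relation-gateaux-dual1} applies verbatim with $\mathcal{B}$ replaced by $\mathcal{B}^*$; using reflexivity to identify $(\mathcal{B}^*)^*$ with $\mathcal{B}$, this gives $\nu^{\sharp} = \|\nu\|_{\mathcal{B}^*}\,\mathcal{G}^*(\nu) = \rho\,\mathcal{G}^*(\nu)$ for the dual element $\nu^{\sharp}\in\mathcal{B}$ defined in \eqref{dual-element-B^*}. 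Substituting into the displayed formula collapses it to $\hat f = \nu^{\sharp} = \left(\sum_{j\in\mathbb{N}_m} c_j g_j^{\sharp}\right)^{\sharp}$, which is exactly \eqref{representer-ufuc1}. The converse direction reverses this identification: if $\hat f\in\mathcal{M}_{\mathbf{y}}$ has the stated form, then rewriting $\nu^{\sharp}$ as $\rho\,\mathcal{G}^*(\nu)$ recovers the characterizing condition of Theorem \ref{representer-theorembyGateaux-explicit}, so $\hat f$ solves \eqref{mni}; uniqueness then forces it to be the solution. The one point needing care is the transfer of \eqref{relation-gateaux-dual1} to $\mathcal{B}^*$: one must check that $\mathcal{G}^*(\nu)$, a priori only an element of $(\mathcal{B}^*)^*$, really lands in $\mathcal{B}$ and coincides with the normalized dual element, which is where reflexivity of $\mathcal{B}$ and the semi-inner-product relation \eqref{semi-inner-product-B^*} are used.
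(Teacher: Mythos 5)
Your proposal is correct and follows essentially the same route as the paper's own proof: uniqueness from uniform convexity (via reflexivity and strict convexity), application of Theorem \ref{representer-theorembyGateaux-explicit} with $\nu_j:=g_j^{\sharp}$ since $\mathcal{B}^*$ is smooth, and then the identification $\rho\,\mathcal{G}^*(\nu)=\nu^{\sharp}$ via \eqref{relation-gateaux-dual1} transferred to $\mathcal{B}^*$. Your extra care about $\mathcal{G}^*(\nu)$ landing in $\mathcal{B}$ through reflexivity and \eqref{dual-element-B^*} is a point the paper leaves implicit, but it is the same argument.
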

\begin{proof}
We note that the minimum norm interpolation problem \eqref{mni} in the uniformly Fr\'{e}chet smooth and uniformly convex Banach space $\mathcal{B}$ has a unique solution $\hat{f}$. We establish equation \eqref{representer-ufuc1} by using Theorem \ref{representer-theorembyGateaux-explicit}. The fact that $\mathcal{B}$ is  uniformly convex guarantees that $\mathcal{B}^*$ is uniformly Fr\'{e}chet smooth. Thus, the hypotheses of Theorem \ref{representer-theorembyGateaux-explicit} are satisfied. By Theorem \ref{representer-theorembyGateaux-explicit} there exist $c_j\in\mathbb{R},$ $j\in\mathbb{N}_m,$ such that
\begin{equation*}
\hat{f}=\rho\mathcal{G}^*
\left(\sum_{j\in\mathbb{N}_m}c_jg_j^{\sharp}\right),
\end{equation*}
with $\rho:=\left\|\sum_{j\in\mathbb{N}_m}c_jg_j^{\sharp}\right\|_{\mathcal{B}^*}$.  Combining the above equation with \eqref{relation-gateaux-dual1}, we get the desired formula \eqref{representer-ufuc1}.
\end{proof}

We remark that Theorem \ref{representer-theorem-ufuc1} can also be obtained directly from Theorem \ref{representer-theorem-ufuc} with the following simple fact.

\begin{lemma}
If $\mathcal{B}$ is a uniformly Fr\'{e}chet smooth and uniformly convex Banach space, then for any $f\in\mathcal{B}$, there holds $f^{\sharp\sharp}=f.$
\end{lemma}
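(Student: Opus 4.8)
The plan is to exploit the fact that $\mathcal{B}^*$ separates the points of $\mathcal{B}$ (by the Hahn--Banach theorem), so that the identity $f^{\sharp\sharp}=f$ will follow once I verify $\langle\mu,f^{\sharp\sharp}\rangle_{\mathcal{B}}=\langle\mu,f\rangle_{\mathcal{B}}$ for every $\mu\in\mathcal{B}^*$. First I would unwind the left-hand side by the definition \eqref{dual-element-B^*} of the dual element, in $\mathcal{B}$, of an element of $\mathcal{B}^*$: taking $\nu:=f^{\sharp}$ there gives $\langle\mu,f^{\sharp\sharp}\rangle_{\mathcal{B}}=[\mu,f^{\sharp}]_{\mathcal{B}^*}$ for all $\mu\in\mathcal{B}^*$. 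The task thus reduces to evaluating the semi-inner-product $[\mu,f^{\sharp}]_{\mathcal{B}^*}$ on the dual space.

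Next I would use the generalized Riesz representation theorem recalled above, which guarantees that the map $g\mapsto g^{\sharp}$ is a bijection from $\mathcal{B}$ onto $\mathcal{B}^*$. Hence every $\mu\in\mathcal{B}^*$ can be written as $\mu=g^{\sharp}$ for a unique $g\in\mathcal{B}$, and I would substitute this to obtain $[\mu,f^{\sharp}]_{\mathcal{B}^*}=[g^{\sharp},f^{\sharp}]_{\mathcal{B}^*}$. Applying the relation \eqref{semi-inner-product-B^*} with the roles of $f$ and $g$ interchanged then yields $[g^{\sharp},f^{\sharp}]_{\mathcal{B}^*}=[f,g]_{\mathcal{B}}$, and finally the defining property \eqref{dual-element} of the dual element $g^{\sharp}$ gives $[f,g]_{\mathcal{B}}=\langle g^{\sharp},f\rangle_{\mathcal{B}}=\langle\mu,f\rangle_{\mathcal{B}}$. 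Chaining these equalities produces $\langle\mu,f^{\sharp\sharp}\rangle_{\mathcal{B}}=\langle\mu,f\rangle_{\mathcal{B}}$ for all $\mu\in\mathcal{B}^*$, and the separation of points closes the argument.

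The main obstacle I anticipate is bookkeeping rather than depth: because the semi-inner-product is not symmetric and is not linear in its second slot, the relation \eqref{semi-inner-product-B^*} must be invoked with exactly the right substitution, and one must keep careful track of which argument of $[\cdot,\cdot]_{\mathcal{B}^*}$ receives $\mu=g^{\sharp}$ and which receives $f^{\sharp}$. A slip in the order of arguments here would break the chain of equalities, so I would display the arguments explicitly at each step. It is worth emphasizing that uniform Fr\'{e}chet smoothness and uniform convexity of $\mathcal{B}$ enter the proof only structurally, namely through the bijectivity of $g\mapsto g^{\sharp}$ and the existence of the induced semi-inner-product on $\mathcal{B}^*$ satisfying \eqref{semi-inner-product-B^*}; once these facts are granted, the computation is purely formal.
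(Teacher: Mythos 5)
Your proof is correct and follows essentially the same route as the paper's: both unwind $\langle g^{\sharp},f^{\sharp\sharp}\rangle_{\mathcal{B}}=[g^{\sharp},f^{\sharp}]_{\mathcal{B}^*}=[f,g]_{\mathcal{B}}=\langle g^{\sharp},f\rangle_{\mathcal{B}}$ via \eqref{dual-element-B^*}, \eqref{semi-inner-product-B^*} (with the roles of $f$ and $g$ interchanged, exactly as you note) and \eqref{dual-element}, and then conclude $f^{\sharp\sharp}=f$ from the equality of the two functional actions. Your only additions---explicitly citing the bijectivity of $g\mapsto g^{\sharp}$ so that the test functionals $g^{\sharp}$ exhaust $\mathcal{B}^*$, and Hahn--Banach separation to close the argument---simply make explicit what the paper leaves implicit.
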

\begin{proof}
On one hand, since $f^{\sharp\sharp}$ is the dual element of $f^{\sharp}\in\mathcal{B}^*$, for any $g^{\sharp}\in\mathcal{B}^*$ there holds
$$
\langle g^{\sharp},f^{\sharp\sharp}\rangle_{\mathcal{B}}=\langle f^{\sharp\sharp},g^{\sharp}\rangle_{\mathcal{B}^*}
=[g^{\sharp},f^{\sharp}]_{\mathcal{B}^*}.
$$
On the other hand, for any $g^{\sharp}\in\mathcal{B}^*$ there holds
$$
\langle g^{\sharp},f\rangle_{\mathcal{B}}=[f,g]_{\mathcal{B}}.
$$
Hence, according to \eqref{semi-inner-product-B^*}, we get that
$$
\langle g^{\sharp},f^{\sharp\sharp}\rangle_{\mathcal{B}}
=\langle g^{\sharp},f\rangle_{\mathcal{B}},
\ \ \mbox{for all}\ \ g^{\sharp}\in\mathcal{B}^*,
$$
which further implies $f^{\sharp\sharp}=f.$
\end{proof}

As a special example, we consider the minimum norm interpolation problem in the Banach space $\ell_p(\mathbb{N})$ with $1<p<+\infty$, which is the Banach space of all real sequences
$$
\mathbf{x}:=(x_j: j\in \bN),\ \ \mbox{with}\ \ \|\mathbf{x}\|_p:=\left(\sum_{j\in\bN}|x_j|^p\right)^{1/p}<+\infty.
$$
The space $\ell_p(\bN)$ is uniformly Fr\'{e}chet smooth and uniformly convex and has $\ell_q(\bN)$ as its dual space, where $1/p+1/q=1$. The dual bilinear form $\langle\cdot,\cdot\rangle_{\ell_p}$ on $\ell_q(\bN)\times \ell_p(\bN)$ is defined by
$$
\langle\mathbf{u},\mathbf{x}\rangle_{\ell_p}:=\sum_{j\in\mathbb{N}}u_jx_j,
$$
for all $\mathbf{u}:=(u_j:j\in\mathbb{N})\in \ell_q(\bN)$ and all $\mathbf{x}:=(x_j:j\in\mathbb{N})\in \ell_p(\bN)$. In this case, we suppose that $\mathbf{u}_j$, $j\in\mathbb{N}_m,$ are a finite number of linearly independent elements in $\ell_q(\bN)$ and the operator $\mathcal{L}:\ell_p(\bN)\rightarrow\mathbb{R}^m$, defined as in \eqref{functional-operator}, has the form
\begin{equation}\label{functional-operator-lp}
\mathcal{L}(\mathbf{x}):=[\langle\mathbf{u}_j,
\mathbf{x}\rangle_{\ell_p}:j\in\mathbb{N}_m],
\ \ \mbox {for all}\ \ \mathbf{x}\in \ell_p(\bN).
\end{equation}

Applying Theorem \ref{representer-theorem-ufuc1} to the minimum norm interpolation in $\ell_p(\bN)$, we get below the explicit representation of the solution.

\begin{cor}\label{representer-theorem-lp}
Suppose that $\mathbf{u}_j$, $j\in\mathbb{N}_m,$ are a finite number of linearly independent elements in $\ell_q(\bN)$ with $1/p+1/q=1$. If $\mathbf{y}\in\mathbb{R}^m\backslash\{0\}$ and $\mathcal{L}$ is defined by \eqref{functional-operator-lp}, then the minimum norm interpolation problem \eqref{mni} with $\mathbf{y}$ in $\ell_p(\bN)$ has a unique solution $\hat{\mathbf{x}}:=(\hat{x}_j:j\in\mathbb{N})$ and it has the form
\begin{equation}\label{representer-lp}
\hat{x}_j=\frac{u_j|u_j|^{q-2}}{\|\mathbf{u}\|_{\ell_q}^{q-2}},
\end{equation}
with
\begin{equation}\label{definition-of-u}
\mathbf{u}=(u_j:j\in\mathbb{N}):=\sum_{j\in\mathbb{N}_m}c_j\mathbf{u}_j,
\end{equation}
for some $c_j\in\mathbb{R}$, $j\in\mathbb{N}_m.$
\end{cor}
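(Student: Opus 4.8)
The plan is to invoke Theorem \ref{representer-theorem-ufuc1} directly, with $\mathcal{B}:=\ell_p(\bN)$ and $\mathcal{B}^*=\ell_q(\bN)$, and then to render the abstract formula \eqref{representer-ufuc1} explicit by computing the dual-element map on $\ell_q(\bN)$. First I would check the hypotheses: as recorded in the text, for $1<p<+\infty$ the space $\ell_p(\bN)$ is uniformly Fr\'echet smooth and uniformly convex with dual $\ell_q(\bN)$, and by the generalized Riesz representation of \cite{Giles} each linearly independent $\mathbf{u}_j\in\ell_q(\bN)=(\ell_p(\bN))^*$ can be written as $g_j^{\sharp}$ for a unique $g_j\in\ell_p(\bN)$. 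Thus Theorem \ref{representer-theorem-ufuc1} applies and produces a unique solution of the form $\hat{\mathbf{x}}=\mathbf{u}^{\sharp}$, where $\mathbf{u}=\sum_{j\in\mathbb{N}_m}c_j\mathbf{u}_j\in\ell_q(\bN)$ is as in \eqref{definition-of-u} and the outer $\sharp$ is the dual-element map \eqref{dual-element-B^*} sending $\mathcal{B}^*=\ell_q(\bN)$ into $\mathcal{B}=\ell_p(\bN)$. Since $\mathbf{y}\neq0$, the interpolation constraint forces $\hat{\mathbf{x}}\neq0$, and by the identity $f^{\sharp\sharp}=f$ we have $\mathbf{u}=\hat{\mathbf{x}}^{\sharp}\neq0$, so $\|\mathbf{u}\|_{\ell_q}\neq0$ and the right-hand side of \eqref{representer-lp} is well defined.

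It therefore remains only to compute $\mathbf{u}^{\sharp}$ explicitly for a nonzero $\mathbf{u}\in\ell_q(\bN)$. The key step is to identify the semi-inner-product on $\ell_q(\bN)$. I would take as candidate
\begin{equation*}
[\mathbf{v},\mathbf{u}]_{\ell_q}
:=\frac{1}{\|\mathbf{u}\|_{\ell_q}^{q-2}}
\sum_{j\in\mathbb{N}}v_j u_j|u_j|^{q-2},
\end{equation*}
and verify that it is linear in $\mathbf{v}$, satisfies the Cauchy--Schwartz inequality, and induces the norm, i.e. $[\mathbf{u},\mathbf{u}]_{\ell_q}=\|\mathbf{u}\|_{\ell_q}^2$ (here one uses $\sum_j|u_j|^q=\|\mathbf{u}\|_{\ell_q}^q$). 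Because $\ell_q(\bN)$ is smooth, its semi-inner-product is unique, so the candidate must coincide with the true one. Equivalently, one may bypass the semi-inner-product and compute the G\^ateaux derivative $\mathcal{G}^*$ of $\|\cdot\|_{\ell_q}$ termwise, obtaining $\mathcal{G}^*(\mathbf{u})=\left(u_j|u_j|^{q-2}/\|\mathbf{u}\|_{\ell_q}^{q-1}\right)_{j}$, and then apply $\mathbf{u}^{\sharp}=\|\mathbf{u}\|_{\ell_q}\mathcal{G}^*(\mathbf{u})$ from \eqref{relation-gateaux-dual1} (applied to the space $\mathcal{B}^*=\ell_q(\bN)$).

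With the semi-inner-product in hand, I would unwind the definition \eqref{dual-element-B^*}: for every $\mu\in\ell_q(\bN)$,
\begin{equation*}
\sum_{j\in\mathbb{N}}\mu_j(\mathbf{u}^{\sharp})_j
=\langle\mu,\mathbf{u}^{\sharp}\rangle_{\ell_p}
=[\mu,\mathbf{u}]_{\ell_q}
=\frac{1}{\|\mathbf{u}\|_{\ell_q}^{q-2}}
\sum_{j\in\mathbb{N}}\mu_j u_j|u_j|^{q-2}.
\end{equation*}
Choosing $\mu$ to range over the standard unit sequences then forces $(\mathbf{u}^{\sharp})_j=u_j|u_j|^{q-2}/\|\mathbf{u}\|_{\ell_q}^{q-2}$ for every $j$, which is exactly \eqref{representer-lp} with $\mathbf{u}$ given by \eqref{definition-of-u}.

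The main obstacle I anticipate is the explicit determination of the semi-inner-product, equivalently the G\^ateaux derivative of the $\ell_q$-norm: one must differentiate $\left(\sum_k|u_k|^q\right)^{1/q}$ termwise, track the signs carefully through $u_j|u_j|^{q-2}=|u_j|^{q-1}\sgn(u_j)$, and check that the normalizing factor $\|\mathbf{u}\|_{\ell_q}^{q-2}$ is precisely what makes $[\mathbf{u},\mathbf{u}]_{\ell_q}=\|\mathbf{u}\|_{\ell_q}^2$. Everything else reduces to direct substitution into the previously established Theorems \ref{representer-theorem-ufuc} and \ref{representer-theorem-ufuc1}.
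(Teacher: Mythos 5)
Your proposal is correct and takes essentially the same route as the paper: the paper's proof of Corollary \ref{representer-theorem-lp} likewise applies Theorem \ref{representer-theorem-ufuc1} to obtain $\hat{\mathbf{x}}=\mathbf{u}^{\sharp}$ with $\mathbf{u}:=\sum_{j\in\mathbb{N}_m}c_j\mathbf{u}_j$ and then invokes the explicit formula $\mathbf{u}^{\sharp}=\bigl(u_j|u_j|^{q-2}/\|\mathbf{u}\|_{q}^{q-2}:j\in\mathbb{N}\bigr)$, which the paper states without proof while you verify it via the (unique) semi-inner-product on $\ell_q(\mathbb{N})$, equivalently the G\^{a}teaux derivative of the $\ell_q$-norm. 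Your extra checks---that $\mathbf{y}\neq 0$ forces $\hat{\mathbf{x}}\neq 0$ and hence $\mathbf{u}\neq 0$ so the normalizing factor is well defined, and the termwise identification of $\mathbf{u}^{\sharp}$ against the unit sequences---are correct fill-ins of details the paper leaves implicit.
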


\begin{proof}
Since the space $\ell_p(\bN)$ is uniformly Fr\'{e}chet smooth and uniformly convex, Theorem \ref{representer-theorem-ufuc1} ensures that the minimum norm interpolation problem \eqref{mni} has a unique solution $\hat{\mathbf{x}}\in\ell_p(\bN)$ and there exist $c_j\in\mathbb{R}$, $j\in\mathbb{N}_m,$ such that
$$
\hat{\mathbf{x}}=\left(\sum_{j\in\mathbb{N}_m}c_j\mathbf{u}_j\right)^{\sharp},
$$
Using the notation of $\mathbf{u}=(u_j:j\in\mathbb{N})$ defined by \eqref{definition-of-u}, the above representation is simplified as $\hat{\mathbf{x}}=\mathbf{u}^{\sharp}$. This together with the fact that
$$
\mathbf{u}^{\sharp}
=\left(\frac{u_j|u_j|^{q-2}}{\|\mathbf{u}\|_{q}^{q-2}}:j\in\mathbb{N}\right)
$$
leads to the desired equation \eqref{representer-lp}.
\end{proof}

We next consider minimum norm interpolation in two types of RKBSs $\mathcal{B}$. In these spaces, the functionals $\nu_j\in\mathcal{B}^*$, $j\in\mathbb{N}_m$, also refer to point-evaluation functionals $\delta_{x_j},$ $j\in\mathbb{N}_m$, where $x_j, j\in\mathbb{N}_m,$ are finite points in an input set $X$. The notion of RKBSs was originally introduced in \cite{ZXZ} based on the semi-inner-product. We begin by reviewing the notion of semi-inner-product RKBSs. A Banach space $\mathcal{B}$ of functions on a domain $X$ is called a semi-inner-product RKBS if it is uniformly Fr\'{e}chet smooth and uniformly convex, and the point-evaluation functionals are continuous linear functionals on $\mathcal{B}$. If $\mathcal{B}$ is a semi-inner-product RKBS, then there exists a unique semi-inner-product reproducing kernel $G:X\times X \rightarrow \mathbb{R}$ satisfies that $G(x,\cdot)\in\mathcal{B}$ for all $x\in X$ and
\begin{equation}\label{reproducing-property-ZXZ}
f(x)=[f,G(x,\cdot)]_{\mathcal{B}},
\ \ \mbox{for all}\ \ x\in X
\ \ \mbox{and all}\ \ f\in\mathcal{B}.
\end{equation}

Since a semi-inner-product RKBS is uniformly Fr\'{e}chet smooth and uniformly convex, Theorems \ref{representer-theorem-ufuc} and \ref{representer-theorem-ufuc1} lead directly to the following representer theorems for the minimum norm interpolation problem in a semi-inner-product RKBS.

\begin{cor}\label{representer-theorem-ZXZ}
If $\mathcal{B}$ is a semi-inner-product RKBS with the semi-inner-product reproducing kernel $G$ and $x_j\in X$, $j\in\mathbb{N}_m,$ such that $G(x_j,\cdot)^{\sharp}\in\mathcal{B}^*$, $j\in\mathbb{N}_m,$ are linearly independent, then the minimum norm interpolation problem \eqref{mni} with $\mathbf{y}\in\mathbb{R}^m$ has a unique solution $\hat{f}$ such that
\begin{equation}\label{representer-ZXZ}
\hat{f}^{\sharp}=\sum_{j\in\mathbb{N}_m}c_jG(x_j,\cdot)^{\sharp},
\end{equation}
for some $c_j\in\mathbb{R}$, $j\in\mathbb{N}_m.$
\end{cor}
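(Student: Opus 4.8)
The plan is to obtain Corollary \ref{representer-theorem-ZXZ} as an immediate specialization of Theorem \ref{representer-theorem-ufuc}. Since a semi-inner-product RKBS is, by its very definition, uniformly Fr\'echet smooth and uniformly convex, and its point-evaluation functionals are continuous, the structural hypotheses of Theorem \ref{representer-theorem-ufuc} are already satisfied. The only genuine task is to identify the interpolation functionals $\nu_j:=\delta_{x_j}$ with dual elements of kernel sections, so that the abstract representation $\hat{f}^{\sharp}=\sum_{j\in\mathbb{N}_m}c_j g_j^{\sharp}$ supplied by that theorem turns into the claimed formula \eqref{representer-ZXZ}.

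First I would establish the key identity that, for each $x\in X$, the point-evaluation functional $\delta_x$ coincides with the dual element $G(x,\cdot)^{\sharp}$ of the kernel section $G(x,\cdot)$. This follows by combining the reproducing property \eqref{reproducing-property-ZXZ} with the defining relation \eqref{dual-element} of the dual element: for every $f\in\mathcal{B}$,
\[
\langle G(x,\cdot)^{\sharp},f\rangle_{\mathcal{B}}=[f,G(x,\cdot)]_{\mathcal{B}}=f(x)=\langle\delta_x,f\rangle_{\mathcal{B}},
\]
and since this holds for all $f\in\mathcal{B}$, the two functionals agree, i.e. $\delta_x=G(x,\cdot)^{\sharp}$ in $\mathcal{B}^*$.

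With this identity in hand I would set $g_j:=G(x_j,\cdot)$ for $j\in\mathbb{N}_m$, so that $g_j^{\sharp}=G(x_j,\cdot)^{\sharp}=\delta_{x_j}=\nu_j$. The hypothesis that $G(x_j,\cdot)^{\sharp}$, $j\in\mathbb{N}_m$, are linearly independent is then precisely the linear independence of $g_j^{\sharp}$ demanded by Theorem \ref{representer-theorem-ufuc}. Invoking that theorem produces a unique solution $\hat{f}$ of the minimum norm interpolation problem \eqref{mni} with $\hat{f}^{\sharp}=\sum_{j\in\mathbb{N}_m}c_j g_j^{\sharp}$ for some $c_j\in\mathbb{R}$, and substituting $g_j^{\sharp}=G(x_j,\cdot)^{\sharp}$ gives exactly the representation \eqref{representer-ZXZ}.

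There is essentially no analytical obstacle here, since existence, uniqueness, and the abstract representer theorem have all already been secured in Theorem \ref{representer-theorem-ufuc}; the substance of the corollary is merely the reproducing-kernel reading of the abstract interpolation functionals. The one point deserving attention is the asymmetry of the semi-inner-product, which is linear in its first argument but in general only homogeneous, not additive, in its second. In applying \eqref{dual-element} to compute $G(x,\cdot)^{\sharp}$ one must therefore keep $G(x,\cdot)$ in the second slot and let the test element $f$ occupy the (linear) first slot; this is exactly how the reproducing property \eqref{reproducing-property-ZXZ} is phrased, so the matching is direct and the identification $\delta_x=G(x,\cdot)^{\sharp}$ is unambiguous.
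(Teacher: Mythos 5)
Your proposal is correct and follows essentially the same route as the paper's own proof: identify $\delta_{x_j}=G(x_j,\cdot)^{\sharp}$ via the reproducing property \eqref{reproducing-property-ZXZ} together with the definition \eqref{dual-element} of the dual element, then apply Theorem \ref{representer-theorem-ufuc} with $g_j:=G(x_j,\cdot)$. Your explicit chain of equalities and the remark on the non-linearity of the semi-inner-product in its second slot merely spell out details the paper leaves implicit.
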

\begin{proof}
We note by the reproducing property \eqref{reproducing-property-ZXZ} that for each $x\in X$, the dual element $G(x,\cdot)^{\sharp}$ of $G(x,\cdot)$ coincides exactly with the point-evaluation functional $\delta_x$. Thus, Theorem \ref{representer-theorem-ufuc} with $g_j:=G(x_j,\cdot)$, for $j\in\mathbb{N}_m,$ ensures that the minimum norm interpolation problem \eqref{mni} has a unique solution $\hat{f}$ such that \eqref{representer-ZXZ} for some $c_j\in\mathbb{R}$, $j\in\mathbb{N}_m.$
\end{proof}

We remark that the representer theorem for the minimum norm interpolation problem \eqref{mni} in a semi-inner-product RKBS, stated in Corollary \ref{representer-theorem-ZXZ}, was originally obtained in \cite{ZXZ} by a different approach. In \cite{ZXZ}, the representer theorem was proved by using the orthogonality in a semi-inner-product RKBS which can be characterized through the dual element and the semi-inner-product.

By employing Theorem \ref{representer-theorem-ufuc1}, we get below an explicit representation of  the solution $\hat{f}$ of the minimum norm interpolation problem in a semi-inner-product RKBS.

\begin{thm}\label{representer-theorem-ZXZ1}
If $\mathcal{B}$ is a semi-inner-product RKBS with the semi-inner-product reproducing kernel $G$ and $x_j\in X$, $j\in\mathbb{N}_m,$ such that $G(x_j,\cdot)^{\sharp}\in\mathcal{B}^*$, $j\in\mathbb{N}_m,$ are linearly independent, then the minimum norm interpolation problem \eqref{mni} with $\mathbf{y}\in\mathbb{R}^m$ has a unique solution $\hat{f}$ in the form
\begin{equation*}\label{representer-ZXZ1}
\hat{f}=\left(\sum_{j\in\mathbb{N}_m}c_jG(x_j,\cdot)^{\sharp}\right)^{\sharp},
\end{equation*}
for some $c_j\in\mathbb{R},\ j\in\mathbb{N}_m.$
\end{thm}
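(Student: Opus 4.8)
The plan is to obtain this result as a direct specialization of the explicit representer theorem, Theorem \ref{representer-theorem-ufuc1}, to the RKBS setting, in exactly the same way that Corollary \ref{representer-theorem-ZXZ} specializes the implicit version Theorem \ref{representer-theorem-ufuc}. First I would verify that the hypotheses of Theorem \ref{representer-theorem-ufuc1} are met with the concrete choice $g_j:=G(x_j,\cdot)$, $j\in\mathbb{N}_m$. By definition a semi-inner-product RKBS is uniformly Fr\'{e}chet smooth and uniformly convex, so the structural assumptions on $\mathcal{B}$ hold. Moreover, the semi-inner-product reproducing kernel satisfies $G(x,\cdot)\in\mathcal{B}$ for every $x\in X$, so each $g_j$ is a legitimate element of $\mathcal{B}$, and its dual element $g_j^{\sharp}=G(x_j,\cdot)^{\sharp}$ is precisely the functional assumed to be linearly independent in the statement.

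The second ingredient is the identification, already recorded in the proof of Corollary \ref{representer-theorem-ZXZ}, that the functionals $\nu_j$ governing the interpolation problem \eqref{mni} here are the point-evaluation functionals $\delta_{x_j}$, and that these coincide with the dual elements $G(x_j,\cdot)^{\sharp}$. This follows from the reproducing property \eqref{reproducing-property-ZXZ}: for each $x\in X$ and every $f\in\mathcal{B}$ one has $\langle G(x,\cdot)^{\sharp},f\rangle_{\mathcal{B}}=[f,G(x,\cdot)]_{\mathcal{B}}=f(x)=\langle\delta_x,f\rangle_{\mathcal{B}}$ by the definition \eqref{dual-element} of the dual element, so that $G(x,\cdot)^{\sharp}=\delta_x$. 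This brings the abstract hypothesis of Theorem \ref{representer-theorem-ufuc1}, phrased in terms of arbitrary functionals $g_j^{\sharp}$, into exact correspondence with the point-evaluation setting of the present theorem.

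With these two checks in place, I would simply invoke Theorem \ref{representer-theorem-ufuc1}, which guarantees that the minimum norm interpolation problem \eqref{mni} has a unique solution $\hat{f}$ of the form $\hat{f}=\left(\sum_{j\in\mathbb{N}_m}c_jg_j^{\sharp}\right)^{\sharp}$ for some $c_j\in\mathbb{R}$; substituting $g_j^{\sharp}=G(x_j,\cdot)^{\sharp}$ yields the asserted representation. I do not anticipate any genuine obstacle here, since the argument is purely a translation of an already-established explicit representer theorem into the language of the reproducing kernel; the only point demanding a word of care is the kernel--to--functional identification of the preceding paragraph, which is what makes the substitution legitimate.
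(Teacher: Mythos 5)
Your proposal is correct and follows exactly the paper's route: the paper proves this theorem as an immediate consequence of Theorem \ref{representer-theorem-ufuc1} with $g_j:=G(x_j,\cdot)$, $j\in\mathbb{N}_m$. The verification you supply (that a semi-inner-product RKBS is uniformly Fr\'{e}chet smooth and uniformly convex, and that $G(x_j,\cdot)^{\sharp}=\delta_{x_j}$ by the reproducing property) is the same identification the paper records in the proof of Corollary \ref{representer-theorem-ZXZ} and leaves implicit here.
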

\begin{proof}
The desired result is an immediate consequence of Theorem \ref{representer-theorem-ufuc1} with $g_j:=G(x_j,\cdot)$, for $j\in\mathbb{N}_m$.
\end{proof}

An alternative definition of RKBSs was introduced in \cite{XY}. This definition is a natural generalization of RKHSs by replacing the inner product in the Hilbert spaces with the dual bilinear form in introducing the reproducing properties in RKBSs. We now apply Theorem \ref{representer-theorembyGateaux} and Theorem \ref{representer-theorembyGateaux-explicit} to the minimum norm interpolation problem in such an RKBS. To this end, we recall the definition of RKBSs. Suppose that $\mathcal{B}$ is a Banach space of functions on $X$ and the dual space $\mathcal{B}^*$ is isometrically equivalent to a Banach space of functions on $X^{'}$. Let $K:X\times X^{'} \rightarrow \mathbb{R}$. A Banach space $\mathcal{B}$ is called a right-sided RKBS and $K$ is its right-sided reproducing kernel if $K(x,\cdot)\in\mathcal{B}^{*}$ for all $x\in X$ and
\begin{equation}\label{reproducing-property-XY}
f(x)=\langle f,K(x,\cdot)\rangle_{\mathcal{B}}, \ \ \mbox{for all}\ \ x\in X \ \ \mbox{and all}\ \ f\in\mathcal{B}.
\end{equation}

In the framework of right-sided RKBSs, we consider the minimum norm interpolation problem with a finite number of point-evaluation functionals $\delta_{x_j},$ $j\in\mathbb{N}_m$, where $x_j\in X$, $j\in\mathbb{N}_m$. The representer theorem for this case can be obtained directly from Theorem \ref{representer-theorembyGateaux}.

\begin{cor}\label{representer-theorem-XY}
Suppose that $\mathcal{B}$ is a right-sided RKBS with the right-sided reproducing kernel $K$ and $x_j\in X$, $j\in\mathbb{N}_m,$ such that $K(x_j,\cdot)\in\mathcal{B}^*$, $j\in\mathbb{N}_m,$ are linearly independent. If $\mathcal{B}$ is reflexive,  strictly convex, and smooth, then the minimum norm interpolation problem \eqref{mni} with $\mathbf{y}\in\mathbb{R}^m$ has a unique solution $\hat{f}$ such that
\begin{equation}\label{representer-XY}
\mathcal{G}(\hat{f})=\sum_{j\in\mathbb{N}_m}c_jK(x_j,\cdot),
\end{equation}
for some $c_j\in\mathbb{R}$, $j\in\mathbb{N}_m.$
\end{cor}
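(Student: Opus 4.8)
The plan is to recognize Corollary~\ref{representer-theorem-XY} as a direct specialization of Theorem~\ref{representer-theorembyGateaux}, with the abstract interpolation functionals $\nu_j$ taken to be the point-evaluation functionals $\delta_{x_j}$, $j\in\mathbb{N}_m$. The three added hypotheses---reflexivity, strict convexity, and smoothness---each play a distinct and isolable role, so the argument amounts to assembling them together with the reproducing property, and I would organize the write-up around that split.

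First I would dispose of existence and uniqueness. Since $\mathcal{B}$ is reflexive and the functionals $K(x_j,\cdot)\in\mathcal{B}^*$, $j\in\mathbb{N}_m$, are linearly independent, Corollary~\ref{existence-mni-reflexive} guarantees that the minimum norm interpolation problem \eqref{mni} with $\mathbf{y}$ has at least one solution. Strict convexity of $\mathcal{B}$ then forces this solution to be unique, by the remark following Corollary~\ref{existence-mni-reflexive}. Thus there is a well-defined $\hat{f}$ to represent, and it remains only to exhibit its form.

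Next I would identify the interpolation data with the kernel. By the reproducing property \eqref{reproducing-property-XY}, the point-evaluation functional $\delta_{x_j}$ is precisely the element $K(x_j,\cdot)\in\mathcal{B}^*$, since $\langle f,K(x_j,\cdot)\rangle_{\mathcal{B}}=f(x_j)$ for all $f\in\mathcal{B}$. Hence the constraint $\mathcal{L}(f)=\mathbf{y}$ defining $\mathcal{M}_{\mathbf{y}}$ is exactly the interpolation condition $f(x_j)=y_j$, $j\in\mathbb{N}_m$, and the functionals appearing in the abstract problem are $\nu_j:=K(x_j,\cdot)$.

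Finally, with $\mathcal{B}$ smooth and the functionals $K(x_j,\cdot)$ linearly independent, the hypotheses of Theorem~\ref{representer-theorembyGateaux} are met. Applying that theorem with $\nu_j:=K(x_j,\cdot)$ yields that $\hat{f}$, being the solution lying in $\mathcal{M}_{\mathbf{y}}$, satisfies $\mathcal{G}(\hat{f})=\sum_{j\in\mathbb{N}_m}c_jK(x_j,\cdot)$ for some $c_j\in\mathbb{R}$, which is exactly \eqref{representer-XY}. I do not expect any genuine obstacle here: the entire content is in the correct bookkeeping of which hypothesis supplies which conclusion---reflexivity for existence, strict convexity for uniqueness, and smoothness for the G\^{a}teaux representation---together with reading the reproducing property as the identification $\delta_{x_j}=K(x_j,\cdot)$. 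No new estimate, construction, or duality argument beyond Theorem~\ref{representer-theorembyGateaux} is required.
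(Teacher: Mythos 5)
Your proposal is correct and follows essentially the same route as the paper's own proof: existence from reflexivity, uniqueness from strict convexity, and an application of Theorem~\ref{representer-theorembyGateaux} with the point-evaluation functionals identified, via the reproducing property \eqref{reproducing-property-XY}, with the kernel sections $K(x_j,\cdot)$. The only (immaterial) difference is that you make the identification $\delta_{x_j}=K(x_j,\cdot)$ before invoking the theorem, whereas the paper applies the theorem with $\nu_j:=\delta_{x_j}$ and performs the identification afterwards.
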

\begin{proof}
The reflexivity and strict convexity of the RKBS $\mathcal{B}$ ensure that the minimum norm interpolation problem \eqref{mni} has a unique solution $\hat{f}$. Since $\mathcal{B}$ is smooth, Theorem \ref{representer-theorembyGateaux} with $\nu_j:=\delta_{x_j},$ $j\in\mathbb{N}_m$, ensures that there exist $c_j\in\mathbb{R},j\in\mathbb{N}_m,$ such that
$$
\mathcal{G}(\hat{f})=\sum_{j\in\mathbb{N}_m}c_j\delta_{x_j}.
$$
It follows from the reproducing property \eqref{reproducing-property-XY} that the right-sided reproducing kernel $K$ provides a closed-form function representation for the point-evaluation functionals. Hence, the above representation coincides with \eqref{representer-XY}.
\end{proof}

We note that the representer theorem stated in Corollary \ref{representer-theorem-XY} was originally established in \cite{XY} by a different approach. In \cite{XY}, the solution of the minimum norm interpolation problem was characterized by using orthogonality in the Banach space, which is described through the G\^{a}teaux derivatives and reproducing properties.
Again, formula \eqref{representer-XY} obtained in Corollary \ref{representer-theorem-XY} is an implicit representation of $\hat f$. We prefer having an explicit formula, which we derive from Theorem \ref{representer-theorembyGateaux-explicit}.

\begin{thm}\label{representer-theorem-XY1}
Suppose that $\mathcal{B}$ is a right-sided RKBS with the right-sided reproducing kernel $K$ and $x_j\in X$, $j\in\mathbb{N}_m,$ such that $K(x_j,\cdot)\in\mathcal{B}^*$, $j\in\mathbb{N}_m,$ are linearly independent. If $\mathcal{B}$ is reflexive and strictly convex, then the minimum norm interpolation problem \eqref{mni} with $\mathbf{y}\in\mathbb{R}^m$ has a unique solution $\hat{f}$ in the form
\begin{equation}\label{representer-XY1}
\hat{f}=\rho\mathcal{G}^*
\left(\sum_{j\in\mathbb{N}_m}c_jK(x_j,\cdot)\right),
\end{equation}
for some $c_j\in\mathbb{R}$, $j\in\mathbb{N}_m,$ where $\rho:=\left\|\sum_{j\in\mathbb{N}_m}c_jK(x_j,\cdot)\right\|_{\mathcal{B}^*}$.
\end{thm}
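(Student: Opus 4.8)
The plan is to reduce Theorem \ref{representer-theorem-XY1} to the already-established explicit representer theorem, Theorem \ref{representer-theorembyGateaux-explicit}, by recognizing that in a right-sided RKBS the interpolation functionals are exactly the point-evaluation functionals, which the reproducing property \eqref{reproducing-property-XY} identifies with the closed-form representers $K(x_j,\cdot)\in\mathcal{B}^*$. First I would dispose of existence and uniqueness: reflexivity of $\mathcal{B}$ yields existence of a solution through Corollary \ref{existence-mni-reflexive}, while strict convexity of $\mathcal{B}$ forces the solution to be unique. Thus there is a single $\hat{f}$ to represent, and it remains only to put it in the stated explicit form.

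The heart of the argument is verifying the one hypothesis of Theorem \ref{representer-theorembyGateaux-explicit} that is not assumed outright here, namely that the dual space $\mathcal{B}^*$ is smooth; this is where I expect the only genuine subtlety to lie. It is settled by the standard duality between strict convexity and smoothness. Since $\mathcal{B}$ is reflexive, we may write $\mathcal{B}=(\mathcal{B}^*)^*$, so the strict convexity of $\mathcal{B}$ says precisely that the dual of the space $\mathcal{B}^*$ is strictly convex; the classical implication that strict convexity of the dual implies smoothness of the space, applied to $\mathcal{B}^*$, then gives that $\mathcal{B}^*$ is smooth. With this in hand the full hypotheses of Theorem \ref{representer-theorembyGateaux-explicit} are met upon taking $\nu_j:=\delta_{x_j}$, which are linearly independent precisely because the $K(x_j,\cdot)$ are.

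Finally I would invoke Theorem \ref{representer-theorembyGateaux-explicit}, whose ``only if'' direction, applied to the unique solution $\hat{f}$, produces coefficients $c_j\in\mathbb{R}$, $j\in\mathbb{N}_m$, with
$$
\hat{f}=\rho\,\mathcal{G}^*\left(\sum_{j\in\mathbb{N}_m}c_j\delta_{x_j}\right),\qquad \rho:=\left\|\sum_{j\in\mathbb{N}_m}c_j\delta_{x_j}\right\|_{\mathcal{B}^*}.
$$
Replacing each point-evaluation functional $\delta_{x_j}$ by its representer $K(x_j,\cdot)$ via \eqref{reproducing-property-XY} turns this into the claimed formula \eqref{representer-XY1}, with $\rho=\left\|\sum_{j\in\mathbb{N}_m}c_jK(x_j,\cdot)\right\|_{\mathcal{B}^*}$. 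The main obstacle, as noted, is the smoothness-of-$\mathcal{B}^*$ step; once that duality fact is recorded, the result follows by direct substitution with no further computation.
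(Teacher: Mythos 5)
Your proposal is correct and follows essentially the same route as the paper: establish existence and uniqueness from reflexivity and strict convexity, deduce smoothness of $\mathcal{B}^*$ from strict convexity of the reflexive space $\mathcal{B}$, apply Theorem \ref{representer-theorembyGateaux-explicit} with $\nu_j:=\delta_{x_j}$, and substitute $K(x_j,\cdot)$ via the reproducing property \eqref{reproducing-property-XY}. You correctly identified the smoothness-of-$\mathcal{B}^*$ step as the one hypothesis needing verification, which is precisely how the paper's proof handles it.
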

\begin{proof}
As pointed out in Corollary \ref{representer-theorem-XY}, the minimum norm interpolation problem \eqref{mni} has a unique solution $\hat{f}$. Since $\mathcal{B}$ is reflexive, it follows from the strict convexity of $\mathcal{B}$ that $\mathcal{B}^*$ is smooth. The hypotheses of Theorem \ref{representer-theorembyGateaux-explicit} are satisfied. Hence, by Theorem \ref{representer-theorembyGateaux-explicit}, there exist $c_j\in\mathbb{R}$, $j\in\mathbb{N}_m,$ such that
$$
\hat{f}=\rho\mathcal{G}^*\left(\sum_{j\in\mathbb{N}_m}c_j\delta_{x_j}\right),
$$
with $\rho:=\left\|\sum_{j\in\mathbb{N}_m}c_j\delta_{x_j}\right\|_{\mathcal{B}^*}$. By using the reproduction property \eqref{reproducing-property-XY},
we obtain from the above equation the desired formula \eqref{representer-XY1}.
\end{proof}

Finally, we consider the minimum norm interpolation problem in the Banach space $\ell_1(\mathbb{N})$, which was studied in \cite{CX}. Note that $\ell_1(\mathbb{N})$ has $c_0$ as its pre-dual space and $\ell_{\infty}(\mathbb{N})$ as its dual space.
The space $\ell_1(\mathbb{N})$ is not reflexive since the dual space of  $\ell_{\infty}(\mathbb{N})$ is not $\ell_1(\mathbb{N})$, and its pre-dual space $c_0$ is not smooth. The dual bilinear form $\langle\cdot,\cdot\rangle_{\ell_1}$ on $\ell_{\infty}(\mathbb{N})\times \ell_1(\mathbb{N})$ is defined by
$$
\langle\mathbf{u},\mathbf{x}\rangle_{\ell_1}:=\sum_{j\in\mathbb{N}}u_jx_j,
$$
for all $\mathbf{u}:=(u_j:j\in\mathbb{N})\in \ell_{\infty}(\mathbb{N})$ and all
$\mathbf{x}:=(x_j:j\in\mathbb{N})\in \ell_1(\mathbb{N})$. In this case, the functionals in the minimum norm interpolation problem belong to $c_0$. Specifically, we suppose that $\mathbf{u}_j, j\in\mathbb{N}_m,$ are a finite number of linearly independent elements in $c_0$. The operator $\mathcal{L}:\ell_1(\mathbb{N})\rightarrow\mathbb{R}^m$, defined by \eqref{functional-operator}, has the form
\begin{equation}\label{functional-operator-l1}
\mathcal{L}(\mathbf{x}):=[\langle\mathbf{u}_j,
\mathbf{x}\rangle_{\ell_1}:j\in\mathbb{N}_m],
\ \ \mbox {for all}\ \ \mathbf{x}\in \ell_1(\mathbb{N}).
\end{equation}

To obtain the representer theorem for this case from Theorem \ref{representer-theorem-subdifferential-predual}, we need to compute explicitly the subdifferentials of the norm $\|\cdot\|_\infty$ of $c_0$. Let $X$ be a vector space and $\mathcal{V}$ a subset. The convex hull of $\mathcal{V}$, denoted by ${\rm co}(\mathcal{V})$, is the collection of all convex combinations of elements of $\mathcal{V}$, that is,
$$
{\rm co}(\mathcal{V}):=\left\{\sum_{j\in\mathbb{N}_n}t_jx_j: x_j\in \mathcal{V}, t_j\in\mathbb{R}_+:=[0,+\infty), \sum_{j\in\mathbb{N}_n}t_j=1, j\in\mathbb{N}_n, n\in\mathbb{N} \right\}.
$$
For each $\mathbf{u}\in c_0$, we let $\mathbb{N}(\mathbf{u})$ denote the index set on which the sequence $\mathbf{u}$ attains its norm $\|\mathbf{u}\|_{\infty}$, that is,
\begin{equation}\label{extremal-index-set}
\|\mathbf{u}\|_{\infty}=|u_j|, \ \ j\in \mathbb{N}(\mathbf{u}) \ \ \mbox{and}\ \ \|\mathbf{u}\|_{\infty}>|u_j|, \ \ j\notin \mathbb{N}(\mathbf{u}).
\end{equation}
For each $\mathbf{u}\in c_0$, since $u_j$ tends to zero as $j\to +\infty$, we have that
$$
\#\mathbb{N}(\mathbf{u})<+\infty.
$$
To present the subdifferentials of the norm $\|\cdot\|_{\infty}$ of $c_0$ at any $\mathbf{u}:=(u_j:j\in\mathbb{N})\in c_0$, we introduce a subset of $\ell_1(\mathbb{N})$ as
\begin{equation}\label{convex-hull}
\mathcal{V}(\mathbf{u}):=\{{\rm sign}(u_j)\mathbf{e}_j:j\in\mathbb{N}(\mathbf{u})\},
\end{equation}
where for each $j\in\mathbb{N}$, $\mathbf{e}_j$ denotes the vector in $\ell_1(\mathbb{N})$ whose $j$th component is equal to 1 and all other components are zero.

The following lemma which was essentially proved in \cite{CX} presents the subdifferential of the norm $\|\cdot\|_{\infty}$ of $c_0$ at any nonzero $\mathbf{u}:=(u_j:j\in\mathbb{N})\in c_0$.

\begin{lemma}\label{subdifferentials-c0}
If $\mathbf{u}:=(u_j:j\in\mathbb{N})$ is a nonzero element in $c_0$ and $\mathbb{N}(\mathbf{u})$ and $\mathcal{V}(\mathbf{u})$ are defined by \eqref{extremal-index-set} and \eqref{convex-hull}, respectively, then there holds
\begin{equation}\label{subdifferentials-formula-c0}
\partial\|\cdot\|_{\infty}(\mathbf{u})={\rm co}(\mathcal{V}(\mathbf{u})).
\end{equation}
\end{lemma}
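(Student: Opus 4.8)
The plan is to read off the formula from the identity \eqref{relation-subdifferential-functionals}, applied with the underlying Banach space taken to be $c_0$, whose dual space is $\ell_1(\mathbb{N})$. That identity reduces the computation of $\partial\|\cdot\|_{\infty}(\mathbf{u})$ to the characterization of the \emph{peak functionals} of the nonzero element $\mathbf{u}$, namely those $\nu=(\nu_j:j\in\mathbb{N})\in\ell_1(\mathbb{N})$ satisfying
\begin{equation*}
\|\nu\|_1=1 \quad\mbox{and}\quad \sum_{j\in\mathbb{N}}\nu_j u_j=\|\mathbf{u}\|_{\infty}.
\end{equation*}
It therefore suffices to show that the set of all such $\nu$ coincides with ${\rm co}(\mathcal{V}(\mathbf{u}))$.

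For the inclusion $\partial\|\cdot\|_{\infty}(\mathbf{u})\subseteq {\rm co}(\mathcal{V}(\mathbf{u}))$, I would analyze the equality case in the two-step estimate
\begin{equation*}
\sum_{j\in\mathbb{N}}\nu_j u_j\le\sum_{j\in\mathbb{N}}|\nu_j|\,|u_j|\le\|\mathbf{u}\|_{\infty}\sum_{j\in\mathbb{N}}|\nu_j|=\|\mathbf{u}\|_{\infty}.
\end{equation*}
Equality in the first inequality forces $\nu_j u_j=|\nu_j|\,|u_j|$ for every $j$, so that $\nu_j$ shares the sign of $u_j$ whenever $\nu_j\neq0$; equality in the second forces $|\nu_j|(\|\mathbf{u}\|_{\infty}-|u_j|)=0$ for every $j$, and since $\|\mathbf{u}\|_{\infty}-|u_j|>0$ for $j\notin\mathbb{N}(\mathbf{u})$, the support of $\nu$ must lie in $\mathbb{N}(\mathbf{u})$. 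Setting $t_j:=|\nu_j|$ for $j\in\mathbb{N}(\mathbf{u})$, the normalization $\|\nu\|_1=1$ gives $\sum_{j\in\mathbb{N}(\mathbf{u})}t_j=1$, and the sign condition yields $\nu=\sum_{j\in\mathbb{N}(\mathbf{u})}t_j\,{\rm sign}(u_j)\mathbf{e}_j$, exhibiting $\nu$ as a convex combination of elements of $\mathcal{V}(\mathbf{u})$.

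The reverse inclusion is a direct verification: for $\nu=\sum_{j\in\mathbb{N}(\mathbf{u})}t_j\,{\rm sign}(u_j)\mathbf{e}_j$ with $t_j\ge0$ and $\sum_{j\in\mathbb{N}(\mathbf{u})}t_j=1$, one computes $\|\nu\|_1=\sum_{j\in\mathbb{N}(\mathbf{u})}t_j=1$ and, using $|u_j|=\|\mathbf{u}\|_{\infty}$ for $j\in\mathbb{N}(\mathbf{u})$,
\begin{equation*}
\sum_{j\in\mathbb{N}}\nu_j u_j=\sum_{j\in\mathbb{N}(\mathbf{u})}t_j|u_j|=\|\mathbf{u}\|_{\infty},
\end{equation*}
so $\nu$ is a peak functional of $\mathbf{u}$.

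I expect the only genuinely delicate point to be the forward direction: the single scalar equality $\sum_{j\in\mathbb{N}}\nu_j u_j=\|\mathbf{u}\|_{\infty}$ must be split into the two separate sharp inequalities above so that the sign condition and the support condition can be extracted independently. The membership $\mathbf{u}\in c_0$ enters crucially here, since it guarantees that $\|\mathbf{u}\|_{\infty}$ is actually attained and that $\#\mathbb{N}(\mathbf{u})<+\infty$; consequently $\mathcal{V}(\mathbf{u})$ is a finite set, ${\rm co}(\mathcal{V}(\mathbf{u}))$ is a genuine finite-dimensional simplex, and all the series above reduce to finite sums over $\mathbb{N}(\mathbf{u})$ once the support restriction has been established.
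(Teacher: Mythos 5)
Your proof is correct. Note that the paper does not actually prove this lemma: it states that the result "was essentially proved in \cite{CX}" and gives no argument, so your write-up in effect supplies the proof the paper delegates to that reference, and it does so using only machinery already present in the paper — the identity \eqref{relation-subdifferential-functionals} with $\mathcal{B}:=c_0$ (so $\mathcal{B}^{*}=\ell_1(\mathbb{N})$) reduces everything to the peak-functional characterization, the same device the paper exploits in Lemma \ref{subdifferential-solvable-thm}. Two small points are worth making explicit in the forward direction. First, extracting term-by-term conditions from the single scalar equality $\sum_{j\in\mathbb{N}}\nu_ju_j=\|\mathbf{u}\|_{\infty}$ is legitimate precisely because each summand obeys $\nu_ju_j\leq|\nu_j|\,|u_j|\leq\|\mathbf{u}\|_{\infty}|\nu_j|$ and all series converge absolutely, so equality of the sums forces equality in every term; your splitting into the two sharp inequalities is the right mechanism. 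Second, the sign condition only constrains $\nu_j$ at indices where $\nu_j\neq0$; at indices $j\in\mathbb{N}(\mathbf{u})$ with $\nu_j=0$ one takes $t_j=0$, which is permitted since the paper's definition of ${\rm co}(\mathcal{V}(\mathbf{u}))$ only requires nonnegative coefficients summing to one. Your remark on the role of $\mathbf{u}\in c_0$ is also exactly right: since $\mathbf{u}\neq0$ we have $\|\mathbf{u}\|_{\infty}>0$, so $u_j\neq0$ and ${\rm sign}(u_j)$ is well defined for $j\in\mathbb{N}(\mathbf{u})$, and $\#\mathbb{N}(\mathbf{u})<+\infty$ guarantees the representation of $\nu$ is a \emph{finite} convex combination, matching the definition of the convex hull given before \eqref{convex-hull}.
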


Combining Lemma \ref{subdifferentials-c0} and Theorem \ref{representer-theorem-subdifferential-predual}, we may get the following representer theorem for the minimum norm interpolation problem in $\ell_1(\mathbb{N})$.

\begin{thm}\label{representer-theorem-l1}
Suppose that $\mathbf{u}_j, j\in\mathbb{N}_m,$ are a finite number of linearly independent elements in $c_0$. Let $\mathbf{y}\in\mathbb{R}^m$ and $\mathcal{L}$ and $\mathcal{M}_{\mathbf{y}}$ be defined by \eqref{functional-operator-l1} and \eqref{My}, respectively. Then $\hat{\mathbf{x}}\in \ell_1(\mathbb{N})$ is a solution of the minimum norm interpolation problem \eqref{mni} with $\mathbf{y}$ in $\ell_1(\mathbb{N})$ if and only if
$\hat{\mathbf{x}}\in\mathcal{M}_{\mathbf{y}}$ and there exist $c_j\in\mathbb{R}$, $j\in\mathbb{N}_m,$ such that
\begin{equation}\label{rt-subdifferential-l1}
\hat{\mathbf{x}}\in\|\mathbf{u}\|_{\infty}{\rm co}(\mathcal{V}
(\mathbf{u})),
\end{equation}
with $\mathbf{u}:=\sum_{j\in\mathbb{N}_m}c_j\mathbf{u}_j.$
\end{thm}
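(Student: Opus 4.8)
The plan is to read off Theorem~\ref{representer-theorem-l1} as a direct specialization of Theorem~\ref{representer-theorem-subdifferential-predual}, with the abstract subdifferential replaced by the concrete formula of Lemma~\ref{subdifferentials-c0}. First I would fix the functional-analytic setting: take $\mathcal{B}:=\ell_1(\mathbb{N})$, whose pre-dual is $\mathcal{B}_*:=c_0$ with norm $\|\cdot\|_\infty$, and note that the dual pairing $\langle\cdot,\cdot\rangle_{\ell_1}$ restricted to $c_0\times\ell_1(\mathbb{N})$ is exactly the pairing $\langle\cdot,\cdot\rangle_{\mathcal{B}_*}$ appearing in Theorem~\ref{representer-theorem-subdifferential-predual}. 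Since the $\mathbf{u}_j\in c_0$, $j\in\mathbb{N}_m$, are linearly independent and $\mathcal{L}$ is given by \eqref{functional-operator-l1} with $\nu_j:=\mathbf{u}_j$, the hypotheses of that theorem are met, and its subdifferential $\partial\|\cdot\|_\infty(\mathbf{u})$ is computed in $c_0$, so its elements lie in $(c_0)^*=\ell_1(\mathbb{N})$, consistently with $\hat{\mathbf{x}}\in\ell_1(\mathbb{N})$.

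Applying Theorem~\ref{representer-theorem-subdifferential-predual} then gives: $\hat{\mathbf{x}}\in\ell_1(\mathbb{N})$ solves \eqref{mni} with $\mathbf{y}$ if and only if $\hat{\mathbf{x}}\in\mathcal{M}_{\mathbf{y}}$ and there exist $c_j\in\mathbb{R}$, $j\in\mathbb{N}_m$, with
\[
\hat{\mathbf{x}}\in\|\mathbf{u}\|_\infty\,\partial\|\cdot\|_\infty(\mathbf{u}),\qquad \mathbf{u}:=\sum_{j\in\mathbb{N}_m}c_j\mathbf{u}_j.
\]
It therefore remains only to identify the set $\|\mathbf{u}\|_\infty\,\partial\|\cdot\|_\infty(\mathbf{u})$ with $\|\mathbf{u}\|_\infty\,{\rm co}(\mathcal{V}(\mathbf{u}))$ for every $\mathbf{u}$ in the linear span of the $\mathbf{u}_j$. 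When $\mathbf{u}\neq0$, this is immediate from Lemma~\ref{subdifferentials-c0}, which yields $\partial\|\cdot\|_\infty(\mathbf{u})={\rm co}(\mathcal{V}(\mathbf{u}))$, so the two sets coincide and \eqref{rt-subdifferential-l1} follows.

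The only point requiring care, and the step I expect to be the main (though minor) obstacle, is the degenerate case $\mathbf{u}=0$, where Lemma~\ref{subdifferentials-c0} does not apply. I would dispose of it by observing that the scalar prefactor $\|\mathbf{u}\|_\infty=0$ collapses both candidate sets to $\{0\}$, that is $\|\mathbf{u}\|_\infty\,\partial\|\cdot\|_\infty(\mathbf{0})=\{0\}=\|\mathbf{u}\|_\infty\,{\rm co}(\mathcal{V}(\mathbf{0}))$, so the pointwise set identity persists. This case is relevant only for $\mathbf{y}=0$, where \eqref{mni} has the unique solution $\hat{\mathbf{x}}=0$, realized by the choice $c_j=0$, $j\in\mathbb{N}_m$; conversely, if $\mathbf{y}\neq0$ then $0\notin\mathcal{M}_{\mathbf{y}}$, so any solution is nonzero and the membership $\hat{\mathbf{x}}\in\|\mathbf{u}\|_\infty\,\partial\|\cdot\|_\infty(\mathbf{u})$ forces $\mathbf{u}\neq0$, reducing matters to the nondegenerate argument. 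Since the set identity holds for every admissible $\mathbf{u}$, the two existential conditions are equivalent; substituting the identity into the characterization furnished by Theorem~\ref{representer-theorem-subdifferential-predual} produces exactly \eqref{rt-subdifferential-l1}, completing the argument.
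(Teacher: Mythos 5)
Your proposal is correct and follows essentially the same route as the paper: specialize Theorem \ref{representer-theorem-subdifferential-predual} to $\mathcal{B}=\ell_1(\mathbb{N})$ with pre-dual $c_0$ and substitute the subdifferential formula of Lemma \ref{subdifferentials-c0}. The only organizational difference is that the paper isolates $\mathbf{y}=0$ and verifies that case by a direct computation (deriving $\langle\mathbf{u},\hat{\mathbf{x}}\rangle_{\ell_1}=\|\mathbf{u}\|_\infty^2$ to force $\mathbf{u}=0$), whereas you absorb the degenerate case into the pointwise set identity at $\mathbf{u}=0$, which is a legitimate and slightly more uniform treatment.
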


\begin{proof}
We first consider the case when $\mathbf{y}=0.$ The minimum norm interpolation problem \eqref{mni} with $\mathbf{y}=0$ has a unique solution $\hat{\mathbf{x}}=0.$ On one hand, if $\hat{\mathbf{x}}=0$, there hold $\hat{\mathbf{x}}\in\mathcal{M}_{0}$ and \eqref{rt-subdifferential-l1} with $c_j=0$, $j\in\mathbb{N}_m.$ On the other hand, suppose that $\hat{\mathbf{x}}\in\mathcal{M}_{0}$ and equation \eqref{rt-subdifferential-l1} holds for some $c_j\in\mathbb{R}$, $j\in\mathbb{N}_m.$ Set $\mathbf{u}:=(u_j:j\in\mathbb{N})$. It follows that
$\langle\mathbf{u},\hat{\mathbf{x}}\rangle_{\ell_1}=0$ and
\begin{equation}\label{rt-subdifferential-l10}
\hat{\mathbf{x}}=\|\mathbf{u}\|_{\infty}
\sum_{k\in\mathbb{N}_n}t_k{\rm sign}(u_{j_k})\mathbf{e}_{j_k},
\end{equation}
for some $n\in\mathbb{N}$, $j_k\in \mathbb{N}(\mathbf{u})$, $t_k\in\mathbb{R}_+$, $k\in\mathbb{N}_n$, with $\sum_{k\in\mathbb{N}_n}t_k=1$.
If $\mathbf{u}=0$, equation \eqref{rt-subdifferential-l10} leads to $\hat{\mathbf{x}}=0$. If $\mathbf{u}\neq0$, by equation \eqref{rt-subdifferential-l10} we have that
\begin{equation}\label{rt-subdifferential-l11}
\langle\mathbf{u},\hat{\mathbf{x}}\rangle_{\ell_1}=\|\mathbf{u}\|_{\infty}
\sum_{k\in\mathbb{N}_n}t_k{\rm sign}(u_{j_k})
\langle\mathbf{u},\mathbf{e}_{j_k}\rangle_{\ell_1}
=\|\mathbf{u}\|_{\infty}
\sum_{k\in\mathbb{N}_n}t_k|u_{j_k}|.
\end{equation}
By definition \eqref{extremal-index-set} of $\mathbb{N}(\mathbf{u})$, there holds
$$
|u_{j_k}|=\|\mathbf{u}\|_{\infty},\ \ \mbox{for all}\ \ k\in\mathbb{N}_n.
$$
Substituting the equations above and the fact that $\sum_{k\in\mathbb{N}_n}t_k=1$ into the right hand side of equation \eqref{rt-subdifferential-l11}, we obtain that
$\langle\mathbf{u},\hat{\mathbf{x}}\rangle_{\ell_1}=\|\mathbf{u}\|_{\infty}^2$, which together with $\langle\mathbf{u},\hat{\mathbf{x}}\rangle_{\ell_1}=0$ yields $\mathbf{u}=0$. This is a contradiction.

We prove this result for $\mathbf{y}\neq0$ by employing Theorem \ref{representer-theorem-subdifferential-predual}. Note that the minimum norm interpolation problem \eqref{mni} with $\mathbf{y}\neq0$ has no trival solution. Since $\ell_1(\mathbb{N})$ has $c_0$ as its pre-dual space, Theorem \ref{representer-theorem-subdifferential-predual} ensures that $\hat{\mathbf{x}}\in \ell_1(\mathbb{N})$ is a solution of the minimum norm interpolation problem \eqref{mni} if and only if $\hat{\mathbf{x}}\in\mathcal{M}_{\mathbf{y}}$ and there exist $c_j\in\mathbb{R}$, $j\in\mathbb{N}_m,$ such that
$$
\hat{\mathbf{x}}\in\gamma\partial\|\cdot\|_{\infty}
\left(\sum_{j\in\mathbb{N}_m}c_j\mathbf{u}_j\right),
$$
with $\gamma:=\left\|\sum_{j\in\mathbb{N}_m}c_j\mathbf{u}_j\right\|_{\infty}$. Substituting equation \eqref{subdifferentials-formula-c0} of Lemma \ref{subdifferentials-c0} into the right hand side of the above equation and letting $\mathbf{u}:=\sum_{j\in\mathbb{N}_m}c_j\mathbf{u}_j$, we get the desired equation \eqref{rt-subdifferential-l1}.
\end{proof}

The minimum norm interpolation problem in $\ell_1(\bN)$ was considered in \cite{CX}, where the original optimization problem was reformulated as a dual problem in a finite dimensional vector space, which can be solved by linear programming.

\section{Representer Theorem Based Solution Methods for Minimum Norm Interpolation}
The representer theorems presented in the last section for the minimum norm interpolation problem \eqref{mni} give only forms of the solutions for the problem. They do {\it not} provide methods to determine the coefficients $c_j$ involved in the solution representations. We develop in this section approaches to determine these coefficients, leading to solution methods for solving the minimum norm interpolation problem \eqref{mni} when the Banach space $\mathcal{B}$ has the pre-dual space $\mathcal{B}_{*}$ and $\nu_j\in\mathcal{B}_*$, for $j\in\mathbb{N}_m$. We will consider both cases when the pre-dual space is smooth and when it is non-smooth.

As a preparation, we express the adjoint operator $\mathcal{L}^*$ of $\mathcal{L}$ defined by \eqref{functional-operator}. According to the continuity of the linear operator $\mathcal{L}$ on $\mathcal{B}$, there exists a unique bounded linear operator $\mathcal{L}^*:\mathbb{R}^m\rightarrow\mathcal{B}^*$, called the adjoint operator of $\mathcal{L}$,  such that for any $f\in\mathcal{B}$ and any $\mathbf{c}=[c_j:j\in\mathbb{N}_m]\in\mathbb{R}^m$ that
$$
\langle\mathcal{L}^*(\mathbf{c}),f\rangle_{\mathcal{B}}
=\langle\mathbf{c},\mathcal{L}(f)\rangle_{\mathbb{R}^m}.
$$
It follows from definition \eqref{functional-operator} of $\mathcal{L}$ that
$$
\langle\mathcal{L}^*(\mathbf{c}),f\rangle_{\mathcal{B}}
=\sum_{j\in\mathbb{N}_m}c_j\langle\nu_j,f\rangle_{\mathcal{B}}
=\left\langle\sum_{j\in\mathbb{N}_m}c_j\nu_j,f\right\rangle_{\mathcal{B}},
$$
which leads to
\begin{equation}\label{adjoint-operator}
\mathcal{L}^*(\mathbf{c})=\sum_{j\in\mathbb{N}_m}c_j\nu_j.
\end{equation}

\subsection{Solutions of Minimum Norm Interpolation in a Banach Space with a Smooth Pre-Dual Space}

In this subsection, we provide the complete solution of the minimum norm interpolation problem \eqref{mni} in a Banach space having the smooth pre-dual space $\mathcal{B}_*$. In this case, the solution of the minimum norm interpolation problem \eqref{mni} with data $\mathbf{y}$ can be obtained by employing Theorem \ref{representer-theorembyGateaux-predual} with the coefficients $c_j$ involved in it being chosen as a solution of a system of equations.

\begin{thm}\label{solution-mni-smooth-B_*}
Suppose that $\mathcal{B}$ is a Banach space having the smooth pre-dual space $\mathcal{B}_{*}$ and $\nu_j\in\mathcal{B}_*$, $j\in\mathbb{N}_m$, are linearly independent. Let $\mathbf{y}:=[y_j:j\in\mathbb{N}_m]\in\mathbb{R}^m$, $\mathcal{L}$ be the operator defined by \eqref{functional-operator} and $\mathcal{L}^*$ be the adjoint operator. Then \begin{equation}\label{rt-Gateaux-predual-solve}
\hat{f}:=\|\mathcal{L}^*(\mathbf{c})\|_{\mathcal{B}_*}
\mathcal{G}_*(\mathcal{L}^*(\mathbf{c})),
\end{equation}
is a solution of the minimum norm interpolation problem \eqref{mni} with $\mathbf{y}$ if and only if $\mathbf{c}\in\mathbb{R}^m$ is a solution of the system of equations
\begin{equation}\label{equation-c-smooth-B_*}
\langle\nu_k,\|\mathcal{L}^*(\mathbf{c})\|_{\mathcal{B}_*}
\mathcal{G}_*(\mathcal{L}^*(\mathbf{c}))\rangle_{\mathcal{B}}
=y_k,\ k\in\mathbb{N}_m.
\end{equation}
\end{thm}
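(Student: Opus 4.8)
The plan is to obtain Theorem \ref{solution-mni-smooth-B_*} as a direct consequence of the representer theorem already established in Theorem \ref{representer-theorembyGateaux-predual}, by reading the membership condition $\hat{f}\in\mathcal{M}_{\mathbf{y}}$ as the system \eqref{equation-c-smooth-B_*}. The first step is to record the identification $\mathcal{L}^*(\mathbf{c})=\sum_{j\in\mathbb{N}_m}c_j\nu_j$ furnished by \eqref{adjoint-operator}. With this in hand, the candidate element \eqref{rt-Gateaux-predual-solve} is precisely the object $\rho\,\mathcal{G}_*\!\left(\sum_{j\in\mathbb{N}_m}c_j\nu_j\right)$ appearing in Theorem \ref{representer-theorembyGateaux-predual}, where $\rho=\left\|\sum_{j\in\mathbb{N}_m}c_j\nu_j\right\|_{\mathcal{B}_*}=\|\mathcal{L}^*(\mathbf{c})\|_{\mathcal{B}_*}$.

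For the necessity direction I would assume that the element $\hat{f}$ defined by \eqref{rt-Gateaux-predual-solve} is a solution of \eqref{mni}. By the very definition of the minimum norm interpolation problem, every solution lies in $\mathcal{M}_{\mathbf{y}}$, so $\mathcal{L}(\hat{f})=\mathbf{y}$, which by the definition \eqref{functional-operator} of $\mathcal{L}$ means $\langle\nu_k,\hat{f}\rangle_{\mathcal{B}}=y_k$ for every $k\in\mathbb{N}_m$. Substituting the expression \eqref{rt-Gateaux-predual-solve} for $\hat{f}$ into these equations produces exactly \eqref{equation-c-smooth-B_*}, so $\mathbf{c}$ solves the system. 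Note this half needs only the definition of a solution, not the representer theorem itself.

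For sufficiency I would assume that $\mathbf{c}$ satisfies \eqref{equation-c-smooth-B_*}. Reading these equations as the statement $\langle\nu_k,\hat{f}\rangle_{\mathcal{B}}=y_k$ for the element $\hat{f}:=\|\mathcal{L}^*(\mathbf{c})\|_{\mathcal{B}_*}\mathcal{G}_*(\mathcal{L}^*(\mathbf{c}))$ shows that $\mathcal{L}(\hat{f})=\mathbf{y}$, i.e. $\hat{f}\in\mathcal{M}_{\mathbf{y}}$. Simultaneously, by the first step $\hat{f}$ already has the form $\rho\,\mathcal{G}_*\!\left(\sum_{j\in\mathbb{N}_m}c_j\nu_j\right)$ with $\rho=\left\|\sum_{j\in\mathbb{N}_m}c_j\nu_j\right\|_{\mathcal{B}_*}$. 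Thus $\hat{f}$ meets both conditions in the ``if'' part of Theorem \ref{representer-theorembyGateaux-predual}, which guarantees that $\hat{f}$ is a solution of \eqref{mni}.

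Since the argument is a faithful translation of Theorem \ref{representer-theorembyGateaux-predual}, there is no deep obstacle; the only point requiring a moment of care is the degenerate configuration $\mathcal{L}^*(\mathbf{c})=0$, where the convention $\mathcal{G}_*(0)=0$ together with $\rho=0$ forces $\hat{f}=0$. This case is already absorbed into Theorem \ref{representer-theorembyGateaux-predual}, so invoking that theorem directly avoids any separate treatment. I would also remark that the linear independence of the $\nu_j$ in $\mathcal{B}_*$ is used only through the hypotheses of Theorem \ref{representer-theorembyGateaux-predual}, guaranteeing that $\mathcal{M}_{\mathbf{y}}$ is nonempty and that the representation is valid, and does not reappear in the present reduction.
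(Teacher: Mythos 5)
Your proposal is correct and takes essentially the same route as the paper's own proof: necessity is the direct substitution of \eqref{rt-Gateaux-predual-solve} into the interpolation condition $\mathcal{L}(\hat f)=\mathbf{y}$, and sufficiency combines the identification $\mathcal{L}^*(\mathbf{c})=\sum_{j\in\mathbb{N}_m}c_j\nu_j$ from \eqref{adjoint-operator} with the ``if'' direction of Theorem \ref{representer-theorembyGateaux-predual}. Your side remark on the degenerate case $\mathcal{L}^*(\mathbf{c})=0$ is accurate but, as you observe, already absorbed into that theorem, so no separate treatment is needed.
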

\begin{proof}
Suppose that $\hat{f}$ in the form
\eqref{rt-Gateaux-predual-solve} for some $\mathbf{c}\in\mathbb{R}^m$ is a solution of \eqref{mni} with $\mathbf{y}$. Substituting \eqref{rt-Gateaux-predual-solve} into the interpolation condition $\mathcal{L}(\hat{f})=\mathbf{y}$, we have that the vector $\mathbf{c}$ satisfies \eqref{equation-c-smooth-B_*}.

Conversely, we suppose that the vector $\mathbf{c}$ satisfies the system of equations \eqref{equation-c-smooth-B_*}. We first comment that $\hat{f}$ in the form \eqref{rt-Gateaux-predual-solve} is in $\mathcal{B}$ since the operator $\mathcal{G}_*$ maps $\mathcal{B}_{*}$ to $\mathcal{B}$. We will prove by Theorem \ref{representer-theorembyGateaux-predual} that $\hat{f}$ is a solution of the minimum norm interpolation problem \eqref{mni} with $\mathbf{y}$. Substituting \eqref{rt-Gateaux-predual-solve} into \eqref{equation-c-smooth-B_*} leads to the interpolation condition $\mathcal{L}(\hat{f})=\mathbf{y}$. Then by Theorem \ref{representer-theorembyGateaux-predual} and the representation \eqref{adjoint-operator} of the adjoint operator $\mathcal{L}^*$, we conclude that $\hat{f}\in\mathcal{B}$ is a solution of the minimum norm interpolation problem \eqref{mni} with data $\mathbf{y}$.
\end{proof}

There are two interesting special cases. The first one concerns the minimum norm interpolation in a Hilbert space, that is, $\mathcal{B}=\mathcal{H}$ is a Hilbert space. In this case, $\mathcal{H}_*=\mathcal{H}$ and the linearly independent functionals $\nu_j$ can be identified with $g_j\in\mathcal{H},$ for $j\in\mathbb{N}_m$. We then introduce the Gram matrix as
\begin{equation}\label{Gram-matrix}
\mathbf{G}:=[\langle g_j,g_k\rangle_{\mathcal{H}}:j,k\in\mathbb{N}_m].
\end{equation}

\begin{cor}\label{solution-mni-H}
Suppose that $\mathcal{H}$ is a Hilbert space and $g_j\in\mathcal{H}$, $j\in\mathbb{N}_m$, are linearly independent. Let $\mathbf{y}\in\mathbb{R}^m$, $\mathcal{L}$ be defined by \eqref{functional-operator} and $\mathcal{L}^*$ be the adjoint operator. Then \begin{equation}\label{rt-H-solve}
\hat{f}:=\sum_{j\in\mathbb{N}_m}c_jg_j
\end{equation}
is the unique solution of the minimum norm interpolation \eqref{mni} with $\mathbf{y}$ if and only if $\mathbf{c}:=[c_j:j\in\mathbb{N}_m]\in\mathbb{R}^m$ is the solution of the linear system of equations
\begin{equation}\label{equation-c-H}
\mathbf{G}\mathbf{c}=\mathbf{y}.
\end{equation}
\end{cor}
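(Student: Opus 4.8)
The plan is to derive this corollary as the Hilbert-space specialization of Theorem \ref{solution-mni-smooth-B_*}. A Hilbert space $\mathcal{H}$ is its own pre-dual via the Riesz representation, and its norm is G\^ateaux differentiable away from the origin, so the hypotheses of Theorem \ref{solution-mni-smooth-B_*} are met with $\mathcal{B}=\mathcal{B}_*=\mathcal{H}$ and the functionals $\nu_j$ identified with $g_j\in\mathcal{H}$. First I would record, from \eqref{adjoint-operator}, that $\mathcal{L}^*(\mathbf{c})=\sum_{j\in\mathbb{N}_m}c_jg_j$. The key simplification is that in a Hilbert space the G\^ateaux derivative of the norm satisfies $\nu=\|\nu\|_{\mathcal{H}}\mathcal{G}_*(\nu)$ for $\nu\neq0$, which is the identity \eqref{GateauxDiff-H} now read for the norm of $\mathcal{B}_*=\mathcal{H}$. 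Substituting this into the general solution form \eqref{rt-Gateaux-predual-solve} collapses the scalar factor $\|\mathcal{L}^*(\mathbf{c})\|_{\mathcal{H}}$ against the denominator, leaving precisely $\hat{f}=\mathcal{L}^*(\mathbf{c})=\sum_{j\in\mathbb{N}_m}c_jg_j$, which is the asserted form \eqref{rt-H-solve}. The degenerate case $\mathbf{c}=0$ is handled separately but is immediate, since then $\hat{f}=0$.

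Next I would specialize the characterizing system \eqref{equation-c-smooth-B_*}. With $\hat{f}=\sum_{j\in\mathbb{N}_m}c_jg_j$, the $k$-th equation reads $\langle g_k,\sum_{j\in\mathbb{N}_m}c_jg_j\rangle_{\mathcal{H}}=y_k$, which by bilinearity becomes $\sum_{j\in\mathbb{N}_m}c_j\langle g_k,g_j\rangle_{\mathcal{H}}=y_k$. The one point demanding care is the index convention: the Gram matrix is defined in \eqref{Gram-matrix} by $\mathbf{G}=[\langle g_j,g_k\rangle_{\mathcal{H}}:j,k\in\mathbb{N}_m]$, and since $\mathcal{H}$ is a real Hilbert space the inner product is symmetric, so $\langle g_k,g_j\rangle_{\mathcal{H}}=\langle g_j,g_k\rangle_{\mathcal{H}}$ and the system is exactly $\mathbf{G}\mathbf{c}=\mathbf{y}$ of \eqref{equation-c-H}. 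Theorem \ref{solution-mni-smooth-B_*} then delivers the stated equivalence: $\hat{f}=\sum_{j\in\mathbb{N}_m}c_jg_j$ solves \eqref{mni} with $\mathbf{y}$ if and only if $\mathbf{c}$ solves $\mathbf{G}\mathbf{c}=\mathbf{y}$.

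Finally I would establish the uniqueness implicit in ``the'' unique solution. A Hilbert space is uniformly convex, hence strictly convex, so by the remark following Corollary \ref{existence-mni-reflexive} the problem \eqref{mni} has at most one solution; combined with existence (Corollary \ref{existence-mni-reflexive}, $\mathcal{H}$ being reflexive) this yields a unique $\hat{f}$. On the algebraic side, the linear independence of $g_j$, $j\in\mathbb{N}_m$, makes $\mathbf{G}$ symmetric positive definite, hence invertible, so $\mathbf{G}\mathbf{c}=\mathbf{y}$ has a unique solution $\mathbf{c}$, which closes the correspondence between the solution and its coefficient vector. I do not anticipate a genuine obstacle: the argument is entirely a matter of evaluating the general pre-dual formula in the self-dual Hilbert setting, and the only place one can slip is the Gram-matrix transpose bookkeeping, which the symmetry of the real inner product resolves.
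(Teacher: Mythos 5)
Your proof is correct and follows essentially the same route as the paper's: both specialize Theorem \ref{solution-mni-smooth-B_*} with $\mathcal{B}=\mathcal{B}_*=\mathcal{H}$, use the Hilbert-space identity \eqref{GateauxDiff-H} to collapse the representation \eqref{rt-Gateaux-predual-solve} to $\hat{f}=\mathcal{L}^*(\mathbf{c})=\sum_{j\in\mathbb{N}_m}c_jg_j$, and rewrite the system \eqref{equation-c-smooth-B_*} via \eqref{adjoint-operator} as $\mathbf{G}\mathbf{c}=\mathbf{y}$. Your additional bookkeeping (the Gram-matrix index convention resolved by symmetry of the real inner product, and the explicit uniqueness argument via strict convexity of $\mathcal{H}$ alongside positive definiteness of $\mathbf{G}$) merely makes explicit what the paper compresses into the observation that linear independence renders $\mathbf{G}$ symmetric positive definite.
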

\begin{proof}
According to the linear independence of $g_j\in\mathcal{H}$, $j\in\mathbb{N}_m$, the Gram matrix $\mathbf{G}$ is symmetric and positive definite. Therefore, the linear system \eqref{equation-c-H} has a unique solution.
Proposition \ref{solution-mni-smooth-B_*} ensures that $\hat{f}$ in the form
\eqref{rt-Gateaux-predual-solve} is the solution of the minimum norm interpolation \eqref{mni} with $\mathbf{y}$ if and only if $\mathbf{c}:=[c_j:j\in\mathbb{N}_m]\in\mathbb{R}^m$ satisfies
\eqref{equation-c-smooth-B_*}. It suffices to show that in this special case, $\hat{f}$ in the form \eqref{rt-Gateaux-predual-solve} has the form \eqref{rt-H-solve} and the system of equations
\eqref{equation-c-smooth-B_*} can be represented as the linear system \eqref{equation-c-H}. Substituting
\begin{equation}\label{equation-c-H-1}
\|\mathcal{L}^*(\mathbf{c})\|_{\mathcal{B}_*}
\mathcal{G}_{*}(\mathcal{L}^*(\mathbf{c}))=\mathcal{L}^*(\mathbf{c})
\end{equation}
into \eqref{rt-Gateaux-predual-solve}, we represent $\hat{f}$ in \eqref{rt-H-solve}. Again substituting \eqref{equation-c-H-1} into \eqref{equation-c-smooth-B_*}, we get that
$$
\langle g_k,\mathcal{L}^*(\mathbf{c})\rangle_{\mathcal{H}}
=y_k,\ k\in\mathbb{N}_m.
$$
By the representation \eqref{adjoint-operator} of the adjoint operator $\mathcal{L}^*$, the equation above can be rewritten in the form \eqref{equation-c-H}.
\end{proof}

We remark that in the case of Hilbert spaces, the {\it infinite} dimensional minimum norm interpolation problem is reduced to solving an equivalent {\it finite} dimensional linear system. The only infinite dimensional component in the linear system is computing the entries of the Gram matrix $\mathbf{G}$ which requires calculating the inner produces of $g_k$ and $g_j$, elements in the infinite dimensional space $\mathcal{H}$.

As shown in Corollary \ref{solution-mni-H}, the minimum interpolation problem in a Hilbert space is reduced to solving a {\it linear system}. However, in a Banach space, not Hilbert, the problem cannot be reduced to a linear system. This will be demonstrated in the next case, where $\mathcal{B}$ is a uniformly Fr\'{e}chet smooth and uniformly convex Banach space (thus, in this case $\mathcal{B}_*=\mathcal{B}^*$).

\begin{cor}\label{solution-mni-ufuc}
Suppose that $\mathcal{B}$ is a uniformly Fr\'{e}chet smooth and uniformly convex Banach space having the dual space $\mathcal{B}^*$, and $g_j\in\mathcal{B}$, $j\in\mathbb{N}_m,$ such that $g_j^{\sharp}\in\mathcal{B}^*$, $j\in\mathbb{N}_m,$ are linearly independent. Let $\mathbf{y}:=[y_j:j\in\mathbb{N}_m]\in\mathbb{R}^m$, $\mathcal{L}$ be defined by \eqref{functional-operator} and $\mathcal{L}^*$ be the adjoint operator. Then \begin{equation}\label{rt-ufuc-solve}
\hat{f}=(\mathcal{L}^*(\mathbf{c}))^{\sharp},
\end{equation}
is the unique solution of the minimum norm interpolation \eqref{mni} with $\mathbf{y}$ if and only if $\mathbf{c}\in\mathbb{R}^m$ is the solution of the system of equations
\begin{equation}\label{equation-c-ufuc}
[g_k^{\sharp},\mathcal{L}^*(\mathbf{c})]_{\mathcal{B}^*}
=y_k,\ k\in\mathbb{N}_m.
\end{equation}
\end{cor}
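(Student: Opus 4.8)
The plan is to obtain this corollary as a direct specialization of Theorem \ref{solution-mni-smooth-B_*}, translating its G\^ateaux-derivative formulation into the dual-element ($\sharp$) language available in a uniformly Fr\'echet smooth and uniformly convex space. First I would check that the hypotheses of Theorem \ref{solution-mni-smooth-B_*} hold with $\mathcal{B}_* = \mathcal{B}^*$. Since $\mathcal{B}$ is uniformly convex, the Milman--Pettis theorem gives reflexivity, so $\mathcal{B}^*$ is a pre-dual of $\mathcal{B}$. By the self-dual nature of the two hypotheses (uniform convexity of $\mathcal{B}$ is equivalent to uniform Fr\'echet smoothness of $\mathcal{B}^*$, and vice versa), the dual space $\mathcal{B}^*$ is again uniformly Fr\'echet smooth and uniformly convex; in particular $\mathcal{B}_* = \mathcal{B}^*$ is smooth. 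As the functionals $\nu_j := g_j^\sharp$ are assumed linearly independent in $\mathcal{B}^* = \mathcal{B}_*$, Theorem \ref{solution-mni-smooth-B_*} applies, with $\mathcal{G}_* = \mathcal{G}^*$.

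Second, I would rewrite the solution form. Theorem \ref{solution-mni-smooth-B_*} asserts that $\hat{f} := \|\mathcal{L}^*(\mathbf{c})\|_{\mathcal{B}^*}\mathcal{G}^*(\mathcal{L}^*(\mathbf{c}))$ solves \eqref{mni} if and only if $\mathbf{c}$ solves the corresponding interpolation system. Applying the identity \eqref{relation-gateaux-dual1} in the space $\mathcal{B}^*$ (legitimate precisely because $\mathcal{B}^*$ is itself uniformly Fr\'echet smooth and uniformly convex) yields $\nu^\sharp = \|\nu\|_{\mathcal{B}^*}\mathcal{G}^*(\nu)$ for every $\nu \in \mathcal{B}^*$, where $\nu^\sharp \in \mathcal{B}$ is the dual element defined in \eqref{dual-element-B^*} and lands in $\mathcal{B}$ by reflexivity. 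Taking $\nu := \mathcal{L}^*(\mathbf{c})$ collapses the solution form to $\hat{f} = (\mathcal{L}^*(\mathbf{c}))^\sharp$, which is exactly \eqref{rt-ufuc-solve}.

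Third, I would convert the interpolation system into \eqref{equation-c-ufuc}. With $\nu_k = g_k^\sharp$, the system \eqref{equation-c-smooth-B_*} reads $\langle g_k^\sharp, (\mathcal{L}^*(\mathbf{c}))^\sharp\rangle_{\mathcal{B}} = y_k$. The pairing identity \eqref{dual-element-B^*}, with $\mu := g_k^\sharp$ and $\nu := \mathcal{L}^*(\mathbf{c})$, gives $\langle g_k^\sharp, (\mathcal{L}^*(\mathbf{c}))^\sharp\rangle_{\mathcal{B}} = [g_k^\sharp, \mathcal{L}^*(\mathbf{c})]_{\mathcal{B}^*}$, turning the system precisely into \eqref{equation-c-ufuc}. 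Uniqueness then comes for free: uniform convexity implies strict convexity, which forces at most one minimizer (as noted after Corollary \ref{existence-mni-reflexive}), while reflexivity supplies existence.

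Finally, I do not expect a genuine analytic obstacle here; the difficulty is one of careful bookkeeping. The delicate points are, first, justifying that the G\^ateaux identity \eqref{relation-gateaux-dual1} and the semi-inner-product relation \eqref{semi-inner-product-B^*} may be invoked on $\mathcal{B}^*$ rather than $\mathcal{B}$, which is exactly where the self-duality of the uniform-smoothness and uniform-convexity hypotheses is used, and second, keeping track of where each dual element lives, so that $(\mathcal{L}^*(\mathbf{c}))^\sharp$ is correctly recognized as an element of $\mathcal{B}$ through reflexivity. Once these identifications are in place, the corollary follows by substitution.
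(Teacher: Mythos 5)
Your proposal is correct and follows essentially the same route as the paper's proof: specialize Theorem \ref{solution-mni-smooth-B_*} with $\mathcal{B}_*=\mathcal{B}^*$, use the identity \eqref{relation-gateaux-dual1} on $\mathcal{B}^*$ to rewrite $\|\mathcal{L}^*(\mathbf{c})\|_{\mathcal{B}^*}\mathcal{G}^*(\mathcal{L}^*(\mathbf{c}))$ as $(\mathcal{L}^*(\mathbf{c}))^{\sharp}$, and convert the system \eqref{equation-c-smooth-B_*} into \eqref{equation-c-ufuc} via the pairing \eqref{dual-element-B^*}. Your extra bookkeeping (Milman--Pettis for reflexivity, the duality of uniform convexity and uniform Fr\'echet smoothness, and the strict-convexity uniqueness remark) only makes explicit what the paper leaves implicit from its earlier discussion.
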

\begin{proof}
By Proposition \ref{solution-mni-smooth-B_*} we have that $\hat{f}$ in the form
\eqref{rt-Gateaux-predual-solve} is the solution of the minimum norm interpolation \eqref{mni} with $\mathbf{y}$ if and only if $\mathbf{c}\in\mathbb{R}^m$ satisfies
\eqref{equation-c-smooth-B_*}. According to the relation between the semi-inner-product and the G\^{a}teaux derivative of the norm $\|\cdot\|_{\mathcal{B}}$
\begin{equation}\label{equation-c-ufuc-1}
\|\mathcal{L}^*(\mathbf{c})\|_{\mathcal{B}^*}
\mathcal{G}^*(\mathcal{L}^*(\mathbf{c}))
=(\mathcal{L}^*(\mathbf{c}))^{\sharp},
\end{equation}
we represent $\hat{f}$ as in \eqref{rt-ufuc-solve}. Substituting \eqref{equation-c-ufuc-1} into \eqref{equation-c-smooth-B_*}, with noting that $\nu_k:=g_k^{\sharp}$ for all $k\in\mathbb{N}_m$, we have that
\begin{equation*}
\langle g_k^{\sharp},(\mathcal{L}^*(\mathbf{c}))^{\sharp}\rangle_{\mathcal{B}}
=y_k,\ k\in\mathbb{N}_m.
\end{equation*}
This together with \eqref{dual-element-B^*} leads to \eqref{equation-c-ufuc}. That is, $\hat{f}$ having the form \eqref{rt-ufuc-solve}
is a solution of the minimum norm interpolation \eqref{mni} with $\mathbf{y}$ if and only if $\mathbf{c}\in\mathbb{R}^m$ is a solution of \eqref{equation-c-ufuc}.
\end{proof}

Observing from above results, a solution of the minimum norm interpolation problem in a Banach space having a smooth pre-dual can be represented by a finite number of functionals $\nu_j$, whose coefficients can be obtained by solving a system of equations. The resulting systems of equations are generally {\it nonlinear} unless $\mathcal{B}$ is a Hilbert space.
In particular, in the Banach space defined by the semi-inner-product, equations
\eqref{equation-c-ufuc} are truly nonlinear due to the nonlinearity of the semi-inner-product with respect to the second variable. Similar to the Hilbert space case, the infinite dimensional component of this case lies in the computation of the semi-inner-product of two elements of the infinite dimensional Banach space.

\subsection{Solutions of Minimum Norm Interpolation in a Banach Space with a Non-Smooth Pre-Dual Space}

We consider in this subsection solving the minimum norm interpolation problem \eqref{mni} in a Banach space having a {\it non-smooth} pre-dual space by making use the representer theorem obtained in section 3. In this case, we do not assume that the pre-dual space is smooth. 

The solution methods presented in this subsection is mainly a continuation of Theorem \ref{representer-theorem-subdifferential-predual}. Recall that Theorem \ref{representer-theorem-subdifferential-predual} provides a characterization of a solution of  the minimum norm interpolation problem \eqref{mni} in the case when a Banach space $\mathcal{B}$ has the non-smooth pre-dual space $\mathcal{B}_{*}$. However, the theorem does not furnish a way to obtain the $m$ coefficients $c_j$ involved in the solution representation. Our task is to show that the coefficients $c_j$ can, in deed, be obtained by solving an optimization problem in $\mathbb{R}^m$. To this end, we introduce the finite dimensional minimization problem with $\mathbf{y}\in\mathbb{R}^m\setminus\{0\}$ as
\begin{equation}\label{equivalent-minimization-predual}
\inf\left\{\|\mathcal{L}^*(\mathbf{c})\|_{\mathcal{B}_*}:
\langle\mathbf{c},\mathbf{y}\rangle_{\mathbb{R}^m}=1,
\mathbf{c}\in\mathbb{R}^m\right\}.
\end{equation}
Note that minimization problem \eqref{equivalent-minimization-predual} is a somewhat twisted version of the compressed sensing problem \cite{CRT, D}.

We begin with characterizing the solutions of \eqref{equivalent-minimization-predual} by standard arguments in convex analysis. For any $\mathbf{c}\in\mathbb{R}^m$ we set
\begin{equation}\label{Definitionofphi}
\phi(\mathbf{c}):=\|\mathcal{L}^*(\mathbf{c})\|_{\mathcal{B}_*}
\end{equation}
and
\begin{equation}\label{Definitionofpsi}
\psi(\mathbf{c}):=\langle\mathbf{c},\mathbf{y}\rangle_{\mathbb{R}^m}-1.
\end{equation}
We also need to describe the chain rule of the subdifferential \cite{Showalter}. Let $\mathcal{B}_1$ and $\mathcal{B}_2$ be two real Banach spaces. Supposet that $\varphi:\mathcal{B}_2\to \mathbb{R}\cup\{+\infty\}$ is a convex function and $\mathcal{T}:\mathcal{B}_1\to \mathcal{B}_2$ is a bounded linear operator. If $\varphi$ is continuous at some point of the range of $\mathcal{T}$, then for all $f\in \mathcal{B}_1$
\begin{equation}\label{chain-rule}
\partial(\varphi\circ\mathcal{T})(f)=\mathcal{T}^*\partial\varphi(\mathcal{T}(f)).
\end{equation}

The solutions of the minimization problem \eqref{equivalent-minimization-predual} can be first characterized by the Lagrange multiplier method, stated in  Lemma \ref{lagrange-multiplier}.

\begin{prop}\label{Lagrange-multiplier-equ-mni}
Suppose that $\mathcal{B}$ is a Banach space having the pre-dual space $\mathcal{B}_{*}$ and $\nu_j\in\mathcal{B}_*$, $j\in\mathbb{N}_m$, are linearly independent. Let $\mathbf{y}\in\mathbb{R}^m\setminus\{0\}$, $\mathcal{L}$ be defined by \eqref{functional-operator} and $\mathcal{L}^*$ be the adjoint operator. Then $\hat{\mathbf{c}}\in\mathbb{R}^m$ is a solution of the minimization problem \eqref{equivalent-minimization-predual} with $\mathbf{y}$ if and only if $\langle\hat{\mathbf{c}},\mathbf{y}\rangle_{\mathbb{R}^m}=1$, and there exist $\lambda\in\mathbb{R}$ and
\begin{equation}\label{Lag-mul-1}
f\in\partial\|\cdot\|_{\mathcal{B}_{*}}
(\mathcal{L}^*(\hat{\mathbf{c}}))
\end{equation}
such that
\begin{equation}\label{Lag-mul-2}
\mathcal{L}(f)=\lambda \mathbf{y}.
\end{equation}
\end{prop}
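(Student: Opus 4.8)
The plan is to read \eqref{equivalent-minimization-predual} as a convex minimization problem over the finite dimensional space $\mathbb{R}^m$ subject to a single affine equality constraint, and then to invoke the Lagrange multiplier characterization of Lemma \ref{lagrange-multiplier} with the ambient space taken to be $\mathbb{R}^m$ and with one constraint function. With $\phi$ and $\psi$ defined by \eqref{Definitionofphi} and \eqref{Definitionofpsi}, the first task is to check the hypotheses: $\phi=\|\cdot\|_{\mathcal{B}_*}\circ\mathcal{L}^*$ is a real-valued convex function on $\mathbb{R}^m$, being the composition of the linear map $\mathcal{L}^*$ with the convex norm $\|\cdot\|_{\mathcal{B}_*}$, while $\psi$ is affine, hence convex and continuous (feasibility of the constraint $\langle\mathbf{c},\mathbf{y}\rangle_{\mathbb{R}^m}=1$ is guaranteed by $\mathbf{y}\neq0$). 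Lemma \ref{lagrange-multiplier} then gives that $\hat{\mathbf{c}}$ solves \eqref{equivalent-minimization-predual} if and only if $\langle\hat{\mathbf{c}},\mathbf{y}\rangle_{\mathbb{R}^m}=1$ and there is a multiplier $\lambda\in\mathbb{R}$ with $0\in\partial(\phi+\lambda\psi)(\hat{\mathbf{c}})$; since $\psi$ is continuous, the second assertion of that lemma upgrades this to the additive form $0\in\partial\phi(\hat{\mathbf{c}})+\lambda\,\partial\psi(\hat{\mathbf{c}})$. Because $\psi$ is affine with gradient $\mathbf{y}$, one has $\partial\psi(\hat{\mathbf{c}})=\{\mathbf{y}\}$, so the condition becomes $-\lambda\mathbf{y}\in\partial\phi(\hat{\mathbf{c}})$.

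The crux is to evaluate $\partial\phi(\hat{\mathbf{c}})$ via the chain rule \eqref{chain-rule}, applied with $\mathcal{B}_1:=\mathbb{R}^m$, $\mathcal{B}_2:=\mathcal{B}_*$, the bounded linear operator $\mathcal{T}:=\mathcal{L}^*$ (which maps $\mathbb{R}^m$ into $\mathcal{B}_*$ precisely because $\nu_j\in\mathcal{B}_*$, by \eqref{adjoint-operator}), and the convex function $\varphi:=\|\cdot\|_{\mathcal{B}_*}$, which is finite and continuous everywhere on $\mathcal{B}_*$ so that the continuity hypothesis of the chain rule holds automatically. This yields $\partial\phi(\hat{\mathbf{c}})=(\mathcal{L}^*)^*\,\partial\|\cdot\|_{\mathcal{B}_*}(\mathcal{L}^*(\hat{\mathbf{c}}))$, where the elements of $\partial\|\cdot\|_{\mathcal{B}_*}(\mathcal{L}^*(\hat{\mathbf{c}}))$ lie in $(\mathcal{B}_*)^*=\mathcal{B}$, matching the membership \eqref{Lag-mul-1}. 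The decisive identification, which I expect to be the main obstacle, is to show that the adjoint $(\mathcal{L}^*)^*\colon\mathcal{B}\to\mathbb{R}^m$ coincides with $\mathcal{L}$ itself. This hinges on the pre-dual natural map \eqref{natural-map-predual}: for $f\in\mathcal{B}$ and $\mathbf{c}\in\mathbb{R}^m$,
$$
\langle f,\mathcal{L}^*(\mathbf{c})\rangle_{\mathcal{B}_*}
=\left\langle\sum_{j\in\mathbb{N}_m}c_j\nu_j,f\right\rangle_{\mathcal{B}}
=\sum_{j\in\mathbb{N}_m}c_j\langle\nu_j,f\rangle_{\mathcal{B}}
=\langle\mathbf{c},\mathcal{L}(f)\rangle_{\mathbb{R}^m},
$$
so that $(\mathcal{L}^*)^*(f)=\mathcal{L}(f)$. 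It is exactly the standing restriction $\nu_j\in\mathcal{B}_*$ that makes this succeed: it keeps the adjoint of $\mathcal{L}^*$ inside the original space $\mathcal{B}$ rather than in $\mathcal{B}^{**}$, thereby recovering $\mathcal{L}$.

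Combining these computations, the condition $-\lambda\mathbf{y}\in\partial\phi(\hat{\mathbf{c}})=\mathcal{L}\bigl(\partial\|\cdot\|_{\mathcal{B}_*}(\mathcal{L}^*(\hat{\mathbf{c}}))\bigr)$ is equivalent to the existence of $f\in\partial\|\cdot\|_{\mathcal{B}_*}(\mathcal{L}^*(\hat{\mathbf{c}}))$ with $\mathcal{L}(f)=-\lambda\mathbf{y}$; absorbing the sign into the free real parameter (replacing $-\lambda$ by $\lambda$) produces exactly \eqref{Lag-mul-1} and \eqref{Lag-mul-2}. Because every step in the chain is an equivalence, this closes both implications of the proposition at once. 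The only items needing mild care beyond the adjoint identification are the continuity hypothesis in the chain rule (immediate, as the norm is finite and continuous on all of $\mathcal{B}_*$) and the sign bookkeeping of the Lagrange multiplier, neither of which presents a genuine difficulty.
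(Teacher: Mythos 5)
Your proposal is correct and follows essentially the same route as the paper's own proof: both reduce \eqref{equivalent-minimization-predual} to the Lagrange multiplier criterion of Lemma \ref{lagrange-multiplier} with $\phi$ and $\psi$ as in \eqref{Definitionofphi} and \eqref{Definitionofpsi}, compute $\partial\psi(\hat{\mathbf{c}})=\{\mathbf{y}\}$, apply the chain rule \eqref{chain-rule} to get $\partial\phi(\hat{\mathbf{c}})=\mathcal{L}\,\partial\|\cdot\|_{\mathcal{B}_*}(\mathcal{L}^*(\hat{\mathbf{c}}))$, and absorb the sign of the multiplier. The only difference is that you verify explicitly, via the pre-dual pairing \eqref{natural-map-predual}, that the adjoint of $\mathcal{L}^*\colon\mathbb{R}^m\to\mathcal{B}_*$ is $\mathcal{L}$ itself (rather than landing in $\mathcal{B}^{**}$), a step the paper's proof takes silently, so your write-up is if anything slightly more careful at the one genuinely delicate point.
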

\begin{proof}
As a composition of the linear function $\mathcal{L}^*(\cdot)$ and the convex function $\|\cdot\|_{\mathcal{B}_*}$, $\phi$ defined by \eqref{Definitionofphi} is convex on $\mathbb{R}^m$. Moreover, it is easy to see the convexity and the continuity of the function $\psi$ defined by \eqref{Definitionofpsi}. By Lemma \ref{lagrange-multiplier}, $\hat{\mathbf{c}}$ is a solution of the optimization problem \eqref{equivalent-minimization-predual} with $\mathbf{y}$ if and only if $\langle\hat{\mathbf{c}},\mathbf{y}\rangle_{\mathbb{R}^m}=1$ and there exists $\eta\in\mathbb{R}$ such that
$$
0\in\partial \phi(\hat{\mathbf{c}})+\eta\partial\psi(\hat{\mathbf{c}}).
$$
Note that $\phi=\|\cdot\|_{\mathcal{B}_{*}}\circ\mathcal{L}^*$, $\mathcal{L}^*:\mathbb{R}^m\rightarrow\mathcal{B}_*$ is a bounded linear operator and the norm $\|\cdot\|_{\mathcal{B}_{*}}$ is continuous on $\mathcal{B}_*$. Then by the chain rule \eqref{chain-rule} of subdifferentials, we have that
\begin{equation}\label{subdiff-phi}
\partial \phi(\mathbf{c})
=\mathcal{L}\partial\|\cdot\|_{\mathcal{B}_{*}}(\mathcal{L}^*\hat{\mathbf{c}}).
\end{equation}
Since $\psi$ is linear, there holds
\begin{equation}\label{subdiff-psi}
\partial(\psi)(\hat{\mathbf{c}})=\mathbf{y}.
\end{equation}
Combining \eqref{subdiff-phi} with \eqref{subdiff-psi}, we get that
\begin{equation*}\label{subdiff-phi-psi-formula}
\partial \phi(\hat{\mathbf{c}})+\eta\partial\psi(\hat{\mathbf{c}})
=\mathcal{L}\partial\|\cdot\|_{\mathcal{B}_{*}}
(\mathcal{L}^*(\hat{\mathbf{c}}))+\eta\mathbf{y}.
\end{equation*}
It follows that $\hat{\mathbf{c}}$ is a solution of \eqref{equivalent-minimization-predual} if and only if $\langle\hat{\mathbf{c}},\mathbf{y}\rangle_{\mathbb{R}^m}=1$ and there exist $\eta\in\mathbb{R}$ and $f$ satisfying \eqref{Lag-mul-1} such that
$$
\mathcal{L}(f)+\eta\mathbf{y}=0.
$$
By setting $\lambda=-\eta$, we get the desired conclusion.
\end{proof}

We next present an alternative characterization of solutions of the minimization problem \eqref{equivalent-minimization-predual}, which will be used to reveal the relation of solutions of minimization problems \eqref{mni} and \eqref{equivalent-minimization-predual}.

\begin{prop}\label{minimization-for-c}
Suppose that $\mathcal{B}$ is a Banach space having the pre-dual space $\mathcal{B}_{*}$ and $\nu_j\in\mathcal{B}_*$, $j\in\mathbb{N}_m$, are linearly independent. Let $\mathbf{y}\in\mathbb{R}^m\setminus\{0\}$, $\mathcal{L}$ be defined by \eqref{functional-operator}, $\mathcal{L}^*$ be the adjoint operator and $\mathcal{M}_{\mathbf{y}}$ be defined by \eqref{My}. Then $\hat{\mathbf{c}}\in\mathbb{R}^m$ is a solution of the minimization problem \eqref{equivalent-minimization-predual} with $\mathbf{y}$ if and only if
\begin{equation}\label{relationfc}
\frac{1}{\|\mathcal{L}^*(\hat{\mathbf{c}})\|_{\mathcal{B}_{*}}}
\partial\|\cdot\|_{\mathcal{B}_{*}}(\mathcal{L}^*(\hat{\mathbf{c}}))
\cap\mathcal{M}_{\mathbf{y}}\neq\emptyset.
\end{equation}
\end{prop}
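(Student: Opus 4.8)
The plan is to deduce this characterization directly from Proposition~\ref{Lagrange-multiplier-equ-mni}, whose conditions differ from \eqref{relationfc} only through the presence of an unknown multiplier $\lambda$; the crux is to pin down the value of $\lambda$. First I would record that a solution $\hat{\mathbf{c}}$ of \eqref{equivalent-minimization-predual} with $\mathbf{y}\neq0$ necessarily satisfies $\langle\hat{\mathbf{c}},\mathbf{y}\rangle_{\mathbb{R}^m}=1$, hence $\hat{\mathbf{c}}\neq0$; since the $\nu_j$ are linearly independent, $\mathcal{L}^*(\hat{\mathbf{c}})=\sum_{j\in\mathbb{N}_m}\hat{c}_j\nu_j\neq0$, so $\|\mathcal{L}^*(\hat{\mathbf{c}})\|_{\mathcal{B}_*}>0$ and the quotient appearing in \eqref{relationfc} is well defined.

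The engine of the proof is a single identity valid for any $f\in\partial\|\cdot\|_{\mathcal{B}_*}(\mathcal{L}^*(\hat{\mathbf{c}}))$ when $\mathcal{L}^*(\hat{\mathbf{c}})\neq0$. Applying the subdifferential formula \eqref{relation-subdifferential-functionals} with $\mathcal{B}$ replaced by $\mathcal{B}_*$ (so that the subgradients live in $(\mathcal{B}_*)^*=\mathcal{B}$) gives $\langle f,\mathcal{L}^*(\hat{\mathbf{c}})\rangle_{\mathcal{B}_*}=\|\mathcal{L}^*(\hat{\mathbf{c}})\|_{\mathcal{B}_*}$. On the other hand, combining the natural-map relation \eqref{natural-map-predual} with the representation \eqref{adjoint-operator} of the adjoint yields $\langle f,\mathcal{L}^*(\hat{\mathbf{c}})\rangle_{\mathcal{B}_*}=\langle\mathcal{L}^*(\hat{\mathbf{c}}),f\rangle_{\mathcal{B}}=\langle\hat{\mathbf{c}},\mathcal{L}(f)\rangle_{\mathbb{R}^m}$. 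Equating the two expressions produces $\langle\hat{\mathbf{c}},\mathcal{L}(f)\rangle_{\mathbb{R}^m}=\|\mathcal{L}^*(\hat{\mathbf{c}})\|_{\mathcal{B}_*}$, which drives both implications.

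For the forward direction I would invoke Proposition~\ref{Lagrange-multiplier-equ-mni} to obtain $\lambda\in\mathbb{R}$ and $f\in\partial\|\cdot\|_{\mathcal{B}_*}(\mathcal{L}^*(\hat{\mathbf{c}}))$ with $\mathcal{L}(f)=\lambda\mathbf{y}$. Inserting $\mathcal{L}(f)=\lambda\mathbf{y}$ into the identity above and using $\langle\hat{\mathbf{c}},\mathbf{y}\rangle_{\mathbb{R}^m}=1$ forces $\lambda=\|\mathcal{L}^*(\hat{\mathbf{c}})\|_{\mathcal{B}_*}$, whence $\mathcal{L}(f)=\|\mathcal{L}^*(\hat{\mathbf{c}})\|_{\mathcal{B}_*}\mathbf{y}$, i.e., $f/\|\mathcal{L}^*(\hat{\mathbf{c}})\|_{\mathcal{B}_*}\in\mathcal{M}_{\mathbf{y}}$, which is exactly \eqref{relationfc}. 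For the converse, \eqref{relationfc} supplies an $f$ in the subdifferential with $\mathcal{L}(f)=\|\mathcal{L}^*(\hat{\mathbf{c}})\|_{\mathcal{B}_*}\mathbf{y}$; feeding this into the same identity gives $\|\mathcal{L}^*(\hat{\mathbf{c}})\|_{\mathcal{B}_*}=\|\mathcal{L}^*(\hat{\mathbf{c}})\|_{\mathcal{B}_*}\langle\hat{\mathbf{c}},\mathbf{y}\rangle_{\mathbb{R}^m}$, and dividing by the positive scalar $\|\mathcal{L}^*(\hat{\mathbf{c}})\|_{\mathcal{B}_*}$ recovers $\langle\hat{\mathbf{c}},\mathbf{y}\rangle_{\mathbb{R}^m}=1$. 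Setting $\lambda:=\|\mathcal{L}^*(\hat{\mathbf{c}})\|_{\mathcal{B}_*}$ then verifies the hypotheses of Proposition~\ref{Lagrange-multiplier-equ-mni}, so $\hat{\mathbf{c}}$ solves \eqref{equivalent-minimization-predual}.

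The main obstacle I anticipate is the bookkeeping of the three distinct pairings — the $\mathbb{R}^m$ inner product, $\langle\cdot,\cdot\rangle_{\mathcal{B}}$, and $\langle\cdot,\cdot\rangle_{\mathcal{B}_*}$ — and transcribing \eqref{relation-subdifferential-functionals}, stated for $\mathcal{B}$, correctly to $\mathcal{B}_*$. A secondary point requiring care is the nonvanishing of $\mathcal{L}^*(\hat{\mathbf{c}})$: in the forward direction it follows from $\langle\hat{\mathbf{c}},\mathbf{y}\rangle_{\mathbb{R}^m}=1$ together with the linear independence of the $\nu_j$, whereas in the converse it is already built into the well-definedness of the quotient in \eqref{relationfc}.
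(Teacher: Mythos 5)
Your proposal is correct and takes essentially the same route as the paper's proof: both pass through Proposition \ref{Lagrange-multiplier-equ-mni}, use the pairing identity $\langle\hat{\mathbf{c}},\mathcal{L}(f)\rangle_{\mathbb{R}^m}=\|\mathcal{L}^*(\hat{\mathbf{c}})\|_{\mathcal{B}_*}$ (obtained from \eqref{relation-subdifferential-functionals} transcribed to $\mathcal{B}_*$ together with \eqref{natural-map-predual}) to pin down $\lambda=\|\mathcal{L}^*(\hat{\mathbf{c}})\|_{\mathcal{B}_*}$, and conclude by rescaling $f$ in the forward direction and reversing the scaling with $\lambda:=\|\mathcal{L}^*(\hat{\mathbf{c}})\|_{\mathcal{B}_*}$ in the converse. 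Your only deviation is the explicit check that $\mathcal{L}^*(\hat{\mathbf{c}})\neq0$ (via $\langle\hat{\mathbf{c}},\mathbf{y}\rangle_{\mathbb{R}^m}=1$ and linear independence), a point the paper leaves implicit.
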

\begin{proof}
We first suppose that $\hat{\mathbf{c}}\in\mathbb{R}^m$ is a solution of the minimization problem \eqref{equivalent-minimization-predual} with $\mathbf{y}$. Proposition \ref{Lagrange-multiplier-equ-mni} ensures that $\langle\hat{\mathbf{c}},\mathbf{y}\rangle_{\mathbb{R}^m}=1$ and there exists $\lambda\in\mathbb{R}$ and exists $f$ satisfying \eqref{Lag-mul-1} such that \eqref{Lag-mul-2} holds. It follows from \eqref{Lag-mul-1} that
$$
\langle\mathcal{L}^*(\hat{\mathbf{c}}),f\rangle_{\mathcal{B}}
=\|\mathcal{L}^*(\hat{\mathbf{c}})\|_{\mathcal{B}_{*}},
$$
which further yields that
$$
\langle\hat{\mathbf{c}},\mathcal{L}(f)\rangle_{\mathbb{R}^m}
=\|\mathcal{L}^*(\hat{\mathbf{c}})\|_{\mathcal{B}_{*}}.
$$
Substituting \eqref{Lag-mul-2} into the above equation, we have that
$$
\lambda\langle\hat{\mathbf{c}},\mathbf{y}\rangle_{\mathbb{R}^m}
=\|\mathcal{L}^*(\hat{\mathbf{c}})\|_{\mathcal{B}_{*}}.
$$
This together with $\langle\hat{\mathbf{c}},\mathbf{y}\rangle_{\mathbb{R}^m}=1$ leads to
$\lambda=\|\mathcal{L}^*(\hat{\mathbf{c}})\|_{\mathcal{B}_{*}}.$ Set $\hat{f}:=\frac{1}{\lambda}f.$ We will show that $\hat{f}$ belongs to the intersection in the left side hand of equation \eqref{relationfc}. Combining inclusion \eqref{Lag-mul-1} with the definition of $\hat{f}$, we get that
\begin{equation}\label{relationfc1}
\hat{f}\in\frac{1}{\|\mathcal{L}^*(\hat{\mathbf{c}})\|_{\mathcal{B}_{*}}}
\partial\|\cdot\|_{\mathcal{B}_{*}}(\mathcal{L}^*(\hat{\mathbf{c}})).
\end{equation}
Moreover, equation \eqref{Lag-mul-2} leads directly to the interpolation condition
$\mathcal{L}(\hat{f})=\mathbf{y}$. That is, $\hat{f}\in\mathcal{M}_{\mathbf{y}}$. Consequently,
we conclude that $\hat{f}$ belongs to the intersection in the left hand side of equation \eqref{relationfc}, which leads to the validity of \eqref{relationfc}.

Conversely, we suppose that \eqref{relationfc} holds. That is, there exists $\hat{f}\in\mathcal{B}$ satisfying $\hat{f}\in\mathcal{M}_{\mathbf{y}}$ and inclusion \eqref{relationfc1}.
We will prove by employing Proposition \ref{Lagrange-multiplier-equ-mni} that $\hat{\mathbf{c}}$ is a solution of the minimization problem \eqref{equivalent-minimization-predual}. By inclusion \eqref{relationfc1} we get that
$$
\langle\mathcal{L}^*(\hat{\mathbf{c}}),\hat{f}\rangle_{\mathcal{B}}=1,
$$
which yields that
$$
\langle\hat{\mathbf{c}},\mathcal{L}(\hat{f})\rangle_{\mathbb{R}^m}=1.
$$
Substituting the interpolation condition $\mathcal{L}(\hat{f})=\mathbf{y}$ into the above equation, we have that $\langle\hat{\mathbf{c}},\mathbf{y}\rangle_{\mathbb{R}^m}=1$. It suffices to verify that there exists $\lambda\in\mathbb{R}$ and exists $f$ satisfying \eqref{Lag-mul-1} such that \eqref{Lag-mul-2} holds. Set
$$
f:=\|\mathcal{L}^*(\hat{\mathbf{c}})\|_{\mathcal{B}_{*}}\hat{f}
\ \ \mbox{and}\ \
\lambda:=\|\mathcal{L}^*(\hat{\mathbf{c}})\|_{\mathcal{B}_{*}}.
$$
Inclusion \eqref{relationfc1} leads directly to \eqref{Lag-mul-1} and the interpolation condition $\mathcal{L}(\hat{f})=\mathbf{y}$ leads to \eqref{Lag-mul-2}. Hence, by using Proposition \ref{Lagrange-multiplier-equ-mni} we conclude that $\hat{\mathbf{c}}$ is a solution of the minimization problem \eqref{equivalent-minimization-predual} with $\mathbf{y}$.
\end{proof}

We show below that the coefficients $c_j$ appearing in the representer theorem (Theorem \ref{representer-theorem-subdifferential-predual}) can be obtained by solving the finite dimensional minimization problem \eqref{equivalent-minimization-predual}. This yields a complete solution of the minimum norm interpolation \eqref{mni} in a Banach space having the (non-smooth) pre-dual space.

\begin{thm}\label{minimization-for-c-f}
Suppose that $\mathcal{B}$ is a Banach space having the pre-dual space $\mathcal{B}_{*}$ and $\nu_j\in\mathcal{B}_*$, $j\in\mathbb{N}_m$, are linearly independent. Let $\mathbf{y}\in\mathbb{R}^m\setminus\{0\}$, $\mathcal{L}$ be defined by \eqref{functional-operator}, $\mathcal{L}^*$ be the adjoint operator and $\mathcal{M}_{\mathbf{y}}$ be defined by \eqref{My}. Then $\hat{f}\in\mathcal{B}$ is a solution of the minimum norm interpolation \eqref{mni} with $\mathbf{y}$ if and only if
\begin{equation}\label{intersectionoftwosets}
\hat{f}\in \frac{1}{\|\mathcal{L}^*(\hat{\mathbf{c}})\|_{\mathcal{B}_{*}}}
\partial\|\cdot\|_{\mathcal{B}_{*}}(\mathcal{L}^*(\hat{\mathbf{c}}))
\cap\mathcal{M}_{\mathbf{y}}
\end{equation}
for a solution $\hat{\mathbf{c}}$ of the minimization problem \eqref{equivalent-minimization-predual} with $\mathbf{y}$.
\end{thm}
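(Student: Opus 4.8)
The plan is to read this statement as a bridge between two results already in hand: the pre-dual representer theorem (Theorem \ref{representer-theorem-subdifferential-predual}) and the characterization of minimizers of the finite-dimensional problem \eqref{equivalent-minimization-predual} by the nonempty-intersection condition \eqref{relationfc} (Proposition \ref{minimization-for-c}). The only genuine mismatch between the two is a scalar normalization: the representer theorem places the factor $\gamma=\|\mathcal{L}^*(\mathbf{c})\|_{\mathcal{B}_*}$ in front of the subdifferential, whereas \eqref{intersectionoftwosets} carries the reciprocal $1/\|\mathcal{L}^*(\hat{\mathbf{c}})\|_{\mathcal{B}_*}$. The observation that reconciles them is that, by \eqref{relation-subdifferential-functionals} applied to $\mathcal{B}_*$, the set $\partial\|\cdot\|_{\mathcal{B}_*}(\nu)$ depends only on the ray through $\nu$; that is, $\partial\|\cdot\|_{\mathcal{B}_*}(t\nu)=\partial\|\cdot\|_{\mathcal{B}_*}(\nu)$ for every $t>0$ and every $\nu\neq 0$. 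Together with the linearity of $\mathcal{L}^*$ (so $\mathcal{L}^*(t\mathbf{c})=t\mathcal{L}^*(\mathbf{c})$), this lets me freely rescale the coefficient vector without altering the subdifferential, and I would isolate this homogeneity fact at the outset.

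For the direction assuming $\hat{f}$ satisfies \eqref{intersectionoftwosets} for a minimizer $\hat{\mathbf{c}}$, I would first note $\mathcal{L}^*(\hat{\mathbf{c}})\neq 0$ (as $\langle\hat{\mathbf{c}},\mathbf{y}\rangle_{\mathbb{R}^m}=1$ forces $\hat{\mathbf{c}}\neq 0$ and the $\nu_j$ are linearly independent). Setting $\mathbf{c}':=\hat{\mathbf{c}}/\|\mathcal{L}^*(\hat{\mathbf{c}})\|_{\mathcal{B}_*}^2$, positive homogeneity gives $\|\mathcal{L}^*(\mathbf{c}')\|_{\mathcal{B}_*}=1/\|\mathcal{L}^*(\hat{\mathbf{c}})\|_{\mathcal{B}_*}$ and $\partial\|\cdot\|_{\mathcal{B}_*}(\mathcal{L}^*(\mathbf{c}'))=\partial\|\cdot\|_{\mathcal{B}_*}(\mathcal{L}^*(\hat{\mathbf{c}}))$, so the set in \eqref{intersectionoftwosets} equals $\|\mathcal{L}^*(\mathbf{c}')\|_{\mathcal{B}_*}\,\partial\|\cdot\|_{\mathcal{B}_*}(\mathcal{L}^*(\mathbf{c}'))$. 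Since $\hat{f}\in\mathcal{M}_{\mathbf{y}}$ as well, Theorem \ref{representer-theorem-subdifferential-predual} applied with coefficients $\mathbf{c}'$ immediately yields that $\hat{f}$ solves \eqref{mni}.

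For the converse, suppose $\hat{f}$ solves \eqref{mni}; since $\mathbf{y}\neq 0$ we have $\hat{f}\neq 0$. Theorem \ref{representer-theorem-subdifferential-predual} produces coefficients $\mathbf{c}$ with $\mathcal{L}^*(\mathbf{c})\neq 0$ and $\hat{f}\in\gamma\,\partial\|\cdot\|_{\mathcal{B}_*}(\mathcal{L}^*(\mathbf{c}))$, where $\gamma=\|\mathcal{L}^*(\mathbf{c})\|_{\mathcal{B}_*}$. Using the peak description \eqref{relation-subdifferential-functionals} of this membership together with $\mathcal{L}(\hat{f})=\mathbf{y}$ and the natural identifications \eqref{natural-map-predual}, I would compute $\langle\mathbf{c},\mathbf{y}\rangle_{\mathbb{R}^m}=\langle\mathcal{L}^*(\mathbf{c}),\hat{f}\rangle_{\mathcal{B}}=\gamma^2>0$. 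This identity dictates the rescaling: put $\hat{\mathbf{c}}:=\mathbf{c}/\gamma^2$, so that $\langle\hat{\mathbf{c}},\mathbf{y}\rangle_{\mathbb{R}^m}=1$ and $\|\mathcal{L}^*(\hat{\mathbf{c}})\|_{\mathcal{B}_*}=1/\gamma$. By homogeneity of the subdifferential once more, $\hat{f}\in\gamma\,\partial\|\cdot\|_{\mathcal{B}_*}(\mathcal{L}^*(\mathbf{c}))=\bigl(1/\|\mathcal{L}^*(\hat{\mathbf{c}})\|_{\mathcal{B}_*}\bigr)\partial\|\cdot\|_{\mathcal{B}_*}(\mathcal{L}^*(\hat{\mathbf{c}}))$, and combined with $\hat{f}\in\mathcal{M}_{\mathbf{y}}$ this shows the intersection in \eqref{relationfc} is nonempty. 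Proposition \ref{minimization-for-c} then certifies that $\hat{\mathbf{c}}$ is a minimizer of \eqref{equivalent-minimization-predual}, while the same display shows $\hat{f}$ lies in the intersection \eqref{intersectionoftwosets}, completing the argument.

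The step I expect to be most delicate is pinning down the normalization $\langle\mathbf{c},\mathbf{y}\rangle_{\mathbb{R}^m}=\gamma^2$ and hence the rescaling factor $\gamma^2$: this is precisely what makes the constraint $\langle\hat{\mathbf{c}},\mathbf{y}\rangle_{\mathbb{R}^m}=1$ of \eqref{equivalent-minimization-predual} consistent with the unit-norm normalization hidden in the representer theorem. Everything else reduces to the positive homogeneity of $\partial\|\cdot\|_{\mathcal{B}_*}$ and the two cited results, so the write-up should be short once this scaling is in place.
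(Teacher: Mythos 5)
Your proof is correct and takes essentially the same route as the paper's: both directions reduce to Theorem \ref{representer-theorem-subdifferential-predual} and Proposition \ref{minimization-for-c} via the rescaling $\hat{\mathbf{c}}=\mathbf{c}/\|\mathcal{L}^*(\mathbf{c})\|_{\mathcal{B}_*}^2$ combined with the ray-invariance of $\partial\|\cdot\|_{\mathcal{B}_*}$ coming from \eqref{relation-subdifferential-functionals}. Your additional identity $\langle\mathbf{c},\mathbf{y}\rangle_{\mathbb{R}^m}=\gamma^2$, which the paper leaves implicit inside Proposition \ref{minimization-for-c}, is correct and merely motivates the scaling factor, so it changes nothing of substance.
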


\begin{proof}
We first suppose that $\hat{f}\in\mathcal{B}$ is a solution of the minimum norm interpolation \eqref{mni} with $\mathbf{y}$. Theorem \ref{representer-theorem-subdifferential-predual} ensures that
\begin{equation}\label{f-c}
\hat f\in \|\mathcal{L}^*(\mathbf{c})\|_{\mathcal{B}_{*}}
\partial\|\cdot\|_{\mathcal{B}_{*}}(\mathcal{L}^*(\mathbf{c}))
\cap\mathcal{M}_{\mathbf{y}}
\end{equation}
for some $\mathbf{c}\in\mathbb{R}^m.$ By setting
$$
\hat{\mathbf{c}}:=\frac{\mathbf{c}}{\|\mathcal{L}^*(\mathbf{c})\|_{\mathcal{B}_{*}}^2},
$$
we get that
\begin{equation}\label{c-hat-c1}
\|\mathcal{L}^*(\mathbf{c})\|_{\mathcal{B}_{*}}
=\frac{1}{\|\mathcal{L}^*(\hat{\mathbf{c}})\|_{\mathcal{B}_{*}}}
\end{equation}
and
\begin{equation}\label{c-hat-c2}
\partial\|\cdot\|_{\mathcal{B}_{*}}(\mathcal{L}^*(\mathbf{c}))
=\partial\|\cdot\|_{\mathcal{B}_{*}}(\mathcal{L}^*(\hat{\mathbf{c}})).
\end{equation}
Substituting equations \eqref{c-hat-c1} and \eqref{c-hat-c2} into the right hand side of inclusion \eqref{f-c}, we get the inclusion relation \eqref{intersectionoftwosets}, which further leads to \eqref{relationfc}. Thus, by employing Proposition \ref{minimization-for-c} we conclude that $\hat{\mathbf{c}}$ is a solution of the minimization problem \eqref{equivalent-minimization-predual} with $\mathbf{y}$.

Conversely, we suppose that $\hat{\mathbf{c}}$ is a solution of the minimization problem \eqref{equivalent-minimization-predual} with $\mathbf{y}$. Note by Proposition \ref{minimization-for-c} that \eqref{relationfc} holds. We also suppose that $\hat{f}$ is an element satisfying \eqref{intersectionoftwosets}. We will prove by Theorem \ref{representer-theorem-subdifferential-predual} that $\hat{f}$ is a solution of the minimum norm interpolation problem (\ref{mni}). Set
$$
\mathbf{c}:=\frac{\hat{\mathbf{c}}}{\|\mathcal{L}^*(\hat{\mathbf{c}})\|_{\mathcal{B}_{*}}^2}.
$$
It follows that equations \eqref{c-hat-c1} and \eqref{c-hat-c2} hold. Substituting these two  equations into \eqref{intersectionoftwosets} leads to \eqref{f-c}. Hence, Theorem \ref{representer-theorem-subdifferential-predual} ensures that $\hat{f}$ is a solution of the minimum norm interpolation problem (\ref{mni}).
\end{proof}

Theorem \ref{minimization-for-c-f} provides a scheme for finding a solution of the minimum norm
interpolation problem \eqref{mni}. We now describe the scheme as follows.

Step 1: Solve the finite dimensional optimization problem \eqref{equivalent-minimization-predual} and obtain a solution $\mathbf{c}:=[c_j: j\in\mathbb{N}_m]\in\mathbb{R}^m$.

Step 2: Construct
$$
\nu:=\sum_{j\in\mathbb{N}_m}c_j\nu_j.
$$

Step 3: Find an element $g\in\partial\|\cdot\|_{\mathcal{B}_*}(\nu)$ which satisfies
$$
\mathcal{L}(g)=\frac{\mathbf{y}}{\|\nu\|_{\mathcal{B}_*}}.
$$

Step 4: Obtain a solution of the minimum norm interpolation problem \eqref{mni} by
$$
\hat{f}:=(\|\nu\|_{\mathcal{B}_*})g.
$$

Actual implementation of the above scheme requires further investigation.
Note that although the minimization problem \eqref{equivalent-minimization-predual} in step 1 is of finite dimension, it still involves computation of the norm $\|\cdot\|_{\mathcal{B}_*}$, which is a hidden infinite dimensional component. Moreover, step 3 also involves an infinite dimensional component since it requires solving an infinite dimensional problem. In order to make the above scheme implementable, we have to deal with these hidden infinite dimensional components. One could use approximation approaches to replace the infinite dimensional components by finite dimensional ones. This approach will introduce ``truncation errors'', which we do {\it not} adopt here. Our idea is to make use certain intrinsic properties of these infinite dimensional components to remove their berries, developing {\it equivalent} implementable finite dimensional schemes.

Our approach to be described in section 6 is inspired by a recent result presented in \cite{CX}, where the minimum norm interpolation problem in a special case $\mathcal{B}=\ell_1(\mathbb{N})$ was solved by a different approach.  In this special case, the infinite dimensional components we mentioned above can be removed. This benefits from the characterization of the space $c_0$, the pre-dual space of $\ell_1(\mathbb{N})$. Firstly, the minimization problem \eqref{equivalent-minimization-predual} was reformulated in \cite{CX} as a linear programming problem. Specifically, suppose that $\mathbf{u}_j$, $j\in\mathbb{N}_m,$ are $m$ given linearly independent elements in $c_0$ and the operator $\mathcal{L}:\ell_1(\mathbb{N})\rightarrow\mathbb{R}^m$ is defined by \eqref{functional-operator-l1}. Instead of solving the minimization problem \eqref{equivalent-minimization-predual}, it was proposed in \cite{CX} to solve an equivalent problem
\begin{equation}\label{dual-problem}
\sup\left\{
\frac{\langle\mathbf{c},\mathbf{y}\rangle_{\mathbb{R}^m}}{\|\sum_{j\in\mathbb{N}_m}
c_j\mathbf{u}_j\|_{\infty}}:\mathbf{c}=[c_j:j\in\mathbb{N}_m]\in\mathbb{R}^m
\right\}.
\end{equation}
It was proved there that the unit sphere in $\mathbb{R}^m$ under the norm $\|\cdot\|_{*}$, defined by
$$
\|\mathbf{c}\|_{*}:=\left\|\sum_{j\in\mathbb{N}_m}
c_j\mathbf{u}_j\right\|_{\infty}, \ \ \mathbf{c}\in\mathbb{R}^m,
$$
is the surface of a convex polytope, which are formed by a finite number of planes. Hence, the optimization problem \eqref{dual-problem} is equivalent to a linear programming problem: finding a maximizer of the linear function
$$
g(\mathbf{c}):=\langle\mathbf{c},\mathbf{y}\rangle_{\mathbb{R}^m}, \ \ \mathbf{c}\in\mathbb{R}^m
$$
on the unit sphere
$$
\{\mathbf{c}: \mathbf{c}\in\mathbb{R}^m, \|\mathbf{c}\|_{*}=1\}.
$$
Moreover, Lemma \ref{subdifferentials-c0} shows that finding the subdifferentials of the norm $\|\cdot\|_{\infty}$ of $c_0$ at a nonzero element $\mathbf{u}\in c_0$ is of finite dimension and any vector in the subdifferentials has at most finite many nonzero components. Therefore, the minimum norm interpolation in $\ell_1(\mathbb{N})$  can be obtained by solving a linear system of $m$ coefficients.

Although the minimum norm interpolation problem in $\ell_1(\mathbb{N})$ can be solved as a truly finite dimensional problem, as described above, solving the resulting linear programming problem requires an exponential order ${\cal O}(2^m)$ of computational costs, where $m$ is the number of interpolation conditions used in the problem. When $m$ is large, which is often the case in data science, this method is not feasible. It is desirable to develop alternative representations of solutions of the minimum norm interpolation problem convenient for algorithmic development. Motivated from the success of the fixed-point approach used in machine learning \cite{AMPSX, LMX, Li-Song-Xu2018, Li-Song-Xu2019, PSW}, image processing \cite{CHZ, LMSX, LSXZ, LSXX, MSX}, medical imaging \cite{KLSX, LZKSVSLFX, ZLKSZX} and solutions of inverse problems \cite{FJL, JL}, we will develop representations of a solution of the minimum norm interpolation problem or the regularization problem in a Banach space, as fixed-points of nonlinear maps defined by proximity operators of functions involved in the problem. The fixed-point formulation well fits for designing iterative algorithms. Difficulty of developing implementable algorithms for this problem in a Banach space lies in infinite dimensional components of the problem. This challenge motivates us to develop a finite dimensional fixed-point approach to solve the minimum norm interpolation problem in the special Banach space $\ell_1(\mathbb{N})$ by making use special properties of this space and its pre-dual space. We present this approach in section 6. Extension of this approach to a general Banach space will be a future research topic.


\section{Infimum of Minimum Norm Interpolation}
We present in this section the infimum of the minimum norm interpolation problem in a Banach space.

From the explicit representer theorems and solution representations presented in previous sections, we can readily find the infimum of the minimum norm interpolation in a Banach space. In the next theorem, we identify it with the norm of the functional appearing in the explicit solution representations.

\begin{thm}\label{infimum}
Suppose that $\mathcal{B}$ is a Banach space with the dual space $\mathcal{B}^*$ and $\nu_j\in\mathcal{B}^*$, $j\in\mathbb{N}_m$, are linearly independent and $\mathbf{y}\in\mathbb{R}^m$. If $\hat{f}$ is a solution of the minimum norm interpolation problem \eqref{mni} with $\mathbf{y}$, which has either the form \eqref{rt-subdifferential-explicit} or \eqref{rt-Gateaux-explicit} with the coefficients $c_j\in\mathbb{R}$, $j\in\mathbb{N}_m$, then
$$
\|\hat{f}\|_{\mathcal{B}}
=\left\|\sum_{j\in\mathbb{N}_m}c_j\nu_j\right\|_{\mathcal{B}^*}.
$$
\end{thm}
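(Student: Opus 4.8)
The plan is to treat the two explicit forms uniformly by observing that each represents $\hat f$ as a nonnegative multiple of a unit-norm element of the bidual, and then to read off the norm using the isometry of the canonical embedding of $\mathcal{B}$ into $\mathcal{B}^{**}$. Throughout I would write $\nu:=\sum_{j\in\mathbb{N}_m}c_j\nu_j$ and $\gamma:=\|\nu\|_{\mathcal{B}^*}$, so that the two representations read $\hat f\in\gamma\,\partial\|\cdot\|_{\mathcal{B}^*}(\nu)$ and $\hat f=\gamma\,\mathcal{G}^*(\nu)$, respectively. The target identity is then exactly $\|\hat f\|_{\mathcal{B}}=\gamma$.

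First I would dispose of the degenerate case $\mathbf{y}=0$, where $\hat f=0$ is the unique solution, so that $\|\hat f\|_{\mathcal{B}}=0$ and it suffices to show $\gamma=0$. If $\nu\ne 0$, then $\gamma>0$ and the inclusion $0\in\gamma\,\partial\|\cdot\|_{\mathcal{B}^*}(\nu)$ would force $0\in\partial\|\cdot\|_{\mathcal{B}^*}(\nu)$; but applying the characterization \eqref{relation-subdifferential-functionals} with $\mathcal{B}$ replaced by $\mathcal{B}^*$ shows that every element of $\partial\|\cdot\|_{\mathcal{B}^*}(\nu)$ has $\mathcal{B}^{**}$-norm equal to $1$, a contradiction. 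Hence $\nu=0$ and both sides vanish. The Gâteaux form is handled identically, since $\mathcal{G}^*(\nu)$ lies in the (singleton) subdifferential $\partial\|\cdot\|_{\mathcal{B}^*}(\nu)$.

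For the principal case $\mathbf{y}\ne0$ the solution satisfies $\hat f\ne0$, and consequently $\nu\ne0$: otherwise $\gamma=0$ and the inclusion would yield $\hat f=0$. Thus $\gamma>0$ and $\hat f/\gamma\in\partial\|\cdot\|_{\mathcal{B}^*}(\nu)$. Invoking \eqref{relation-subdifferential-functionals} on $\mathcal{B}^*$, which is legitimate because $\nu\in\mathcal{B}^*\setminus\{0\}$, every member of $\partial\|\cdot\|_{\mathcal{B}^*}(\nu)$ is a functional of unit norm in $(\mathcal{B}^*)^*=\mathcal{B}^{**}$; in particular $\|\hat f/\gamma\|_{\mathcal{B}^{**}}=1$. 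Because the canonical map $\mathcal{B}\hookrightarrow\mathcal{B}^{**}$ is an isometric embedding (equation \eqref{natural-map}), this reads $\|\hat f\|_{\mathcal{B}}=\gamma=\|\nu\|_{\mathcal{B}^*}$, which is the assertion. For the form \eqref{rt-Gateaux-explicit} the same conclusion follows at once: $\mathcal{G}^*(\nu)$ is the unique element of $\partial\|\cdot\|_{\mathcal{B}^*}(\nu)$ and satisfies $\|\mathcal{G}^*(\nu)\|_{\mathcal{B}^{**}}=1$ by \eqref{norm-GateauxDiff} applied to $\mathcal{B}^*$, whence $\|\hat f\|_{\mathcal{B}}=\gamma\,\|\mathcal{G}^*(\nu)\|_{\mathcal{B}^{**}}=\gamma$.

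The computation is short and presents no genuine obstacle; the points demanding care are purely organizational. One must apply the peak-functional characterization \eqref{relation-subdifferential-functionals} to the dual space $\mathcal{B}^*$ rather than to $\mathcal{B}$ itself, and one must invoke that the natural embedding into the bidual preserves norms, so that the $\mathcal{B}^{**}$-norm of $\hat f$ coincides with its $\mathcal{B}$-norm. The only place needing a separate small argument is the degenerate case $\mathbf{y}=0$, where one shows $\gamma$ vanishes rather than simply reading off a unit-norm element. I note that this identity is in fact already implicit in the proof of Theorem \ref{representer-theorem-subdifferential-explicit}, where the rescaled coefficients were constructed precisely so that $\|\nu\|_{\mathcal{B}^*}=\|\hat f\|_{\mathcal{B}}$ (cf.\ equation \eqref{rt-subdifferential3}).
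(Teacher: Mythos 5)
Your proof is correct and takes essentially the same route as the paper's: both dispose of the trivial solution by forcing $\nu:=\sum_{j\in\mathbb{N}_m}c_j\nu_j=0$ (the paper by citing the proofs of Theorems \ref{representer-theorem-subdifferential-explicit} and \ref{representer-theorembyGateaux-explicit}, you by a direct contradiction), and in the nontrivial case both apply the characterization \eqref{relation-subdifferential-functionals} with $\mathcal{B}$ replaced by $\mathcal{B}^*$ (respectively \eqref{norm-GateauxDiff} for the G\^{a}teaux form), together with the isometric embedding \eqref{natural-map} of $\mathcal{B}$ into $\mathcal{B}^{**}$, to conclude $\|\hat{f}\|_{\mathcal{B}}=\|\nu\|_{\mathcal{B}^*}$. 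Your only departures are organizational---you split on $\mathbf{y}=0$ rather than $\hat{f}=0$, which is equivalent here since $\mathcal{L}(\hat f)=\mathbf{y}$---and your closing observation that the identity is already implicit in \eqref{rt-subdifferential3} is accurate.
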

\begin{proof}
As has been shown in the proof of Theorems \ref{representer-theorem-subdifferential-explicit} and
\ref{representer-theorembyGateaux-explicit}, for the trivial solution $\hat{f}=0$, the coefficients appearing in the solution representations of these theorems are all zeros. Clearly, we have that
$$
\|\hat{f}\|_{\mathcal{B}}
=\left\|\sum_{j\in\mathbb{N}_m}c_j\nu_j\right\|_{\mathcal{B}^*}=0.
$$

It remains to consider the nontrivial solution $\hat{f}\neq 0$. In this case, the function $\nu:=\sum_{j\in\mathbb{N}_m}c_j\nu_j$ is also nonzero. When $\hat{f}$ satisfies the inclusion relation \eqref{rt-subdifferential-explicit}, we get that
$$
\frac{\hat{f}}{\|\nu\|_{_{\mathcal{B}^*}}}
\in\partial\|\cdot\|_{_{\mathcal{B}^*}}(\nu).
$$
Equation \eqref{relation-subdifferential-functionals} ensures that
$$
\frac{\|\hat{f}\|_{\mathcal{B}}}{\|\nu\|_{_{\mathcal{B}^*}}}=1,
$$
that is, $\|\hat{f}\|_{\mathcal{B}}=\|\nu\|_{\mathcal{B}^*}.$ When $\hat{f}$ satisfies the equality \eqref{rt-Gateaux-explicit}, equation \eqref{norm-GateauxDiff} ensures that $\|\hat{f}\|_{\mathcal{B}}=\|\nu\|_{\mathcal{B}^*}.$
\end{proof}

Approaches to determine the coefficients $c_j\in\mathbb{R},$ $j\in\mathbb{N}_m,$ appearing in the solution representations were developed in the last section when the Banach space $\mathcal{B}$ has the pre-dual space $\mathcal{B}_{*}$ and $\nu_j\in\mathcal{B}_*$, for $j\in\mathbb{N}_m$. Accordingly, the infimum of the minimum norm interpolation problem \eqref{mni} can be obtained from the resulting coefficients.

\begin{thm}\label{infimum-smooth-predual}
If $\mathcal{B}$ is a Banach space having the smooth pre-dual space $\mathcal{B}_*$, $\nu_j\in\mathcal{B}_*$, $j\in\mathbb{N}_m$, are linearly independent and $\mathbf{y}\in\mathbb{R}^m$, then the infimum $m_0$ of the minimum norm interpolation problem \eqref{mni} with $\mathbf{y}$ has the form
$$
m_0=\left\|\sum_{j\in\mathbb{N}_m}c_j\nu_j\right\|_{\mathcal{B}_*},
$$
where $c_j\in\mathbb{R}$, $j\in\mathbb{N}_m$, are the solution of the system \eqref{equation-c-smooth-B_*} of equations.
\end{thm}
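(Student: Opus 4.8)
The plan is to exhibit a single solution $\hat f$ of \eqref{mni} whose $\mathcal{B}$-norm simultaneously equals the infimum $m_0$ and equals $\left\|\sum_{j\in\mathbb{N}_m}c_j\nu_j\right\|_{\mathcal{B}_*}$. The first of these is immediate: since $\mathcal{B}$ has the pre-dual space $\mathcal{B}_*$ and $\nu_j\in\mathcal{B}_*$, Proposition \ref{existence-mni} guarantees that \eqref{mni} has at least one solution, and by the very definition of the minimum norm interpolation problem any solution attains the infimum, so $m_0=\|\hat f\|_{\mathcal{B}}$. Hence the task reduces to computing $\|\hat f\|_{\mathcal{B}}$ for a conveniently chosen solution.

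First I would invoke Theorem \ref{solution-mni-smooth-B_*} (equivalently Theorem \ref{representer-theorembyGateaux-predual}) to select the solution in its explicit G\^ateaux form. Combining the existence of a solution with these theorems shows that the system \eqref{equation-c-smooth-B_*} admits a solution $\mathbf{c}=[c_j:j\in\mathbb{N}_m]\in\mathbb{R}^m$, and for this $\mathbf{c}$ the element given by \eqref{rt-Gateaux-predual-solve},
\[
\hat f=\|\mathcal{L}^*(\mathbf{c})\|_{\mathcal{B}_*}\,\mathcal{G}_*(\mathcal{L}^*(\mathbf{c})),
\]
is a solution of \eqref{mni} with $\mathbf{y}$. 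By the representation \eqref{adjoint-operator} of the adjoint operator, $\mathcal{L}^*(\mathbf{c})=\sum_{j\in\mathbb{N}_m}c_j\nu_j$.

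The main computation is then to take the $\mathcal{B}$-norm of this $\hat f$. Writing $\nu:=\mathcal{L}^*(\mathbf{c})=\sum_{j\in\mathbb{N}_m}c_j\nu_j$, I would use that $\mathcal{B}_*$ is smooth and that $(\mathcal{B}_*)^*=\mathcal{B}$, so that the identity \eqref{norm-GateauxDiff}, applied with $\mathcal{B}$ replaced by $\mathcal{B}_*$, gives $\|\mathcal{G}_*(\nu)\|_{\mathcal{B}}=1$ whenever $\nu\neq0$. Consequently
\[
\|\hat f\|_{\mathcal{B}}=\|\nu\|_{\mathcal{B}_*}\,\|\mathcal{G}_*(\nu)\|_{\mathcal{B}}=\|\nu\|_{\mathcal{B}_*}=\left\|\sum_{j\in\mathbb{N}_m}c_j\nu_j\right\|_{\mathcal{B}_*},
\]
and combining this with $m_0=\|\hat f\|_{\mathcal{B}}$ yields the asserted formula.

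I expect no deep obstacle here, since the result is essentially a readout of the explicit solution representation already established; the work is in the bookkeeping. The points requiring care are: handling the trivial case $\mathbf{y}=0$ separately, where $m_0=0$, the system \eqref{equation-c-smooth-B_*} is solved by $\mathbf{c}=0$, hence $\nu=0$, and both sides of the identity vanish because $\mathcal{G}_*(0):=0$; correctly transferring the unit-norm identity \eqref{norm-GateauxDiff} from its ambient statement on $\mathcal{B}$ to the pre-dual space through the isometric identification $(\mathcal{B}_*)^*=\mathcal{B}$; and noting that although $m_0$ does not depend on which solution is selected, the coefficients $c_j$ are precisely those furnished by any solution of \eqref{equation-c-smooth-B_*}, so the statement is well posed.
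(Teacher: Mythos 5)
Your proposal is correct and takes essentially the same route as the paper: the paper's proof likewise invokes Theorem \ref{solution-mni-smooth-B_*} for the explicit representation \eqref{rt-Gateaux-predual-solve} and then repeats the norm computation from the proof of Theorem \ref{infimum}, namely using \eqref{norm-GateauxDiff} (with $\mathcal{B}$ replaced by $\mathcal{B}_*$) to conclude $\|\hat f\|_{\mathcal{B}}=\|\mathcal{L}^*(\mathbf{c})\|_{\mathcal{B}_*}$. Your explicit handling of the trivial case $\mathbf{y}=0$ and of the well-posedness of the coefficient choice merely spells out details the paper leaves implicit.
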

\begin{proof}
Theorem \ref{solution-mni-smooth-B_*} ensures that if the Banach space $\mathcal{B}$ has the smooth pre-dual space $\mathcal{B}_*$ and $\nu_j\in\mathcal{B}_*$, $j\in\mathbb{N}_m$, then the solution of the minimum norm interpolation problem \eqref{mni} with $\mathbf{y}$ has the form \eqref{rt-Gateaux-predual-solve} with $c_j\in\mathbb{R}$, $j\in\mathbb{N}_m$, satisfying the system \eqref{equation-c-smooth-B_*}. By arguments similar to those used in the proof of Theorem \ref{infimum} and by employing the solution representation \eqref{rt-Gateaux-predual-solve}, we get the desired conclusion of this theorem.
\end{proof}

For the case that the pre-dual space $\mathcal{B}_*$ of the Banach space $\mathcal{B}$ may not be smooth, we have the following representation of the infimum.

\begin{thm}\label{infimum-predual}
If $\mathcal{B}$ is a Banach space having the pre-dual space $\mathcal{B}_{*}$, $\nu_j\in\mathcal{B}_*$, $j\in\mathbb{N}_m$, are linearly independent and $\mathbf{y}\in\mathbb{R}^m\setminus\{0\}$, then the infimum $m_0$ of the minimum norm interpolation problem \eqref{mni} with $\mathbf{y}$ has the form
$$
m_0=\left\|\sum_{j\in\mathbb{N}_m}\hat{c}_j\nu_j\right\|_{\mathcal{B}_*}^{-1},
$$
where $\hat{\mathbf{c}}:=[\hat{c}_j:j\in\mathbb{N}_m]$ is a solution of the minimization problem \eqref{equivalent-minimization-predual} with $\mathbf{y}$.
\end{thm}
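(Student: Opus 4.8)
The plan is to identify the infimum $m_0$ with the $\mathcal{B}$-norm of a concrete solution of \eqref{mni} manufactured by Theorem \ref{minimization-for-c-f}, and then to evaluate that norm directly from the subdifferential characterization \eqref{relation-subdifferential-functionals}. The point is that the complete solution theory of the previous section already attaches to each solution $\hat{\mathbf{c}}$ of the finite dimensional problem \eqref{equivalent-minimization-predual} an explicit solution of \eqref{mni}, and the norm of that solution is essentially forced by the fact that subdifferentials of the norm consist of unit vectors.

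Concretely, I would fix a solution $\hat{\mathbf{c}}\in\mathbb{R}^m$ of \eqref{equivalent-minimization-predual} with $\mathbf{y}$. By Proposition \ref{minimization-for-c} the set on the right hand side of \eqref{intersectionoftwosets} is nonempty, so Theorem \ref{minimization-for-c-f} furnishes a solution $\hat{f}\in\mathcal{B}$ of the minimum norm interpolation problem \eqref{mni} satisfying
$$
\hat{f}\in\frac{1}{\|\mathcal{L}^*(\hat{\mathbf{c}})\|_{\mathcal{B}_*}}
\partial\|\cdot\|_{\mathcal{B}_*}(\mathcal{L}^*(\hat{\mathbf{c}})).
$$
Writing $\nu:=\mathcal{L}^*(\hat{\mathbf{c}})=\sum_{j\in\mathbb{N}_m}\hat{c}_j\nu_j$ by \eqref{adjoint-operator}, this means $\hat{f}=\gamma^{-1}f_0$ with $\gamma:=\|\nu\|_{\mathcal{B}_*}$ and $f_0\in\partial\|\cdot\|_{\mathcal{B}_*}(\nu)$. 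Next I would observe that the constraint $\langle\hat{\mathbf{c}},\mathbf{y}\rangle_{\mathbb{R}^m}=1$ forces $\hat{\mathbf{c}}\neq0$, whence the linear independence of the $\nu_j$, $j\in\mathbb{N}_m$, gives $\nu\neq0$ and thus $\gamma>0$. Applying equation \eqref{relation-subdifferential-functionals} with $\mathcal{B}$ replaced by its pre-dual $\mathcal{B}_*$ (so that $(\mathcal{B}_*)^*=\mathcal{B}$), every element of $\partial\|\cdot\|_{\mathcal{B}_*}(\nu)$ has $\mathcal{B}$-norm equal to $1$; in particular $\|f_0\|_{\mathcal{B}}=1$.

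Combining these, $\|\hat{f}\|_{\mathcal{B}}=\gamma^{-1}\|f_0\|_{\mathcal{B}}=\gamma^{-1}=\|\sum_{j\in\mathbb{N}_m}\hat{c}_j\nu_j\|_{\mathcal{B}_*}^{-1}$. Since $\hat{f}$ is a solution of \eqref{mni}, it attains the infimum, so $m_0=\|\hat{f}\|_{\mathcal{B}}$, which yields the asserted formula. There is no genuine obstacle here beyond bookkeeping: the one step that must be handled carefully is the legitimacy of invoking \eqref{relation-subdifferential-functionals} at $\nu$ in the pre-dual space, which requires $\nu\neq0$ and is exactly what the constraint $\langle\hat{\mathbf{c}},\mathbf{y}\rangle_{\mathbb{R}^m}=1$ together with linear independence guarantees; this is also why the hypothesis $\mathbf{y}\neq0$ is essential, the trivial case $\mathbf{y}=0$ giving $m_0=0$ being excluded.
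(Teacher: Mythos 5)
Your proof is correct and follows essentially the same route as the paper: both invoke Theorem \ref{minimization-for-c-f} to obtain a solution $\hat{f}$ of \eqref{mni} lying in $\gamma^{-1}\partial\|\cdot\|_{\mathcal{B}_*}(\mathcal{L}^*(\hat{\mathbf{c}}))$ and then read off $\|\hat{f}\|_{\mathcal{B}}=\gamma^{-1}$ from the characterization \eqref{relation-subdifferential-functionals} applied in the pre-dual (the paper defers this last computation to ``arguments similar to the proof of Theorem \ref{infimum}''). Your explicit verification that $\langle\hat{\mathbf{c}},\mathbf{y}\rangle_{\mathbb{R}^m}=1$ together with linear independence forces $\mathcal{L}^*(\hat{\mathbf{c}})\neq0$ is a detail the paper leaves implicit, and it is handled correctly.
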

\begin{proof}
By employing Theorem \ref{minimization-for-c-f}, we have that a solution of the minimum norm interpolation problem \eqref{mni} with $\mathbf{y}$ has the from
$$
\hat{f}\in \frac{1}{\left\|\sum_{j\in\mathbb{N}_m}\hat{c}_j\nu_j\right\|_{\mathcal{B}_*}}
\partial\|\cdot\|_{\mathcal{B}_{*}}\left(\sum_{j\in\mathbb{N}_m}\hat{c}_j\nu_j\right),
$$
where $\hat{\mathbf{c}}:=[\hat{c}_j:j\in\mathbb{N}_m]$ is a solution of the minimization problem \eqref{equivalent-minimization-predual} with $\mathbf{y}$. By applying arguments similar to those used in the proof of Theorem \ref{infimum} to the inclusion relation above, we obtain the desired result.
\end{proof}

For the special cases discussed in subsection 3.3, infimum results similar to those stated in the above theorems remain valid. That is, in all the cases considered there, the infimum of the minimum norm interpolation problem is equal to the norm of the functional appearing in each corresponding explicit solution representation. We state these results below.


\begin{rem}
If $\mathcal{B}$ is a uniformly Fr\'{e}chet smooth and uniformly convex Banach space with the dual space $\mathcal{B}^*$, then the infimum $m_0$ of the minimum norm interpolation problem \eqref{mni} with $\mathbf{y}$ has the form
$$
m_0=\left\|\sum_{j\in\mathbb{N}_m}c_jg_j^{\sharp}\right\|_{\mathcal{B}^*},
$$
where $c_j\in\mathbb{R}$, $j\in\mathbb{N}_m$, are the solution of the system \eqref{equation-c-ufuc} of equations.
\end{rem}

\begin{rem} If $\mathcal{B}$ is a semi-inner-product RKBS with the semi-inner-product reproducing kernel $G$ and $\mathcal{B}^*$ is its dual space, then the infimum $m_0$ of the minimum norm interpolation problem \eqref{mni} with $\mathbf{y}$ has the form
$$
m_0=\left\|\sum_{j\in\mathbb{N}_m}c_jG(x_j,\cdot)^{\sharp}\right\|_{\mathcal{B}^*},
$$
where $c_j\in\mathbb{R}$, $j\in\mathbb{N}_m$, are the solution of the system \eqref{equation-c-ufuc} of equations with $g_k:=G(x_k,\cdot)$, $k\in\mathbb{N}_m$.
\end{rem}

\begin{rem}
Suppose that $\mathcal{B}$ is a right-sided RKBS with the right-sided reproducing kernel $K$ and $\mathcal{B}^*$ is its dual space. If $\mathcal{B}$ is reflexive and strictly convex, then the infimum $m_0$ of the minimum norm interpolation problem \eqref{mni} with $\mathbf{y}$ has the form
$$
m_0=\left\|\sum_{j\in\mathbb{N}_m}c_jK(x_j,\cdot)\right\|_{\mathcal{B}^*},
$$
where $c_j\in\mathbb{R}$, $j\in\mathbb{N}_m$, are the solution of the system \eqref{equation-c-smooth-B_*} of equations with $\nu_j:=K(x_j,\cdot),$ $j\in\mathbb{N}_m$.
\end{rem}

\begin{rem}
The infimum of the minimum norm interpolation problem \eqref{mni} in $\ell_1(\mathbb{N})$ has the form
$$
m_0=\left\|\sum_{j\in\mathbb{N}_m}\hat{c}_j\mathbf{u}_j\right\|_{\infty}^{-1},
$$
where $\hat{\mathbf{c}}:=[\hat{c}_j:j\in\mathbb{N}_m]$ is a solution of the minimization problem \eqref{equivalent-minimization-predual} with $\mathbf{y}$ and $\mathcal{B}_*=c_0$.
\end{rem}

\section{Fixed-Point Approach for Minimum Norm Interpolation}

The solution method established in section 4 for the minimum norm interpolation in a Banach space with non-smooth pre-dual space provides a foundation for further development of implementable schemes to find its solution by determining the coefficients $c_j$ which appear in the solution representations. Specifically, using the solution representation described in Theorem \ref{minimization-for-c-f} to find a solution of the problem \eqref{mni} requires to solve the minimization problem \eqref{equivalent-minimization-predual} and to verify \eqref{intersectionoftwosets}. Both of these steps involve solving inclusion relations. It is not computationally convenient to solve an inclusion relation, especially, when the set involved in the inclusion is described by sophisticated equations and/or inequalities. It requires further investigation to develop computationally convenient schemes based on the theoretical results that we have obtained.

In this section, we will take a different point of view to develop a fixed-point approach for the minimum norm interpolation problem \eqref{mni} in a Banach space. Specifically, we reformulate problem \eqref{mni} as an unconstrained minimization problem, and then re-express its solution as a fixed-point of a nonlinear map defined via the proximity operator of functions involved in the problem. The resulting fixed-point equations can be solved conveniently by iteration schemes. The reformulation will be done by using the fact that an inclusion involving subdifferential of a convex function can be converted to a fixed-point equation defined by the proximity operator of the function. The fixed-point formulation provides a sound basis for algorithmic development for numerical solutions of the problem. In particular, when the Banach space $\mathcal{B}$ is the special space $\ell_1(\mathbb{N})$, we develop an implementable fixed-point equation for finding a solution of this problem.

\subsection{Fixed-Point Approach for Minimum Norm Interpolation in a General Banach Space}
We formulate in this subsection fixed-point equations for a solution of the minimum norm interpolation problem \eqref{mni} in a general Banach space.

We first reformulate the minimum norm interpolation problem \eqref{mni} as an equivalent unconstrained minimization problem. Suppose that $\mathcal{B}$ is a real Banach space with the dual space $\mathcal{B}^*$ and $\nu_j\in\mathcal{B}^*$, $j\in\mathbb{N}_m$, are linearly independent. Let $\mathcal{L}$ be defined by \eqref{functional-operator} and $\mathcal{L}^*$ its adjoint operator. For a given data $\mathbf{y}\in\mathbb{R}^m$, we define the indicator function $\iota_{\mathbf{y}}:\mathbb{R}^m\rightarrow \mathbb{R}\cup\{+\infty\}$ of $\mathbf{y}$ at $\mathbf{c}\in\mathbb{R}^m$ as
\begin{equation}\label{indicator}
\iota_{\mathbf{y}}(\mathbf{c})
:=\left\{\begin{array}{rcl}
0,& \mbox{if}\ \ \mathbf{c}=\mathbf{y},\\
+\infty, & \mbox{if}\ \ \mathbf{c}\neq \mathbf{y}.
\end{array}\right.
\end{equation}
Note that the indicator function $\iota_{\mathbf{y}}$ is convex but not continuous at $\mathbf{c}=\mathbf{y}$. Using the indicator function, the minimum norm interpolation problem \eqref{mni}, which is a constrained minimization problem, is rewritten as an equivalent unconstrained one. We state this result in the next lemma for convenient reference.

\begin{lemma}\label{mni-unconstrain}
If for a given $\mathbf{y}\in\mathbb{R}^m$, the indicator function $\iota_{\mathbf{y}}$ is defined by \eqref{indicator}, then the minimum norm interpolation problem \eqref{mni} with $\mathbf{y}$ is equivalent to the unconstrained minimization problem
\begin{equation}\label{equi-min-unconstrain}
\inf\{\|f\|_{\mathcal{B}}
+\iota_{\mathbf{y}}(\mathcal{L}(f)):f\in\mathcal{B}\}.
\end{equation}
\end{lemma}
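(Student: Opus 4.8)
The plan is to show the two optimization problems have the same feasible minimizers by exploiting the structure of the indicator function $\iota_{\mathbf{y}}$. The key observation is that $\iota_{\mathbf{y}}(\mathcal{L}(f))$ is finite (in fact zero) precisely when $\mathcal{L}(f)=\mathbf{y}$, that is, exactly when $f\in\mathcal{M}_{\mathbf{y}}$, and is $+\infty$ otherwise. First I would record this dichotomy explicitly: for any $f\in\mathcal{B}$,
\begin{equation*}
\|f\|_{\mathcal{B}}+\iota_{\mathbf{y}}(\mathcal{L}(f))
=\left\{\begin{array}{rcl}
\|f\|_{\mathcal{B}},& \mbox{if}\ \ f\in\mathcal{M}_{\mathbf{y}},\\
+\infty, & \mbox{if}\ \ f\notin\mathcal{M}_{\mathbf{y}}.
\end{array}\right.
\end{equation*}
This follows directly from definition \eqref{indicator} of the indicator function together with definition \eqref{My} of $\mathcal{M}_{\mathbf{y}}$.

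Next I would use this identity to match the two infima. Since the objective in \eqref{equi-min-unconstrain} equals $+\infty$ off $\mathcal{M}_{\mathbf{y}}$, any infimizing behaviour must occur on $\mathcal{M}_{\mathbf{y}}$, provided this set is nonempty; and the linear independence of the $\nu_j$ guarantees $\mathcal{M}_{\mathbf{y}}\neq\emptyset$ (as noted in the paragraph preceding Proposition \ref{existence-mni}). Restricting the infimum in \eqref{equi-min-unconstrain} to $\mathcal{M}_{\mathbf{y}}$, where the objective reduces to $\|f\|_{\mathcal{B}}$, I would conclude
\begin{equation*}
\inf\{\|f\|_{\mathcal{B}}+\iota_{\mathbf{y}}(\mathcal{L}(f)):f\in\mathcal{B}\}
=\inf\{\|f\|_{\mathcal{B}}:f\in\mathcal{M}_{\mathbf{y}}\},
\end{equation*}
which is precisely the minimum norm interpolation problem \eqref{mni}. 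The same reasoning shows that the set of minimizers coincides: an element $\hat f$ attains the infimum in \eqref{equi-min-unconstrain} if and only if it attains the infimum in \eqref{mni}, since a minimizer must necessarily satisfy $\mathcal{L}(\hat f)=\mathbf{y}$ (otherwise the objective is infinite and cannot be minimal as the feasible set is nonempty) and then carries objective value $\|\hat f\|_{\mathcal{B}}$.

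This argument is almost entirely bookkeeping, so I do not anticipate a genuine obstacle. The only point requiring mild care is the word ``equivalent'' in the statement, which I would interpret as asserting both equality of the optimal values and equality of the solution sets; the proof must therefore address the minimizers and not merely the infima. A secondary subtlety is ensuring the feasible set $\mathcal{M}_{\mathbf{y}}$ is nonempty so that the infimum is not taken over the empty set (in which case the unconstrained infimum would be the vacuous $+\infty$); this is guaranteed by the standing hypothesis that the $\nu_j$, $j\in\mathbb{N}_m$, are linearly independent, which the statement inherits from the ambient setup of this section.
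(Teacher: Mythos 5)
Your proposal is correct and follows essentially the same route as the paper's proof: the indicator function forces the objective in \eqref{equi-min-unconstrain} to coincide with $\|f\|_{\mathcal{B}}$ on $\mathcal{M}_{\mathbf{y}}$ and to be $+\infty$ elsewhere, so the unconstrained problem reduces to \eqref{mni}. Your additional remarks on the nonemptiness of $\mathcal{M}_{\mathbf{y}}$ and the coincidence of minimizer sets are sound refinements of the paper's brief argument, not a different method.
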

\begin{proof}
By the definition of the indicator function, the infimum in \eqref{equi-min-unconstrain} will be assumed at an element $f\in\mathcal{B}$ such that $\mathcal{L}(f)=\mathbf{y}$. Thus, the minimization problem \eqref{equi-min-unconstrain} can be rewritten as
$$
\inf\{\|f\|_{\mathcal{B}}
:f\in\mathcal{B},\mathcal{L}(f)=\mathbf{y}\},
$$
which coincides with the minimum norm interpolation problem \eqref{mni} with $\mathbf{y}$.
\end{proof}

We characterize a solution of \eqref{equi-min-unconstrain} in terms of fixed-point equations. To this end, we need the notion of the proximity operator in both spaces $\mathbb{R}^m$ and $\mathcal{B}$. We begin with reviewing the proximity operator on $\mathbb{R}^m$ which was originally introduced in \cite{Mo}. Let $\psi:\mathbb{R}^m\rightarrow\mathbb{R}\cup\{+\infty\}$ be a convex function such that
$$
\mathrm{dom}(\psi):=\{\mathbf{c}\in\mathbb{R}^m:\psi(\mathbf{c})<+\infty\}\neq\emptyset.
$$
The proximity operator $\mathrm{prox}_{\psi}: \mathbb{R}^m\to \mathbb{R}^m$ of $\psi$ is defined for $\mathbf{a}\in\mathbb{R}^m$ by
\begin{equation}\label{prox-Rm}
\mathrm{prox}_{\psi}(\mathbf{a})
:=\arg\inf\left\{\frac12\|\mathbf{a}-\mathbf{c}\|_{\mathbb{R}^m}^2
+\psi(\mathbf{c}):\mathbf{c}\in\mathbb{R}^m\right\}.
\end{equation}

The following relation between the proximity operator of $\psi$ and its subdifferential can be found in \cite{BC, MSX}.

\begin{lemma}\label{subdiff-prox-Rm}
If $\psi$ is a convex function from $\mathbb{R}^m$ to $\mathbb{R}\cup\{+\infty\}$ and $\mathbf{a}\in\mathrm{dom}(\psi)$, then
\begin{equation}\label{subdiff-prox-Rm-formula}
\mathbf{b}\in\partial\psi(\mathbf{a})
\ \ \mbox{if and only if}\ \
\mathbf{a}={\rm prox}_{\psi}(\mathbf{a}+\mathbf{b}).
\end{equation}
\end{lemma}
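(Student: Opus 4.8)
The plan is to recognize that the claimed equivalence is simply the first-order optimality condition for the strongly convex minimization problem defining the proximity operator, written in two equivalent forms. Write $\mathbf{z}:=\mathbf{a}+\mathbf{b}$ and consider the objective
\begin{equation*}
h(\mathbf{c}):=\tfrac12\|\mathbf{z}-\mathbf{c}\|_{\mathbb{R}^m}^2+\psi(\mathbf{c}),\quad \mathbf{c}\in\mathbb{R}^m.
\end{equation*}
Since $\mathbf{a}\in\mathrm{dom}(\psi)$ we have $h(\mathbf{a})<+\infty$, and the strong convexity contributed by the quadratic term guarantees that $h$ possesses a unique minimizer. Thus, by the definition \eqref{prox-Rm} of the proximity operator, the assertion $\mathbf{a}={\rm prox}_{\psi}(\mathbf{a}+\mathbf{b})$ is precisely the statement that $\mathbf{a}$ minimizes $h$. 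I would therefore reduce the lemma to proving that $\mathbf{a}$ minimizes $h$ if and only if $\mathbf{b}\in\partial\psi(\mathbf{a})$.

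For the implication $\mathbf{b}\in\partial\psi(\mathbf{a})\Rightarrow\mathbf{a}$ minimizes $h$, I would substitute the subgradient inequality $\psi(\mathbf{c})\ge\psi(\mathbf{a})+\langle\mathbf{b},\mathbf{c}-\mathbf{a}\rangle_{\mathbb{R}^m}$ into $h(\mathbf{c})$ and complete the square. Setting $\mathbf{d}:=\mathbf{c}-\mathbf{a}$, so that $\mathbf{z}-\mathbf{c}=\mathbf{b}-\mathbf{d}$, the identity
\begin{equation*}
\tfrac12\|\mathbf{b}-\mathbf{d}\|_{\mathbb{R}^m}^2+\langle\mathbf{b},\mathbf{d}\rangle_{\mathbb{R}^m}=\tfrac12\|\mathbf{b}\|_{\mathbb{R}^m}^2+\tfrac12\|\mathbf{d}\|_{\mathbb{R}^m}^2
\end{equation*}
yields $h(\mathbf{c})\ge\tfrac12\|\mathbf{b}\|_{\mathbb{R}^m}^2+\psi(\mathbf{a})=h(\mathbf{a})$ for every $\mathbf{c}$, whence $\mathbf{a}$ is the (unique) minimizer.

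For the converse, assuming that $\mathbf{a}$ minimizes $h$, I would test the minimality inequality $h(\mathbf{a})\le h(\mathbf{a}+t(\mathbf{c}-\mathbf{a}))$ along the segment with parameter $t\in(0,1]$ for an arbitrary $\mathbf{c}$, bound the $\psi$-term by the convexity estimate $\psi(\mathbf{a}+t(\mathbf{c}-\mathbf{a}))\le(1-t)\psi(\mathbf{a})+t\psi(\mathbf{c})$, cancel the common terms, divide by $t$, and let $t\to0^{+}$. The quadratic cross term contributes an $O(t)$ piece that vanishes in the limit, leaving exactly $\psi(\mathbf{c})-\psi(\mathbf{a})\ge\langle\mathbf{b},\mathbf{c}-\mathbf{a}\rangle_{\mathbb{R}^m}$, which is $\mathbf{b}\in\partial\psi(\mathbf{a})$.

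An even shorter route is to invoke Fermat's rule $\mathbf{0}\in\partial h(\mathbf{a})$ together with the Moreau--Rockafellar sum rule, applicable here because the quadratic summand is finite and continuous on all of $\mathbb{R}^m$; this gives $\mathbf{0}\in(\mathbf{a}-\mathbf{z})+\partial\psi(\mathbf{a})$, hence $\mathbf{b}=\mathbf{z}-\mathbf{a}\in\partial\psi(\mathbf{a})$ at once. I expect no serious obstacle: the only points requiring care are the uniqueness of the proximal point, which licenses stating the conclusion as the equality $\mathbf{a}={\rm prox}_{\psi}(\mathbf{a}+\mathbf{b})$ rather than a mere inclusion, and the limit $t\to0^{+}$ in the converse, both of which are routine given the convexity of $\psi$. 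I would present the self-contained completing-the-square argument, so that the proof does not depend on citing the sum rule.
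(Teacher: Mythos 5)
Your proof is correct, but there is no in-paper argument to compare it against: the paper states this lemma without proof, remarking only that the relation ``can be found in'' \cite{BC, MSX}. Your self-contained argument is the standard textbook proof and checks out in every step. The completing-the-square identity is right: with $\mathbf{d}:=\mathbf{c}-\mathbf{a}$ and $\mathbf{z}-\mathbf{c}=\mathbf{b}-\mathbf{d}$, the subgradient inequality gives $h(\mathbf{c})\geq\frac12\|\mathbf{b}\|_{\mathbb{R}^m}^2+\frac12\|\mathbf{d}\|_{\mathbb{R}^m}^2+\psi(\mathbf{a})=h(\mathbf{a})+\frac12\|\mathbf{c}-\mathbf{a}\|_{\mathbb{R}^m}^2$, which exhibits $\mathbf{a}$ as a minimizer and, since the inequality is strict for $\mathbf{c}\neq\mathbf{a}$, simultaneously proves uniqueness; the converse via $h(\mathbf{a})\leq h(\mathbf{a}+t(\mathbf{c}-\mathbf{a}))$, cancellation, division by $t$, and $t\to0^{+}$ is likewise sound, as is the alternative route through Fermat's rule and the Moreau--Rockafellar sum rule (applicable because the quadratic summand is finite and continuous on all of $\mathbb{R}^m$). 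One small imprecision worth flagging: your opening claim that strong convexity ``guarantees that $h$ possesses a unique minimizer'' overstates matters, since strong convexity yields uniqueness but existence in general also requires lower semicontinuity of $\psi$, which the lemma does not assume (e.g., for $m=1$, $\psi(0)=1$, $\psi(x)=0$ for $x>0$, $\psi(x)=+\infty$ for $x<0$ is proper convex, yet the infimum defining $\mathrm{prox}_{\psi}(0)$ is not attained). Fortunately this remark is not load-bearing in your argument: in the forward direction the minimizer is exhibited explicitly by the displayed inequality, and in the converse direction attainment at $\mathbf{a}$ is precisely the hypothesis $\mathbf{a}=\mathrm{prox}_{\psi}(\mathbf{a}+\mathbf{b})$. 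So your proof establishes the equivalence exactly as stated, and in fact does so without needing the lower semicontinuity that the cited references typically impose to make $\mathrm{prox}_{\psi}$ everywhere defined.
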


The proximity operator of a convex function in an infinite dimensional Hilbert space may be found in \cite{BC}. We now define the proximity operator of a convex function in a Banach space $\mathcal{B}$. This requires the availability of a Hilbert space and a linear map between it and the Banach space $\mathcal{B}$. Suppose that $\mathcal{H}$ is a Hilbert space, $\mathcal{T}$ is a bounded linear operator from $\mathcal{B}$ to $\mathcal{H}$ and $\mathcal{T}^*$ is its adjoint operator from $\mathcal{H}$ to $\mathcal{B}^*$. The proximity operator $\mathrm{prox}_{\psi,\mathcal{H},\mathcal{T}}: \mathcal{B} \to \mathcal{B}$ of a convex function $\psi:\mathcal{B}\to\mathbb{R}\cup\{+\infty\}$ with respect to $\mathcal{H}$ and $\mathcal{T}$ is defined by
\begin{equation}\label{prox-predual}
\mathrm{prox}_{\psi,\mathcal{H},\mathcal{T}}(f)
:=\arg\inf\left\{\frac12\|\mathcal{T}(f-h)\|_{\mathcal{H}}^2
+\psi(h):h\in\mathcal{B}\right\},\ \ \mbox{for all}\ \ f\in\mathcal{B}.
\end{equation}
The proximity operator $\mathrm{prox}_{\psi}$ defined by \eqref{prox-Rm} is a special case of the definition \eqref{prox-predual}. Specifically, let $\mathcal{B}$ be the Euclidean space $\mathbb{R}^m$ with a norm $\|\cdot\|$. If we choose $\mathcal{H}:=\mathbb{R}^m$ with the Euclidean norm $\|\cdot\|_{\mathbb{R}^m}$ and $\mathcal{T}$ as the identity operator from $\mathbb{R}^m$ with the norm $\|\cdot\|$ to $\mathbb{R}^m$ with the Euclidean norm $\|\cdot\|_{\mathbb{R}^m}$, the proximity operator $\mathrm{prox}_{\psi,\mathcal{H},\mathcal{T}}$ reduces to $\mathrm{prox}_{\psi}$.

In a manner similar to Lemma \ref{subdiff-prox-Rm}, the proximity operator defined by \eqref{prox-predual} of a convex function $\psi$ defined on $\mathcal{B}$ is intimately related to the subdifferential of $\psi$.

\begin{prop}\label{subdiff-prox-Banach}
Suppose that $\mathcal{B}$ is a Banach space with the dual space $\mathcal{B}^*$ and $\mathcal{H}$ is a Hilbert space. Let $\mathcal{T}$ be a bounded linear operator from $\mathcal{B}$ to $\mathcal{H}$ and $\mathcal{T}^*$ be its adjoint operator from $\mathcal{H}$ to $\mathcal{B}^*$. If $\psi:\mathcal{B}\to\mathbb{R}\cup\{+\infty\}$ is a convex function, then for all $f\in{\rm dom}(\psi)$ and $g\in\mathcal{B}$
\begin{equation}\label{subdiff-prox-Banach-formula}
\mathcal{T}^*\mathcal{T}(g)\in\partial\psi(f)
\ \ \mbox{if and only if}\ \
f=\mathrm{prox}_{\psi,\mathcal{H},\mathcal{T}}(f+g).
\end{equation}
\end{prop}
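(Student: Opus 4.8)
The plan is to recognize that the fixed-point equation $f=\mathrm{prox}_{\psi,\mathcal{H},\mathcal{T}}(f+g)$ is precisely the assertion that $f$ attains the infimum in the definition \eqref{prox-predual} evaluated at the argument $f+g$; that is, $f$ is a minimizer over $\mathcal{B}$ of the convex functional
\begin{equation*}
\Phi(h):=\frac12\|\mathcal{T}(f+g-h)\|_{\mathcal{H}}^2+\psi(h).
\end{equation*}
Since $\Phi$ is convex, $f$ minimizes $\Phi$ if and only if $0\in\partial\Phi(f)$. My entire argument will therefore be a chain of equivalences running from this optimality condition to the inclusion $\mathcal{T}^*\mathcal{T}(g)\in\partial\psi(f)$; establishing such a chain disposes of both directions of the stated equivalence at once, exactly as in the finite-dimensional prototype Lemma \ref{subdiff-prox-Rm}.

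First I would split $\Phi=\Phi_1+\psi$ with $\Phi_1(h):=\frac12\|\mathcal{T}(f+g-h)\|_{\mathcal{H}}^2$ the quadratic data term, which is convex and G\^{a}teaux differentiable on all of $\mathcal{B}$. I would compute its derivative directly: expanding $\|\mathcal{T}(f+g-h-tw)\|_{\mathcal{H}}^2$ in powers of $t$ and reading off the linear coefficient, then using the Riesz identification $\mathcal{H}^*=\mathcal{H}$ together with the defining adjoint relation $\langle\mathcal{T}^*(z),w\rangle_{\mathcal{B}}=\langle z,\mathcal{T}(w)\rangle_{\mathcal{H}}$, yields
\begin{equation*}
\nabla\Phi_1(h)=\mathcal{T}^*\mathcal{T}(h-f-g)\in\mathcal{B}^*,
\end{equation*}
and in particular $\nabla\Phi_1(f)=-\mathcal{T}^*\mathcal{T}(g)$.

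Next I would invoke additivity of the subdifferential. Because $\Phi_1$ is convex and continuous (indeed differentiable) everywhere on $\mathcal{B}$, the Moreau--Rockafellar sum rule applies and gives
\begin{equation*}
\partial\Phi(f)=\{\nabla\Phi_1(f)\}+\partial\psi(f)=-\mathcal{T}^*\mathcal{T}(g)+\partial\psi(f).
\end{equation*}
Substituting this into $0\in\partial\Phi(f)$ shows that the optimality condition is equivalent to $0\in-\mathcal{T}^*\mathcal{T}(g)+\partial\psi(f)$, which is exactly $\mathcal{T}^*\mathcal{T}(g)\in\partial\psi(f)$. Reading the chain forward yields the ``only if'' direction of \eqref{subdiff-prox-Banach-formula} and reading it backward yields the ``if'' direction.

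I expect the main obstacle to lie not in the formal algebra but in two points of rigor peculiar to the Banach-space setting. First, the differentiation of the quadratic term must keep careful track of where the inner product of $\mathcal{H}$ lives versus the dual pairing of $\mathcal{B}$, so that the adjoint $\mathcal{T}^*$ and the Riesz isomorphism are inserted in the correct places; this is the step that replaces the transparent Hilbert-space identity underlying Lemma \ref{subdiff-prox-Rm}. Second, the sum rule for subdifferentials on a general Banach space requires a constraint qualification, which is furnished here by the continuity of $\Phi_1$ at every point of $\mathrm{dom}(\psi)$; I would state this hypothesis explicitly, since it is precisely what legitimizes the splitting of $\partial\Phi(f)$ into $\{\nabla\Phi_1(f)\}+\partial\psi(f)$.
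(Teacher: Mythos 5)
Your proof is correct and follows essentially the same route as the paper's: both characterize the fixed-point equation via Fermat's rule for the minimization defining $\mathrm{prox}_{\psi,\mathcal{H},\mathcal{T}}$, split off the quadratic term, and identify its contribution to the subdifferential at $f$ as $-\mathcal{T}^*\mathcal{T}(g)$. The only cosmetic difference is that you compute $\nabla\Phi_1$ by direct expansion and invoke the Moreau--Rockafellar sum rule explicitly (with the correct qualification that $\Phi_1$ is continuous everywhere), whereas the paper reaches the same inclusion $0\in\mathcal{T}^*\mathcal{T}(f-f-g)+\partial\psi(f)$ through its chain rule \eqref{chain-rule} combined with the singleton subdifferential of $\|\cdot\|_{\mathcal{H}}^2$ --- if anything, your treatment of the sum rule is the more careful of the two.
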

\begin{proof}
By definition \eqref{prox-predual} of the proximity operator on the Banach space $\mathcal{B}$, for each $f\in{\rm dom}(\psi)$ and $g\in\mathcal{B}$ the equation
$$
f=\mathrm{prox}_{\psi,\mathcal{H},\mathcal{T}}(f+g)
$$
is equivalent to
$$
f=\arg\inf\left\{\frac12\|\mathcal{T}(f+g-h)\|_{\mathcal{H}}^2
+\psi(h):h\in\mathcal{B}\right\}.
$$
According to the Fermat rule \cite{Zalinescu}, the above equation holds if and only if
$$
0\in\partial\left(\frac12\|\mathcal{T}(\cdot-f-g)\|_{\mathcal{H}}^2
+\psi(\cdot)\right)(f).
$$
By employing the chain rule \eqref{chain-rule} of the subdifferential and noting that the subdifferential of the function $\|\cdot\|_{\mathcal{H}}^2$ at any element in the Hilbert space $\mathcal{H}$ is a singleton, the inclusion relation above is thus equivalent to
\begin{equation}\label{relation-sub-prox}
0\in\mathcal{T}^*\mathcal{T}(f-f-g)+\partial\psi(f).
\end{equation}
The inclusion relation \eqref{relation-sub-prox} is further equivalent to
$$
\mathcal{T}^*\mathcal{T}(g)\in\partial\psi(f),
$$
proving the desired result.
\end{proof}

Proposition \ref{subdiff-prox-Banach} is a generalization of Lemma \ref{subdiff-prox-Rm}. We explain this point below. Let $\mathcal{B}$ be the Euclidean space $\mathbb{R}^m$ with a norm $\|\cdot\|$. For the norm $\|\cdot\|$, its dual norm $\|\cdot\|_{\sharp}$ is defined, for all
$\mathbf{b}\in\mathbb{R}^m$ by
$$
\|\mathbf{b}\|_{\sharp}:=\max\{|\langle\mathbf{a},
\mathbf{b}\rangle_{\mathbb{R}^m}|:\|\mathbf{a}\|=1,
\mathbf{a}\in\mathbb{R}^m\}.
$$
Accordingly, the dual space $\mathcal{B}^*$ is identified with $\mathbb{R}^m$ with the dual norm $\|\cdot\|_{\sharp}$. Choose the Hilbert space $\mathcal{H}$ as $\mathbb{R}^m$ with the Euclidean norm $\|\cdot\|_{\mathbb{R}^m}$ and the operator $\mathcal{T}$ as the identity operator from $\mathcal{B}$ to $\mathcal{H}$. Clearly, the adjoint operator $\mathcal{T}^*$ is the identity operator from $\mathcal{H}$ to $\mathcal{B}^*$. Hence, $\mathcal{T}^*\mathcal{T}$ coincides with the identity operator from $\mathcal{B}$ to $\mathcal{B}^*$. In this special case, relation \eqref{subdiff-prox-Banach-formula} in Proposition \ref{subdiff-prox-Banach} reduces to relation \eqref{subdiff-prox-Rm-formula} in Lemma \ref{subdiff-prox-Rm}.

We also need the notion of the conjugate function to develop the characterization for the solution of the minimization problem \eqref{equi-min-unconstrain} in terms of fixed-point equations. The conjugate function of a convex function $\psi:\mathbb{R}^m\rightarrow\mathbb{R}\cup\{+\infty\}$ is defined as
$$
\psi^*(\mathbf{c}):=\sup\{\langle\mathbf{a},\mathbf{c}\rangle_{\mathbb{R}^m}
-\psi(\mathbf{a}):\mathbf{a}\in\mathbb{R}^m\},\ \ \mbox{for all}\ \ \mathbf{c}\in\mathbb{R}^m.
$$
There is a relation between the subdifferential of a convex function and that of its conjugate function. Specifically, if $\psi$ is a convex function on $\mathbb{R}^m$, then for all $\mathbf{a}\in \mathrm{dom}(\psi)$ and all $\mathbf{b}\in\mathrm{dom}(\psi^*)$ there holds
\begin{equation}\label{conjugate}
\mathbf{a}\in\partial\psi^*(\mathbf{b})\ \ \mbox{if and only if}\ \ \mathbf{b}\in\partial\psi(\mathbf{a}).
\end{equation}
This leads to the relation between the proximity operators of $\psi$ and $\psi^*$, that is,
\begin{equation}\label{prox-psi-psi*}
\mathrm{prox}_{\psi}=\mathbf{I}-\mathrm{prox}_{\psi^*},
\end{equation}
where $\mathbf{I}$ denotes the $m\times m$ identity matrix. As an example, the conjugate fucntion $\iota_{\mathbf{y}}^*$ of the indicator function $\iota_{\mathbf{y}}$ has the form \begin{equation}\label{indicator-conjugate}
\iota_{\mathbf{y}}^*(\mathbf{c}):=\langle\mathbf{y},\mathbf{c}\rangle_{\mathbb{R}^m},
\ \ \mbox{for all}\ \ \mathbf{c}\in\mathbb{R}^m.
\end{equation}

The minimization problem \eqref{equi-min-unconstrain} involves the composition of the indicator function $\iota_{\mathbf{y}}$ and the linear operator $\mathcal{L}$. We need to compute the subdifferential of the composition function by the chain rule \eqref{chain-rule} of the subdifferential. However, the pair $\iota_{\mathbf{y}}$ and $\mathcal{L}$ does not satisfy the hypothesis of the chain rule \eqref{chain-rule} of the subdifferential since $\iota_{\mathbf{y}}$ is not continuous at every point in the range of $\mathcal{L}$. Thus, we cannot use the chain rule \eqref{chain-rule} directly. In the next lemma, we verify that the chain rule for the subdifferential of the composition of these two functions remains valid by using special property of the indicator function $\iota_{\mathbf{y}}$.

\begin{lemma}\label{chain-rule-indicator}
Suppose that $\mathcal{B}$ is a Banach space with the dual space $\mathcal{B}^*$ and $\nu_j\in\mathcal{B}^*$, $j\in\mathbb{N}_m$, are linearly independent. Let $\mathcal{L}$ be defined by \eqref{functional-operator} and $\mathcal{L}^*$ be the adjoint operator. If for a given $\mathbf{y}\in\mathbb{R}^m$, $\mathcal{M}_y$ is defined by \eqref{My} and the indicator function $\iota_{\mathbf{y}}$ is defined by \eqref{indicator}, then for all $f\in\mathcal{M}_y$
\begin{equation}\label{iota-L-composition}
\partial(\iota_{\mathbf{y}}\circ\mathcal{L})(f)
=\mathcal{L}^*\partial\iota_{\mathbf{y}}(\mathcal{L}(f)).
\end{equation}
\end{lemma}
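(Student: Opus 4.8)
The plan is to bypass the failure of the standard chain rule \eqref{chain-rule}---which is inapplicable here because $\iota_{\mathbf{y}}$ is continuous at no point of the range of $\mathcal{L}$---by computing both sides of \eqref{iota-L-composition} explicitly and showing that each coincides with $\span\mathcal{V}_m$, the linear span of the functionals $\nu_j$, $j\in\mathbb{N}_m$. The starting observation is that the composition $\iota_{\mathbf{y}}\circ\mathcal{L}$ is precisely the indicator function of the affine set $\mathcal{M}_{\mathbf{y}}$: it vanishes on $\mathcal{M}_{\mathbf{y}}$ and equals $+\infty$ elsewhere. Since the statement restricts attention to $f\in\mathcal{M}_{\mathbf{y}}=\mathrm{dom}(\iota_{\mathbf{y}}\circ\mathcal{L})$, both subdifferentials are taken at points where the respective functions are finite, so neither side is vacuously empty.

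First I would compute the left-hand side. For $f\in\mathcal{M}_{\mathbf{y}}$, a functional $\nu\in\mathcal{B}^*$ lies in $\partial(\iota_{\mathbf{y}}\circ\mathcal{L})(f)$ if and only if, by the definition \eqref{subdifferential}, the inequality $\langle\nu,g-f\rangle_{\mathcal{B}}\leq0$ holds for every $g\in\mathcal{B}$; this is vacuous when $g\notin\mathcal{M}_{\mathbf{y}}$ (the left-hand side of the defining inequality being then $+\infty$) and so reduces to $\langle\nu,g-f\rangle_{\mathcal{B}}\leq0$ for all $g\in\mathcal{M}_{\mathbf{y}}$. Because $\mathcal{M}_{\mathbf{y}}=\mathcal{M}_0+f$ for $f\in\mathcal{M}_{\mathbf{y}}$, as $g$ ranges over $\mathcal{M}_{\mathbf{y}}$ the difference $g-f$ ranges over the subspace $\mathcal{M}_0$; since $\mathcal{M}_0$ is a subspace, $\langle\nu,h\rangle_{\mathcal{B}}\leq0$ for all $h\in\mathcal{M}_0$ forces $\langle\nu,h\rangle_{\mathcal{B}}=0$ for all $h\in\mathcal{M}_0$. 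Hence $\partial(\iota_{\mathbf{y}}\circ\mathcal{L})(f)=\mathcal{M}_0^{\perp}$, which Lemma \ref{finite} identifies with $\span\mathcal{V}_m$.

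Next I would compute the right-hand side. Because $f\in\mathcal{M}_{\mathbf{y}}$ we have $\mathcal{L}(f)=\mathbf{y}$, so the relevant subdifferential is $\partial\iota_{\mathbf{y}}(\mathbf{y})$. As $\iota_{\mathbf{y}}$ is the indicator of the singleton $\{\mathbf{y}\}$, its subdifferential at $\mathbf{y}$ consists of all $\mathbf{c}\in\mathbb{R}^m$ satisfying $\langle\mathbf{c},\mathbf{z}-\mathbf{y}\rangle_{\mathbb{R}^m}\leq0$ for every $\mathbf{z}\in\{\mathbf{y}\}$, an empty constraint; thus $\partial\iota_{\mathbf{y}}(\mathbf{y})=\mathbb{R}^m$. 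Applying $\mathcal{L}^*$ and invoking its representation \eqref{adjoint-operator}, one obtains $\mathcal{L}^*\partial\iota_{\mathbf{y}}(\mathbf{y})=\{\sum_{j\in\mathbb{N}_m}c_j\nu_j:\mathbf{c}\in\mathbb{R}^m\}=\span\mathcal{V}_m$. Comparing the two computations yields \eqref{iota-L-composition}.

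The main obstacle is conceptual rather than technical: the ``hard'' inclusion $\partial(\iota_{\mathbf{y}}\circ\mathcal{L})(f)\subseteq\mathcal{L}^*\partial\iota_{\mathbf{y}}(\mathcal{L}(f))$ is exactly the part of the chain rule that a constraint qualification would normally supply, and it does not follow from \eqref{chain-rule} here. The payoff of the explicit route above is that this inclusion becomes an equality between two concretely identified finite-dimensional spaces, both equal to $\span\mathcal{V}_m$; the reverse inclusion $\supseteq$ holds automatically from the subgradient inequality and requires no hypothesis, but I would not even need to isolate it separately once both sides have been pinned down in closed form.
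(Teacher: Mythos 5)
Your proof is correct and takes essentially the same approach as the paper's: both sides are computed explicitly and shown to coincide with $\span\mathcal{V}_m$, the left-hand side via the translation $\mathcal{M}_{\mathbf{y}}=f+\mathcal{M}_0$, the subspace argument turning $\langle\nu,h\rangle_{\mathcal{B}}\leq0$ into $\langle\nu,h\rangle_{\mathcal{B}}=0$ on $\mathcal{M}_0$, and Lemma \ref{finite}, and the right-hand side via $\partial\iota_{\mathbf{y}}(\mathbf{y})=\mathbb{R}^m$ together with the representation \eqref{adjoint-operator} of $\mathcal{L}^*$. This is precisely the paper's argument, so nothing further is needed.
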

\begin{proof}
Let $f\in\mathcal{M}_{\mathbf{y}}$. By definition \eqref{subdifferential} of the subdifferential, we have that
$$
\nu\in\partial(\iota_{\mathbf{y}}\circ\mathcal{L})(f)
$$
if and only if
\begin{equation}\label{indicator-function1}
\iota_{\mathbf{y}}(\mathcal{L}(g))-\iota_{\mathbf{y}}(\mathcal{L}(f))
\geq\langle\nu,g-f\rangle_{\mathcal{B}},
 \ \ \mbox{for all}\ \ g\in\mathcal{B}.
\end{equation}
By the definition of the indicator function $\iota_{\mathbf{y}}$, we observe that
$$
\iota_{\mathbf{y}}(\mathcal{L}(f))=0
\ \ \mbox{and}\ \
\iota_{\mathbf{y}}(\mathcal{L}(g))=0,
\ \ \mbox{for all}\ \ g \in \mathcal{M}_{\mathbf{y}}.
$$
Thus, condition \eqref{indicator-function1} is equivalent to
\begin{equation}\label{indicator-function2}
\langle\nu,g-f\rangle_{\mathcal{B}}\leq 0,
 \ \ \mbox{for all}\ \ g\in\mathcal{M}_{\mathbf{y}}.
\end{equation}
The relation of $\mathcal{M}_{\mathbf{y}}$ and $\mathcal{M}_{0}$ ensures that  condition \eqref{indicator-function2} is equivalent to
\begin{equation}\label{indicator-function3}
\langle\nu,h\rangle_{\mathcal{B}}\leq 0,
 \ \ \mbox{for all}\ \ h\in\mathcal{M}_{0}.
\end{equation}
Since $\mathcal{M}_{0}$ is a subspace of $\mathcal{B}$, we can rewrite the inequalities \eqref{indicator-function3} in their equivalent form
$$
\langle\nu,h\rangle_{\mathcal{B}}=0,
 \ \ \mbox{for all}\ \ h\in\mathcal{M}_{0}.
$$
That is, $\nu\in\mathcal{M}_{0}^{\perp}$, which guaranteed by Lemma \ref{finite} is equivalent to
$$
\nu\in\span\{\nu_j:j\in\mathbb{N}_m\}.
$$
Therefore, we conclude that
\begin{equation}\label{iota-L-composition1}
\partial(\iota_{\mathbf{y}}\circ\mathcal{L})(f)
=\span\{\nu_j:j\in\mathbb{N}_m\}.
\end{equation}

On the other hand, clearly, we have that
\begin{equation}\label{indicator-function4}
\partial\iota_{\mathbf{y}}(\mathbf{y})=\mathbb{R}^m.
\end{equation}
Using \eqref{indicator-function4}, for $\mathcal{L}(f)=\mathbf{y}$  we get that
$$
\mathcal{L}^*\partial\iota_{\mathbf{y}}(\mathcal{L}(f))=\mathcal{L}^*\partial\iota_{\mathbf{y}}(\mathbf{y})=\mathcal{L}^*(\mathbb{R}^m).
$$
By the representation \eqref{adjoint-operator} of $\mathcal{L}^*$, we conclude from the above equation that
\begin{equation}\label{iota-L-composition2}
\mathcal{L}^*\partial\iota_{\mathbf{y}}(\mathcal{L}(f))=\span\{\nu_j:j\in\mathbb{N}_m\}.
\end{equation}
Combining equations \eqref{iota-L-composition1} and \eqref{iota-L-composition2}, we obtain the desired chain rule \eqref{iota-L-composition}.
\end{proof}

With the help of the chain rule \eqref{iota-L-composition}, we turn to characterizing the solution of the minimization problem \eqref{equi-min-unconstrain} as a fixed-point of a nonlinear map defined via the proximity operators of the norm $\|\cdot\|_{\mathcal{B}}$ and the conjugate function $\iota_{\mathbf{y}}^*$.
For convenience, we set
\begin{equation}\label{linear-span-nu-j}
\mathcal{V}:=\span\{\nu_j:j\in\mathbb{N}_m\}.
\end{equation}

\begin{thm}\label{characterize-prox-f}
Suppose that $\mathcal{B}$ is a Banach space with the dual space $\mathcal{B}^*$ and $\nu_j\in\mathcal{B}^*$, $j\in\mathbb{N}_m$, are linearly independent and $\mathbf{y}\in\mathbb{R}^m$. Let $\mathcal{L}$, $\iota_{\mathbf{y}}$  and $\mathcal{V}$ be defined by \eqref{functional-operator}, \eqref{indicator} and \eqref{linear-span-nu-j}, respectively, and $\mathcal{L}^*$ be the adjoint operator of $\mathcal{L}$. Let $\mathcal{H}$ be a Hilbert space and $\mathcal{T}$ a bounded linear operator from $\mathcal{B}$ to $\mathcal{H}$ with the adjoint operator $\mathcal{T}^*$. If $\mathcal{T}^*\mathcal{T}$ is a one-to-one mapping from the inverse image of $\mathcal{V}$ onto $\mathcal{V}$, then $\hat{f}\in\mathcal{B}$ is a solution of the minimization problem \eqref{equi-min-unconstrain} with $\mathbf{y}$ if and only if there exists $\mathbf{c}\in\mathbb{R}^m$ such that
\begin{equation}\label{characterize-prox-f-formula1}
\mathbf{c}=\mathrm{prox}_{\iota_{\mathbf{y}}^*}
(\mathbf{c}+\mathcal{L}(\hat{f}))
\end{equation}
and
\begin{equation}\label{characterize-prox-f-formula2}
\hat{f}=\mathrm{prox}_{\|\cdot\|_{\mathcal{B}},\mathcal{H},\mathcal{T}}
\left(\hat{f}-(\mathcal{T}^*\mathcal{T})^{-1}\mathcal{L}^*(\mathbf{c})\right).
\end{equation}
\end{thm}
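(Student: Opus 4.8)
The plan is to characterize a minimizer of the unconstrained problem \eqref{equi-min-unconstrain} through the Fermat rule, and then to convert the two resulting subdifferential inclusions---one living in $\mathbb{R}^m$ and carrying the interpolation constraint, the other living in $\mathcal{B}^*$ and carrying the norm optimality---into the fixed-point equations \eqref{characterize-prox-f-formula1} and \eqref{characterize-prox-f-formula2} by means of the proximity-operator characterizations in Lemma \ref{subdiff-prox-Rm} and Proposition \ref{subdiff-prox-Banach}. First I would record that, by Lemma \ref{mni-unconstrain}, a minimizer $\hat f$ of \eqref{equi-min-unconstrain} has finite objective value, so $\iota_{\mathbf{y}}(\mathcal{L}(\hat f))<+\infty$, i.e. $\hat f\in\mathcal{M}_{\mathbf{y}}$. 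The Fermat rule \cite{Zalinescu} then reads $0\in\partial(\|\cdot\|_{\mathcal{B}}+\iota_{\mathbf{y}}\circ\mathcal{L})(\hat f)$. Because the norm $\|\cdot\|_{\mathcal{B}}$ is finite and continuous on all of $\mathcal{B}$, the Moreau--Rockafellar sum rule applies and, combined with the chain rule of Lemma \ref{chain-rule-indicator} (legitimate precisely because $\hat f\in\mathcal{M}_{\mathbf{y}}$), yields
\[
0\in\partial\|\cdot\|_{\mathcal{B}}(\hat f)+\mathcal{L}^{*}\partial\iota_{\mathbf{y}}(\mathcal{L}(\hat f)).
\]
Hence there exists $\mathbf{c}\in\partial\iota_{\mathbf{y}}(\mathcal{L}(\hat f))\subseteq\mathbb{R}^m$ with $-\mathcal{L}^{*}(\mathbf{c})\in\partial\|\cdot\|_{\mathcal{B}}(\hat f)$; this single vector $\mathbf{c}$ will serve in both fixed-point equations, which avoids any matching difficulty between them.

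Next I would translate the two inclusions. For the first, the membership $\mathbf{c}\in\partial\iota_{\mathbf{y}}(\mathcal{L}(\hat f))$ is, by the conjugacy relation \eqref{conjugate}, equivalent to $\mathcal{L}(\hat f)\in\partial\iota_{\mathbf{y}}^{*}(\mathbf{c})$; applying Lemma \ref{subdiff-prox-Rm} with $\psi:=\iota_{\mathbf{y}}^{*}$, $\mathbf{a}:=\mathbf{c}$ and $\mathbf{b}:=\mathcal{L}(\hat f)$ turns this into exactly \eqref{characterize-prox-f-formula1}. For the second, I would set $g:=-(\mathcal{T}^{*}\mathcal{T})^{-1}\mathcal{L}^{*}(\mathbf{c})$, which is well defined because $\mathcal{L}^{*}(\mathbf{c})=\sum_{j\in\mathbb{N}_m}c_j\nu_j\in\mathcal{V}$ by \eqref{adjoint-operator} and $\mathcal{T}^{*}\mathcal{T}$ maps the inverse image of $\mathcal{V}$ bijectively onto $\mathcal{V}$; consequently $\mathcal{T}^{*}\mathcal{T}(g)=-\mathcal{L}^{*}(\mathbf{c})$, and Proposition \ref{subdiff-prox-Banach} converts $-\mathcal{L}^{*}(\mathbf{c})\in\partial\|\cdot\|_{\mathcal{B}}(\hat f)$ into $\hat f=\mathrm{prox}_{\|\cdot\|_{\mathcal{B}},\mathcal{H},\mathcal{T}}(\hat f+g)$, which is \eqref{characterize-prox-f-formula2}. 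This completes the forward implication.

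For the converse I would reverse these equivalences. Given $\mathbf{c}$ satisfying \eqref{characterize-prox-f-formula1} and \eqref{characterize-prox-f-formula2}, Lemma \ref{subdiff-prox-Rm} and \eqref{conjugate} give $\mathbf{c}\in\partial\iota_{\mathbf{y}}(\mathcal{L}(\hat f))$; since the subdifferential of $\iota_{\mathbf{y}}$ is empty off $\mathbf{y}$, this forces $\mathcal{L}(\hat f)=\mathbf{y}$, so $\hat f\in\mathcal{M}_{\mathbf{y}}$ and Lemma \ref{chain-rule-indicator} is again available. Proposition \ref{subdiff-prox-Banach} applied to \eqref{characterize-prox-f-formula2} gives $-\mathcal{L}^{*}(\mathbf{c})\in\partial\|\cdot\|_{\mathcal{B}}(\hat f)$, while $\mathbf{c}\in\partial\iota_{\mathbf{y}}(\mathcal{L}(\hat f))$ gives $\mathcal{L}^{*}(\mathbf{c})\in\mathcal{L}^{*}\partial\iota_{\mathbf{y}}(\mathcal{L}(\hat f))=\partial(\iota_{\mathbf{y}}\circ\mathcal{L})(\hat f)$. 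Adding these two inclusions produces $0\in\partial(\|\cdot\|_{\mathcal{B}}+\iota_{\mathbf{y}}\circ\mathcal{L})(\hat f)$, and the Fermat rule identifies $\hat f$ as a minimizer of \eqref{equi-min-unconstrain}, hence a solution of \eqref{mni}.

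The step I expect to require the most care is the application of the subdifferential sum rule: \eqref{equi-min-unconstrain} combines the everywhere-continuous norm with the composite $\iota_{\mathbf{y}}\circ\mathcal{L}$ whose effective domain is the thin affine set $\mathcal{M}_{\mathbf{y}}$, so one must invoke the Moreau--Rockafellar qualification in the form that continuity of one summand suffices, rather than any interior condition on $\mathcal{M}_{\mathbf{y}}$. The other delicate point is purely bookkeeping but must be checked: the operator $(\mathcal{T}^{*}\mathcal{T})^{-1}$ is only assumed invertible from $\mathcal{V}$ onto the inverse image of $\mathcal{V}$, so one has to verify that $\mathcal{L}^{*}(\mathbf{c})$ indeed lies in $\mathcal{V}$ before forming $g$, which is exactly the content of \eqref{adjoint-operator}. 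Everything else reduces to the two proximity-operator identities already established.
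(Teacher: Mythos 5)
Your proof is correct and follows essentially the same route as the paper's: Fermat's rule combined with the chain rule of Lemma \ref{chain-rule-indicator}, splitting the resulting inclusion into $\mathbf{c}\in\partial\iota_{\mathbf{y}}(\mathcal{L}(\hat f))$ and $-\mathcal{L}^{*}(\mathbf{c})\in\partial\|\cdot\|_{\mathcal{B}}(\hat f)$, then converting the first via \eqref{conjugate} and Lemma \ref{subdiff-prox-Rm} and the second via Proposition \ref{subdiff-prox-Banach} with $g:=-(\mathcal{T}^{*}\mathcal{T})^{-1}\mathcal{L}^{*}(\mathbf{c})$, justified by $\mathcal{L}^{*}(\mathbf{c})\in\mathcal{V}$. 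The only differences are presentational: you make explicit the Moreau--Rockafellar qualification (continuity of the norm) behind the sum rule and the fact that \eqref{characterize-prox-f-formula1} forces $\mathcal{L}(\hat f)=\mathbf{y}$ so that Lemma \ref{chain-rule-indicator} is available in the converse direction, two points the paper's proof leaves implicit.
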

\begin{proof}
By using the Fermat rule together with the chain rule \eqref{chain-rule-indicator} for the subdifferential of the composition function $\iota_{\mathbf{y}}\circ\mathcal{L}$, we see that $\hat{f}\in\mathcal{B}$ is a solution of the minimization problem \eqref{equi-min-unconstrain} with $\mathbf{y}$ if and only if
\begin{equation*}
0\in\partial\|\cdot\|_{\mathcal{B}}(\hat{f})
+\mathcal{L}^*\partial\iota_{\mathbf{y}}(\mathcal{L}(\hat{f})).
\end{equation*}
This is equivalent to that there exists $\mathbf{c}\in\mathbb{R}^m$ such that
\begin{equation}\label{characterize-prox-f-formula22}
\mathbf{c}\in\partial\iota_{\mathbf{y}}(\mathcal{L}(\hat{f}))
\end{equation}
and
\begin{equation}\label{characterize-prox-f-formula11}
-\mathcal{L}^*(\mathbf{c})\in\partial\|\cdot\|_{\mathcal{B}}(\hat{f}).
\end{equation}
According to the relation \eqref{conjugate}, we rewrite the inclusion relation \eqref{characterize-prox-f-formula22} as
\begin{equation*}\label{characterize-prox-f-formula222}
\mathcal{L}(\hat{f})\in\partial\iota_{\mathbf{y}}^*(\mathbf{c}).
\end{equation*}
Lemma \ref{subdiff-prox-Rm} ensures the equivalence between the inclusion relation above and \eqref{characterize-prox-f-formula1}. By the assumptions on $\mathcal{T}$ and noting that $\mathcal{L}^*(\mathbf{c})\in\mathcal{V}$, we rewrite \eqref{characterize-prox-f-formula11} as
\begin{equation}\label{characterize-prox-f-formula111}
-(\mathcal{T}^*\mathcal{T})(\mathcal{T}^*\mathcal{T})^{-1}
\mathcal{L}^*(\mathbf{c})
\in\partial\|\cdot\|_{\mathcal{B}}(\hat{f}).
\end{equation}
By employing Proposition \ref{subdiff-prox-Banach} with
$$
\psi:=\|\cdot\|_{\mathcal{B}},\ \ g:=-(\mathcal{T}^*\mathcal{T})^{-1}\mathcal{L}^*(\mathbf{c}),\ \ \mbox{and}\ \ f:=\hat{f},
$$
we get that the inclusion relation \eqref{characterize-prox-f-formula111} is equivalent to \eqref{characterize-prox-f-formula2}. Consequently, we conclude that $\hat{f}\in\mathcal{B}$ is a solution of the minimization problem \eqref{equi-min-unconstrain} with $\mathbf{y}$ if and only if there exists $\mathbf{c}\in\mathbb{R}^m$ such that
\eqref{characterize-prox-f-formula1} and \eqref{characterize-prox-f-formula2}.
\end{proof}

Theorem \ref{minimization-for-c-f} shows that when the Banach space $\mathcal{B}$ has the non-smooth pre-dual space $\mathcal{B}_*$ and $\nu_j\in\mathcal{B}_*,$ $j\in\mathbb{N}_m$, the coefficients $c_j$, $j\in\mathbb{N}_m$, appearing in Theorem \ref{representer-theorem-subdifferential-predual} can be obtained by solving the finite dimensional minimization problem \eqref{equivalent-minimization-predual}. A solution of \eqref{equivalent-minimization-predual} can be alternatively characterized via fixed-point equations. We next present this result.

\begin{thm}\label{characterize-prox-c}
Suppose that $\mathcal{B}$ is a Banach space having the pre-dual space $\mathcal{B}_{*}$, $\nu_j\in\mathcal{B}_*$, $j\in\mathbb{N}_m$, are linearly independent and $\mathbf{y}\in\mathbb{R}^m\setminus\{0\}$. Let $\mathcal{L}$, $\iota_{\mathbf{y}}$,  and $\mathcal{V}$ be defined by \eqref{functional-operator}, \eqref{indicator}, and \eqref{linear-span-nu-j}, respectively, and $\mathcal{L}^*$ be the adjoint operator of $\mathcal{L}$. Let $\mathcal{H}$ be a Hilbert space and $\mathcal{T}$ a bounded linear operator from $\mathcal{B}$ to $\mathcal{H}$ with the adjoint operator $\mathcal{T}^*$. If $\mathcal{T}^*\mathcal{T}$ is a one-to-one mapping from the inverse image of $\mathcal{V}$ onto $\mathcal{V}$, then $\hat{\mathbf{c}}\in\mathbb{R}^m$ is a solution of the minimization problem \eqref{equivalent-minimization-predual} with $\mathbf{y}$ if and only if there exists $\hat{f}\in\mathcal{B}$ such that the pair $\hat{f}$ and $\mathbf{c}:=-\|\hat{f}\|_{\mathcal{B}}\hat{\mathbf{c}}$ satisfies the fixed-point equations \eqref{characterize-prox-f-formula1} and \eqref{characterize-prox-f-formula2}.
\end{thm}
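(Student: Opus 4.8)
The plan is to derive this theorem by bridging two characterizations already established: the fixed-point characterization of solutions of the unconstrained problem \eqref{equi-min-unconstrain} given in Theorem \ref{characterize-prox-f}, and the characterization of solutions of \eqref{equivalent-minimization-predual} given in Proposition \ref{minimization-for-c}. Since by Lemma \ref{mni-unconstrain} problem \eqref{equi-min-unconstrain} coincides with the minimum norm interpolation problem \eqref{mni}, both characterizations revolve around the same solution $\hat{f}$ of \eqref{mni}, and the hypotheses on $\mathcal{H}$, $\mathcal{T}$ here are exactly those needed to invoke Theorem \ref{characterize-prox-f}. The real content of the statement is thus the relation $\mathbf{c}=-\|\hat{f}\|_{\mathcal{B}}\hat{\mathbf{c}}$ between the coefficient vector $\mathbf{c}$ occurring in the fixed-point equations and the minimizer $\hat{\mathbf{c}}$ of \eqref{equivalent-minimization-predual}. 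I would first recall that, by the argument inside the proof of Theorem \ref{characterize-prox-f}, for a fixed $\mathbf{c}$ the pair \eqref{characterize-prox-f-formula1}--\eqref{characterize-prox-f-formula2} is equivalent to the pair of inclusions \eqref{characterize-prox-f-formula22} and \eqref{characterize-prox-f-formula11}; since $\mathbf{y}\neq 0$ forces $\mathcal{L}(\hat{f})=\mathbf{y}$ and hence $\partial\iota_{\mathbf{y}}(\mathcal{L}(\hat{f}))=\mathbb{R}^m$, inclusion \eqref{characterize-prox-f-formula22} is automatic and everything reduces to the single inclusion $-\mathcal{L}^*(\mathbf{c})\in\partial\|\cdot\|_{\mathcal{B}}(\hat{f})$ together with $\hat{f}\in\mathcal{M}_{\mathbf{y}}$.

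The heart of the proof is a conversion step. For $\hat{f}\in\mathcal{M}_{\mathbf{y}}$ (so that $\hat{f}\neq 0$, as $\mathbf{y}\neq 0$ rules out the trivial interpolant) and with $\nu:=\mathcal{L}^*(\hat{\mathbf{c}})$, I would show that
\begin{equation*}
-\mathcal{L}^*(\mathbf{c})\in\partial\|\cdot\|_{\mathcal{B}}(\hat{f})
\quad\text{with}\quad \mathbf{c}=-\|\hat{f}\|_{\mathcal{B}}\hat{\mathbf{c}}
\end{equation*}
is equivalent to $\hat{f}\in\tfrac{1}{\|\nu\|_{\mathcal{B}_*}}\partial\|\cdot\|_{\mathcal{B}_*}(\nu)$, which is the inclusion \eqref{relationfc1} appearing in Proposition \ref{minimization-for-c}. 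From the linearity of $\mathcal{L}^*$ via \eqref{adjoint-operator} and the definition of $\mathbf{c}$ one has $-\mathcal{L}^*(\mathbf{c})=\|\hat{f}\|_{\mathcal{B}}\,\nu$. Expanding both inclusions through \eqref{relation-subdifferential-functionals}, the inclusion $\|\hat{f}\|_{\mathcal{B}}\nu\in\partial\|\cdot\|_{\mathcal{B}}(\hat{f})$ unwinds to $\|\hat{f}\|_{\mathcal{B}}\|\nu\|_{\mathcal{B}^*}=1$ and $\langle\nu,\hat{f}\rangle_{\mathcal{B}}=1$, while $\|\nu\|_{\mathcal{B}_*}\hat{f}\in\partial\|\cdot\|_{\mathcal{B}_*}(\nu)$ unwinds to $\|\nu\|_{\mathcal{B}_*}\|\hat{f}\|_{\mathcal{B}}=1$ and $\langle\hat{f},\nu\rangle_{\mathcal{B}_*}=1$. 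Because $\nu=\mathcal{L}^*(\hat{\mathbf{c}})\in\mathcal{B}_*$, the isometric embedding of $\mathcal{B}_*$ into $\mathcal{B}^*$ gives $\|\nu\|_{\mathcal{B}^*}=\|\nu\|_{\mathcal{B}_*}$, and \eqref{natural-map}--\eqref{natural-map-predual} give $\langle\nu,\hat{f}\rangle_{\mathcal{B}}=\langle\hat{f},\nu\rangle_{\mathcal{B}_*}$; hence the two pairs of scalar identities coincide, proving the equivalence. This is exactly the mechanism of Lemma \ref{subdifferential-solvable-thm} transported to the pre-dual pairing, and it simultaneously recovers the infimum identity $\|\hat{f}\|_{\mathcal{B}}=\|\nu\|_{\mathcal{B}_*}^{-1}$.

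With the conversion in place, both directions are short. For the forward direction I would take a minimizer $\hat{\mathbf{c}}$ of \eqref{equivalent-minimization-predual}, invoke Proposition \ref{minimization-for-c} to obtain $\hat{f}\in\mathcal{M}_{\mathbf{y}}$ satisfying \eqref{relationfc1}, apply the conversion to deduce $-\mathcal{L}^*(\mathbf{c})\in\partial\|\cdot\|_{\mathcal{B}}(\hat{f})$ for $\mathbf{c}=-\|\hat{f}\|_{\mathcal{B}}\hat{\mathbf{c}}$ (with \eqref{characterize-prox-f-formula22} automatic), and then invoke the equivalence between \eqref{characterize-prox-f-formula22}--\eqref{characterize-prox-f-formula11} and \eqref{characterize-prox-f-formula1}--\eqref{characterize-prox-f-formula2} established in the proof of Theorem \ref{characterize-prox-f} to conclude that $(\hat{f},\mathbf{c})$ solves the fixed-point equations. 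For the converse I would start from such a pair $(\hat{f},\mathbf{c})$, use Theorem \ref{characterize-prox-f} to extract $\hat{f}\in\mathcal{M}_{\mathbf{y}}$ (hence $\hat{f}\neq 0$) and $-\mathcal{L}^*(\mathbf{c})\in\partial\|\cdot\|_{\mathcal{B}}(\hat{f})$, apply the conversion in the opposite direction to obtain \eqref{relationfc1}, and conclude via Proposition \ref{minimization-for-c} that $\hat{\mathbf{c}}$ solves \eqref{equivalent-minimization-predual}.

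I expect the main obstacle to be the bookkeeping in the conversion step rather than any deep difficulty: one must keep rigorously separate the subdifferential $\partial\|\cdot\|_{\mathcal{B}}(\hat{f})\subset\mathcal{B}^*$ of the norm of $\mathcal{B}$ and the subdifferential $\partial\|\cdot\|_{\mathcal{B}_*}(\nu)\subset\mathcal{B}$ of the norm of the pre-dual, and justify passing between the pairings $\langle\cdot,\cdot\rangle_{\mathcal{B}}$ and $\langle\cdot,\cdot\rangle_{\mathcal{B}_*}$. This passage is legitimate precisely because $\nu=\mathcal{L}^*(\hat{\mathbf{c}})$ lies in $\mathcal{B}_*$, guaranteed by the hypothesis $\nu_j\in\mathcal{B}_*$; the embedding being an isometry is what makes the scaling factor $\|\hat{f}\|_{\mathcal{B}}$ match up so that the single scalar relation $\|\hat{f}\|_{\mathcal{B}}\|\nu\|_{\mathcal{B}_*}=1$ governs both inclusions. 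Some care is also needed to confirm $\hat{f}\neq 0$ in each direction before applying \eqref{relation-subdifferential-functionals}, which is ensured by $\mathbf{y}\neq 0$.
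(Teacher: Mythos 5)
Your proposal is correct and follows essentially the same route as the paper's proof: both reduce via Proposition \ref{minimization-for-c} to the inclusion \eqref{relationfc1}, identify the first fixed-point equation \eqref{characterize-prox-f-formula1} with the interpolation condition $\mathcal{L}(\hat{f})=\mathbf{y}$ (using that $\partial\iota_{\mathbf{y}}^*(\mathbf{c})=\{\mathbf{y}\}$), and pass between $\partial\|\cdot\|_{\mathcal{B}_*}(\mathcal{L}^*(\hat{\mathbf{c}}))$ and $\partial\|\cdot\|_{\mathcal{B}}(\hat{f})$ through the rescaling $\mathbf{c}=-\|\hat{f}\|_{\mathcal{B}}\hat{\mathbf{c}}$ together with \eqref{relation-subdifferential-functionals} and the duality transfer of Lemma \ref{subdifferential-solvable-thm}, before invoking the equivalence of \eqref{characterize-prox-f-formula11} with \eqref{characterize-prox-f-formula2} established inside the proof of Theorem \ref{characterize-prox-f}. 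Your only deviation --- unwinding both subdifferential inclusions into the scalar identities $\|\hat{f}\|_{\mathcal{B}}\|\nu\|_{\mathcal{B}_*}=1$ and $\langle\nu,\hat{f}\rangle_{\mathcal{B}}=1$ instead of citing Lemma \ref{subdifferential-solvable-thm} after rescaling --- merely reproduces that lemma's proof inline and does not change the argument.
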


\begin{proof}
Proposition \ref{minimization-for-c} ensures that $\hat{\mathbf{c}}\in\mathbb{R}^m$ is a solution of \eqref{equivalent-minimization-predual} with $\mathbf{y}$ if and only if there exists $\hat{f}\in\mathcal{B}$ such that $\hat{f}\in\mathcal{M}_{\mathbf{y}}$ and
\begin{equation}\label{characterize-prox-c-1}
\hat{f}\in \frac{1}{\|\mathcal{L}^*(\hat{\mathbf{c}})\|_{\mathcal{B}_{*}}}
\partial\|\cdot\|_{\mathcal{B}_{*}}(\mathcal{L}^*(\hat{\mathbf{c}})).
\end{equation}
Set $\mathbf{c}:=-\|\hat{f}\|_{\mathcal{B}}\hat{\mathbf{c}}.$ It suffices to verify that $\hat{f}\in\mathcal{B}$ satisfies $\hat{f}\in\mathcal{M}_{\mathbf{y}}$ and
\eqref{characterize-prox-c-1} if and only if the pair $\hat{f}$ and $\mathbf{c}$ satisfies the fixed-point equations \eqref{characterize-prox-f-formula1} and \eqref{characterize-prox-f-formula2}. Note that $\hat{f}\in\mathcal{M}_{\mathbf{y}}$ is equivalent to $\mathcal{L}(\hat{f})=\mathbf{y}$. According to the representation \eqref{indicator-conjugate} of the conjugate function $\iota_{\mathbf{y}}^*$, its subdifferential at each $\mathbf{a}\in\mathbb{R}^m$ is a singleton, that is, $\partial\iota_{\mathbf{y}}^*(\mathbf{a})=\{\mathbf{y}\}$. We then conclude that  $\mathcal{L}(\hat{f})=\mathbf{y}$ if and only if $\mathcal{L}(\hat{f})\in\partial\iota_{\mathbf{y}}^*(\mathbf{c})$, which guaranteed by Lemma \ref{subdiff-prox-Rm} is equivalent to \eqref{characterize-prox-f-formula1}. Therefore, we have that $\hat{f}\in\mathcal{M}_{\mathbf{y}}$ if and only if $\hat{f}$ and $\mathbf{c}$ satisfies the fixed-point equation \eqref{characterize-prox-f-formula1}.

We next show that $\hat{f}\in\mathcal{B}$ satisfies
\eqref{characterize-prox-c-1} if and only if $\hat{f}$ and $\mathbf{c}$ satisfies the fixed-point equation \eqref{characterize-prox-f-formula2}. We rewrite \eqref{characterize-prox-c-1} as
\begin{equation}\label{characterize-prox-c-2}
\|\mathcal{L}^*(\hat{\mathbf{c}})\|_{\mathcal{B}_{*}}\hat{f}\in \partial\|\cdot\|_{\mathcal{B}_{*}}(\mathcal{L}^*(\hat{\mathbf{c}})).
\end{equation}
Note by the definition of $\mathbf{c}$ that
\begin{equation}\label{characterize-prox-c-3}
\partial\|\cdot\|_{\mathcal{B}_{*}}(\mathcal{L}^*(\hat{\mathbf{c}}))
=\partial\|\cdot\|_{\mathcal{B}_{*}}(-\mathcal{L}^*(\mathbf{c}))
\end{equation}
and
\begin{equation}\label{characterize-prox-c-4}
\|\mathcal{L}^*(\mathbf{c})\|_{\mathcal{B}_{*}}
=\|\hat{f}\|_{\mathcal{B}}
\|\mathcal{L}^*(\hat{\mathbf{c}})\|_{\mathcal{B}_{*}}.
\end{equation}
According to \eqref{characterize-prox-c-3}, we have that \eqref{characterize-prox-c-2} holds if and only if
\begin{equation}\label{characterize-prox-c-5}
\|\mathcal{L}^*(\hat{\mathbf{c}})\|_{\mathcal{B}_{*}}\hat{f}\in \partial\|\cdot\|_{\mathcal{B}_{*}}(-\mathcal{L}^*(\mathbf{c})),
\end{equation}
which guaranteed by \eqref{relation-subdifferential-functionals} is equivalent to
\begin{equation}\label{characterize-prox-c-6}
\|\hat{f}\|_{\mathcal{B}}\|\mathcal{L}^*(\hat{\mathbf{c}})\|_{\mathcal{B}_{*}}=1
\end{equation}
and
\begin{equation}\label{characterize-prox-c-7}
\frac{\hat{f}}{\|\hat{f}\|_{\mathcal{B}}}
\in\partial\|\cdot\|_{\mathcal{B}_{*}}(-\mathcal{L}^*(\mathbf{c})).
\end{equation}
Note by equation \eqref{characterize-prox-c-4} that \eqref{characterize-prox-c-6} is equivalent to $\|\mathcal{L}^*(\mathbf{c})\|_{\mathcal{B}_{*}}=1$. Accordingly, we conclude that  \eqref{characterize-prox-c-5} holds if and only if there hold $\|-\mathcal{L}^*(\mathbf{c})\|_{\mathcal{B}_{*}}=1$ and \eqref{characterize-prox-c-7}. Lemma \ref{subdifferential-solvable-thm} ensures the equivalence between the latter and inclusion relation \eqref{characterize-prox-f-formula11}. As has been shown in the proof of Theorem \ref{characterize-prox-f}, inclusion relation \eqref{characterize-prox-f-formula11} is equivalent to \eqref{characterize-prox-f-formula2}. Consequently, we have that $\hat{f}\in\mathcal{B}$ satisfies \eqref{characterize-prox-c-1} if and only if $\hat{f}$ and $\mathbf{c}$ satisfies the fixed-point equation \eqref{characterize-prox-f-formula2}. This completes the proof of this theorem.
\end{proof}

Theorem \ref{characterize-prox-f} shows that solving the minimum interpolation problem \eqref{mni} can be done by solving fixed-point equations \eqref{characterize-prox-f-formula1} and \eqref{characterize-prox-f-formula2}. These two fixed-point equations are coupled together and they have to be solved simultaneously by iteration. In general, the fixed-point equation \eqref{characterize-prox-f-formula2} is of infinite dimension, which requires further investigation to convert it to a finite dimensional fixed-point equation. We demonstrate this point by considering the case when $\mathcal{B}=\ell_1(\mathbb{N})$ whose special property will enable us to reduce the corresponding fixed-point equation \eqref{characterize-prox-f-formula2} to a finite dimensional one.

\subsection{Fixed-Point Approach for Minimum Norm Interpolation in $\ell_1(\mathbb{N})$}
In this subsection, we establish a fixed-point characterization for a solution of the minimum norm interpolation problem \eqref{mni} in the special Banach space $\ell_1(\mathbb{N})$. We are especially interested in showing how the fixed-point equations \eqref{characterize-prox-f-formula1} and \eqref{characterize-prox-f-formula2} which is of infinite dimension is reduced to equivalent finite dimensional fixed-point equations.

We first derive the proximity operator of convex functions on $\ell_1(\mathbb{N})$. According to the definition \eqref{prox-predual}, we need to choose an appropriate Hilbert space $\mathcal{H}$ and the operator $\mathcal{T}:\ell_{1}(\mathbb{N})\rightarrow\mathcal{H}$. Noting that there hold the inclusion relations
$$
\ell_1(\mathbb{N})\subset\ell_2(\mathbb{N})\subset c_0,
$$
we define the embedding operator $\mathcal{T}_0:\ell_1(\mathbb{N})\rightarrow\ell_2(\mathbb{N})$ as
\begin{equation}\label{T-l1}
\mathcal{T}_0(\mathbf{x}):=\mathbf{x},\ \ \mbox{for all}\ \ \mathbf{x}\in\ell_1(\mathbb{N}).
\end{equation}
Since there holds
$$
\|\mathbf{x}\|_2\leq\|\mathbf{x}\|_{1},\ \ \mbox{for all}\ \ \mathbf{x}\in\ell_1(\mathbb{N}),
$$
we conclude that $\mathcal{T}_0$ is bounded. We next express the adjoint operator $\mathcal{T}_0^*$ of $\mathcal{T}_0$. There holds for all $\mathbf{u}\in\ell_2(\mathbb{N})$ and all $\mathbf{x}\in\ell_1(\mathbb{N})$ that
$$
\langle\mathcal{T}_0^*(\mathbf{u}),\mathbf{x}\rangle_{\ell_1}
=\langle\mathbf{u},\mathcal{T}_0(\mathbf{x})\rangle_{\ell_2}
=\sum_{j\in\mathbb{N}}u_jx_j.
$$
This yields that $\mathcal{T}_0^*:\ell_2(\mathbb{N})\rightarrow\ell_{\infty}(\mathbb{N})$
has the form
$$
\mathcal{T}_0^*(\mathbf{u})=\mathbf{u},
\ \ \mbox{for all}\ \ \mathbf{u}\in\ell_2(\mathbb{N}).
$$
By choosing $\mathcal{H}:=\ell_2(\mathbb{N})$ and $\mathcal{T}:=\mathcal{T}_0$ defined by \eqref{T-l1}, the proximity operator
$\mathrm{prox}_{\psi,\ell_2(\mathbb{N}),\mathcal{T}_0}: \ell_1(\mathbb{N})\to \ell_1(\mathbb{N})$ of a convex function $\psi:\ell_1(\mathbb{N})\rightarrow\mathbb{R}\cup\{+\infty\}$ defined by \eqref{prox-predual} has the form
\begin{equation}\label{prox-l1}
\mathrm{prox}_{\psi,\ell_2(\mathbb{N}),\mathcal{T}_0}(\mathbf{x})=\arg\inf\left\{\frac12\|\mathbf{x}-\mathbf{z}\|_2^2
+\psi(\mathbf{z}):\mathbf{z}\in\ell_1(\mathbb{N})\right\},
\ \ \mbox{for all}\ \ \mathbf{x}\in\ell_1(\mathbb{N}).
\end{equation}
Since $\mathcal{T}_0$ and $\mathcal{T}_0^*$ are both embedding operator, there holds
$$
\mathcal{T}_0^*\mathcal{T}_0\mathbf{z}=\mathbf{z},\ \ \mbox{for all}\ \ \mathbf{z}\in\ell_1(\mathbb{N}).
$$
Hence, as a consequence of Proposition \ref{subdiff-prox-Banach}, we may get the relation between the proximity operator \eqref{prox-l1} of $\psi$ and its subdifferential. Specifically,
if $\psi:\ell_1(\mathbb{N})\to\mathbb{R}\cup\{+\infty\}$ is a convex function, then for all $\mathbf{x}\in{\rm dom}(\psi)$ and $\mathbf{z}\in\ell_1(\mathbb{N})$ that
\begin{equation}\label{subdiff-prox-l1}
\mathbf{z}\in\partial\psi(\mathbf{x})
\ \ \mbox{if and only if}\ \
\mathbf{x}=\mathrm{prox}_{\psi,\ell_2(\mathbb{N}),\mathcal{T}_0}
(\mathbf{x}+\mathbf{z}).
\end{equation}

We can explicitly calculate the proximity operator \eqref{prox-l1} of the norm $\psi:=\|\cdot\|_{1}$ of $\ell_1(\mathbb{N})$ and its subdifferential. Specifically, for each $\mathbf{x}:=(x_j:j\in\mathbb{N})\in\ell_1(\mathbb{N})$ there holds
$$
\partial\|\cdot\|_{1}(\mathbf{x}) =\{\mathbf{u}\in\ell_{\infty}(\mathbb{N}): \mathbf{u} =(u_j:j\in\mathbb{N}), u_j\in\partial|\cdot|(x_j), j\in\mathbb{N}\},
$$
where for each $x\in\mathbb{R}$
\begin{equation*}
\partial|\cdot|(x):=\left\{\begin{array}{ll}
{\rm sign}(x), &\mbox{if}\  x\neq 0, \\

[-1,1], & \mbox{if}\  x=0,
\end{array}
\right.
\end{equation*}
and
\begin{equation}\label{prox-1norm}
\mathrm{prox}_{\|\cdot\|_{1},\ell_2(\mathbb{N}),\mathcal{T}_0}(\mathbf{x})
=\left(\max\{|x_j|-1,0\}{\rm sign}(x_j):j\in\mathbb{N}\right).
\end{equation}

We denote by $c_c$ the space of all real sequences on $\mathbb{N}$ having at most a finite number of nonzero components. Clearly, we have that $c_c\subset c_0$. For each $\mathbf{x}\in c_c$, the support of $\mathbf{x}$, denoted by $\mathrm{supp}(\mathbf{x})$, is defined to be the index set on which $\mathbf{x}$ is nonzero. The next proposition ensures that the proximity operator of the norm function $\|\cdot\|_1$ is a mapping from $\ell_1(\mathbb{N})$ to $c_c$.

\begin{prop}\label{prox-l1-l0}
If $\mathcal{T}_0:\ell_1(\mathbb{N})\rightarrow \ell_2(\mathbb{N})$ is defined by \eqref{T-l1} and the proximity operator $\mathrm{prox}_{\|\cdot\|_1,\ell_2(\mathbb{N}),\mathcal{T}_0}$ is defined by \eqref{prox-l1} with $\psi:=\|\cdot\|_1$, then for all $\mathbf{x}\in\ell_1(\mathbb{N})$,
$$
\mathrm{prox}_{\|\cdot\|_1,\ell_2(\mathbb{N}),\mathcal{T}_0}(\mathbf{x})
\in c_c.
$$
\end{prop}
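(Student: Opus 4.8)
The plan is to invoke the explicit soft-thresholding formula \eqref{prox-1norm} for the proximity operator of the norm $\|\cdot\|_1$ and then exploit the summability that defines membership in $\ell_1(\mathbb{N})$. Since the formula already gives the output componentwise, the whole argument reduces to a statement about which coordinates of the output can be nonzero.

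First I would fix an arbitrary $\mathbf{x}:=(x_j:j\in\mathbb{N})\in\ell_1(\mathbb{N})$ and set $\mathbf{z}:=\mathrm{prox}_{\|\cdot\|_1,\ell_2(\mathbb{N}),\mathcal{T}_0}(\mathbf{x})$. By formula \eqref{prox-1norm}, the $j$-th component of $\mathbf{z}$ is $z_j=\max\{|x_j|-1,0\}\,\mathrm{sign}(x_j)$, so that $z_j\neq 0$ if and only if $|x_j|>1$. Hence it suffices to prove that the index set $\{j\in\mathbb{N}: |x_j|>1\}$ is finite, for then $\mathbf{z}$ has at most finitely many nonzero components and so belongs to $c_c$ by the definition of $c_c$.

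Second, I would use that $\mathbf{x}\in\ell_1(\mathbb{N})$ means the series $\sum_{j\in\mathbb{N}}|x_j|$ converges, which forces $\lim_{j\to+\infty}|x_j|=0$. Consequently there exists $N\in\mathbb{N}$ with $|x_j|\leq 1$ for all $j>N$, so that $\{j\in\mathbb{N}: |x_j|>1\}\subseteq\mathbb{N}_N$ is finite. Combining this with the first step yields $\mathbf{z}\in c_c$, as desired.

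The conclusion is essentially immediate once \eqref{prox-1norm} is available, and I do not anticipate any genuine obstacle: the soft-thresholding formula already localizes the support of the output to those indices where $|x_j|$ exceeds the threshold $1$, and the only point requiring care is the elementary observation that summability of $(|x_j|)_{j\in\mathbb{N}}$ forces its terms to tend to zero, so that this threshold is exceeded at only finitely many indices.
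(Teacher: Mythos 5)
Your proposal is correct and matches the paper's own proof essentially verbatim: both invoke the componentwise soft-thresholding formula \eqref{prox-1norm} and then use the fact that summability of $(|x_j|)_{j\in\mathbb{N}}$ forces $|x_j|\to 0$, so that $|x_j|$ exceeds the threshold $1$ at only finitely many indices and the output lies in $c_c$. No gaps; nothing further to add.
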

\begin{proof}
For $\mathbf{x}:=(x_j:j\in\mathbb{N})$, we let
$$
\mathbf{y}=(y_j:j\in\mathbb{N}):=\mathrm{prox}_{\|\cdot\|_1,\ell_2(\mathbb{N}),\mathcal{T}_0}(\mathbf{x}).
$$
It follows from equation \eqref{prox-1norm} that
$$
y_j=\max\{|x_j|-1,0\}{\rm sign}(x_j),\ \ \mbox{for all}\ \ j\in\mathbb{N}.
$$
Since $\mathbf{x}\in\ell_1(\mathbb{N})$, there exists an positive integer $N$ such that $|x_j|<1$, for all $j>N$. This together with the above equations leads to $y_j=0$, for all $j>N$.
That is, $\mathbf{y}\in c_c$.
\end{proof}

We now turn to solving the minimization problem \eqref{equi-min-unconstrain}
in the case that $\mathcal{B}:=\ell_1(\mathbb{N})$. The minimization problem \eqref{equi-min-unconstrain} in this case has the form
\begin{equation}\label{equi-min-l1-noconstrain}
\inf\{\|\mathbf{x}\|_{1}
+\iota_{\mathbf{y}}(\mathcal{L}(\mathbf{x})):\mathbf{x}\in\ell_1(\mathbb{N})\},
\end{equation}
where $\mathcal{L}$ is defined by \eqref{functional-operator-l1} with $\mathbf{u}_j\in c_0$, $j\in\mathbb{N}_m.$ The solution of the minimization problem \eqref{equi-min-l1-noconstrain} can be characterized as a fixed-point of a map defined on a finite dimensional space. This benefits from an important property of the space $c_0$. Specifically, for each $\mathbf{u}\in c_0$, since $u_j$ tends to zero as $j\rightarrow+\infty,$ $\mathbf{u}$ attains its norm $\|\mathbf{u}\|_{\infty}$ on the finite index set $\mathbb{N}(\mathbf{u})$, defined by \eqref{extremal-index-set}. By virtue of this property, we define a truncation operator $\mathcal{S}:c_0\rightarrow c_c$ as
\begin{equation}\label{truncation}
\mathcal{S}(\mathbf{u}):=(\tilde{u}_j:j\in\mathbb{N})
\end{equation}
with
\begin{equation}\label{truncation-vector}
\tilde{u}_j:=\left\{\begin{array}{ll}
u_j, &\mbox{if}\  j\in\mathbb{N}(\mathbf{u}), \\
0, & \mbox{if}\  j\notin\mathbb{N}(\mathbf{u}).
\end{array}
\right.
\end{equation}
Clearly, we have that
$$
\|\mathbf{u}\|_{\infty}=\|\mathcal{S}(\mathbf{u})\|_{\infty}, \ \ \mbox{for all} \ \ \mathbf{u}\in c_0.
$$

As a direct consequence of Lemma \ref{subdifferentials-c0}, we get the following relation between the subdifferential of the norm $\|\cdot\|_{\infty}$ of $c_0$ at nonzero $\mathbf{u}\in c_0$ and that at $\mathcal{S}(\mathbf{u})$.

\begin{lemma}\label{improtant-character1}
If the truncation operator $\mathcal{S}:c_0\rightarrow c_c$ is defined by \eqref{truncation} and \eqref{truncation-vector}, then for each nonzero $\mathbf{u}\in c_0$,
\begin{equation}\label{important1}
\partial\|\cdot\|_{\infty}(\mathbf{u})=\partial\|\cdot\|_{\infty}(\mathcal{S}(\mathbf{u})).
\end{equation}
\end{lemma}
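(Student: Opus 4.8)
The plan is to reduce the identity \eqref{important1} to the explicit description of the subdifferential of $\|\cdot\|_{\infty}$ furnished by Lemma \ref{subdifferentials-c0}. Since $\mathbf{u}$ is nonzero, that lemma gives $\partial\|\cdot\|_{\infty}(\mathbf{u})={\rm co}(\mathcal{V}(\mathbf{u}))$ via \eqref{subdifferentials-formula-c0}; and provided $\mathcal{S}(\mathbf{u})$ is also nonzero, the same lemma applied to $\mathcal{S}(\mathbf{u})$ gives $\partial\|\cdot\|_{\infty}(\mathcal{S}(\mathbf{u}))={\rm co}(\mathcal{V}(\mathcal{S}(\mathbf{u})))$. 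Thus it suffices to establish that the two sets $\mathcal{V}(\mathbf{u})$ and $\mathcal{V}(\mathcal{S}(\mathbf{u}))$ coincide, whence their convex hulls coincide and \eqref{important1} follows at once.

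First I would record that $\mathcal{S}(\mathbf{u})$ is nonzero and that it has the same norm as $\mathbf{u}$. The remark preceding the lemma already provides $\|\mathcal{S}(\mathbf{u})\|_{\infty}=\|\mathbf{u}\|_{\infty}$, and since $\mathbf{u}\neq0$ the extremal index set $\mathbb{N}(\mathbf{u})$ from \eqref{extremal-index-set} is nonempty, so by \eqref{truncation-vector} the truncated vector $\mathcal{S}(\mathbf{u})$ retains at least one component of modulus $\|\mathbf{u}\|_{\infty}>0$; in particular $\mathcal{S}(\mathbf{u})\neq 0$ and Lemma \ref{subdifferentials-c0} is applicable to it.

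The key step is to verify $\mathbb{N}(\mathcal{S}(\mathbf{u}))=\mathbb{N}(\mathbf{u})$, using the definition \eqref{extremal-index-set}. For $j\in\mathbb{N}(\mathbf{u})$ we have $\tilde{u}_j=u_j$ by \eqref{truncation-vector}, hence $|\tilde{u}_j|=\|\mathbf{u}\|_{\infty}=\|\mathcal{S}(\mathbf{u})\|_{\infty}$, so $j\in\mathbb{N}(\mathcal{S}(\mathbf{u}))$; conversely, for $j\notin\mathbb{N}(\mathbf{u})$ we have $\tilde{u}_j=0$, and since $\|\mathcal{S}(\mathbf{u})\|_{\infty}=\|\mathbf{u}\|_{\infty}>0$ such a $j$ cannot belong to $\mathbb{N}(\mathcal{S}(\mathbf{u}))$. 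With the two index sets equal, and noting that ${\rm sign}(\tilde{u}_j)={\rm sign}(u_j)$ for every $j\in\mathbb{N}(\mathbf{u})$ (again because $\tilde{u}_j=u_j$ there), the defining formula \eqref{convex-hull} yields $\mathcal{V}(\mathcal{S}(\mathbf{u}))=\mathcal{V}(\mathbf{u})$, completing the argument.

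I do not anticipate any genuine obstacle here: the entire content is bookkeeping with the definitions of the extremal index set $\mathbb{N}(\cdot)$, the truncation operator $\mathcal{S}$, and the set $\mathcal{V}(\cdot)$. The only point requiring a moment of care is ensuring that truncation neither destroys a maximizing index nor creates a new one, which is precisely guaranteed by the norm-preservation $\|\mathcal{S}(\mathbf{u})\|_{\infty}=\|\mathbf{u}\|_{\infty}$ together with the strict positivity $\|\mathbf{u}\|_{\infty}>0$ for nonzero $\mathbf{u}$.
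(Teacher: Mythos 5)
Your proposal is correct and follows essentially the same route as the paper's proof: both reduce \eqref{important1} to Lemma \ref{subdifferentials-c0} by showing $\mathbb{N}(\mathbf{u})=\mathbb{N}(\mathcal{S}(\mathbf{u}))$, hence $\mathcal{V}(\mathbf{u})=\mathcal{V}(\mathcal{S}(\mathbf{u}))$, and then identify both subdifferentials with the common convex hull ${\rm co}(\mathcal{V}(\mathbf{u}))$. You merely spell out two details the paper leaves implicit, namely that $\mathcal{S}(\mathbf{u})\neq 0$ (so the lemma applies to it) and the two-directional verification of the index-set equality, which is a welcome but not substantively different elaboration.
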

\begin{proof}
It follows from the definition of $\mathcal{S}$ that for each $\mathbf{u}\in c_0$, there holds
$$
\mathbb{N}(\mathbf{u})=\mathbb{N}(\mathcal{S}(\mathbf{u})),
$$
which together with definition \eqref{convex-hull} leads to
$$
\mathcal{V}(\mathbf{u})=\mathcal{V}(\mathcal{S}(\mathbf{u})).
$$
Hence, by Lemma \ref{subdifferentials-c0} we get that
$$
\partial\|\cdot\|_{\infty}(\mathbf{u})={\rm co}(\mathcal{V}(\mathbf{u}))={\rm co}(\mathcal{V}(\mathcal{S}(\mathbf{u}))) = \partial\|\cdot\|_{\infty}(\mathcal{S}(\mathbf{u})),
$$
proving the desired result.
\end{proof}

By employing the above lemma, we get below a technical lemma, which is useful for establishing the finite dimensional fixed-point equations for a solution of the minimization problem \eqref{equi-min-l1-noconstrain}.

\begin{lemma}\label{improtant-character2}
If the truncation operator $\mathcal{S}:c_0\rightarrow c_c$ is defined by \eqref{truncation} and \eqref{truncation-vector}, then for all $\mathbf{x}\in\ell_1(\mathbb{N})$ and all $\mathbf{u}\in c_0$,
\begin{equation}\label{important2}
\mathbf{u}\in\partial\|\cdot\|_{1}(\mathbf{x})\ \ \mbox{if and only if}\ \
\mathcal{S}(\mathbf{u})\in\partial\|\cdot\|_1(\mathbf{x}).
\end{equation}
\end{lemma}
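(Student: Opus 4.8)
The plan is to argue directly from the explicit description of $\partial\|\cdot\|_1$ recorded just before this lemma, namely that $\mathbf{v}\in\partial\|\cdot\|_1(\mathbf{x})$ holds exactly when $v_j={\rm sign}(x_j)$ at every index with $x_j\neq0$ and $v_j\in[-1,1]$ at every index with $x_j=0$. Duality is not needed. First I would dispose of the trivial case $\mathbf{u}=\mathbf{0}$: here $\mathbb{N}(\mathbf{u})=\mathbb{N}$ by \eqref{extremal-index-set}, so $\mathcal{S}(\mathbf{u})=\mathbf{u}$ by \eqref{truncation}--\eqref{truncation-vector} and the equivalence is immediate. Thereafter I assume $\mathbf{u}\neq\mathbf{0}$, so that $\|\mathbf{u}\|_{\infty}>0$ and $\mathbb{N}(\mathbf{u})$ is the finite index set on which $|u_j|=\|\mathbf{u}\|_{\infty}$; on $\mathbb{N}(\mathbf{u})$ the operator $\mathcal{S}$ leaves $\mathbf{u}$ unchanged, while off $\mathbb{N}(\mathbf{u})$ it sets the entries to $0$, where $|u_j|<\|\mathbf{u}\|_{\infty}$.

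For the forward implication, suppose $\mathbf{u}\in\partial\|\cdot\|_1(\mathbf{x})$. Then $|u_j|\leq1$ for every $j$, so $\|\mathbf{u}\|_{\infty}\leq1$, and whenever $x_j\neq0$ we have $u_j={\rm sign}(x_j)$ and hence $|u_j|=1$. The decisive point is that this forces $\mathrm{supp}(\mathbf{x})\subseteq\mathbb{N}(\mathbf{u})$: if $\mathbf{x}\neq\mathbf{0}$ then $\|\mathbf{u}\|_{\infty}=1$ and each index in $\mathrm{supp}(\mathbf{x})$ attains this maximal value. Consequently $\mathcal{S}(\mathbf{u})$ agrees with $\mathbf{u}$ on $\mathrm{supp}(\mathbf{x})$, so $\mathcal{S}(\mathbf{u})_j={\rm sign}(x_j)$ there, whereas off $\mathrm{supp}(\mathbf{x})$ the entries of $\mathcal{S}(\mathbf{u})$ are either $u_j\in[-1,1]$ or $0\in[-1,1]$; the explicit formula then yields $\mathcal{S}(\mathbf{u})\in\partial\|\cdot\|_1(\mathbf{x})$, and the case $\mathbf{x}=\mathbf{0}$ is covered by the same bookkeeping.

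For the reverse implication, suppose $\mathcal{S}(\mathbf{u})\in\partial\|\cdot\|_1(\mathbf{x})$. At an index with $x_j\neq0$ this gives $\mathcal{S}(\mathbf{u})_j={\rm sign}(x_j)\neq0$, which forces $j\in\mathbb{N}(\mathbf{u})$ and therefore $u_j=\mathcal{S}(\mathbf{u})_j={\rm sign}(x_j)$, so the sign condition for $\mathbf{u}$ is recovered. For the $[-1,1]$ membership I would invoke the identity $\|\mathbf{u}\|_{\infty}=\|\mathcal{S}(\mathbf{u})\|_{\infty}$ noted right after the definition of $\mathcal{S}$, together with $\|\mathcal{S}(\mathbf{u})\|_{\infty}\leq1$ (every entry of a subgradient of $\|\cdot\|_1$ lies in $[-1,1]$), to conclude $|u_j|\leq\|\mathbf{u}\|_{\infty}\leq1$ uniformly in $j$. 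Hence $u_j\in[-1,1]$ at every index with $x_j=0$, and the explicit formula gives $\mathbf{u}\in\partial\|\cdot\|_1(\mathbf{x})$.

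The whole argument is a careful accounting of where $\mathbf{u}$ and $\mathcal{S}(\mathbf{u})$ may differ, and I expect no step to be genuinely delicate; the one item deserving care, which I regard as the crux, is the inclusion $\mathrm{supp}(\mathbf{x})\subseteq\mathbb{N}(\mathbf{u})$, since it is precisely this that guarantees truncation to $\mathbb{N}(\mathbf{u})$ never erases a sign-matching constraint. Finally I would remark that both implications remain valid irrespective of whether $\mathrm{supp}(\mathbf{x})$ is finite: if it is infinite, then no $\mathbf{u}\in c_0$ can satisfy either side (an element of $\partial\|\cdot\|_1(\mathbf{x})$ would have to equal $\pm1$ on an infinite set, impossible for a member of $c_0$ or of $c_c$), so the stated equivalence holds vacuously.
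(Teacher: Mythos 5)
Your proof is correct, but it takes a genuinely different route from the paper's. The paper argues by duality: for $\mathbf{x}\neq 0$ it applies Lemma~\ref{subdifferential-solvable-thm} to pass from $\mathbf{u}\in\partial\|\cdot\|_{1}(\mathbf{x})$ to $\mathbf{x}/\|\mathbf{x}\|_1\in\partial\|\cdot\|_{\infty}(\mathbf{u})$, invokes Lemma~\ref{improtant-character1} (which rests on the convex-hull description in Lemma~\ref{subdifferentials-c0}) to replace $\partial\|\cdot\|_{\infty}(\mathbf{u})$ by $\partial\|\cdot\|_{\infty}(\mathcal{S}(\mathbf{u}))$, and then dualizes back using $\|\mathcal{S}(\mathbf{u})\|_{\infty}=\|\mathbf{u}\|_{\infty}=1$; the case $\mathbf{x}=0$ is handled separately from the norm-ball description of $\partial\|\cdot\|_1(0)$. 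You instead verify the equivalence componentwise from the explicit formula for $\partial\|\cdot\|_1$ recorded before Proposition~\ref{prox-l1-l0}, splitting on $\mathbf{u}=0$ rather than $\mathbf{x}=0$, and you correctly identify the crux as the inclusion $\mathrm{supp}(\mathbf{x})\subseteq\mathbb{N}(\mathbf{u})$, which is exactly what guarantees truncation never destroys a sign constraint. Each approach has its advantages: the paper's is shorter given the machinery already in place and consistent with how section 6 repeatedly exploits the $\ell_1$--$c_0$ duality, whereas yours is self-contained, sidesteps the implicit normalizations in applying Lemma~\ref{subdifferential-solvable-thm} (that lemma requires both elements nonzero and formally yields $\mathbf{u}/\|\mathbf{u}\|_{\infty}\in\partial\|\cdot\|_1(\mathbf{x})$, with the tacit identification $\|\mathbf{u}\|_{\infty}=1$, and the degenerate case $\mathbf{u}=0$, $\mathbf{x}\neq 0$ is left unmentioned there), and produces as a by-product the support inclusion that the paper establishes separately later as Lemma~\ref{support-x-su}. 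Your closing remark that both sides fail vacuously when $\mathrm{supp}(\mathbf{x})$ is infinite is also correct and is not made explicit in the paper.
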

\begin{proof}
By the definition of the subdifferential, we have that
$$
\partial\|\cdot\|_{1}(0)=\{\mathbf{v}\in\ell_{\infty}(\mathbb{N}):\|\mathbf{v}\|_{\infty}\leq1\}.
$$
This together with the fact that $\|\mathbf{u}\|_{\infty}=\|\mathcal{S}(\mathbf{u})\|_{\infty}$ ensures that \eqref{important2} holds for $\mathbf{x}=0$. It remains to prove the desired conclusion in the case that $\mathbf{x}\neq 0$. By Lemma \ref{subdifferential-solvable-thm}, there holds
$$
\mathbf{u}\in\partial\|\cdot\|_{1}(\mathbf{x})\ \ \mbox{if and only if}\ \ \frac{\mathbf{x}}{\|\mathbf{x}\|_1}\in\partial\|\cdot\|_{\infty}(\mathbf{u}).
$$
Lemma \ref{improtant-character1} ensures that the last inclusion relation is equivalent to
$$
\frac{\mathbf{x}}{\|\mathbf{x}\|_1}\in\partial\|\cdot\|_{\infty}(\mathcal{S}(\mathbf{u})).
$$
Again, using Lemma \ref{subdifferential-solvable-thm} and noting that $\|\mathcal{S}(\mathbf{u})\|_{\infty}=\|\mathbf{u}\|_{\infty}=1$, we conclude that the above inclusion relation
is equivalent to
$$
\mathcal{S}(\mathbf{u})\in\partial\|\cdot\|_1(\mathbf{x}),
$$
proving the desired result \eqref{important2}.
\end{proof}

We are now ready to characterize the solution of the minimization problem \eqref{equi-min-l1-noconstrain} by fixed-point equations.

\begin{thm}\label{characterize-prox-f-l1}
Suppose that $\mathbf{u}_j\in c_0$, $j\in\mathbb{N}_m,$ are linearly independent,  $\mathbf{y}\in\mathbb{R}^m$ and that $\mathcal{L}$, $\iota_{\mathbf{y}}$ and $\mathcal{T}_0$ are defined by \eqref{functional-operator-l1}, \eqref{indicator} and \eqref{T-l1}, respectively. Let the truncation operator $\mathcal{S}$ be defined by \eqref{truncation} and \eqref{truncation-vector}. Then $\hat{\mathbf{x}}\in\ell_1(\mathbb{N})$ is a solution of the minimization problem \eqref{equi-min-l1-noconstrain} with $\mathbf{y}$ if and only if there exists $\mathbf{c}\in\mathbb{R}^m$ such that
\begin{equation}\label{characterize-prox-formula-l1-1}
\mathbf{c}=\mathrm{prox}_{\iota_{\mathbf{y}}^*}
(\mathbf{c}+\mathcal{L}(\hat{\mathbf{x}}))
\end{equation}
and
\begin{equation}\label{characterize-prox-formula-l1-2}
\hat{\mathbf{x}}=\mathrm{prox}_{\|\cdot\|_1,\ell_2(\mathbb{N}),\mathcal{T}_0}
(\hat{\mathbf{x}}-\mathcal{S}\mathcal{L}^*(\mathbf{c})).
\end{equation}
\end{thm}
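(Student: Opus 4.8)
The plan is to follow the architecture of the proof of Theorem \ref{characterize-prox-f}, but I must first recognize that one \emph{cannot} simply specialize that theorem to $\mathcal{B}=\ell_1(\mathbb{N})$, $\mathcal{H}=\ell_2(\mathbb{N})$, $\mathcal{T}=\mathcal{T}_0$. Indeed, in Theorem \ref{characterize-prox-f} the term $(\mathcal{T}^*\mathcal{T})^{-1}\mathcal{L}^*(\mathbf{c})$ requires $\mathcal{T}^*\mathcal{T}$ to be invertible from the inverse image of $\mathcal{V}$ onto $\mathcal{V}$. Here $\mathcal{T}_0^*\mathcal{T}_0$ is the embedding of $\ell_1(\mathbb{N})$ into $\ell_{\infty}(\mathbb{N})$, whose range is $\ell_1(\mathbb{N})$, whereas $\mathcal{L}^*(\mathbf{c})=\sum_{j\in\mathbb{N}_m}c_j\mathbf{u}_j$ lies in $\mathcal{V}\subset c_0$ and in general does \emph{not} belong to $\ell_1(\mathbb{N})$. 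Hence $\mathcal{V}$ is not contained in the range of $\mathcal{T}_0^*\mathcal{T}_0$ and the hypothesis of Theorem \ref{characterize-prox-f} is violated. I would therefore reprove the result directly, and the truncation operator $\mathcal{S}$ of \eqref{truncation} will be precisely the device that repairs this defect, replacing $\mathcal{L}^*(\mathbf{c})$ by a finitely supported element of $c_c\subset\ell_1(\mathbb{N})$ without altering the relevant subdifferential.

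First I would apply the Fermat rule together with the chain rule of Lemma \ref{chain-rule-indicator} for the composition $\iota_{\mathbf{y}}\circ\mathcal{L}$, noting that any solution of \eqref{equi-min-l1-noconstrain} necessarily has finite objective value and therefore satisfies $\mathcal{L}(\hat{\mathbf{x}})=\mathbf{y}$, so that the chain rule is applicable. This shows that $\hat{\mathbf{x}}$ solves \eqref{equi-min-l1-noconstrain} if and only if
\begin{equation*}
0\in\partial\|\cdot\|_1(\hat{\mathbf{x}})
+\mathcal{L}^*\partial\iota_{\mathbf{y}}(\mathcal{L}(\hat{\mathbf{x}})),
\end{equation*}
which is equivalent to the existence of $\mathbf{c}\in\mathbb{R}^m$ with $\mathbf{c}\in\partial\iota_{\mathbf{y}}(\mathcal{L}(\hat{\mathbf{x}}))$ and $-\mathcal{L}^*(\mathbf{c})\in\partial\|\cdot\|_1(\hat{\mathbf{x}})$. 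The first inclusion is treated exactly as in Theorem \ref{characterize-prox-f}: by the conjugacy relation \eqref{conjugate} it is equivalent to $\mathcal{L}(\hat{\mathbf{x}})\in\partial\iota_{\mathbf{y}}^*(\mathbf{c})$, and Lemma \ref{subdiff-prox-Rm} then converts this into the finite dimensional fixed-point equation \eqref{characterize-prox-formula-l1-1}.

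The crux is the second inclusion $-\mathcal{L}^*(\mathbf{c})\in\partial\|\cdot\|_1(\hat{\mathbf{x}})$, where $-\mathcal{L}^*(\mathbf{c})\in c_0$ but generally $-\mathcal{L}^*(\mathbf{c})\notin\ell_1(\mathbb{N})$, so the proximity relation \eqref{subdiff-prox-l1} cannot be invoked directly. Here I would call on Lemma \ref{improtant-character2}: since $\mathbb{N}(-\mathbf{u})=\mathbb{N}(\mathbf{u})$ one has $\mathcal{S}(-\mathcal{L}^*(\mathbf{c}))=-\mathcal{S}\mathcal{L}^*(\mathbf{c})$, and Lemma \ref{improtant-character2} yields the equivalence of $-\mathcal{L}^*(\mathbf{c})\in\partial\|\cdot\|_1(\hat{\mathbf{x}})$ with $-\mathcal{S}\mathcal{L}^*(\mathbf{c})\in\partial\|\cdot\|_1(\hat{\mathbf{x}})$. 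Because $\mathcal{S}$ maps $c_0$ into $c_c\subset\ell_1(\mathbb{N})$ by its definition, the element $\mathbf{z}:=-\mathcal{S}\mathcal{L}^*(\mathbf{c})$ now lies in $\ell_1(\mathbb{N})$, so relation \eqref{subdiff-prox-l1} applies with $\mathbf{x}:=\hat{\mathbf{x}}$ and delivers precisely the fixed-point equation \eqref{characterize-prox-formula-l1-2}. Combining the two equivalences finishes the argument.

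I expect the main obstacle to be exactly this last reduction: seeing that the naive specialization of Theorem \ref{characterize-prox-f} is inadmissible because $\mathcal{L}^*(\mathbf{c})\notin\ell_1(\mathbb{N})$ in general, and that the subdifferential-invariance of the truncation operator (Lemma \ref{improtant-character2}, itself resting on the finiteness of $\mathbb{N}(\mathbf{u})$ for $\mathbf{u}\in c_0$) is the legitimate mechanism that collapses the infinite dimensional inclusion into the implementable finite dimensional fixed-point equation \eqref{characterize-prox-formula-l1-2}. Everything else is bookkeeping with the conjugacy and proximity identities already established.
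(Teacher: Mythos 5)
Your proof is correct and follows essentially the same route as the paper's: the Fermat rule with the chain rule of Lemma \ref{chain-rule-indicator} yields the two subdifferential inclusions, the first converted to \eqref{characterize-prox-formula-l1-1} via \eqref{conjugate} and Lemma \ref{subdiff-prox-Rm}, the second to \eqref{characterize-prox-formula-l1-2} via Lemma \ref{improtant-character2} and relation \eqref{subdiff-prox-l1}. Your opening observation---that Theorem \ref{characterize-prox-f} cannot simply be specialized with $\mathcal{T}:=\mathcal{T}_0$ because $\mathcal{V}\subset c_0$ generally escapes the range $\ell_1(\mathbb{N})$ of $\mathcal{T}_0^*\mathcal{T}_0$, which is exactly why the paper reuses only the first portion of that proof and then brings in the truncation operator $\mathcal{S}$---is a correct and useful clarification of a point the paper leaves implicit.
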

\begin{proof}
As has been shown in the proof of Theorem \ref{characterize-prox-f}, $\hat{\mathbf{x}}\in\ell_1(\mathbb{N})$ is a solution of the minimization problem \eqref{equi-min-l1-noconstrain} with $\mathbf{y}$ if and only if there exists $\mathbf{c}\in\mathbb{R}^m$ such that
\begin{equation}\label{characterize-prox-l1-1}
\mathbf{c}\in\partial\iota_{\mathbf{y}}(\mathcal{L}(\hat{\mathbf{x}}))
\end{equation}
and
\begin{equation}\label{characterize-prox-l1-2}
-\mathcal{L}^*(\mathbf{c})\in\partial\|\cdot\|_{1}(\hat{\mathbf{x}}).
\end{equation}
By relation \eqref{conjugate}, the inclusion relation \eqref{characterize-prox-l1-1} has the  equivalent form
$$
\mathcal{L}(\hat{\mathbf{x}})\in\partial\iota_{\mathbf{y}}^*(\mathbf{c}),
$$
which guaranteed by Lemma \ref{subdiff-prox-Rm} is equivalent to the fixed-point equation \eqref{characterize-prox-formula-l1-1}. Since $\mathbf{u}_j\in c_0$, $j\in\mathbb{N}_m,$ we have that  $\mathcal{L}^*(\mathbf{c})\in c_0$. Hence, by Lemma \ref{improtant-character2}, we conclude that the inclusion relation \eqref{characterize-prox-l1-2} holds if and only if
\begin{equation}\label{characterize-prox-l1-3}
-\mathcal{S}\mathcal{L}^*(\mathbf{c})\in\partial\|\cdot\|_{1}(\hat{\mathbf{x}}).
\end{equation}
Note that $-\mathcal{S}\mathcal{L}^*(\mathbf{c})\in\ell_1(\mathbb{N})$. Relation \eqref{subdiff-prox-l1} ensures that the inclusion relation \eqref{characterize-prox-l1-3} is equivalent to the fixed-point equation \eqref{characterize-prox-formula-l1-2}. Consequently, we have that $\hat{\mathbf{x}}\in\ell_1(\mathbb{N})$ is a solution of the minimization problem \eqref{equi-min-l1-noconstrain} with $\mathbf{y}$ if and only if there exists $\mathbf{c}\in\mathbb{R}^m$ satisfying \eqref{characterize-prox-formula-l1-1} and \eqref{characterize-prox-formula-l1-2}.
\end{proof}

Following Theorem \ref{characterize-prox-f-l1}, we can give a characterization by fixed-point equations for the solution of the dual problem
\begin{equation}\label{equivalent-minimization-l1}
\inf\left\{\|\mathcal{L}^*(\mathbf{c})\|_{\infty}:
\langle\mathbf{c},\mathbf{y}\rangle_{\mathbb{R}^m}=1,
\mathbf{c}\in\mathbb{R}^m\right\}.
\end{equation}
of the minimum norm interpolation problem in the space $\ell_1(\mathbb{N})$.

\begin{thm}\label{characterize-prox-c-l1}
Suppose that $\mathbf{u}_j\in c_0$, $j\in\mathbb{N}_m,$ are linearly independent and $\mathbf{y}\in\mathbb{R}^m\setminus\{0\}$ and that $\mathcal{L}$, $\iota_{\mathbf{y}}$ and $\mathcal{T}_0$ are defined by \eqref{functional-operator-l1}, \eqref{indicator} and \eqref{T-l1}, respectively. Let the truncation operator $\mathcal{S}$ by defined by \eqref{truncation} and \eqref{truncation-vector}. Then $\hat{\mathbf{c}}\in\mathbb{R}^m$ is a solution of the minimization problem \eqref{equivalent-minimization-l1} with $\mathbf{y}$ if and only if there exists $\hat{\mathbf{x}}\in\ell_1(\mathbb{N})$ such that the pair $\hat{\mathbf{x}}$ and $\mathbf{c}:=-\|\hat{\mathbf{x}}\|_{1}\hat{\mathbf{c}}$ satisfy the fixed-point equations \eqref{characterize-prox-formula-l1-1} and \eqref{characterize-prox-formula-l1-2}.
\end{thm}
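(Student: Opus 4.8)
The plan is to mirror the proof of Theorem \ref{characterize-prox-c}, specialized to $\mathcal{B}=\ell_1(\mathbb{N})$ with pre-dual $\mathcal{B}_{*}=c_0$, while replacing the abstract infinite-dimensional fixed-point equation by the truncated one already established in Theorem \ref{characterize-prox-f-l1}. First I would invoke Proposition \ref{minimization-for-c} with $\mathcal{B}=\ell_1(\mathbb{N})$ and $\mathcal{B}_{*}=c_0$: it asserts that $\hat{\mathbf{c}}\in\mathbb{R}^m$ is a solution of \eqref{equivalent-minimization-l1} if and only if there exists $\hat{\mathbf{x}}\in\ell_1(\mathbb{N})$ with $\hat{\mathbf{x}}\in\mathcal{M}_{\mathbf{y}}$ and
$$\hat{\mathbf{x}}\in\frac{1}{\|\mathcal{L}^*(\hat{\mathbf{c}})\|_{\infty}}\partial\|\cdot\|_{\infty}(\mathcal{L}^*(\hat{\mathbf{c}})).$$
Setting $\mathbf{c}:=-\|\hat{\mathbf{x}}\|_1\hat{\mathbf{c}}$ (a well-defined nonzero scaling, since $\mathbf{y}\neq0$ forces $\hat{\mathbf{x}}\neq0$ and hence $\|\hat{\mathbf{x}}\|_1>0$), it then suffices to show that this pair of conditions on $\hat{\mathbf{x}}$ is equivalent to the two fixed-point equations \eqref{characterize-prox-formula-l1-1} and \eqref{characterize-prox-formula-l1-2}.

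For the interpolation membership I would argue exactly as in Theorem \ref{characterize-prox-c}: since the conjugate $\iota_{\mathbf{y}}^*$ in \eqref{indicator-conjugate} has the singleton subdifferential $\partial\iota_{\mathbf{y}}^*(\mathbf{a})=\{\mathbf{y}\}$, the condition $\mathcal{L}(\hat{\mathbf{x}})=\mathbf{y}$ (that is, $\hat{\mathbf{x}}\in\mathcal{M}_{\mathbf{y}}$) is equivalent to $\mathcal{L}(\hat{\mathbf{x}})\in\partial\iota_{\mathbf{y}}^*(\mathbf{c})$, which by Lemma \ref{subdiff-prox-Rm} is precisely the fixed-point equation \eqref{characterize-prox-formula-l1-1}. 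For the subdifferential inclusion I would reproduce the norm-homogeneity chain of \eqref{characterize-prox-c-2}--\eqref{characterize-prox-c-7}: using the definition of $\mathbf{c}$ to write $\partial\|\cdot\|_{\infty}(\mathcal{L}^*(\hat{\mathbf{c}}))=\partial\|\cdot\|_{\infty}(-\mathcal{L}^*(\mathbf{c}))$ together with the scaling $\|\mathcal{L}^*(\mathbf{c})\|_{\infty}=\|\hat{\mathbf{x}}\|_1\|\mathcal{L}^*(\hat{\mathbf{c}})\|_{\infty}$, the inclusion above reduces to the conjunction $\|-\mathcal{L}^*(\mathbf{c})\|_{\infty}=1$ and $\hat{\mathbf{x}}/\|\hat{\mathbf{x}}\|_1\in\partial\|\cdot\|_{\infty}(-\mathcal{L}^*(\mathbf{c}))$. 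Applying Lemma \ref{subdifferential-solvable-thm} with the Banach space taken to be $c_0$ and its dual $\ell_1(\mathbb{N})$ then collapses this to the single inclusion $-\mathcal{L}^*(\mathbf{c})\in\partial\|\cdot\|_1(\hat{\mathbf{x}})$.

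The final, genuinely $\ell_1$-specific step is to convert this infinite-dimensional inclusion into the fixed-point equation \eqref{characterize-prox-formula-l1-2}, reusing the argument from the proof of Theorem \ref{characterize-prox-f-l1}. Since $\mathbf{u}_j\in c_0$ gives $\mathcal{L}^*(\mathbf{c})\in c_0$, Lemma \ref{improtant-character2} shows that $-\mathcal{L}^*(\mathbf{c})\in\partial\|\cdot\|_1(\hat{\mathbf{x}})$ is equivalent to $-\mathcal{S}\mathcal{L}^*(\mathbf{c})\in\partial\|\cdot\|_1(\hat{\mathbf{x}})$; because $\mathcal{S}$ maps $c_0$ into $c_c\subset\ell_1(\mathbb{N})$, relation \eqref{subdiff-prox-l1} then yields \eqref{characterize-prox-formula-l1-2}. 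Assembling the three equivalences gives the stated result.

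I expect the main obstacle to be the duality bookkeeping forced by the non-reflexivity of $\ell_1(\mathbb{N})$: one must be careful that Lemma \ref{subdifferential-solvable-thm} is invoked on the dual pair $(c_0,\ell_1(\mathbb{N}))$ rather than $(\ell_1(\mathbb{N}),\ell_{\infty}(\mathbb{N}))$, and that the reciprocal scaling $\mathbf{c}=-\|\hat{\mathbf{x}}\|_1\hat{\mathbf{c}}$ is threaded consistently through the homogeneity identities so that the normalization $\|\mathcal{L}^*(\mathbf{c})\|_{\infty}=1$ emerges exactly. The insertion of the truncation operator $\mathcal{S}$ via Lemma \ref{improtant-character2} is the conceptual crux that makes \eqref{characterize-prox-formula-l1-2} finite dimensional and distinguishes this theorem from its abstract predecessor Theorem \ref{characterize-prox-c}.
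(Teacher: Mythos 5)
Your proposal is correct and follows essentially the same route as the paper's proof: Proposition \ref{minimization-for-c} specialized to $\mathcal{B}=\ell_1(\mathbb{N})$, the scaling/homogeneity argument of Theorem \ref{characterize-prox-c} (via Lemmas \ref{subdiff-prox-Rm} and \ref{subdifferential-solvable-thm}) to obtain $-\mathcal{L}^*(\mathbf{c})\in\partial\|\cdot\|_{1}(\hat{\mathbf{x}})$, and then Lemma \ref{improtant-character2} together with relation \eqref{subdiff-prox-l1} to insert the truncation $\mathcal{S}$ and arrive at \eqref{characterize-prox-formula-l1-2}. The only difference is that you spell out the homogeneity chain and the $(c_0,\ell_1(\mathbb{N}))$ duality bookkeeping that the paper abbreviates by citing the proof of Theorem \ref{characterize-prox-c}.
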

\begin{proof}
By Proposition \ref{minimization-for-c}, we have that $\hat{\mathbf{c}}\in\mathbb{R}^m$ is a solution of \eqref{equivalent-minimization-l1} with $\mathbf{y}$ if and only if there exists $\hat{\mathbf{x}}\in\ell_1(\mathbb{N})$ such that
\begin{equation}\label{characterize-prox-c-l1-1}
\hat{\mathbf{x}}\in \frac{1}{\|\mathcal{L}^*(\hat{\mathbf{c}})\|_{\infty}}
\partial\|\cdot\|_{\infty}(\mathcal{L}^*(\hat{\mathbf{c}}))\cap\mathcal{M}_{\mathbf{y}}.
\end{equation}
It suffices to verify that $\hat{\mathbf{x}}\in\ell_1(\mathbb{N})$ satisfies \eqref{characterize-prox-c-l1-1} if and only if the pair $\hat{\mathbf{x}}$ and $\mathbf{c}:=-\|\hat{\mathbf{x}}\|_{1}\hat{\mathbf{c}}$ satisfy  \eqref{characterize-prox-formula-l1-1} and \eqref{characterize-prox-formula-l1-2}.
As pointed out in the proof of Theorem \ref{characterize-prox-c}, we conclude that $\hat{\mathbf{x}}\in\mathcal{M}_{\mathbf{y}}$ if and only if $\hat{\mathbf{x}}$ and $\mathbf{c}$ satisfy \eqref{characterize-prox-formula-l1-1}. We also have that
\begin{equation}\label{characterize-prox-c-f-l1-3}
\hat{\mathbf{x}}\in \frac{1}{\|\mathcal{L}^*(\hat{\mathbf{c}})\|_{\infty}}
\partial\|\cdot\|_{\infty}(\mathcal{L}^*(\hat{\mathbf{c}}))
\end{equation}
if and only if
$$
-\mathcal{L}^*(\mathbf{c})\in\partial\|\cdot\|_{1}(\hat{\mathbf{x}}).
$$
This guaranteed by Lemma \ref{improtant-character2} is equivalent to
$$
-\mathcal{S}\mathcal{L}^*(\mathbf{c})\in\partial\|\cdot\|_{1}(\hat{\mathbf{x}}).
$$
By relation \eqref{subdiff-prox-l1} and noting that $-\mathcal{S}\mathcal{L}^*(\mathbf{c})\in\ell_1(\mathbb{N})$, the conclusion relation above can be characterized by fixed-point equation \eqref{characterize-prox-formula-l1-2}. Therefore, we get the conclusion that $\hat{\mathbf{x}}$ satisfies \eqref{characterize-prox-c-f-l1-3} if $\hat{\mathbf{x}}$ and $\mathbf{c}$ satisfy \eqref{characterize-prox-formula-l1-2}. This completes the proof of this theorem.
\end{proof}

The fixed-points equations \eqref{characterize-prox-formula-l1-1} and \eqref{characterize-prox-formula-l1-2} appearing in both Theorems \ref{characterize-prox-f-l1} and \ref{characterize-prox-c-l1} are in fact of finite dimension. We unfold this fact in the remaining part of this section. To this end, we first present a technical lemma.

\begin{lemma}\label{support-x-su}
If $\mathbf{u}\in c_0$ is nonzero, then for each $\mathbf{x}\in\partial\|\cdot\|_{\infty}(\mathbf{u})$, there hold
\begin{equation}\label{support}
\mathbf{x}\in c_c\ \  \mbox{and}\ \ \mathrm{supp}(\mathbf{x})\subseteq\mathrm{supp}(\mathcal{S}(\mathbf{u})).
\end{equation}
\end{lemma}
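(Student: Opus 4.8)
The plan is to read the conclusion directly off the explicit description of the subdifferential of $\|\cdot\|_{\infty}$ supplied by Lemma \ref{subdifferentials-c0}. Since $\mathbf{u}\in c_0$ is nonzero, that lemma applies and gives
$$
\partial\|\cdot\|_{\infty}(\mathbf{u})={\rm co}(\mathcal{V}(\mathbf{u})),
$$
where $\mathcal{V}(\mathbf{u})=\{{\rm sign}(u_j)\mathbf{e}_j:j\in\mathbb{N}(\mathbf{u})\}$ by \eqref{convex-hull}. The entire statement then reduces to controlling the supports of convex combinations of the finitely many vectors in $\mathcal{V}(\mathbf{u})$.

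Next I would record the elementary fact that, because $\mathbf{u}$ is nonzero, one has $\|\mathbf{u}\|_{\infty}>0$, so each coordinate $u_j$ with $j\in\mathbb{N}(\mathbf{u})$ satisfies $|u_j|=\|\mathbf{u}\|_{\infty}>0$ and is therefore nonzero. Combined with the definition \eqref{truncation}--\eqref{truncation-vector} of the truncation operator $\mathcal{S}$, this yields $\mathrm{supp}(\mathcal{S}(\mathbf{u}))=\mathbb{N}(\mathbf{u})$. Moreover, as already noted in the excerpt, $\mathbf{u}\in c_0$ forces $\#\mathbb{N}(\mathbf{u})<+\infty$, so $\mathbb{N}(\mathbf{u})$ is a finite index set.

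Then, given any $\mathbf{x}\in\partial\|\cdot\|_{\infty}(\mathbf{u})={\rm co}(\mathcal{V}(\mathbf{u}))$, I would expand it as a finite convex combination $\mathbf{x}=\sum_{k\in\mathbb{N}_n}t_k\,{\rm sign}(u_{j_k})\mathbf{e}_{j_k}$ with $j_k\in\mathbb{N}(\mathbf{u})$, $t_k\in\mathbb{R}_+$ and $\sum_{k\in\mathbb{N}_n}t_k=1$, exactly as in the definition of the convex hull. Each summand is supported at the single index $j_k\in\mathbb{N}(\mathbf{u})$, hence $\mathrm{supp}(\mathbf{x})\subseteq\{j_k:k\in\mathbb{N}_n\}\subseteq\mathbb{N}(\mathbf{u})$. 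Since $\mathbb{N}(\mathbf{u})$ is finite this shows $\mathbf{x}\in c_c$, and the inclusion $\mathrm{supp}(\mathbf{x})\subseteq\mathbb{N}(\mathbf{u})=\mathrm{supp}(\mathcal{S}(\mathbf{u}))$ is precisely the claim \eqref{support}.

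I do not expect a genuine obstacle here: the statement is a structural consequence of the explicit convex-hull formula, and the only points requiring care are the bookkeeping that $\mathbf{u}$ being nonzero forces $\mathbf{u}$ to be nonzero on all of $\mathbb{N}(\mathbf{u})$ (so that $\mathrm{supp}(\mathcal{S}(\mathbf{u}))$ equals $\mathbb{N}(\mathbf{u})$ rather than a proper subset) and the finiteness of $\mathbb{N}(\mathbf{u})$, which guarantees membership in $c_c$. The only mild subtlety is that coefficient cancellation can shrink the support, so it is the inclusion—not equality—that is claimed, which is all that is needed in the sequel.
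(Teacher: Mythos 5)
Your proof is correct and follows essentially the same route as the paper's: both invoke Lemma \ref{subdifferentials-c0} to write $\partial\|\cdot\|_{\infty}(\mathbf{u})={\rm co}(\mathcal{V}(\mathbf{u}))$, identify $\mathrm{supp}(\mathcal{S}(\mathbf{u}))=\mathbb{N}(\mathbf{u})$, and read off the support inclusion and membership in $c_c$ from the finiteness of $\mathbb{N}(\mathbf{u})$. Your version merely spells out the bookkeeping (nonvanishing of $u_j$ on $\mathbb{N}(\mathbf{u})$, possible cancellation shrinking the support) that the paper leaves implicit.
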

\begin{proof}
Note that for all nonzero element $\mathbf{u}\in c_0$, there holds
$$
\mathrm{supp}(\mathcal{S}(\mathbf{u}))=\mathbb{N}(\mathbf{u}).
$$
By Lemma \ref{subdifferentials-c0}, each $\mathbf{x}\in\partial\|\cdot\|_{\infty}(\mathbf{u})$ is a convex combination of elements of $\mathcal{V}(\mathbf{u})$ whose supports are contained in $\mathbb{N}(\mathbf{u})$. This leads to the inclusion relation in \eqref{support} and thus, $\mathbf{x}\in c_c$.
\end{proof}

With the help of Lemma \ref{support-x-su}, we reveal the finite dimension component of the fixed-points equations \eqref{characterize-prox-formula-l1-1} and \eqref{characterize-prox-formula-l1-2}.
It is convenient to write the fixed-point equations \eqref{characterize-prox-formula-l1-1} and \eqref{characterize-prox-formula-l1-2} in a compact form. To this end, we stack the vector $\mathbf{c}\in\mathbb{R}^m$ on the top of finite dimensional vector $\hat{\mathbf{x}}$ to form a new vector
\begin{equation*}
\mathbf{s}:=\left[\begin{array}{c}
\mathbf{c}\\ \hat{\mathbf{x}}
\end{array}
\right].
\end{equation*}
We also introduce two matrices of operators by
\begin{equation}\label{definitionofP}
\mathcal{P}:=\left[\begin{array}{c}
\mathrm{prox}_{\iota_{\mathbf{y}}^*}\\
\mathrm{prox}_{\|\cdot\|_1,\ell_2(\mathbb{N}),\mathcal{T}_0}
\end{array}
\right]
\end{equation}
and
\begin{equation}\label{definitionofR}
\mathcal{R}:=\left[\begin{array}{cc}
\mathcal{I}& \mathcal{L}\\
-\mathcal{S}\mathcal{L}^* &\mathcal{I}
\end{array}
\right].
\end{equation}
In the above notion, we rewrite the equations \eqref{characterize-prox-formula-l1-1} and \eqref{characterize-prox-formula-l1-2} in the following compact form
\begin{equation}\label{fixed-point-l1}
\mathbf{s}=(\mathcal{P}\circ\mathcal{R})(\mathbf{s}).
\end{equation}
The following theorem implies that the fixed-point equation \eqref{fixed-point-l1} (or equivalently the system of the fixed-points equations \eqref{characterize-prox-formula-l1-1} and \eqref{characterize-prox-formula-l1-2} is of finite dimension.

\begin{thm}\label{finite-dimension-fixed-point}
If operators $\mathcal{P}$ and $\mathcal{R}$ are defined respectively by \eqref{definitionofP} and \eqref{definitionofR}, then $\mathcal{P}\circ\mathcal{R}$ is an operator from $(\mathbb{R}^m,\ell_1(\mathbb{N}))$ to $(\mathbb{R}^m, c_c)$ and its fixed-point $\mathbf{s}=\left[\begin{array}{c}
\mathbf{c}\\ \hat{\mathbf{x}}\end{array}\right]
\in(\mathbb{R}^m,\ell_1(\mathbb{N}))$  satisfies

\begin{equation}\label{support1}
\hat{\mathbf{x}}\in c_c\ \ \mbox{and}\ \ \mathrm{supp}(\hat{\mathbf{x}})\subseteq
\mathrm{supp}(\mathcal{S}(\mathcal{L}^*(\mathbf{c})).
\end{equation}
\end{thm}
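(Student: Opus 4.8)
The plan is to compute the two coordinates of $(\mathcal{P}\circ\mathcal{R})(\mathbf{s})$ explicitly and then read off both assertions. Writing $\mathbf{s}=\left[\begin{array}{c}\mathbf{c}\\ \hat{\mathbf{x}}\end{array}\right]$ with $\mathbf{c}\in\mathbb{R}^m$ and $\hat{\mathbf{x}}\in\ell_1(\mathbb{N})$, the definition \eqref{definitionofR} of $\mathcal{R}$ produces the intermediate vector with top entry $\mathbf{c}+\mathcal{L}(\hat{\mathbf{x}})\in\mathbb{R}^m$ and bottom entry $\hat{\mathbf{x}}-\mathcal{S}\mathcal{L}^*(\mathbf{c})$. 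First I would verify that this bottom entry lies in $\ell_1(\mathbb{N})$ so that $\mathcal{P}$ is applicable: since $\mathbf{u}_j\in c_0$, the representation \eqref{adjoint-operator} gives $\mathcal{L}^*(\mathbf{c})=\sum_{j\in\mathbb{N}_m}c_j\mathbf{u}_j\in c_0$, and the truncation operator $\mathcal{S}$ maps $c_0$ into $c_c\subseteq\ell_1(\mathbb{N})$; adding $\hat{\mathbf{x}}\in\ell_1(\mathbb{N})$ keeps us in $\ell_1(\mathbb{N})$. Applying $\mathcal{P}$ from \eqref{definitionofP}, the top coordinate is $\mathrm{prox}_{\iota_{\mathbf{y}}^*}(\mathbf{c}+\mathcal{L}(\hat{\mathbf{x}}))\in\mathbb{R}^m$, while the bottom coordinate $\mathrm{prox}_{\|\cdot\|_1,\ell_2(\mathbb{N}),\mathcal{T}_0}(\hat{\mathbf{x}}-\mathcal{S}\mathcal{L}^*(\mathbf{c}))$ belongs to $c_c$ by Proposition \ref{prox-l1-l0}. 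This shows $\mathcal{P}\circ\mathcal{R}$ maps $(\mathbb{R}^m,\ell_1(\mathbb{N}))$ into $(\mathbb{R}^m,c_c)$, and in particular any fixed-point $\mathbf{s}$ has its second component $\hat{\mathbf{x}}\in c_c$.

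It then remains to establish the support inclusion in \eqref{support1}. Using that $\mathbf{s}$ is a fixed-point, its bottom coordinate equation is precisely \eqref{characterize-prox-formula-l1-2}, namely $\hat{\mathbf{x}}=\mathrm{prox}_{\|\cdot\|_1,\ell_2(\mathbb{N}),\mathcal{T}_0}(\hat{\mathbf{x}}-\mathcal{S}\mathcal{L}^*(\mathbf{c}))$. I would invoke the equivalence \eqref{subdiff-prox-l1} to rewrite this as $-\mathcal{S}\mathcal{L}^*(\mathbf{c})\in\partial\|\cdot\|_1(\hat{\mathbf{x}})$, and then Lemma \ref{improtant-character2} applied with $\mathbf{u}:=-\mathcal{L}^*(\mathbf{c})\in c_0$ (noting $\mathcal{S}(\mathbf{u})=-\mathcal{S}\mathcal{L}^*(\mathbf{c})$, since $\mathcal{S}$ truncates on the negation-invariant index set $\mathbb{N}(\mathbf{u})$) to obtain the cleaner relation $-\mathcal{L}^*(\mathbf{c})\in\partial\|\cdot\|_1(\hat{\mathbf{x}})$. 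The degenerate case $\hat{\mathbf{x}}=0$ makes $\mathrm{supp}(\hat{\mathbf{x}})=\emptyset$, so the inclusion holds vacuously and I would assume $\hat{\mathbf{x}}\neq0$. Then \eqref{relation-subdifferential-functionals} forces every element of $\partial\|\cdot\|_1(\hat{\mathbf{x}})$ to have $\ell_\infty$-norm equal to $1$, whence $-\mathcal{L}^*(\mathbf{c})\neq0$, and Lemma \ref{subdifferential-solvable-thm} in $\mathcal{B}=\ell_1(\mathbb{N})$, $\mathcal{B}^*=\ell_\infty(\mathbb{N})$, with $f:=\hat{\mathbf{x}}$ and $\nu:=-\mathcal{L}^*(\mathbf{c})$, converts the membership into $\tfrac{\hat{\mathbf{x}}}{\|\hat{\mathbf{x}}\|_1}\in\partial\|\cdot\|_\infty(-\mathcal{L}^*(\mathbf{c}))$.

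Finally I would apply Lemma \ref{support-x-su} with the nonzero element $-\mathcal{L}^*(\mathbf{c})\in c_0$ to conclude $\mathrm{supp}(\hat{\mathbf{x}}/\|\hat{\mathbf{x}}\|_1)\subseteq\mathrm{supp}(\mathcal{S}(-\mathcal{L}^*(\mathbf{c})))$. Since scaling by the positive scalar $\|\hat{\mathbf{x}}\|_1$ leaves the support unchanged, and since the extremal index set $\mathbb{N}(\mathbf{u})$ depends only on $|u_j|$ so that $\mathrm{supp}(\mathcal{S}(-\mathcal{L}^*(\mathbf{c})))=\mathrm{supp}(\mathcal{S}(\mathcal{L}^*(\mathbf{c})))$, this yields exactly $\mathrm{supp}(\hat{\mathbf{x}})\subseteq\mathrm{supp}(\mathcal{S}(\mathcal{L}^*(\mathbf{c})))$, completing \eqref{support1}. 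The main obstacle I anticipate is bookkeeping rather than conceptual: correctly tracking the sign throughout (working with $-\mathcal{L}^*(\mathbf{c})$ and using that $\mathcal{S}$, the support, and the norm-attainment set $\mathbb{N}(\cdot)$ are all invariant under negation and positive scaling), and carefully separating the case $\hat{\mathbf{x}}=0$ so that Lemmas \ref{subdifferential-solvable-thm} and \ref{support-x-su}, which require nonzero arguments, may legitimately be invoked.
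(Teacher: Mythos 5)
Your proposal is correct and follows essentially the same route as the paper's proof: the range statement via Proposition \ref{prox-l1-l0}, then conversion of the fixed-point equation \eqref{characterize-prox-formula-l1-2} into the inclusion $-\mathcal{L}^*(\mathbf{c})\in\partial\|\cdot\|_{1}(\hat{\mathbf{x}})$ (via \eqref{subdiff-prox-l1} and Lemma \ref{improtant-character2}, exactly the steps the paper delegates to the proof of Theorem \ref{characterize-prox-f-l1}), followed by Lemma \ref{subdifferential-solvable-thm} and Lemma \ref{support-x-su}. If anything, you are slightly more careful than the paper, which does not explicitly separate the degenerate case $\hat{\mathbf{x}}=0$ before dividing by $\|\hat{\mathbf{x}}\|_{1}$, nor record the negation-invariance argument identifying $\mathrm{supp}(\mathcal{S}(-\mathcal{L}^*(\mathbf{c})))$ with $\mathrm{supp}(\mathcal{S}(\mathcal{L}^*(\mathbf{c})))$.
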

\begin{proof}
Proposition \ref{prox-l1-l0} ensures that the proximity operator $\mathrm{prox}_{\|\cdot\|_1,\ell_2(\mathbb{N}),\mathcal{T}_0}$ is a mapping from $\ell_1(\mathbb{N})$ to $c_c$. Thus, for any $\mathbf{s}\in(\mathbb{R}^m,\ell_1(\mathbb{N}))$, we get that
$$
\hat{\mathbf{x}}-\mathcal{S}\mathcal{L}^*(\mathbf{c})\in\ell_1(\mathbb{N})
$$
and then
\begin{equation}\label{finite-dimension-nature1}
\mathrm{prox}_{\|\cdot\|_1,\ell_2(\mathbb{N}),\mathcal{T}_0}
(\hat{\mathbf{x}}-\mathcal{S}\mathcal{L}^*(\mathbf{c}))\in c_c.
\end{equation}
On the other hand, the proximity operator $\mathrm{prox}_{\iota_{\mathbf{y}}^*}$ is a mapping from $\mathbb{R}^m$ to itself. Note that for any $\mathbf{s}
\in(\mathbb{R}^m,\ell_1(\mathbb{N}))$, there holds
$$
\mathbf{c}+\mathcal{L}(\hat{\mathbf{x}})\in\mathbb{R}^m.
$$
Therefore, we have that
\begin{equation}\label{finite-dimension-nature2}
\mathrm{prox}_{\iota_{\mathbf{y}}^*}(\mathbf{c}+\mathcal{L}(\hat{\mathbf{x}}))\in\mathbb{R}^m.
\end{equation}
Combining \eqref{finite-dimension-nature1} with \eqref{finite-dimension-nature2}, we conclude that $\mathcal{P}\circ\mathcal{R}$ is an operator from $(\mathbb{R}^m,\ell_1(\mathbb{N}))$ to $(\mathbb{R}^m, c_c)$.

It remains to verify that the fixed-point $\mathbf{s}$ of operator $\mathcal{P}\circ\mathcal{R}$ satisfies the assertion of this theorem. Suppose that  $\mathbf{s}$ is a fixed-point of operator $\mathcal{P}\circ\mathcal{R}$. That is, $\hat{\mathbf{x}}$ and $\mathbf{c}$ satisfy the fixed-point equations \eqref{characterize-prox-formula-l1-1} and \eqref{characterize-prox-formula-l1-2}.
According to the proof of Theorem \ref{characterize-prox-f-l1}, we observe that $\hat{\mathbf{x}}$ satisfies the inclusion \eqref{characterize-prox-l1-2}, which guaranteed by Lemma \ref{subdifferential-solvable-thm} leads to
$$
\frac{\hat{\mathbf{x}}}{\|\hat{\mathbf{x}}\|_{1}}
\in\partial\|\cdot\|_{\infty}(-\mathcal{L}^*(\mathbf{c})).
$$
By Lemma \ref{support-x-su}, the above inclusion ensures that  $\mathbf{s}$  satisfies \eqref{support1}.
\end{proof}

A solution of the minimum norm interpolation \eqref{mni} with $\mathcal{B}:=\ell_1(\mathbb{N})$ guaranteed by Theorems \ref{characterize-prox-f-l1} and \ref{finite-dimension-fixed-point} has an additional property.

\begin{rem}\label{Remark-on-support}
Each solution $\hat{\mathbf{x}}\in\ell_1(\mathbb{N})$ of the minimum norm interpolation \eqref{mni} with $\mathcal{B}:=\ell_1(\mathbb{N})$ together with $\mathbf{c}\in\mathbb{R}^m$ satisfying the fixed-point equations \eqref{characterize-prox-formula-l1-1} and \eqref{characterize-prox-formula-l1-2} is of finite dimension, that is, it satisfies
\eqref{support1}.
\end{rem}
\begin{proof}
Let $\hat{\mathbf{x}}$ be a solution of the minimum norm interpolation \eqref{mni} with $\mathcal{B}:=\ell_1(\mathbb{N})$. By Theorem \ref{characterize-prox-f-l1}, there exists $\mathbf{c}\in\mathbb{R}^m$ such that $\hat{\mathbf{x}}$ and $\mathbf{c}$ satisfy the fixed-point equations \eqref{characterize-prox-formula-l1-1} and \eqref{characterize-prox-formula-l1-2}. That is, $\left[\begin{array}{c}\mathbf{c}\\ \hat{\mathbf{x}}\end{array}\right]$ is the fixed-point of $\mathcal{P}\circ\mathcal{R}$. By employing Theorem \ref{finite-dimension-fixed-point}, we conclude that $\hat{\mathbf{x}}$ satisfies \eqref{support1}.
\end{proof}

Theorem \ref{characterize-prox-f-l1} reveals that to solve the minimum norm interpolation \eqref{mni} with $\mathcal{B}:=\ell_1(\mathbb{N})$, it suffices to find a solution of the fixed-point equations \eqref{characterize-prox-formula-l1-1} and \eqref{characterize-prox-formula-l1-2} by iterative algorithms designed based on these fixed-point equations. A remarkable fact is that according to Theorem \ref{finite-dimension-fixed-point}, the fixed-point equations \eqref{characterize-prox-formula-l1-1} and \eqref{characterize-prox-formula-l1-2} are both of {\it finite} dimension. Therefore, solving the infinitely dimensional minimum norm interpolation \eqref{mni} with $\mathcal{B}:=\ell_1(\mathbb{N})$ reduces to finding a fixed-point of a nonlinear map defined on a finite dimensional space.

To develop efficient iterative algorithms with convergence guaranteed based on these fixed-point equations, we need to consider additional issues: The first issue is the computation of the proximity operators of the two functions involved in the fixed-point equations. Moreover, the direct iteration from \eqref{characterize-prox-formula-l1-1} and \eqref{characterize-prox-formula-l1-2} may not lead to convergent algorithms. One needs to reformulate these fixed-point equations to equivalent ones guided by the theory of firmly non-expansive maps. This is the second issue. The third issue is how convergence of the resulting convergent iterative schemes can be accelerated by introducing some parameters or matrices.

We now address the first issue. Note that the closed-form formula for the proximity operator $\mathrm{prox}_{\|\cdot\|_1,\ell_2(\mathbb{N}),\mathcal{T}_0}$ has been given in \eqref{prox-1norm}. We now present the proximity operator of $\iota_{\mathbf{y}}^*$ below. Clearly, by the definition of the indicator function $\iota_{\mathbf{y}}$, its proximity operator has the form $\mathrm{prox}_{\iota_{\mathbf{y}}}(\mathbf{a}):=\mathbf{y}$ for all $\mathbf{a}\in\mathbb{R}^m$. Then equation \eqref{prox-psi-psi*} with $\psi:=\iota_{\mathbf{y}}$ leads to closed-form formula of the proximity operator $\mathrm{prox}_{\iota_{\mathbf{y}}^*}$ as
$$
\mathrm{prox}_{\iota_{\mathbf{y}}^*}(\mathbf{a})
=\mathbf{a}-\mathbf{y},
\ \ \mbox{for all}\ \ \mathbf{a}\in\mathbb{R}^m.
$$
The two closed-form formulas enable us to implement the iteration efficiently.

We next discuss the second issue. Since the equations \eqref{characterize-prox-formula-l1-1} and \eqref{characterize-prox-formula-l1-2} are represented in the compact form \eqref{fixed-point-l1}, one may define the Picard iteration based on \eqref{fixed-point-l1} to find the fixed-point $\mathbf{s}$, that is
\begin{equation}\label{fixed-point-algorithm}
\mathbf{s}^{k+1}=(\mathcal{P}\circ\mathcal{R})(\mathbf{s}^k), \ \ k=0,1, \dots.
\end{equation}
When it converges, the Picard sequence $\mathbf{s}^k$,  $k=0,1, \dots$, generated by the Picard iteration \eqref{fixed-point-algorithm}, converges to a fixed-point of the map $\mathcal{P}\circ\mathcal{R}$, which gives a solution of the minimum norm interpolation problem \eqref{mni}. However, convergence of the Picard sequence is not guaranteed. Normally, we need to reformulate the fixed-point equation \eqref{fixed-point-algorithm} by appropriately split the matrix $\mathcal{R}$ guided by the theory of the non-expansive map. That is, we will construct from the map $\mathcal{P}\circ\mathcal{R}$ a non-expansive map $\mathcal{M}$ which has the same fixed-point set as $\mathcal{P}\circ\mathcal{R}$, so that the Picard sequence of the new map $\mathcal{M}$ converges to a fixed-point of $\mathcal{M}$, guaranteed by its non-expansiveness. Interested readers are referred to \cite{LSXZ} for further algorithmic development along this line. We will address this issue together with other computational issues in a different occasion.

\section{Regularization Problem and its Connection with Minimum Norm Interpolation}

We now turn to considering regularization problems. In the remaining part of this paper, the term ``regularization problem'' will refer to both regularized learning and other semi-discrete inverse problems unless stated otherwise. This is because these two types of problems have the same mathematical structure as far as their representer theorems are concerned, as we have explained earlier. Regularization problems are closely related to minimum norm interpolation problems. We shall translate the representer theorems obtained in section 3 for solutions of minimum norm interpolation problems to those of regularization problems. Since the regularization problem in a general Banach space is described as an infinity dimensional minimization problem, we first comment on the existence of a solution of the problem following general results regarding the existence of a solution of an infinity dimensional minimization problem. Moreover, we establish an intrinsic connection between the regularization problem and the minimum norm interpolation problem. Specifically, we shall show that there always exists a solution of the regularization problem which is also a solution of the minimum norm interpolation problem with specific data.

\subsection{Regularization Problem}
In this subsection, we first describe the regularization problem in a Banach space, review its background and several practical examples of importance, and establish existence of its solution under a rather mild condition.

We begin with describing the regularization problem under investigation. Let $\mathcal{B}$ be a real Banach space with the dual space $\mathcal{B}^*$. Suppose that a set of linearly independent functionals $\nu_j\in\mathcal{B}^*$, $j\in\mathbb{N}_m,$ is given and operator $\mathcal{L}:\mathcal{B}\rightarrow\mathbb{R}^m$ is defined by equation \eqref{functional-operator}.
Learning a target element in $\mathcal{B}$ from the given set of sampled data $\{(\nu_j, y_j):j\in\mathbb{N}_m\}$ consists of solving the following first kind operator equation
\begin{equation}\label{ill-posed}
\mathcal{L}(f)=\mathbf{y}
\end{equation}
for $f\in\mathcal{B}$, where $\mathbf{y}:=[y_j: j\in\mathbb{N}_m]\in\mathbb{R}^m$. Equation \eqref{ill-posed} is a typical ill-posed problem. That is, the inverse of $\mathcal{L}$ is not bounded. A commonly used approach to address the ill-posedness of \eqref{ill-posed} is regularization. Specifically, we define a data fidelity term $\mathcal{Q}_{\mathbf{y}}(\mathcal{L}(f))$ from \eqref{ill-posed} by using a loss function $\mathcal{Q}_{\mathbf{y}}:\mathbb{R}^m\rightarrow\mathbb{R}_{+}$, and solve the minimization problem
\begin{equation}\label{regularization}
\inf\{\mathcal{Q}_{\mathbf{y}}(\mathcal{L}(f))
+\lambda\varphi(\|f\|_{\mathcal{B}}): f\in\mathcal{B}\},
\end{equation}
where $\varphi:\mathbb{R}_{+}\rightarrow\mathbb{R}_{+}$ is a regularizer and $\lambda$ is a positive regularization parameter.

The regularization problem \eqref{regularization} appears in many applied areas.
We present several examples of the loss function and the regularizer that are used frequently in applications. In machine learning, classical regularization network and support vector machines for both classification and regression are reformulated as \eqref{regularization} \cite{EPP,SS}. Specifically, if for $\mathbf{y}\in\mathbb{R}^m$ the loss function is chosen as
\begin{equation}\label{RN}
\mathcal{Q}_{\mathbf{y}}(\mathbf{z})
:=\|\mathbf{z}-\mathbf{y}\|_{\mathbb{R}^m}^2,
\ \ \mathbf{z}\in\mathbb{R}^m,
\end{equation}
and the regularizer as
\begin{equation}\label{varphi=t^2}
\varphi(t):=t^2,\ \ t\in\mathbb{R}_{+},
\end{equation}
then the regularization problem \eqref{regularization} reduces to the regularization networks. The support vector machine regression has the form \eqref{regularization} with the loss function being chosen as
\begin{equation}\label{SVMR}
\mathcal{Q}_{\mathbf{y}}(\mathbf{z}):=\sum_{j\in\mathbb{N}_m}\max\{|y_j-z_j|-\epsilon,0\},\ \mathbf{z}\in\mathbb{R}^m,
\end{equation}
where $\epsilon$ is a positive constant and the regularizer as \eqref{varphi=t^2}. If the loss function is chosen as
\begin{equation}\label{SVMC}
\mathcal{Q}_{\mathbf{y}}(\mathbf{z}):=\sum_{j\in\mathbb{N}_m}\max\{1-y_jz_j,0\},\ \mathbf{z}:=[z_j:j\in\mathbb{N}_m]\in\{-1,1\}^m,
\end{equation}
for $\mathbf{y}:=[y_j:j\in\mathbb{N}_m]\in\{-1,1\}^m$ and the regularizer as \eqref{varphi=t^2}, the regularization problem \eqref{regularization} describes the support vector machine classification. Moreover, $\ell_1$ support vector machine regression and classification \cite{Li-Song-Xu2018,Li-Song-Xu2019,SS,ZRHT} are formulated as \eqref{regularization} with the loss function \eqref{SVMR} and \eqref{SVMC}, respectively, and the regularizer
\begin{equation}\label{varphi=t}
\varphi(t):=t,\ t\in\mathbb{R}_{+}.
\end{equation}
The Lasso regularized model \cite{T,ZY} is also formulated as \eqref{regularization} with the loss function as \eqref{RN} and the regularizer as \eqref{varphi=t} with an appropriate choice of the Banach space. Another example concerns the $l_p$-norm regularization \cite{Z} in which the regularizer is chosen as
$\varphi(t):=t^p,\ t\in\mathbb{R}_{+}$.

Most data science problems are described as semi-discrete inverse problems \cite{Daubechies04, Wen}. Such inverse problems covers many important application areas including image restoration \cite{CDOS, Lu-Shen-Xu2010} and medical imaging
\cite{Chen-Huang-Li-Lu-Xu2020, Jiang-Li-Xu2018}. Semi-discrete inverse problems often solved by regularization methods \cite{CMX, CXY} are formulated in the form \eqref{regularization} with appropriate choices of the loss function and regularizer. The form of the loss function is normally determined by types of noise contaminated in given data.

We now consider the existence of a solution of the regularization problem \eqref{regularization}.
By using arguments similar to those used in the proof of the existence of solutions of the minimum norm interpolation problem \eqref{mni}, we can get the existence of solutions of \eqref{regularization} under the conditions that $\mathcal{B}$ has a pre-dual space $\mathcal{B}_{*}$ and $\nu_j\in\mathcal{B}_{*}$, $j\in\mathbb{N}_m$. To this end, we review a few useful properties of functions appearing in \eqref{regularization}. A function $\mathcal{T}$ mapping from a topological space $\mathcal{X}$ into $\mathbb{R}$ is said to be lower semi-continuous if
$$
\mathcal{T}(f)\leq\liminf_{\alpha}\mathcal{T}(f_{\alpha})
$$
whenever $f_{\alpha}$, $\alpha\in I,$ for some index set $I$ is a net in $\mathcal{X}$ converging to some element $f\in \mathcal{X}$. The notion of weakly$^{*}$ lower semi-continuous is defined accordingly under the weak$^{*}$ topology. We say a function $\mathcal{T}$ mapping from a normed space $\mathcal{X}$ into $\mathbb{R}$ is coercive if
$$
\lim_{\|x\|\rightarrow+\infty}\mathcal{T}(x)=\infty.
$$
The following lemma ensures the existence of a bounded minimizing sequence in $\mathcal{B}$. For notational convenience, we set
\begin{equation}\label{R}
\mathcal{R}(f):=\mathcal{Q}_{\mathbf{y}}(\mathcal{L}(f))
+\lambda\varphi(\|f\|_{\mathcal{B}}),
\ \ \mbox{for all}\ \ f\in\mathcal{B}.
\end{equation}

\begin{lemma}\label{existence}
Suppose that $\mathcal{B}$ is a Banach space with the dual space $\mathcal{B}^*$, $\nu_j\in\mathcal{B}^*$, $j\in\mathbb{N}_m,$ and $\mathcal{L}$ is defined by \eqref{functional-operator}. Let $\mathbf{y}\in\mathbb{R}^m$, $\mathcal{Q}_{\mathbf{y}}:\mathbb{R}^m\rightarrow\mathbb{R}_{+}$, $\varphi:\mathbb{R}_{+}\rightarrow\mathbb{R}_{+}$, $\lambda>0$ be as those appearing in
\eqref{regularization} and $\mathcal{R}$ be defined by \eqref{R}. If $\varphi$ is coercive, then there exists a bounded sequence $f_n,n\in\mathbb{N},$ in $\mathcal{B}$ such that
\begin{equation}\label{minimizing}
\lim_{n\rightarrow+\infty}\mathcal{R}(f_n)=\inf_{f\in\mathcal{B}}\mathcal{R}(f).
\end{equation}
\end{lemma}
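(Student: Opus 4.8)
The plan is to exhibit a minimizing sequence for the functional $\mathcal{R}$ defined in \eqref{R} and then to use the coercivity of $\varphi$ to force any such sequence to be bounded. First I would set $m_0:=\inf_{f\in\mathcal{B}}\mathcal{R}(f)$ and check that $m_0$ is a finite nonnegative real number: since both $\mathcal{Q}_{\mathbf{y}}$ and $\varphi$ take values in $\mathbb{R}_{+}$, we have $\mathcal{R}(f)\geq 0$ for every $f\in\mathcal{B}$, so $m_0\geq 0$; and evaluating at $f=0$ gives $m_0\leq\mathcal{R}(0)=\mathcal{Q}_{\mathbf{y}}(0)+\lambda\varphi(0)<+\infty$. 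Hence $m_0$ is finite, and by the definition of the infimum there exists a sequence $f_n$, $n\in\mathbb{N}$, in $\mathcal{B}$ satisfying \eqref{minimizing}.

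The key step is to isolate the regularization term by nonnegativity of the loss and then invoke coercivity. Since $\mathcal{Q}_{\mathbf{y}}(\mathcal{L}(f_n))\geq 0$, the defining relation \eqref{R} yields $\lambda\varphi(\|f_n\|_{\mathcal{B}})\leq\mathcal{R}(f_n)$ for every $n$. Because $\mathcal{R}(f_n)\to m_0$, the sequence $\mathcal{R}(f_n)$ is bounded, so $\varphi(\|f_n\|_{\mathcal{B}})$ is bounded above by some constant, say $(m_0+1)/\lambda$ for all sufficiently large $n$. I would then argue by contradiction: if $f_n$ were unbounded, there would be a subsequence $f_{n_k}$ with $\|f_{n_k}\|_{\mathcal{B}}\to+\infty$, and the coercivity of $\varphi$ would force $\varphi(\|f_{n_k}\|_{\mathcal{B}})\to+\infty$, contradicting the uniform bound just obtained. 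Therefore $\|f_n\|_{\mathcal{B}}$ is bounded, and $f_n$ is the desired bounded minimizing sequence.

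The argument is essentially routine, and the only points requiring care are (i) verifying that $m_0$ is finite so that a minimizing sequence exists, which hinges on $\mathcal{Q}_{\mathbf{y}}$ and $\varphi$ being $\mathbb{R}_{+}$-valued rather than extended-real-valued, and (ii) peeling off the loss term via its nonnegativity before applying coercivity. I do not anticipate a genuine obstacle here; the coercivity hypothesis on $\varphi$ is exactly what is needed to control $\|f_n\|_{\mathcal{B}}$, and no geometric property of $\mathcal{B}$ (reflexivity, a pre-dual, or smoothness) enters at this stage. Those properties would be invoked only in the subsequent step, where a weak$^{*}$ accumulation point of the bounded sequence is extracted and the weak$^{*}$ lower semi-continuity of $\mathcal{R}$ is used to upgrade the bounded minimizing sequence to an actual minimizer of \eqref{regularization}.
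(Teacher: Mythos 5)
Your proposal is correct and follows essentially the same route as the paper's proof: extract a minimizing sequence from the definition of the infimum, drop the nonnegative loss term to get $\lambda\varphi(\|f_n\|_{\mathcal{B}})\leq\mathcal{R}(f_n)$, and invoke the coercivity of $\varphi$ to conclude boundedness. Your explicit check that the infimum is finite and your contradiction phrasing of the coercivity step are minor elaborations of the same argument, not a different approach.
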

\begin{proof}
For any $\epsilon>0$, there exists an element $g\in\mathcal{B}$ such that
$$
\inf_{f\in\mathcal{B}}\mathcal{R}(f)\leq\mathcal{R}(g)
<\inf_{f\in\mathcal{B}}\mathcal{R}(f)+\epsilon.
$$
Hence, there exists a sequence $f_n,n\in\mathbb{N},$ in $\mathcal{B}$ satisfying \eqref{minimizing}. It remains to show that the sequence is bounded. It follows from \eqref{minimizing} that $\{\mathcal{R}(f_n):n\in\mathbb{N}\}$ is a bounded set. Moreover, by the definition \eqref{R} of $\mathcal{R}$, we have that
$$
\mathcal{R}(f_n)\geq\lambda\varphi(\|f_n\|_{\mathcal{B}}),\ \ \mbox{for all}\ \ n\in \mathbb{N}.
$$
This together with the boundedness of the set $\{\mathcal{R}(f_n):n\in\mathbb{N}\}$ implies that $\{\varphi(\|f_n\|_{\mathcal{B}}):n\in\mathbb{N}\}$ is also a bounded set. By the coercivity of $\varphi$, the boundedness of the set $\{\varphi(\|f_n\|_{\mathcal{B}}):n\in\mathbb{N}\}$ leads to the boundedness of the sequence $f_n,n\in\mathbb{N}$.
\end{proof}

With the help of Lemma \ref{existence}, we establish in the following proposition a sufficient condition which ensures the existence of a solution of the regularization problem \eqref{regularization}.

\begin{prop}\label{existence1}
Suppose that $\mathbf{y}\in\mathbb{R}^m$, both $\mathcal{Q}_{\mathbf{y}}:\mathbb{R}^m\rightarrow\mathbb{R}_{+}$ and $\varphi:\mathbb{R}_{+}\rightarrow\mathbb{R}_{+}$ are lower semi-continuous, $\lambda>0$ and moreover, $\varphi$ is increasing and coercive. If $\mathcal{B}$ is a Banach space having the pre-dual space $\mathcal{B}_{*}$ and the functionals $\nu_j,j\in\mathbb{N}_m,$ appearing in the definition of $\mathcal{L}$ are in $\mathcal{B}_{*}$, then the regularization problem \eqref{regularization} has at least one solution.
\end{prop}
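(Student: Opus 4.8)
The plan is to follow the blueprint of Proposition~\ref{existence-mni}: produce a bounded minimizing sequence, extract a weak$^*$ convergent subsequence via the Banach--Alaoglu theorem, and then show the weak$^*$ limit minimizes $\mathcal{R}$ by establishing the weak$^*$ lower semi-continuity of both terms in $\mathcal{R}$. First I would invoke Lemma~\ref{existence}: since $\varphi$ is coercive, there is a bounded sequence $f_n$, $n\in\mathbb{N}$, in $\mathcal{B}$ with $\lim_{n}\mathcal{R}(f_n)=\inf_{f\in\mathcal{B}}\mathcal{R}(f)=:m_0$. Because $\mathcal{B}$ has the pre-dual space $\mathcal{B}_*$, the Banach--Alaoglu theorem supplies a subsequence $f_{n_k}$ converging weak$^*$ to some $\hat{f}\in\mathcal{B}$. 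I then claim that $\hat{f}$ solves \eqref{regularization}.

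For the fidelity term, the hypothesis $\nu_j\in\mathcal{B}_*$ makes each $\nu_j$ weak$^*$ continuous, hence $\langle\nu_j,f_{n_k}\rangle_{\mathcal{B}}\to\langle\nu_j,\hat{f}\rangle_{\mathcal{B}}$ for each $j\in\mathbb{N}_m$, so $\mathcal{L}(f_{n_k})\to\mathcal{L}(\hat{f})$ in $\mathbb{R}^m$. Lower semi-continuity of $\mathcal{Q}_{\mathbf{y}}$ then yields $\mathcal{Q}_{\mathbf{y}}(\mathcal{L}(\hat{f}))\le\liminf_k\mathcal{Q}_{\mathbf{y}}(\mathcal{L}(f_{n_k}))$.

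For the regularization term I must show $\varphi(\|\hat{f}\|_{\mathcal{B}})\le\liminf_k\varphi(\|f_{n_k}\|_{\mathcal{B}})$, which is the delicate step. The norm is weak$^*$ lower semi-continuous, giving $\|\hat{f}\|_{\mathcal{B}}\le\liminf_k\|f_{n_k}\|_{\mathcal{B}}$. To pass this inequality through $\varphi$, set $L:=\liminf_k\varphi(\|f_{n_k}\|_{\mathcal{B}})$ and pick a subsequence along which $\varphi(\|f_{n_k}\|_{\mathcal{B}})\to L$; since $\{\|f_{n_k}\|_{\mathcal{B}}\}$ is bounded, refine once more so that $\|f_{n_k}\|_{\mathcal{B}}\to b$ for some $b\ge 0$. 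Then $\|\hat{f}\|_{\mathcal{B}}\le b$, and lower semi-continuity together with monotonicity of $\varphi$ give $\varphi(\|\hat{f}\|_{\mathcal{B}})\le\varphi(b)\le L$. Combining the two lower semi-continuity estimates and using $\liminf_k a_k+\liminf_k b_k\le\liminf_k(a_k+b_k)$, I obtain $\mathcal{R}(\hat{f})\le\liminf_k\mathcal{R}(f_{n_k})=m_0$; since also $\mathcal{R}(\hat{f})\ge m_0$, equality holds and $\hat{f}$ is a solution.

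The main obstacle is precisely this composite regularizer step: unlike the pure minimum-norm setting of Proposition~\ref{existence-mni}, one cannot directly quote weak$^*$ lower semi-continuity of the norm, but must transport the norm inequality through the possibly nonlinear $\varphi$. This is why the simultaneous use of coercivity (to secure boundedness and hence a convergent subsequence of norms), monotonicity, and lower semi-continuity of $\varphi$ is essential; the increasing hypothesis in particular is what lets me conclude $\varphi(\|\hat f\|_{\mathcal{B}})\le\varphi(b)$ from $\|\hat f\|_{\mathcal{B}}\le b$.
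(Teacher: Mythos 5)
Your proposal is correct and follows essentially the same route as the paper's proof: a bounded minimizing sequence via Lemma~\ref{existence}, Banach--Alaoglu extraction of a weak$^*$ limit, weak$^*$ continuity of $\mathcal{L}$ plus lower semi-continuity of $\mathcal{Q}_{\mathbf{y}}$ for the fidelity term, and the combination of weak$^*$ lower semi-continuity of the norm with the monotonicity and lower semi-continuity of $\varphi$ for the regularization term. The only (cosmetic) difference is that you extract a subsequence realizing $\liminf_k\varphi(\|f_{n_k}\|_{\mathcal{B}})$ and then refine for norm convergence, whereas the paper extracts the subsequence attaining $\liminf_k\|f_{n_k}\|_{\mathcal{B}}$ and applies $\varphi$ afterward; both are valid implementations of the same key step.
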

\begin{proof}
Since $\varphi$ is coercive, by Lemma \ref{existence} there exists a bounded sequence $f_n,n\in\mathbb{N},$ in $\mathcal{B}$ satisfying \eqref{minimizing} with $\mathcal{R}$ being defined as in \eqref{R}. It follows from the Banach-Alaoglu theorem that there exists a subsequence $f_{n_k},k\in\mathbb{N},$ weakly$^{*}$ converges to $\hat{f}\in\mathcal{B}$. We shall prove that the weak$^{*}$ accumulation point $\hat{f}$ is a solution of the regularization problem \eqref{regularization}. This is done by showing that
\begin{equation}\label{lsc}
\mathcal{R}(\hat{f})\leq\liminf_{j\rightarrow+\infty}\mathcal{R}(f_{n_{k_j}}),
\end{equation}
where $f_{n_{k_j}},j\in\mathbb{N},$ is a subsequence of the sequence $f_{n_k},k\in\mathbb{N}$.

By the definition \eqref{R} of $\mathcal{R}$, we consider the fidelity term $\mathcal{Q}_{\mathbf{y}}(\mathcal{L}(\hat{f}))$ and the regularization term $\varphi(\|\hat{f}\|_{\mathcal{B}})$ separately. We first consider the fidelity term.
Since $\nu_j\in\mathcal{B}_{*}$, $j\in\mathbb{N}_m$, the linear functionals $\nu_j$, $j\in\mathbb{N}_m,$ are weakly$^{*}$ continuous. Hence, we conclude that the linear operator $\mathcal{L}$ defined by \eqref{functional-operator} in terms of the linear functionals $\nu_j$, $j\in\mathbb{N}_m,$ is weakly$^{*}$ continuous. The assumption that $\mathcal{Q}_\mathbf{y}$ is lower semi-continuous yields $\mathcal{Q}_{\mathbf{y}}(\mathcal{L})$ is weakly$^{*}$ lower semi-continuous. Hence, by the weak$^{*}$ convergence of the sequence $f_{n_{k}},j\in\mathbb{N},$ we obtain that
\begin{equation}\label{lsc-Q}
\mathcal{Q}_{\mathbf{y}}(\mathcal{L}(\hat{f}))
\leq\liminf_{j\rightarrow+\infty}\mathcal{Q}_{\mathbf{y}}(\mathcal{L}(f_{n_{k}})).
\end{equation}

We now consider the regularization term. Noting that the norm $\|\cdot\|_{\mathcal{B}}$ is weak$^{*}$ continuous on $\mathcal{B}$, by the weak$^{*}$ convergence of the sequence $f_{n_k},k\in\mathbb{N},$ we get that
\begin{equation}\label{lsc-norm}
\|\hat{f}\|_{\mathcal{B}}\leq\liminf_{k\rightarrow+\infty}\|f_{n_k}\|_{\mathcal{B}}.
\end{equation}
Let $f_{n_{k_j}},j\in\mathbb{N},$ be the subsequence of the sequence $f_{n_{k}},k\in\mathbb{N}$  which attains the limit inferior in \eqref{lsc-norm}. It follows that
\begin{equation*}\label{lsc-norm1}
\|\hat{f}\|_{\mathcal{B}}\leq\lim_{j\rightarrow+\infty}\|f_{n_{k_j}}\|_{\mathcal{B}}.
\end{equation*}
Since $\varphi$ is lower semi-continuity and increasing, we have that \begin{equation}\label{lsc-norm2}
\varphi(\|\hat{f}\|_{\mathcal{B}})
\leq\varphi\left(\lim_{j\rightarrow+\infty}\|f_{n_{k_j}}\|_{\mathcal{B}}\right)
\leq\liminf_{j\rightarrow+\infty}\varphi(\|f_{n_{k_j}}\|_{\mathcal{B}}).
\end{equation}

Finally, combining inequalities \eqref{lsc-Q} and \eqref{lsc-norm2} yields the inequality \eqref{lsc}, which
together with \eqref{minimizing} leads to
\begin{equation}\label{lsc1}
\mathcal{R}(\hat{f})\leq\inf_{f\in\mathcal{B}}\mathcal{R}(f).
\end{equation}
Clearly, inequality \eqref{lsc1} ensures that $\hat{f}$ is a solution of the regularization problem \eqref{regularization}.
\end{proof}

The reflexive Banach space is a special case of the Banach space having a pre-dual space.
In this special case, we can also have the existence result for a solution of the regularization problem \eqref{regularization}.

\begin{cor}\label{existence2}
Suppose that $\mathbf{y}\in\mathbb{R}^m$, both $\mathcal{Q}_{\mathbf{y}}:\mathbb{R}^m\rightarrow\mathbb{R}_{+}$ and $\varphi:\mathbb{R}_{+}\rightarrow\mathbb{R}_{+}$ are lower semi-continuous, $\lambda>0$ and moreover, $\varphi$ is increasing and coercive. If $\mathcal{B}$ is a reflexive Banach space having the dual space $\mathcal{B}^{*}$and the functionals $\nu_j, j\in\mathbb{N}_m,$ appearing in the definition of $\mathcal{L}$ are in $\mathcal{B}^{*}$, then the regularization problem \eqref{regularization} has at least one solution.
\end{cor}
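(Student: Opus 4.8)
The plan is to derive this corollary directly from Proposition \ref{existence1}, exactly as Corollary \ref{existence-mni-reflexive} was derived from Proposition \ref{existence-mni} in the minimum norm interpolation setting. The single conceptual point is that reflexivity supplies a pre-dual space for free: by definition $\mathcal{B}$ is reflexive means $(\mathcal{B}^{*})^{*}=\mathcal{B}$, which is precisely the statement that $\mathcal{B}^{*}$ is a pre-dual space of $\mathcal{B}$. This observation was already recorded in the text preceding Proposition \ref{existence-mni}.

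First I would set $\mathcal{B}_{*}:=\mathcal{B}^{*}$ and observe that reflexivity yields $(\mathcal{B}_{*})^{*}=(\mathcal{B}^{*})^{*}=\mathcal{B}$, so that $\mathcal{B}_{*}$ is indeed a pre-dual space of $\mathcal{B}$ in the sense required by Proposition \ref{existence1}. Next I would check that the hypotheses on the data transfer verbatim: the functionals $\nu_j$, $j\in\mathbb{N}_m$, are assumed to lie in $\mathcal{B}^{*}=\mathcal{B}_{*}$, and the remaining assumptions on $\mathbf{y}$, $\mathcal{Q}_{\mathbf{y}}$, $\varphi$ and $\lambda$ (lower semi-continuity of $\mathcal{Q}_{\mathbf{y}}$ and $\varphi$, with $\varphi$ increasing and coercive, and $\lambda>0$) are identical to those of Proposition \ref{existence1}. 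With $\mathcal{B}_{*}:=\mathcal{B}^{*}$ all conditions of Proposition \ref{existence1} are met.

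Finally I would invoke Proposition \ref{existence1} to conclude that the regularization problem \eqref{regularization} has at least one solution. There is no genuine obstacle here: the corollary is a specialization of the proposition, and the only thing to verify is the elementary identification of $\mathcal{B}^{*}$ as a pre-dual space, which follows immediately from the definition of reflexivity. The entire argument is a short reduction, with no new estimates or limiting arguments beyond those already carried out in the proof of Proposition \ref{existence1}.
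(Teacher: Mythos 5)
Your proposal is correct and follows exactly the paper's own proof: the paper likewise observes that a reflexive Banach space has its dual $\mathcal{B}^{*}$ as a pre-dual space and then invokes Proposition \ref{existence1} directly. Nothing further is needed.
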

\begin{proof}
A reflexive Banach space $\mathcal{B}$ always has its pre-dual space $\mathcal{B}_{*}$ being its dual space $\mathcal{B}^{*}$. Hence, the desired result follows directly from Proposition \ref{existence1}.
\end{proof}

\subsection{Connection between regularization problem and Minimum Norm Interpolation}

We now investigate the connection between a solution of the regularization problem \eqref{regularization} and that of the minimum norm interpolation problem \eqref{mni}. Specifically, we shall show that if the regularizer is increasing then there always exists a solution of the regularization problem \eqref{regularization} which is also a solution of the minimum norm interpolation problem with specific data. Furthermore, if the regularizer is strictly increasing then every solution of the regularization problem \eqref{regularization} is also a solution of the minimum norm interpolation problem with specific data. Throughout the rest of this paper, we always assume that each of the two minimization problems has a solution without further mention. In particular, it is guaranteed by Proposition \ref{existence1} that this assumption holds when $\mathcal{B}$ has the pre-dual space $\mathcal{B}_{*}$, $\nu_j\in\mathcal{B}_{*}$, $j\in\mathbb{N}_m$,  $\mathcal{Q}_{\mathbf{y}}$, $\varphi$ are both lower semi-continuous and $\varphi$ is increasing and coercive.


\begin{prop}\label{relation-two-optimization}
Suppose that $\mathcal{B}$ is a Banach space with the dual space $\mathcal{B}^*$, $\nu_j\in\mathcal{B}^*$, $j\in\mathbb{N}_m,$ and $\mathcal{L}$ is defined by \eqref{functional-operator}. For a given $\mathbf{y}_0\in\mathbb{R}^m$, let $\mathcal{Q}_{\mathbf{y}_0}:\mathbb{R}^m\rightarrow\mathbb{R}_{+}$ be a loss function, $\varphi:\mathbb{R}_{+}\rightarrow\mathbb{R}_{+}$ be an increasing regularizer and $\lambda>0$.
Let $\hat{f}\in\mathcal{B}$ be a solution of the regularization problem \eqref{regularization} with $\mathbf{y}:=\mathbf{y}_0$.

\indent (1) A solution $\hat{g}\in\mathcal{B}$ of the minimum norm interpolation problem \eqref{mni} with $\mathbf{y}:=\mathcal{L}(\hat{f})$ is a solution of the regularization problem \eqref{regularization} with $\mathbf{y}:=\mathbf{y}_0$.

\indent (2) If $\varphi$ is strictly increasing, then $\hat{f}$ is a solution of the minimum norm interpolation problem \eqref{mni} with $\mathbf{y}:=\mathcal{L}(\hat{f})$.
\end{prop}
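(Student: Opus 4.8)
The plan is to exploit a single structural observation behind both parts: moving $f$ within the affine interpolation set $\mathcal{M}_{\mathcal{L}(\hat f)}$ leaves the fidelity term $\mathcal{Q}_{\mathbf{y}_0}(\mathcal{L}(f))$ unchanged, so on this set the regularization objective $\mathcal{R}(f):=\mathcal{Q}_{\mathbf{y}_0}(\mathcal{L}(f))+\lambda\varphi(\|f\|_{\mathcal{B}})$ (that is, \eqref{R} with $\mathbf{y}:=\mathbf{y}_0$) is governed entirely by $\varphi(\|f\|_{\mathcal{B}})$, which is monotone in $\|f\|_{\mathcal{B}}$. Everything then reduces to a short comparison argument.

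For part (1), I would first note that $\hat f$ is trivially feasible for the minimum norm interpolation problem \eqref{mni} with data $\mathbf{y}:=\mathcal{L}(\hat f)$, since $\mathcal{L}(\hat f)=\mathcal{L}(\hat f)$. Consequently any solution $\hat g$ of that interpolation problem satisfies both the interpolation identity $\mathcal{L}(\hat g)=\mathcal{L}(\hat f)$ and the norm bound $\|\hat g\|_{\mathcal{B}}\le\|\hat f\|_{\mathcal{B}}$. The interpolation identity makes the fidelity terms agree, $\mathcal{Q}_{\mathbf{y}_0}(\mathcal{L}(\hat g))=\mathcal{Q}_{\mathbf{y}_0}(\mathcal{L}(\hat f))$, while the fact that $\varphi$ is increasing turns $\|\hat g\|_{\mathcal{B}}\le\|\hat f\|_{\mathcal{B}}$ into $\varphi(\|\hat g\|_{\mathcal{B}})\le\varphi(\|\hat f\|_{\mathcal{B}})$. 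Adding the two contributions yields $\mathcal{R}(\hat g)\le\mathcal{R}(\hat f)$; since $\hat f$ minimizes $\mathcal{R}$, the reverse inequality $\mathcal{R}(\hat f)\le\mathcal{R}(\hat g)$ also holds, forcing equality and showing that $\hat g$ is itself a solution of \eqref{regularization} with $\mathbf{y}:=\mathbf{y}_0$.

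For part (2), I would argue by contradiction, this time invoking strict monotonicity of $\varphi$. Suppose $\hat f$ were not a solution of \eqref{mni} with data $\mathcal{L}(\hat f)$. By the standing existence assumption a genuine minimizer $\hat g$ exists, and the failure of $\hat f$ to be optimal forces the strict bound $\|\hat g\|_{\mathcal{B}}<\|\hat f\|_{\mathcal{B}}$. Strict monotonicity then gives $\varphi(\|\hat g\|_{\mathcal{B}})<\varphi(\|\hat f\|_{\mathcal{B}})$, while $\mathcal{L}(\hat g)=\mathcal{L}(\hat f)$ again equates the fidelity terms, so $\mathcal{R}(\hat g)<\mathcal{R}(\hat f)$. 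This contradicts the optimality of $\hat f$ for the regularization problem, and hence $\hat f$ must be a solution of the interpolation problem with $\mathbf{y}:=\mathcal{L}(\hat f)$.

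I expect no substantial obstacle here; the proof is a monotonicity comparison rather than anything delicate. The only points needing care are the bookkeeping facts that $\hat f$ is a legitimate competitor in the interpolation problem (so its infimum is at most $\|\hat f\|_{\mathcal{B}}$) and that in part (2) the strict inequality genuinely propagates through $\varphi$. This last point is precisely where the hypothesis that $\varphi$ is \emph{strictly} increasing is indispensable, and it explains why the weaker ``increasing'' assumption used in part (1) does not suffice to identify \emph{every} regularization solution with an interpolation solution.
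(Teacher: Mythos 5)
Your proof is correct and follows essentially the same route as the paper: the fidelity term is constant on the interpolation set $\mathcal{M}_{\mathcal{L}(\hat f)}$, so everything reduces to a monotone comparison of $\varphi(\|\cdot\|_{\mathcal{B}})$, and your part (1) matches the paper's argument exactly. The only difference is in part (2), where you argue by contradiction and genuinely use the standing existence assumption to produce a minimizer $\hat g$ with $\|\hat g\|_{\mathcal{B}}<\|\hat f\|_{\mathcal{B}}$, whereas the paper deduces $\varphi(\|\hat f\|_{\mathcal{B}})\leq\varphi(\|f\|_{\mathcal{B}})$ for all $f\in\mathcal{M}_{\mathcal{L}(\hat f)}$ directly from the optimality of $\hat f$ and then applies strict monotonicity, a formulation that needs no existence of an interpolation minimizer at all.
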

\begin{proof}
We first prove statement (1). Suppose that $\hat{g}$ is a solution of the minimum norm interpolation problem \eqref{mni} for data $\mathbf{y}:=\mathcal{L}(\hat{f})$. It follows from $\hat{f}\in\mathcal{M}_{\mathbf{y}}$ with $\mathbf{y}:=\mathcal{L}(\hat{f})$ that
\begin{equation}\label{Lg}
\mathcal{L}(\hat{g})=\mathcal{L}(\hat{f})
\end{equation}
and
\begin{equation}\label{g}
 \|\hat{g}\|_{\mathcal{B}}\leq\|\hat{f}\|_{\mathcal{B}}.
\end{equation}
On one hand, equation \eqref{Lg} further implies that
\begin{equation}\label{QLg}
\mathcal{Q}_{\mathbf{y}_0}(\mathcal{L}(\hat{g}))
=\mathcal{Q}_{\mathbf{y}_0}(\mathcal{L}(\hat{f})).
\end{equation}
On the other hand, since $\varphi$ is increasing, from \eqref{g} we have that
\begin{equation}\label{varphig}
\varphi(\|\hat{g}\|_{\mathcal{B}})\leq\varphi(\|\hat{f}\|_{\mathcal{B}}).
\end{equation}
Combining \eqref{QLg} and \eqref{varphig}, with noting that $\lambda$ is positive, we obtain that
$$
\mathcal{Q}_{\mathbf{y}_0}(\mathcal{L}(\hat{g}))+\lambda\varphi(\|\hat{g}\|_{\mathcal{B}})\leq
\mathcal{Q}_{\mathbf{y}_0}(\mathcal{L}(\hat{f}))+\lambda\varphi(\|\hat{f}\|_{\mathcal{B}}).
$$
This ensures that $\hat{g}$ is a solution of the regularization problem \eqref{regularization} with given data $\mathbf{y}:=\mathbf{y}_0$.

We next show statement (2). Suppose that $\varphi$ is strictly increasing. Set $\hat{\mathbf{y}}:=\mathcal{L}(\hat{f})$. It suffices to verify that
\begin{equation}\label{f}
\|\hat{f}\|_{\mathcal{B}}\leq\|f\|_{\mathcal{B}},\ \ \mbox{for all}\ \ f\in\mathcal{M}_{\hat{\mathbf{y}}}.
\end{equation}
On one hand, for all $f\in\mathcal{M}_{\hat{\mathbf{y}}}$ we have that $\mathcal{L}(\hat{f})=\mathcal{L}(f),$ which leads to
\begin{equation}\label{QLf}
\mathcal{Q}_{\mathbf{y}_0}(\mathcal{L}(\hat{f}))
=\mathcal{Q}_{\mathbf{y}_0}(\mathcal{L}(f)), \ \ \mbox{for all}\ \ f\in\mathcal{M}_{\hat{\mathbf{y}}}.
\end{equation}
On the other hand, since $\hat{f}$ is a solution of \eqref{regularization} with given data $\mathbf{y}:=\mathbf{y}_0$, we get that
\begin{equation}\label{Rf}
\mathcal{Q}_{\mathbf{y}_0}(\mathcal{L}(\hat{f}))+\lambda\varphi(\|\hat{f}\|_{\mathcal{B}})
\leq\mathcal{Q}_{\mathbf{y}_0}(\mathcal{L}(f))+\lambda\varphi(\|f\|_{\mathcal{B}}), \ \ \mbox{for all} \ \ f\in\mathcal{B}.
\end{equation}
Combining \eqref{QLf} and \eqref{Rf}, with noting that $\lambda$ is positive, we have that
\begin{equation}\label{varphi-ineq}
\varphi(\|\hat{f}\|_{\mathcal{B}})\leq\varphi(\|f\|_{\mathcal{B}}),  \ \ \mbox{for all}\ \ f\in\mathcal{M}_{\hat{\mathbf{y}}}.
\end{equation}
Since $\varphi$ is strictly increasing, we get the result \eqref{f} from inequality \eqref{varphi-ineq}. This ensures that $\hat{f}$ is a solution of the minimum norm interpolation problem \eqref{mni} for data $\mathbf{y}:=\mathcal{L}(\hat{f})$.
\end{proof}

Statement (2) of Proposition \ref{relation-two-optimization} was claimed in \cite{MP} without details of proof. We provide above a complete proof for this statement for convenience of the readers.


\section{Representer Theorems and Solution Methods for Regularization Problems}

In this section, we establish representer theorems and solution methods for solutions of the regularization problem \eqref{regularization}. Using the connection enacted in the last section between a solution of the minimum norm interpolation \eqref{mni} and that of the regularization problem, we first present both implicit and explicit representer theorems for a solution of the regularization problem \eqref{regularization}. We then develop solution methods for solving the regularization problem. We present two types of solution methods: one based on the representer theorems and the other being direct methods. We also consider special cases and give special results for them. In particular, for the regularization problem in $\ell_1(\mathbb{N})$, we put forward a fixed-point formulation which serves as a basis for further development of efficient iterative algorithms for solving the problem. Although results to be presented in this section are parallel to those for the minimum norm interpolation, they will provide a foundation for applications due to wide utilizations of regularization problems in many areas. We will keep our presentation concise by skipping some details.


\subsection{Representer Theorems for Regularization}
We present in this subsection representer theorems for regularization problem \eqref{regularization}.
Recall that we have established several implicit representer theorems in Proposition \ref{representer-theorem-summarize} for solutions of the minimum norm interpolation problem \eqref{mni}. Through the connection between a solution of the regularization problem \eqref{regularization} and that of the minimum norm interpolation problem \eqref{mni}, in the next proposition we first translate the results in Proposition \ref{representer-theorem-summarize} originally for the minimum norm interpolation problem \eqref{mni} to those for the regularization problem \eqref{regularization}.
Throughout this subsection, we assume that for any given data $\mathbf{y}$, each of the minimum norm interpolation problem \eqref{mni} and the regularization problem \eqref{regularization} has a solution in the same Banach space under consideration.

\begin{prop}\label{representer-regularization}
Suppose that $\mathcal{B}$ is a Banach space with the dual space $\mathcal{B}^*$, $\nu_j\in\mathcal{B}^*$, $j\in\mathbb{N}_m,$ and $\mathcal{L}$ is defined by \eqref{functional-operator}. For a given  $\mathbf{y}_0\in\mathbb{R}^m$, let $\mathcal{Q}_{\mathbf{y}_0}:\mathbb{R}^m\rightarrow\mathbb{R}_{+}$ be a loss function, $\varphi:\mathbb{R}_{+}\rightarrow\mathbb{R}_{+}$ be a regularizer and $\lambda>0$.
If $\varphi$ is increasing, then there exists a solution $f_0$ of the regularization problem \eqref{regularization} with $\mathbf{y}:=\mathbf{y}_0$ satisfying the following conditions:

(i) There exist $c_j\in\mathbb{R}$, $j\in\mathbb{N}_m,$ such that the linear functional $\nu:=\sum_{j\in\mathbb{N}_m}c_j\nu_j$ satisfying
\begin{equation*}\label{representer-regularization-functional}
\|\nu\|_{\mathcal{B}^*}=1\ \ \mbox{and}\ \ \langle\nu,f_0\rangle_{\mathcal{B}}=\|f_0\|_{\mathcal{B}}.
\end{equation*}

(ii) There exist $c_j\in\mathbb{R}$, $j\in\mathbb{N}_m,$ such that the linear functional $\nu:=\sum_{j\in\mathbb{N}_m}c_j\nu_j$ peaks at $f_0$, that is,
\begin{equation*}\label{representer-regularization-peak-functional}
\langle\nu,f_0\rangle_{\mathcal{B}}
=\left\|\nu\right\|_{\mathcal{B}^*}\|f_0\|_{\mathcal{B}}.
\end{equation*}

(iii) There exist $c_j\in\mathbb{R}$, $j\in\mathbb{N}_m,$ such that
\begin{equation*}\label{representer-regularization-duality}
\sum_{j\in\mathbb{N}_m}c_j\nu_j\in\mathcal{J}(f_0).
\end{equation*}

(iv) There exist $c_j\in\mathbb{R},\ j\in\mathbb{N}_m,$ such that
\begin{equation}\label{representer-regularization-subdifferential}
\sum_{j\in\mathbb{N}_m}c_j\nu_j\in\partial\|\cdot\|_{\mathcal{B}}(f_0).
\end{equation}

If $\varphi$ is strictly increasing, then every solution $f_0$ of the regularization problem \eqref{regularization} with $\mathbf{y}:=\mathbf{y}_0$ satisfies the above conditions (i)-(iv).
\end{prop}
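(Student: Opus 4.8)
The plan is to reduce the entire statement to the corresponding implicit representer theorems for minimum norm interpolation, namely the equivalence of statements (i)--(v) in Proposition \ref{representer-theorem-summarize}, by routing through the connection between the two problems established in Proposition \ref{relation-two-optimization}. The central observation is that conditions (i)--(iv) listed here for the regularization problem are exactly the characterizations appearing in statements (ii)--(v) of Proposition \ref{representer-theorem-summarize}, but now imposed on a solution $f_0$ of the regularization problem rather than on a solution of a minimum norm interpolation problem. So the whole argument is a matter of transporting those characterizations across the bridge supplied by Proposition \ref{relation-two-optimization}.

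First I would handle the increasing case. Let $\hat f$ be any solution of the regularization problem \eqref{regularization} with data $\mathbf{y}:=\mathbf{y}_0$; its existence is assumed throughout this subsection. Set $\hat{\mathbf{y}}:=\mathcal{L}(\hat f)$ and consider the minimum norm interpolation problem \eqref{mni} with data $\hat{\mathbf{y}}$. This interpolation problem has a solution, say $f_0$; note $f_0\in\mathcal{M}_{\hat{\mathbf{y}}}$ by construction. Statement (1) of Proposition \ref{relation-two-optimization} then guarantees that this $f_0$, being a solution of the minimum norm interpolation problem with data $\mathcal{L}(\hat f)$, is also a solution of the regularization problem \eqref{regularization} with $\mathbf{y}:=\mathbf{y}_0$. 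Thus we have produced a solution $f_0$ of the regularization problem that is simultaneously a solution of a minimum norm interpolation problem. Now I would invoke Proposition \ref{representer-theorem-summarize} applied to $f_0$ as a solution of \eqref{mni} with data $\hat{\mathbf{y}}$: the equivalence of its statement (i) with statements (ii), (iii), (iv) and (v) yields precisely conditions (i), (ii), (iii) and (iv) of the present proposition for $f_0$. This settles the ``there exists a solution'' assertion.

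For the strictly increasing case, I would argue that the existentially-produced $f_0$ can be replaced by an arbitrary solution. Let $\hat f$ be any solution of the regularization problem with $\mathbf{y}:=\mathbf{y}_0$. Statement (2) of Proposition \ref{relation-two-optimization}, which requires $\varphi$ strictly increasing, asserts that $\hat f$ is itself a solution of the minimum norm interpolation problem \eqref{mni} with data $\mathbf{y}:=\mathcal{L}(\hat f)$. Applying Proposition \ref{representer-theorem-summarize} directly to $\hat f$ then gives all four conditions (i)--(iv) for $\hat f$. Since $\hat f$ was arbitrary, every solution of the regularization problem satisfies (i)--(iv), as claimed.

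I do not anticipate a genuine obstacle, since both halves are essentially bookkeeping once the two prior propositions are in hand; the one point requiring care is the logical matching of the quantifiers. In the increasing case one must be careful to construct $f_0$ as a solution of the auxiliary interpolation problem and then pull it back to the regularization problem via statement (1), rather than hoping an arbitrary regularization solution already interpolates optimally---which may fail without strict monotonicity. I would therefore keep the two cases cleanly separated and make explicit, in each, which solution the conditions are being asserted for, so that the transition from ``there exists a solution satisfying (i)--(iv)'' to ``every solution satisfies (i)--(iv)'' is seen to rest exactly on the upgrade from statement (1) to statement (2) of Proposition \ref{relation-two-optimization}.
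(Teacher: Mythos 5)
Your proposal is correct and follows essentially the same route as the paper's own proof: in the increasing case you construct $f_0$ as a minimum norm interpolant for the data $\mathcal{L}(\hat f)$ and pull it back to the regularization problem via statement (1) of Proposition \ref{relation-two-optimization}, and in the strictly increasing case you apply statement (2) to an arbitrary solution, in both cases concluding via Proposition \ref{representer-theorem-summarize}. Your explicit remark on the quantifier structure—that without strict monotonicity an arbitrary regularization solution need not itself interpolate optimally—is exactly the distinction the paper's two-case argument rests on.
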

\begin{proof}
Suppose that $\hat{f}$ is a solution of the regularization problem \eqref{regularization} with $\mathbf{y}:=\mathbf{y}_0$. Let $f_0$ be a solution of the minimum norm interpolation problem \eqref{mni} with $\mathbf{y}:=\mathcal{L}(\hat{f})$. Proposition \ref{relation-two-optimization} ensures that $f_0$ is also a solution of the regularization problem \eqref{regularization} with $\mathbf{y}:=\mathbf{y}_0$. As a solution of the minimum norm interpolation problem, $f_0$ has the representations as described in Proposition \ref{representer-theorem-summarize}. Hence, $f_0$ satisfies conditions (i)-(iv) of this proposition.

We now consider the case that $\varphi$ is strictly increasing. Suppose that $f_0$ is a solution of the regularization problem \eqref{regularization} with $\mathbf{y}:=\mathbf{y}_0$. Statement (2) of Proposition \ref{relation-two-optimization} ensures that $f_0$ is also a solution of the minimum norm interpolation problem \eqref{mni} with $\mathbf{y}:=\mathcal{L}(f_0)$. Again by Proposition \ref{representer-theorem-summarize}, we get that $f_0$ satisfies conditions (i)-(iv). Due to the arbitrariness of $f_0$, we obtain the desired conclusion.
\end{proof}

We remark that in a non-smooth Banach space, the regularized learning was studied in \cite{HLTY, Unser}. Specifically, condition (iv) of Proposition \ref{representer-regularization} was established in \cite{HLTY} for a special regularizer $\varphi(t):=t^2$, $t\in\mathbb{R}_{+}$, and condition (iii) of Proposition \ref{representer-regularization} was obtained in \cite{Unser} via a different approach, the duality mapping.

When the Banach space $\mathcal{B}$ is smooth, we may get a representer theorem for a solution of the regularization problem \eqref{regularization} in a simple form.

\begin{prop}\label{representer-regularization-Gateaux}
Suppose that $\mathcal{B}$ is a smooth Banach space with the dual space $\mathcal{B}^*$, $\nu_j\in\mathcal{B}^*$, $j\in\mathbb{N}_m,$ and $\mathcal{L}$ is defined by \eqref{functional-operator}. For a given  $\mathbf{y}_0\in\mathbb{R}^m$, let $\mathcal{Q}_{\mathbf{y}_0}:\mathbb{R}^m\rightarrow\mathbb{R}_{+}$ be a loss function, $\varphi:\mathbb{R}_{+}\rightarrow\mathbb{R}_{+}$ be a regularizer and $\lambda>0$.

(1) If $\varphi$ is increasing, then there exists a solution $f_0$ of the regularization problem \eqref{regularization} with $\mathbf{y}:=\mathbf{y}_0$ such that
\begin{equation}\label{representer-regularization-Gateaux-formula}
\mathcal{G}(f_0)=\sum_{j\in\mathbb{N}_m}c_j\nu_j,
\end{equation}
for some $c_j\in\mathbb{R}$, $j\in\mathbb{N}_m$.

(2) If $\varphi$ is strictly increasing, then every solution $f_0$ of the regularization problem \eqref{regularization} with $\mathbf{y}:=\mathbf{y}_0$ satisfies \eqref{representer-regularization-Gateaux-formula} for some $c_j\in\mathbb{R}$, $j\in\mathbb{N}_m$.
\end{prop}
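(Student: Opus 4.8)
The plan is to reduce both statements to the representer theorem for the minimum norm interpolation problem in a smooth Banach space, namely Theorem \ref{representer-theorembyGateaux}, by transporting solutions between the regularization problem \eqref{regularization} and a suitably chosen minimum norm interpolation problem \eqref{mni} via Proposition \ref{relation-two-optimization}. The argument runs exactly parallel to the proof of Proposition \ref{representer-regularization}, the only change being that the smoothness of $\mathcal{B}$ lets us describe the conclusion through the single-valued G\^{a}teaux derivative $\mathcal{G}$ rather than the set-valued subdifferential, so that the inclusion \eqref{representer-regularization-subdifferential} sharpens into the equality \eqref{representer-regularization-Gateaux-formula}.

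For part (1), I would first fix a solution $\hat{f}$ of the regularization problem \eqref{regularization} with $\mathbf{y}:=\mathbf{y}_0$, whose existence is guaranteed by the standing assumption of this subsection. I would then pass to a solution $f_0$ of the minimum norm interpolation problem \eqref{mni} with data $\mathbf{y}:=\mathcal{L}(\hat{f})$ (again available by the standing assumption). Since $\varphi$ is increasing, statement (1) of Proposition \ref{relation-two-optimization} yields that $f_0$ is also a solution of the regularization problem \eqref{regularization} with $\mathbf{y}:=\mathbf{y}_0$. Because $\mathcal{B}$ is smooth and $f_0$ solves the minimum norm interpolation problem \eqref{mni} with $\mathbf{y}:=\mathcal{L}(\hat{f})$, Theorem \ref{representer-theorembyGateaux} then produces coefficients $c_j\in\mathbb{R}$, $j\in\mathbb{N}_m$, with $\mathcal{G}(f_0)=\sum_{j\in\mathbb{N}_m}c_j\nu_j$, which is precisely \eqref{representer-regularization-Gateaux-formula}.

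For part (2), under the stronger hypothesis that $\varphi$ is strictly increasing, I would take an arbitrary solution $f_0$ of \eqref{regularization} with $\mathbf{y}:=\mathbf{y}_0$. Statement (2) of Proposition \ref{relation-two-optimization} now guarantees that this same $f_0$ solves the minimum norm interpolation problem \eqref{mni} with $\mathbf{y}:=\mathcal{L}(f_0)$. Applying Theorem \ref{representer-theorembyGateaux} once more delivers the representation \eqref{representer-regularization-Gateaux-formula}, and the arbitrariness of $f_0$ finishes the claim.

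The step requiring the most care is verifying that the hypotheses of Theorem \ref{representer-theorembyGateaux} are genuinely met at the point under consideration: that theorem is stated for a solution of a minimum norm interpolation problem, so in each part one must be sure that the $f_0$ produced by Proposition \ref{relation-two-optimization} is being recognized as a solution of the correct minimum norm interpolation problem \eqref{mni} (with data $\mathcal{L}(\hat{f})$ in part (1), and $\mathcal{L}(f_0)$ in part (2)), and not merely as a solution of the regularization problem. Once this matching of data is in place, the proof is a direct bookkeeping combination of the connection results with the smooth representer theorem, and no genuinely new estimate is needed.
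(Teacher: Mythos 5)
Your proposal is correct and takes essentially the same route as the paper: the paper's proof invokes condition (iv) of Proposition \ref{representer-regularization} (which is itself obtained through the very transport via Proposition \ref{relation-two-optimization} that you rerun) and then upgrades the subdifferential inclusion \eqref{representer-regularization-subdifferential} to the equality \eqref{representer-regularization-Gateaux-formula} using the singleton identity $\partial\|\cdot\|_{\mathcal{B}}(f_0)=\{\mathcal{G}(f_0)\}$ for $f_0\neq 0$, treating $f_0=0$ separately. Your version composes Proposition \ref{relation-two-optimization} with Theorem \ref{representer-theorembyGateaux} directly, which already bundles that same singleton argument together with the zero-solution case, so the two proofs differ only in how the intermediate results are packaged.
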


\begin{proof}
We prove this proposition by using condition (iv) of Proposition \ref{representer-regularization}.
If the regularization problem \eqref{regularization} with $\mathbf{y}:=\mathbf{y}_0$ has $f_0=0$ as its solution, equation \eqref{representer-regularization-Gateaux-formula} can be obtained by choosing $c_j=0$, $j\in\mathbb{N}_m$. Otherwise, by condition (iv) of Proposition \ref{representer-regularization},
there exists a nonzero solution $f_0$ of the regularization problem \eqref{regularization} with $\mathbf{y}:=\mathbf{y}_0$ satisfying the inclusion relation \eqref{representer-regularization-subdifferential} for some $c_j\in\mathbb{R}$, $j\in\mathbb{N}_m.$ Note that since $\mathcal{B}$ is smooth, the subdifferential of the norm function $\|\cdot\|_{\mathcal{B}}$ at $f_0\in\mathcal{B}$ is a singleton, that is,
\begin{equation}\label{singleton1}
\partial\|\cdot\|_{\mathcal{B}}(f_0)=\{\mathcal{G}(f_0)\}.
\end{equation}
This together with \eqref{representer-regularization-subdifferential} leads to equation \eqref{representer-regularization-Gateaux-formula}.

Suppose that $\varphi$ is strictly increasing and $f_0$ is a solution of the regularization problem \eqref{regularization}. If $f_0=0$, equation \eqref{representer-regularization-Gateaux-formula} holds for $c_j=0,$ $j\in\mathbb{N}_m.$ If $f_0$ is nonzero, then condition (iv) of Proposition \ref{representer-regularization} and equation \eqref{singleton1} yield the conclusion of this Proposition.
\end{proof}

We next develop explicit representer theorems for regularization problem \eqref{regularization}.
These explicit representer theorems are obtained from the implicit representer theorems presented above in conjunction with Lemma \ref{subdifferential-solvable-thm}, a duality argument.
We first consider the case when a Banach space $\mathcal{B}$ has the dual space $\mathcal{B}^*$.

\begin{thm}\label{representer-regularization-subdifferential-explicit}
Suppose that $\mathcal{B}$ is a Banach space with the dual space $\mathcal{B}^*$, $\nu_j\in\mathcal{B}^*$, $j\in\mathbb{N}_m$ and $\mathcal{L}$ is defined by \eqref{functional-operator}. For a given  $\mathbf{y}_0\in\mathbb{R}^m$, let $\mathcal{Q}_{\mathbf{y}_0}:\mathbb{R}^m\rightarrow\mathbb{R}_{+}$ be a loss function, $\varphi:\mathbb{R}_{+}\rightarrow\mathbb{R}_{+}$ be a regularizer and $\lambda>0$.

(1) If $\varphi$ is increasing, then there exists a solution $f_0$ of the regularization problem \eqref{regularization} with $\mathbf{y}:=\mathbf{y}_0$ such that
\begin{equation}\label{representer-regularization-subdifferential-explicit-formula}
f_0\in\gamma\partial\|\cdot\|_{\mathcal{B}^*}
\left(\sum_{j\in\mathbb{N}_m}c_j\nu_j\right),
\end{equation}
for some $c_j\in\mathbb{R}$, $j\in\mathbb{N}_m,$
with $\gamma:=\left\|\sum_{j\in\mathbb{N}_m}c_j\nu_j\right\|_{\mathcal{B}^*}$.

(2) If $\varphi$ is strictly increasing, then every solution $f_0$ of the regularization problem \eqref{regularization} with $\mathbf{y}:=\mathbf{y}_0$ satisfies \eqref{representer-regularization-subdifferential-explicit-formula} for some $c_j\in\mathbb{R}$, $j\in\mathbb{N}_m$.
\end{thm}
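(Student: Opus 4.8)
The plan is to derive this explicit representer theorem for the regularization problem \eqref{regularization} by transferring the explicit representer theorem already available for the minimum norm interpolation problem \eqref{mni}, exactly as Proposition \ref{representer-regularization} was obtained from Proposition \ref{representer-theorem-summarize}. The bridge is the connection established in Proposition \ref{relation-two-optimization}: a solution of \eqref{regularization} is tied to a solution of \eqref{mni} with the specific data vector obtained by applying $\mathcal{L}$ to a regularization solution. Once this identification is made, the formula \eqref{representer-regularization-subdifferential-explicit-formula} follows verbatim from Theorem \ref{representer-theorem-subdifferential-explicit}, since the two explicit representations have identical form.

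For statement (1), I would first invoke the standing existence assumption to fix a solution $\hat f$ of \eqref{regularization} with $\mathbf{y}:=\mathbf{y}_0$, and then fix a solution $f_0$ of the minimum norm interpolation problem \eqref{mni} with the data $\mathbf{y}:=\mathcal{L}(\hat f)$. Statement (1) of Proposition \ref{relation-two-optimization}, which uses only that $\varphi$ is increasing, guarantees that this $f_0$ is itself a solution of \eqref{regularization} with $\mathbf{y}:=\mathbf{y}_0$. Since $f_0$ solves \eqref{mni} for the data $\mathcal{L}(\hat f)$, applying Theorem \ref{representer-theorem-subdifferential-explicit} with $\mathbf{y}:=\mathcal{L}(\hat f)$ yields coefficients $c_j\in\mathbb{R}$, $j\in\mathbb{N}_m$, for which $f_0$ obeys the inclusion \eqref{representer-regularization-subdifferential-explicit-formula} with $\gamma:=\left\|\sum_{j\in\mathbb{N}_m}c_j\nu_j\right\|_{\mathcal{B}^*}$. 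This also covers the trivial case $f_0=0$ automatically, because Theorem \ref{representer-theorem-subdifferential-explicit} already treats the datum $\mathcal{L}(f_0)=0$.

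For statement (2), I would instead start from an arbitrary solution $f_0$ of \eqref{regularization} with $\mathbf{y}:=\mathbf{y}_0$ and apply statement (2) of Proposition \ref{relation-two-optimization}, which requires $\varphi$ to be strictly increasing, to conclude that $f_0$ is a solution of \eqref{mni} with $\mathbf{y}:=\mathcal{L}(f_0)$. Theorem \ref{representer-theorem-subdifferential-explicit} with $\mathbf{y}:=\mathcal{L}(f_0)$ then produces the desired coefficients, so that every such $f_0$ satisfies \eqref{representer-regularization-subdifferential-explicit-formula}. The distinction between the two parts is precisely the distinction between the two parts of Proposition \ref{relation-two-optimization}: increasing monotonicity of $\varphi$ produces \emph{one} solution of \eqref{regularization} inheriting the representation, whereas strict monotonicity forces \emph{all} of them to do so.

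I expect the argument itself to be essentially routine, since the heavy lifting has been done in Theorem \ref{representer-theorem-subdifferential-explicit} and Proposition \ref{relation-two-optimization}. The only point demanding care is the bookkeeping of the data vector: the explicit minimum norm interpolation formula must be applied with the internally generated data $\mathcal{L}(\hat f)$ (respectively $\mathcal{L}(f_0)$) rather than the original data $\mathbf{y}_0$, and one must check that the coefficients $c_j$ and the scalar $\gamma$ it delivers are exactly those appearing in \eqref{representer-regularization-subdifferential-explicit-formula}. Since the form of the inclusion in Theorem \ref{representer-theorem-subdifferential-explicit} coincides with that in \eqref{representer-regularization-subdifferential-explicit-formula}, no further manipulation of the subdifferential or of the duality argument of Lemma \ref{subdifferential-solvable-thm} is needed here.
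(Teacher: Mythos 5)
Your proof is correct, and it takes a genuinely different route through the paper's lemma structure than the paper's own proof does. The paper never returns to the minimum norm interpolation theorem here: it starts from condition (iv) of Proposition \ref{representer-regularization} (the \emph{implicit} representation $\sum_{j\in\mathbb{N}_m}\hat c_j\nu_j\in\partial\|\cdot\|_{\mathcal{B}}(f_0)$, which was itself already transferred from the interpolation setting via Proposition \ref{relation-two-optimization}) and then carries out the implicit-to-explicit conversion inside the regularization setting: after disposing of the trivial solution $f_0=0$ separately, it applies Lemma \ref{subdifferential-solvable-thm} to obtain $f_0\in\|f_0\|_{\mathcal{B}}\,\partial\|\cdot\|_{\mathcal{B}^*}(\hat\nu)$ for $\hat\nu:=\sum_{j\in\mathbb{N}_m}\hat c_j\nu_j$, rescales $c_j:=\|f_0\|_{\mathcal{B}}\hat c_j$, and verifies via \eqref{relation-subdifferential-functionals} that $\|\nu\|_{\mathcal{B}^*}=\|f_0\|_{\mathcal{B}}$ and $\partial\|\cdot\|_{\mathcal{B}^*}(\hat\nu)=\partial\|\cdot\|_{\mathcal{B}^*}(\nu)$, so the rescaled coefficients realize \eqref{representer-regularization-subdifferential-explicit-formula}. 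You instead short-circuit this chain: you apply the bridge Proposition \ref{relation-two-optimization} directly at the explicit level, with the internally generated data $\mathcal{L}(\hat f)$ (respectively $\mathcal{L}(f_0)$ under strict monotonicity), and import Theorem \ref{representer-theorem-subdifferential-explicit} as a black box in which the duality inversion and the normalization $\gamma=\left\|\sum_{j\in\mathbb{N}_m}c_j\nu_j\right\|_{\mathcal{B}^*}$ are already packaged; the trivial case is absorbed automatically since that theorem covers the datum $\mathbf{y}=0$. Both arguments rest on the same two ingredients (Proposition \ref{relation-two-optimization} and Lemma \ref{subdifferential-solvable-thm}), so the difference is organizational rather than mathematical: your route buys economy, avoiding any duplication of the rescaling bookkeeping, while the paper's route keeps section 8 self-contained relative to its own implicit Proposition \ref{representer-regularization} and makes the norm identity $\left\|\sum_{j\in\mathbb{N}_m}c_j\nu_j\right\|_{\mathcal{B}^*}=\|f_0\|_{\mathcal{B}}$ explicitly visible. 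The one point your argument genuinely depends on --- solvability of the minimum norm interpolation problem with the data $\mathcal{L}(\hat f)$ --- is covered by the standing existence assumption declared in section 7.2 and restated at the start of section 8.1, which you correctly invoke, so there is no gap.
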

\begin{proof}
We prove this result by employing condition (iv) of Proposition \ref{representer-regularization}.
If the regularization problem \eqref{regularization} with $\mathbf{y}:=\mathbf{y}_0$ has a trivial solution $f_0=0$, we get equation  \eqref{representer-regularization-subdifferential-explicit-formula} by choosing $c_j=0$, $j\in\mathbb{N}_m$. We now consider the case that the regularization problem \eqref{regularization} with $\mathbf{y}:=\mathbf{y}_0$ does not have a trivial solution. In this case, condition (iv) of Proposition \ref{representer-regularization} ensures that there exists a nonzero solution $f_0$ of the regularization problem \eqref{regularization} with $\mathbf{y}:=\mathbf{y}_0$ satisfying \eqref{representer-regularization-subdifferential} for some $\hat{c}_j\in\mathbb{R}$, $j\in\mathbb{N}_m.$ Since $f_0\neq0,$ by equation \eqref{relation-subdifferential-functionals} we get that the functional $\hat{\nu}:=\sum_{j\in\mathbb{N}_m}\hat{c}_j\nu_j$ is nonzero. Hence, Lemma \ref{subdifferential-solvable-thm} leads to
\begin{equation}\label{representer-regularization-subdifferential-explicit1}
f_0\in\|f_0\|_{\mathcal{B}}\partial\|\cdot\|_{\mathcal{B}^*}(\hat{\nu}).
\end{equation}
Set
$$
c_j:=\|f_0\|_{\mathcal{B}}\hat{c}_j, \ \ j\in\mathbb{N}_m, \ \ \mbox{and}\ \ \nu:=\sum_{j\in\mathbb{N}_m}c_j\nu_j.
$$
It remains to prove that $c_j, j\in\mathbb{N}_m,$  satisfies \eqref{representer-regularization-subdifferential-explicit-formula}, that is,
\begin{equation}\label{representer-regularization-subdifferential-explicit2}
f_0\in\|\nu\|_{\mathcal{B}^*}\partial\|\cdot\|_{\mathcal{B}^*}(\nu).
\end{equation}
Since $\nu=\|f_0\|_{\mathcal{B}}\hat{\nu}$ and $\|\hat{\nu}\|_{\mathcal{B}^*}=1$, we get that
\begin{equation}\label{representer-regularization-subdifferential-explicit3}
\|\nu\|_{\mathcal{B}^*}=\|f_0\|_{\mathcal{B}}.
\end{equation}
It follows from equation \eqref{relation-subdifferential-functionals} that
\begin{equation}\label{subdifferential-relation}
\partial\|\cdot\|_{\mathcal{B}^*}(\hat{\nu})
=\partial\|\cdot\|_{\mathcal{B}^*}(\nu).
\end{equation}
Substituting \eqref{representer-regularization-subdifferential-explicit3} and \eqref{subdifferential-relation} into the right-hand side of \eqref{representer-regularization-subdifferential-explicit1},
we get equation \eqref{representer-regularization-subdifferential-explicit2}.

Next, we consider the case that $\varphi$ is strictly increasing. If $f_0=0$ is a solution of the regularization problem \eqref{regularization} with $\mathbf{y}:=\mathbf{y}_0$, it has the form \eqref{representer-regularization-subdifferential-explicit-formula} with $c_j=0$, $j\in\mathbb{N}_m$. By condition (iv) of Proposition \ref{representer-regularization}, we can represent any nonzero solution $f_0$ in the form \eqref{representer-regularization-subdifferential} for some $\hat{c}_j\in\mathbb{R}$, $j\in\mathbb{N}_m$. By setting $c_j:=\|f_0\|_{\mathcal{B}}\hat{c}_j,$ $j\in\mathbb{N}_m,$ and  arguments similar to those presented above, we get equation \eqref{representer-regularization-subdifferential-explicit-formula}.
\end{proof}

When the Banach space $\mathcal{B}$ has the smooth dual space $\mathcal{B}^*$, we have a special representer theorem for a solution of the regularization problem \eqref{regularization}.

\begin{thm}\label{representer-regularization-Gateaux-explicit}
Suppose that $\mathcal{B}$ is a Banach space having the smooth dual space $\mathcal{B}^{*}$, $\nu_j\in\mathcal{B}^*$, $j\in\mathbb{N}_m$ and $\mathcal{L}$ is defined by \eqref{functional-operator}. For a given  $\mathbf{y}_0\in\mathbb{R}^m$, let $\mathcal{Q}_{\mathbf{y}_0}:\mathbb{R}^m\rightarrow\mathbb{R}_{+}$ be a loss function, $\varphi:\mathbb{R}_{+}\rightarrow\mathbb{R}_{+}$ be a regularizer and $\lambda>0$.

(1) If $\varphi$ is increasing, then there exists a solution $f_0$ of the regularization problem \eqref{regularization} with $\mathbf{y}:=\mathbf{y}_0$ in the form
\begin{equation}\label{representer-regularization-Gateaux-explicit-formula}
f_0=\rho\mathcal{G}^*\left(\sum_{j\in\mathbb{N}_m}c_j\nu_j\right),
\end{equation}
for some $c_j\in\mathbb{R}$, $j\in\mathbb{N}_m,$ with $\rho:=\left\|\sum_{j\in\mathbb{N}_m}c_j\nu_j\right\|_{\mathcal{B}^*}$.

(2) If $\varphi$ is strictly increasing, then every solution $f_0$ of the regularization problem \eqref{regularization} with $\mathbf{y}:=\mathbf{y}_0$ has the form \eqref{representer-regularization-Gateaux-explicit-formula} for some $c_j\in\mathbb{R}$, $j\in\mathbb{N}_m.$
\end{thm}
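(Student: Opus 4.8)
The plan is to derive this explicit Gâteaux-derivative representation directly from the subdifferential-based explicit representer theorem, Theorem \ref{representer-regularization-subdifferential-explicit}, by exploiting the smoothness of the dual space $\mathcal{B}^*$. The argument will precisely mirror the way Theorem \ref{representer-theorembyGateaux-explicit} was obtained from Theorem \ref{representer-theorem-subdifferential-explicit} in the minimum norm interpolation setting. The central observation I would use is that when $\mathcal{B}^*$ is smooth, the subdifferential of the norm $\|\cdot\|_{\mathcal{B}^*}$ at any nonzero functional collapses to a single point, namely its Gâteaux derivative, by equation \eqref{singleton} applied with $\mathcal{B}$ replaced by $\mathcal{B}^*$.

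First I would treat the trivial case. If $f_0=0$ is a solution of \eqref{regularization} with $\mathbf{y}:=\mathbf{y}_0$, then choosing $c_j=0$ for all $j\in\mathbb{N}_m$ makes $\sum_{j\in\mathbb{N}_m}c_j\nu_j=0$, so $\rho=0$ and, by the convention $\mathcal{G}^*(0):=0$, formula \eqref{representer-regularization-Gateaux-explicit-formula} holds. For the nontrivial case, I would invoke Theorem \ref{representer-regularization-subdifferential-explicit}(1): under the hypothesis that $\varphi$ is increasing, there exists a solution $f_0$ satisfying \eqref{representer-regularization-subdifferential-explicit-formula} for some $c_j\in\mathbb{R}$. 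Since $\mathcal{B}^*$ is smooth, equation \eqref{singleton} with $\mathcal{B}$ replaced by $\mathcal{B}^*$ gives
\[
\partial\|\cdot\|_{\mathcal{B}^*}\left(\sum_{j\in\mathbb{N}_m}c_j\nu_j\right)
=\left\{\mathcal{G}^*\left(\sum_{j\in\mathbb{N}_m}c_j\nu_j\right)\right\}
\]
whenever $\sum_{j\in\mathbb{N}_m}c_j\nu_j\neq0$. Substituting this singleton into the inclusion \eqref{representer-regularization-subdifferential-explicit-formula} turns the set membership into the equality \eqref{representer-regularization-Gateaux-explicit-formula} with $\rho=\gamma$.

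For part (2), under the assumption that $\varphi$ is strictly increasing, the very same substitution applies to every solution $f_0$: Theorem \ref{representer-regularization-subdifferential-explicit}(2) guarantees each solution satisfies \eqref{representer-regularization-subdifferential-explicit-formula}, and smoothness of $\mathcal{B}^*$ again reduces the subdifferential to the singleton, yielding \eqref{representer-regularization-Gateaux-explicit-formula} for that solution. The only point requiring care is the zero/nonzero dichotomy: by the explicit theorem the functional $\sum_{j\in\mathbb{N}_m}c_j\nu_j$ is nonzero exactly when $f_0$ is nonzero, since $\gamma=\left\|\sum_{j\in\mathbb{N}_m}c_j\nu_j\right\|_{\mathcal{B}^*}$ scales the representation, so the singleton reduction is legitimate precisely on the nontrivial solutions while the trivial solution is absorbed by the convention above.

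I anticipate no serious obstacle here, as the statement is essentially a specialization of Theorem \ref{representer-regularization-subdifferential-explicit} combined with the singleton property \eqref{singleton}; all the analytic content has already been carried by the connection to minimum norm interpolation (Proposition \ref{relation-two-optimization}) and by the duality Lemma \ref{subdifferential-solvable-thm} embedded in Theorem \ref{representer-regularization-subdifferential-explicit}. The most delicate bookkeeping will be to ensure the convention $\mathcal{G}^*(0):=0$ is consistently invoked so that the trivial solution is covered by the same formula, and to confirm that the scaling constant $\rho$ in \eqref{representer-regularization-Gateaux-explicit-formula} coincides with the constant $\gamma$ from \eqref{representer-regularization-subdifferential-explicit-formula}, which is immediate from their identical definitions.
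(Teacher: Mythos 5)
Your proposal is correct and takes essentially the same route as the paper's own proof: both deduce the result from Theorem \ref{representer-regularization-subdifferential-explicit}, dispose of the trivial solution $f_0=0$ by choosing $c_j=0$ together with the convention $\mathcal{G}^*(0):=0$, and for nontrivial solutions use the smoothness of $\mathcal{B}^*$ to collapse $\partial\|\cdot\|_{\mathcal{B}^*}\left(\sum_{j\in\mathbb{N}_m}c_j\nu_j\right)$ to the singleton $\left\{\mathcal{G}^*\left(\sum_{j\in\mathbb{N}_m}c_j\nu_j\right)\right\}$ inside the inclusion \eqref{representer-regularization-subdifferential-explicit-formula}, with $\rho=\gamma$. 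Your observation that $f_0\neq 0$ forces $\sum_{j\in\mathbb{N}_m}c_j\nu_j\neq 0$ (so the singleton reduction is legitimate) is exactly the nonzero-functional step the paper makes before invoking the smoothness.
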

\begin{proof}
We prove this theorem by applying Theorem \ref{representer-regularization-subdifferential-explicit}.
If the regularization problem \eqref{regularization} with $\mathbf{y}:=\mathbf{y}_0$ has a trivial solution $f_0=0$, we get equation \eqref{representer-regularization-Gateaux-explicit-formula} by choosing $c_j=0$, $j\in\mathbb{N}_m$. If the problem \eqref{regularization} does not have a trivial solution, Theorem \ref{representer-regularization-subdifferential-explicit} ensures that
there exists a nonzero solution $f_0$ of the regularization problem \eqref{regularization} satisfying equation \eqref{representer-regularization-subdifferential-explicit-formula} for some $c_j\in\mathbb{R}$, $j\in\mathbb{N}_m.$ Since $f_0\neq0,$ by equation \eqref{representer-regularization-subdifferential-explicit-formula} we get that the functional $\nu:=\sum_{j\in\mathbb{N}_m}c_j\nu_j$ is nonzero.
The smoothness of the dual space $\mathcal{B}^*$ guarantees that
\begin{equation}\label{singuton_*}
\partial\|\cdot\|_{\mathcal{B}^*}(\nu)=\{\mathcal{G}^*(\nu)\}.
\end{equation}
Upon substituting equation \eqref{singuton_*} into inclusion relation \eqref{representer-regularization-subdifferential-explicit-formula}, we may express $f_0$ in the form of equation \eqref{representer-regularization-Gateaux-explicit-formula}.

Suppose that $\varphi$ is strictly increasing. The trivial solution $f_0=0$, provided it exists, has the representation in \eqref{representer-regularization-Gateaux-explicit-formula} with $c_j=0,$ $j\in\mathbb{N}_m$. For any nontrivial solution $f_0$, Theorem \ref{representer-regularization-subdifferential-explicit} allows us to represent it in the form \eqref{representer-regularization-subdifferential-explicit-formula}. Again by using \eqref{singuton_*}, we get the explicit representation \eqref{representer-regularization-Gateaux-explicit-formula} for $f_0$.
\end{proof}

Below, we derive representer theorems for a solution of the regularization problem \eqref{regularization} in the case when the Banach space $\mathcal{B}$ has the pre-dual space $\mathcal{B}_{*}$ and $\nu_j\in\mathcal{B}_*$, $j\in\mathbb{N}_m$. We first obtain an explicit solution representation.

\begin{thm}\label{representer-regularization-subdifferential-predual}
Suppose that $\mathcal{B}$ is a Banach space having the pre-dual space $\mathcal{B}_{*}$, $\nu_j\in\mathcal{B}_*$, $j\in\mathbb{N}_m$ and $\mathcal{L}$ is defined by \eqref{functional-operator}. For a given  $\mathbf{y}_0\in\mathbb{R}^m$, let $\mathcal{Q}_{\mathbf{y}_0}:\mathbb{R}^m\rightarrow\mathbb{R}_{+}$ be a loss function, $\varphi:\mathbb{R}_{+}\rightarrow\mathbb{R}_{+}$ be a regularizer and $\lambda>0$.

(1) If $\varphi$ is increasing, then there exists a solution $f_0$ of the regularization problem \eqref{regularization} with $\mathbf{y}:=\mathbf{y}_0$ such that
\begin{equation}\label{representer-regularization-subdifferential-predual-formula}
f_0\in\gamma\partial\|\cdot\|_{\mathcal{B}_*}
\left(\sum_{j\in\mathbb{N}_m}c_j\nu_j\right),
\end{equation}
for some $c_j\in\mathbb{R}$, $j\in\mathbb{N}_m,$
with $\gamma:=\left\|\sum_{j\in\mathbb{N}_m}c_j\nu_j\right\|_{\mathcal{B}_*}$.

(2) If $\varphi$ is strictly increasing, then every solution $f_0$ of the regularization problem \eqref{regularization} with $\mathbf{y}:=\mathbf{y}_0$ satisfies \eqref{representer-regularization-subdifferential-predual-formula} for some $c_j\in\mathbb{R}$, $j\in\mathbb{N}_m$.
\end{thm}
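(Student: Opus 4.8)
The plan is to obtain this representer theorem for the regularization problem by translating the corresponding result for minimum norm interpolation, in precisely the same manner that Theorem \ref{representer-regularization-subdifferential-explicit} was derived from Theorem \ref{representer-theorem-subdifferential-explicit}. The two ingredients I would invoke are Proposition \ref{relation-two-optimization}, which links solutions of the regularization problem \eqref{regularization} to solutions of the minimum norm interpolation problem \eqref{mni} with the induced data $\mathcal{L}(\hat f)$, and Theorem \ref{representer-theorem-subdifferential-predual}, which supplies the explicit pre-dual subdifferential representation for solutions of \eqref{mni} under exactly the present hypotheses (pre-dual space $\mathcal{B}_*$ and functionals $\nu_j\in\mathcal{B}_*$). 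No smoothness of $\mathcal{B}_*$ is assumed, so it is the pre-dual subdifferential form, not the G\^ateaux form, that is the right tool.

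For part (1), I would begin with any solution $\hat f$ of the regularization problem with $\mathbf{y}:=\mathbf{y}_0$, whose existence is part of the standing assumption of this subsection and is concretely guaranteed by Proposition \ref{existence1}. I then select a solution $f_0$ of the minimum norm interpolation problem with data $\mathbf{y}:=\mathcal{L}(\hat f)$, available by Proposition \ref{existence-mni}. Since $\varphi$ is increasing, statement (1) of Proposition \ref{relation-two-optimization} shows that $f_0$ is itself a solution of the regularization problem with $\mathbf{y}:=\mathbf{y}_0$. As $f_0$ is a solution of \eqref{mni} with data $\mathcal{L}(\hat f)$, Theorem \ref{representer-theorem-subdifferential-predual} produces coefficients $c_j\in\mathbb{R}$, $j\in\mathbb{N}_m$, with $f_0\in\gamma\partial\|\cdot\|_{\mathcal{B}_*}(\sum_{j\in\mathbb{N}_m}c_j\nu_j)$ and $\gamma=\|\sum_{j\in\mathbb{N}_m}c_j\nu_j\|_{\mathcal{B}_*}$, which is exactly \eqref{representer-regularization-subdifferential-predual-formula}.

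For part (2), under the stronger hypothesis that $\varphi$ is strictly increasing, I would take an arbitrary solution $f_0$ of the regularization problem with $\mathbf{y}:=\mathbf{y}_0$. Statement (2) of Proposition \ref{relation-two-optimization} now asserts that $f_0$ is a solution of the minimum norm interpolation problem \eqref{mni} with data $\mathbf{y}:=\mathcal{L}(f_0)$; applying Theorem \ref{representer-theorem-subdifferential-predual} to $f_0$ directly yields the representation \eqref{representer-regularization-subdifferential-predual-formula}. Because $f_0$ was arbitrary, every solution admits this representation, giving the conclusion.

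There is no substantive analytic obstacle: all of the real work (the Hahn--Banach/duality argument behind the explicit subdifferential form, and the passage from the dual norm $\|\cdot\|_{\mathcal{B}^*}$ to the pre-dual norm $\|\cdot\|_{\mathcal{B}_*}$ via the natural-map identities \eqref{natural-map} and \eqref{natural-map-predual}) is already encapsulated in Theorem \ref{representer-theorem-subdifferential-predual}. The only point demanding mild care is the existence bookkeeping, namely ensuring that the minimum norm interpolation problem with data $\mathcal{L}(\hat f)$ indeed possesses a solution so that $f_0$ can be chosen in part (1); this is precisely what Proposition \ref{existence-mni} provides under the present hypotheses. The trivial case $f_0=0$ is automatically subsumed, since then the representation holds with all $c_j=0$, and this case is already handled within the cited interpolation theorem.
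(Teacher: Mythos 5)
Your proof is correct, and it reaches the conclusion by a slightly different decomposition than the paper's. The paper stays at the regularization level: it first invokes Theorem \ref{representer-regularization-subdifferential-explicit} (the already-translated dual-space representer theorem for \eqref{regularization}) to get $f_0\in\gamma\partial\|\cdot\|_{\mathcal{B}^*}\bigl(\sum_{j\in\mathbb{N}_m}c_j\nu_j\bigr)$, and then converts the dual-norm subdifferential into the pre-dual-norm subdifferential by re-using the equivalence established in the proof of Theorem \ref{representer-theorem-subdifferential-predual} (via the identities \eqref{natural-map} and \eqref{natural-map-predual}, valid since $\nu_j\in\mathcal{B}_*$). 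You instead replay the regularization--interpolation bridge directly: Proposition \ref{relation-two-optimization} hands you a solution $f_0$ of \eqref{regularization} that also solves \eqref{mni} with data $\mathcal{L}(\hat f)$ (or, for strictly increasing $\varphi$, shows every solution does so with data $\mathcal{L}(f_0)$), and then Theorem \ref{representer-theorem-subdifferential-predual}, cited wholesale, delivers \eqref{representer-regularization-subdifferential-predual-formula}. This is exactly the mechanism the paper used earlier to prove Proposition \ref{representer-regularization}, so the mathematical substance coincides; the difference is in where the dual-to-pre-dual passage happens (inside the black-boxed interpolation theorem for you, as an explicit conversion step for the paper). What your route buys is independence from Theorem \ref{representer-regularization-subdifferential-explicit} and a single clean application of the strongest available interpolation result; what the paper's route buys is uniformity, each pre-dual regularization theorem following its dual counterpart by one fixed conversion. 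Your existence bookkeeping is also sound: the standing assumption of the section (backed by Propositions \ref{existence1} and \ref{existence-mni}, noting that the linear independence of the $\nu_j$ is assumed globally via the definition of $\mathcal{L}$ in \eqref{functional-operator}) supplies both $\hat f$ and the interpolant $f_0$ in part (1), and in part (2) Proposition \ref{relation-two-optimization} itself certifies that $f_0$ attains the interpolation minimum, so no separate existence argument is needed there; the trivial case $f_0=0$ is, as you say, absorbed by the interpolation theorem with $c_j=0$.
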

\begin{proof}
If $\varphi$ is increasing, Theorem \ref{representer-regularization-subdifferential-explicit} ensures that there exists a solution $f_0$ of \eqref{regularization} with $\mathbf{y}:=\mathbf{y}_0$ satisfying \eqref{representer-regularization-subdifferential-explicit-formula}. As has been shown in the proof of Theorem \ref{representer-theorem-subdifferential-predual}, since $f_0\in\mathcal{B}$ and $\nu_j\in\mathcal{B}_*$, \eqref{representer-regularization-subdifferential-explicit-formula} holds if and only if \eqref{representer-regularization-subdifferential-predual-formula} holds. That is, there exists a solution $f_0$ of \eqref{regularization} with $\mathbf{y}:=\mathbf{y}_0$ satisfying \eqref{representer-regularization-subdifferential-predual-formula}. If $\varphi$ is strictly increasing, by Theorem \ref{representer-regularization-subdifferential-explicit} we get that every solution $f_0$ of \eqref{regularization} with $\mathbf{y}:=\mathbf{y}_0$ satisfying \eqref{representer-regularization-subdifferential-explicit-formula} and then \eqref{representer-regularization-subdifferential-predual-formula}.
\end{proof}

When the Banach space $\mathcal{B}$ has the smooth pre-dual space $\mathcal{B}_*$, we have a special representer theorem for a solution of the regularization problem \eqref{regularization}.

\begin{thm}\label{representer-regularization-Gateaux-predual}
Suppose that $\mathcal{B}$ is a Banach space having the smooth pre-dual space $\mathcal{B}_{*}$, $\nu_j\in\mathcal{B}_*$, $j\in\mathbb{N}_m$ and $\mathcal{L}$ is defined by \eqref{functional-operator}. For a given  $\mathbf{y}_0\in\mathbb{R}^m$, let $\mathcal{Q}_{\mathbf{y}_0}:\mathbb{R}^m\rightarrow\mathbb{R}_{+}$ be a loss function, $\varphi:\mathbb{R}_{+}\rightarrow\mathbb{R}_{+}$ be a regularizer and $\lambda>0$.

(1) If $\varphi$ is increasing, then there exists a solution $f_0$ of the regularization problem \eqref{regularization} with $\mathbf{y}:=\mathbf{y}_0$ in the form
\begin{equation}\label{representer-regularization-Gateaux-predual-formula}
f_0=\rho\mathcal{G}_*\left(\sum_{j\in\mathbb{N}_m}c_j\nu_j\right),
\end{equation}
for some $c_j\in\mathbb{R}$, $j\in\mathbb{N}_m,$ with $\rho:=\left\|\sum_{j\in\mathbb{N}_m}c_j\nu_j\right\|_{\mathcal{B}_*}$.

(2) If $\varphi$ is strictly increasing, then every solution $f_0$ of the regularization problem \eqref{regularization} with $\mathbf{y}:=\mathbf{y}_0$ has the form \eqref{representer-regularization-Gateaux-predual-formula} for some $c_j\in\mathbb{R}$, $j\in\mathbb{N}_m.$
\end{thm}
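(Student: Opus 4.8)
The plan is to deduce this result immediately from the explicit subdifferential representer theorem for the regularization problem, namely Theorem \ref{representer-regularization-subdifferential-predual}, by using the smoothness of the pre-dual space $\mathcal{B}_*$ to collapse the subdifferential of the norm $\|\cdot\|_{\mathcal{B}_*}$ to a singleton. This mirrors precisely the way Theorem \ref{representer-regularization-Gateaux-explicit} was obtained from Theorem \ref{representer-regularization-subdifferential-explicit} in the dual-space setting; the only change is that $\mathcal{B}^*$ is replaced throughout by $\mathcal{B}_*$ and $\mathcal{G}^*$ by $\mathcal{G}_*$.

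First I would treat part (1). Theorem \ref{representer-regularization-subdifferential-predual}(1) guarantees, under the hypothesis that $\varphi$ is increasing, the existence of a solution $f_0$ of the regularization problem \eqref{regularization} with $\mathbf{y}:=\mathbf{y}_0$ satisfying the inclusion \eqref{representer-regularization-subdifferential-predual-formula} for some $c_j\in\mathbb{R}$, $j\in\mathbb{N}_m$. If this solution is trivial, $f_0=0$, the desired formula \eqref{representer-regularization-Gateaux-predual-formula} holds with $c_j=0$, $j\in\mathbb{N}_m$. Otherwise, set $\nu:=\sum_{j\in\mathbb{N}_m}c_j\nu_j$ and note that $\nu\neq 0$: indeed $\nu=0$ would force $\gamma=0$ and hence $f_0=0$ through \eqref{representer-regularization-subdifferential-predual-formula}, a contradiction. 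Because $\mathcal{B}_*$ is smooth, equation \eqref{singleton}, applied with $\mathcal{B}$ replaced by $\mathcal{B}_*$, shows that $\partial\|\cdot\|_{\mathcal{B}_*}(\nu)$ is the singleton $\{\mathcal{G}_*(\nu)\}$. Substituting this into \eqref{representer-regularization-subdifferential-predual-formula} yields $f_0=\rho\mathcal{G}_*(\nu)$ with $\rho=\|\nu\|_{\mathcal{B}_*}$, which is exactly \eqref{representer-regularization-Gateaux-predual-formula}.

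For part (2) the argument is identical, except that one invokes Theorem \ref{representer-regularization-subdifferential-predual}(2), which asserts that when $\varphi$ is strictly increasing \emph{every} solution $f_0$ satisfies \eqref{representer-regularization-subdifferential-predual-formula}. The trivial and nontrivial cases are handled just as above, the singleton collapse again producing \eqref{representer-regularization-Gateaux-predual-formula} for each solution.

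I do not anticipate a genuine obstacle here: the entire content is the smoothness-to-singleton reduction combined with the already established explicit subdifferential representer theorem. The single point deserving care is verifying that the functional $\nu$ is nonzero before applying the singleton property \eqref{singleton}, since that property holds only away from the origin; this is the short contradiction argument indicated above. In keeping with the paper's stated practice of presenting such parallel results concisely, the write-up can be kept to a verbatim transcription of the proof of Theorem \ref{representer-regularization-Gateaux-explicit} with $\mathcal{B}^*$ and $\mathcal{G}^*$ replaced by $\mathcal{B}_*$ and $\mathcal{G}_*$.
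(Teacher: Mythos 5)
Your proposal is correct and follows essentially the same route as the paper's own proof: the trivial solution is handled by taking $c_j=0$, and the nontrivial case is dispatched by applying Theorem \ref{representer-regularization-subdifferential-predual} and collapsing $\partial\|\cdot\|_{\mathcal{B}_*}$ at $\sum_{j\in\mathbb{N}_m}c_j\nu_j$ to the singleton $\{\mathcal{G}_*(\sum_{j\in\mathbb{N}_m}c_j\nu_j)\}$ via the smoothness of $\mathcal{B}_*$. Your explicit check that the functional $\nu$ is nonzero before invoking the singleton property is a small refinement the paper leaves implicit, but it does not change the argument.
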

\begin{proof}
If the regularization problem \eqref{regularization} with $\mathbf{y}:=\mathbf{y}_0$ has a trivial solution $f_0=0$, equation \eqref{representer-regularization-Gateaux-predual-formula} holds for $c_j=0$, $j\in\mathbb{N}_m$. By employing Theorem \ref{representer-regularization-subdifferential-predual} and noting that
$$
\partial\|\cdot\|_{\mathcal{B}_*}
\left(\sum_{j\in\mathbb{N}_m}c_j\nu_j\right)
=\left\{\mathcal{G}_*\left(\sum_{j\in\mathbb{N}_m}c_j\nu_j\right)\right\},
$$
we get the desired conclusion for the case that the regularization problem \eqref{regularization} with $\mathbf{y}:=\mathbf{y}_0$ has no trivial solution.
\end{proof}

Observing from the above representer theorems, the essence of the representer theorems is that the original optimization problem in an infinite dimensional space can be converted to one in a finite dimensional space. This benefits from the fact that the number of data points, used in the regularization problem, is finite.

We consider below representer theorems for several special cases of Banach spaces and present special results. Moreover,
we identify the connection of our representer theorems obtained here with those that already exist in the literature.
Regularized learning was originally considered to learn a function in an RKHS from finite point-evaluation functional data, that is, $\nu_j:=\delta_{x_j},$ $j\in\mathbb{N}_m$, where $x_j, j\in\mathbb{N}_m,$ are finite points in an input set $X$. As a consequence of Theorem \ref{representer-regularization-Gateaux-explicit}, the representer theorem for the solution of the regularized learning problem in an RKHS can be obtained as follows.

\begin{cor}\label{representer-theorem-regularization-RKHS}
Suppose that $\mathcal{H}$ is an RKHS on $\mathcal{X}$ with the reproducing kernel $K$ and $x_j\in X$, $j\in\mathbb{N}_m$. If for a given $\mathbf{y}_0\in\mathbb{R}^m$, $\mathcal{Q}_{\mathbf{y}_0}$ and $\varphi$ are continuous and convex and moreover, $\varphi$ is strictly increasing and coercive, then there exists a unique solution $f_0$ of the regularization problem \eqref{regularization} with $\mathbf{y}:=\mathbf{y}_0$ and it has the form
\begin{equation}\label{representer-theorem-regularization-RKHS-formula}
f_0=\sum_{j\in\mathbb{N}_m}c_jK(x_j,\cdot),
\end{equation}
for some $c_j\in\mathbb{R},\ j\in\mathbb{N}_m.$
\end{cor}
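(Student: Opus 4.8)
The plan is to deduce the corollary from the explicit representer theorem for regularization, Theorem~\ref{representer-regularization-Gateaux-explicit}, by exploiting the Hilbert space structure of $\mathcal{H}$, and to settle existence and uniqueness by the general existence result together with a convexity argument. Three things must be shown: that a solution exists, that it has the stated form, and that it is unique. Since an RKHS is a Hilbert space, it is reflexive, and the point-evaluation functionals $\delta_{x_j}$ lie in $\mathcal{H}^*=\mathcal{H}$. The hypotheses that $\mathcal{Q}_{\mathbf{y}_0}$ and $\varphi$ are continuous (hence lower semi-continuous) and that $\varphi$ is increasing and coercive are exactly those of Corollary~\ref{existence2}; applying it with $\nu_j:=\delta_{x_j}$ yields at least one solution $f_0$ of \eqref{regularization} with $\mathbf{y}:=\mathbf{y}_0$.

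For the form, I would invoke part (2) of Theorem~\ref{representer-regularization-Gateaux-explicit}. Its hypotheses are met here: a Hilbert space is (uniformly Fr\'echet) smooth, so $\mathcal{H}^*\cong\mathcal{H}$ is smooth, and $\varphi$ is strictly increasing. As in the minimum norm interpolation counterpart (Corollary~\ref{representer-theorem-RKHS}), I would take the functions $K(x_j,\cdot)$, equivalently the functionals $\delta_{x_j}$, to be linearly independent; otherwise one passes to a maximal linearly independent subset and re-expresses the remaining functionals, which does not affect the final form. Theorem~\ref{representer-regularization-Gateaux-explicit} then gives $f_0=\rho\,\mathcal{G}^*\bigl(\sum_{j\in\mathbb{N}_m}c_j\delta_{x_j}\bigr)$ with $\rho:=\bigl\|\sum_{j\in\mathbb{N}_m}c_j\delta_{x_j}\bigr\|_{\mathcal{H}^*}$. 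The key simplification is that in a Hilbert space the G\^ateaux derivative of the norm is $\mathcal{G}^*(\nu)=\nu/\|\nu\|$, which is the content of \eqref{GateauxDiff-H} applied in $\mathcal{H}^*=\mathcal{H}$. Substituting, the factor $\rho$ cancels and $f_0=\sum_{j\in\mathbb{N}_m}c_j\delta_{x_j}$. Finally, the reproducing property \eqref{reproducing} identifies, under the Riesz isometry $\mathcal{H}^*\to\mathcal{H}$, each $\delta_{x_j}$ with $K(x_j,\cdot)$, so that $f_0=\sum_{j\in\mathbb{N}_m}c_jK(x_j,\cdot)$, which is \eqref{representer-theorem-regularization-RKHS-formula}.

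The remaining and most delicate task is uniqueness. I would argue it directly from convexity of the objective $\mathcal{R}(f):=\mathcal{Q}_{\mathbf{y}_0}(\mathcal{L}(f))+\lambda\varphi(\|f\|_{\mathcal{H}})$. Its solution set is convex, so if $f_1\neq f_2$ are both solutions then $\tfrac12(f_1+f_2)$ is one as well, and both convex terms $\mathcal{Q}_{\mathbf{y}_0}\circ\mathcal{L}$ and $\varphi(\|\cdot\|_{\mathcal{H}})$ must be affine along the segment $[f_1,f_2]$. From affinity of $\varphi(\|\cdot\|_{\mathcal{H}})$, the strict monotonicity of $\varphi$ forces $\|\tfrac12(f_1+f_2)\|_{\mathcal{H}}=\tfrac12(\|f_1\|_{\mathcal{H}}+\|f_2\|_{\mathcal{H}})$, i.e.\ equality in the triangle inequality; since a Hilbert space is strictly convex, this yields $f_1$ and $f_2$ positively proportional. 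The hard part is then to rule out \emph{distinct} positively proportional solutions: here one must use the interplay between the strictly increasing regularizer and the fidelity term along the common ray, which is where the convexity of $\mathcal{Q}_{\mathbf{y}_0}$ and the positivity of $\lambda$ enter. I expect this last step to be the crux of the argument and to be the place where any additional strict-convexity hypothesis on $\mathcal{Q}_{\mathbf{y}_0}$ or $\varphi$ would be needed; once $f_1=f_2$ is forced, uniqueness follows. An alternative, and perhaps cleaner, route to the whole statement is to go through Proposition~\ref{relation-two-optimization} and the minimum norm interpolation representer theorem in an RKHS (Corollary~\ref{representer-theorem-RKHS}): every solution of \eqref{regularization} is a minimum norm interpolant for the data $\mathcal{L}(f_0)$, and the latter has the unique form \eqref{representer-RKHS}, which reduces uniqueness of $f_0$ to uniqueness of the optimal data vector $\mathcal{L}(f_0)$ in the associated reduced finite dimensional problem.
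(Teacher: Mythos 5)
Your treatment of existence and of the representer form coincides with the paper's proof. The paper obtains existence from Proposition \ref{existence1} (your appeal to Corollary \ref{existence2} is the same fact, since a Hilbert space is reflexive and self-dual), and obtains the form exactly as you do: apply part (2) of Theorem \ref{representer-regularization-Gateaux-explicit} with $\mathcal{B}:=\mathcal{H}$ and $\nu_j:=K(x_j,\cdot)$, then cancel the factor $\rho$ via the identity \eqref{GateauxDiff-H}, the reproducing property supplying the identification of $\delta_{x_j}$ with $K(x_j,\cdot)$. (The corollary's statement and the paper's proof are both silent on linear independence of the $K(x_j,\cdot)$; your reduction to a maximal linearly independent subset is a harmless patch.)

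On uniqueness, where you stop short, you have in fact located a genuine gap --- but it is the paper's gap as much as yours. The paper disposes of uniqueness in a single unproved sentence (``the assumptions that $\mathcal{Q}_{\mathbf{y}_0}$ and $\varphi$ are convex and $\varphi$ is strictly increasing guarantee the uniqueness''), and your suspicion that additional strict convexity is needed is correct: under the stated hypotheses uniqueness can fail. Take $X$ a single point, $\mathcal{H}=\mathbb{R}$ with $K\equiv 1$, $m=1$, $\mathcal{Q}_{y_0}(z)=\max\{1-z,0\}$ (hinge loss, $y_0=1$), $\varphi(t)=t$, $\lambda=1$; the objective is $\max\{1-f,0\}+|f|$, which equals $1$ identically on $[0,1]$, so the solution set is a nondegenerate segment although $\mathcal{Q}_{y_0}$ is continuous and convex and $\varphi$ is continuous, convex, strictly increasing and coercive. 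Your partial argument is sound as far as it goes --- equality in each term along the segment, then strict convexity of the Hilbert norm forcing $f_1,f_2$ positively proportional --- but the ray case genuinely cannot be excluded, as the example shows. Uniqueness is restored if $\varphi$ is additionally strictly convex: then $\varphi(\|\cdot\|_{\mathcal{H}})$ is strictly convex (if $\|f_1\|\neq\|f_2\|$ use strict convexity of $\varphi$; if $\|f_1\|=\|f_2\|$ with $f_1\neq f_2$ use strict convexity of the norm together with strict monotonicity of $\varphi$), which covers the classical case $\varphi(t)=t^2$ of the regularization-network literature. Note that every solution still has the form \eqref{representer-theorem-regularization-RKHS-formula} by part (2) of Theorem \ref{representer-regularization-Gateaux-explicit}, so only the word ``unique'' is at issue; and your alternative route through Proposition \ref{relation-two-optimization} and Corollary \ref{representer-theorem-RKHS} yields the form equally well but cannot deliver uniqueness either, since in the example above the data vector $\mathcal{L}(f_0)$ is itself non-unique.
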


\begin{proof}
Proposition \ref{existence1} ensures that the regularization problem \eqref{regularization} with $\mathbf{y}:=\mathbf{y}_0$ has at least one solution. The assumptions that $\mathcal{Q}_{\mathbf{y}}$ and $\varphi$ are convex and moreover, $\varphi$ is strictly increasing guarantees the uniqueness of the solution. 

We next show that $f_0$ has the form \eqref{representer-theorem-regularization-RKHS-formula}. Note that for each $j\in\mathbb{N}_m$, $K(x_j,\cdot)$ refers to a closed-form function representation of $\nu_j:=\delta_{x_j}$. Since $\varphi$ is strictly increasing, by Theorem \ref{representer-regularization-Gateaux-explicit} with $\mathcal{B}:=\mathcal{H}$ and $\nu_j:=K(x_j,\cdot),$ $j\in\mathbb{N},$ we express $f_0$ in the form
\eqref{representer-regularization-Gateaux-explicit-formula} for some $c_j\in\mathbb{R}$, $j\in\mathbb{N}_m$. Note that $\mathcal{H}_*=\mathcal{H}$. Substituting \eqref{GateauxDiff-H} with $f:=\sum_{j\in\mathbb{N}_m}c_jK(x_j,\cdot)$ into \eqref{representer-regularization-Gateaux-explicit-formula}, we get the representation \eqref{representer-theorem-regularization-RKHS-formula} of $f_0$.
\end{proof}

The well-known representer theorem for the regularization networks dates from \cite{KW70} and was generalized for non-quadratic loss functions and nondecreasing regularizers \cite{AMP,CO,SHS}. Theorem \ref{representer-regularization-Gateaux-explicit} is a generalization of the representer theorem in an RKHS.

We next consider regularization problems in a functional reproducing kernel Hilbert space (FRKHS). Motivated by learning a function from a finite number of non-point-evaluation functional data, we introduce in \cite{WX} the notion of FRKHSs. Let $\mathcal{H}$ be a Hilbert space and $\mathcal{F}$ a family of linear functionals on $\mathcal{H}$. Space $\mathcal{H}$ is called an FRKHS with respect to $\mathcal{F}$ if the norm of $\mathcal{H}$ is compatible with $\mathcal{F}$ and each linear functional in $\mathcal{F}$ is continuous on $\mathcal{H}$.  An FRKHS is expected to admit a reproducing kernel, which reproduces the linear functionals defining the space. Specifically, for an FRKHS $\mathcal{H}$ with respect to a family $\mathcal{F}:=\{\nu_{\alpha}:\alpha\in\Lambda\}$ of linear functionals, there exists a unique functional reproducing kernel $K:\Lambda\rightarrow \mathcal{H}$ such that
$K(\alpha)\in \mathcal{H}$, for all $\alpha\in \Lambda$, and
\begin{equation}\label{reproducing-property-FRKHS}
\nu_{\alpha}(f)=\langle f,K(\alpha)\rangle_{\mathcal{H}},
 \ \ \mbox{for all}\ \ f\in \mathcal{H}\ \ \mbox{and for all}\ \ \alpha\in\Lambda.
\end{equation}
The reproducing property \eqref{reproducing-property-FRKHS} shows that for each $\alpha\in\Lambda$, $K(\alpha)$ is an explicit representation for the functionals $\nu_{\alpha}$. The representer theorem for regularization problems in an FRKHS from a finite number of non-point-evaluation functional data $\nu_{\alpha_j}:=K(\alpha_j)$, $j\in\mathbb{N}_m$, can also be derived from Theorem \ref{representer-regularization-Gateaux-explicit}.

\begin{cor}\label{representer-theorem-regularization-FRKHS}
Suppose that $\mathcal{H}$ is an FRKHS with respect to the set $\mathcal{F}:=\{\nu_{\alpha}:\alpha\in\Lambda\}$ of linear functionals on $\mathcal{H}$, $K$ is the functional reproducing kernel for $\mathcal{H}$ and $\alpha_j\in\Lambda,$ $j\in\mathbb{N}_m.$ If for a given $\mathbf{y}_0\in\mathbb{R}^m$, $\mathcal{Q}_{\mathbf{y}_0}$ and $\varphi$ are continuous and convex and moreover, $\varphi$ is strictly increasing and coercive, then there exists a unique solution $f_0$ of the regularization problem \eqref{regularization} with $\mathbf{y}:=\mathbf{y}_0$ and it has the form
\begin{equation}\label{representer-theorem-regularization-FRKHS-formula}
f_0=\sum_{j\in\mathbb{N}_m}c_jK(\alpha_j).
\end{equation}
for some $c_j\in\mathbb{R},$ $j\in\mathbb{N}_m.$
\end{cor}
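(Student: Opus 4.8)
The plan is to treat this corollary as the functional reproducing kernel analogue of Corollary \ref{representer-theorem-regularization-RKHS}, so I would mimic that proof almost verbatim, the only genuinely new ingredient being that the abstract functionals $\nu_{\alpha_j}$ are now represented through the functional reproducing kernel $K$ rather than through point-evaluation sections. Throughout I would exploit that an FRKHS $\mathcal{H}$ is by definition a Hilbert space, hence reflexive, uniformly Fr\'{e}chet smooth and uniformly convex, with $\mathcal{H}_{*}=\mathcal{H}^{*}=\mathcal{H}$; in particular its dual is smooth, which is exactly the hypothesis needed to invoke the explicit representer theorem of Theorem \ref{representer-regularization-Gateaux-explicit}.

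First I would establish existence. Since $\mathcal{Q}_{\mathbf{y}_0}$ and $\varphi$ are continuous they are lower semi-continuous, $\varphi$ is increasing (being strictly increasing) and coercive, and each $\nu_{\alpha_j}$ is continuous on $\mathcal{H}$ by the definition of an FRKHS, so $\nu_{\alpha_j}\in\mathcal{H}^{*}$. Because $\mathcal{H}$ is reflexive, Corollary \ref{existence2} (equivalently Proposition \ref{existence1} with $\mathcal{B}_{*}=\mathcal{H}$) yields at least one solution $f_0$ of \eqref{regularization} with $\mathbf{y}:=\mathbf{y}_0$. Uniqueness I would deduce as in the RKHS corollary, from the convexity of $\mathcal{Q}_{\mathbf{y}_0}$ together with the strict convexity of the Hilbert norm and the strict monotonicity of $\varphi$: along any nontrivial segment the regularization term $\varphi(\|\cdot\|_{\mathcal{H}})$ is strictly convex while the fidelity term $\mathcal{Q}_{\mathbf{y}_0}(\mathcal{L}(\cdot))$ is convex, so the objective admits a single minimizer.

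For the representation I would apply Theorem \ref{representer-regularization-Gateaux-explicit} with $\mathcal{B}:=\mathcal{H}$. The reproducing property \eqref{reproducing-property-FRKHS} states $\nu_{\alpha}(f)=\langle f,K(\alpha)\rangle_{\mathcal{H}}$, which identifies, under the Riesz identification $\mathcal{H}^{*}\cong\mathcal{H}$, each functional $\nu_{\alpha_j}$ with the element $K(\alpha_j)\in\mathcal{H}$; thus $K(\alpha_j)$ plays the role of the functional $\nu_j$ in the theorem. Since $\varphi$ is strictly increasing, part (2) of Theorem \ref{representer-regularization-Gateaux-explicit} then writes every solution in the form $f_0=\rho\,\mathcal{G}^{*}\!\left(\sum_{j\in\mathbb{N}_m}c_jK(\alpha_j)\right)$ with $\rho:=\left\|\sum_{j\in\mathbb{N}_m}c_jK(\alpha_j)\right\|_{\mathcal{H}^{*}}$ for some $c_j\in\mathbb{R}$. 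Finally, because $\mathcal{H}_{*}=\mathcal{H}$ and the G\^{a}teaux derivative of the Hilbert norm satisfies $v=\|v\|_{\mathcal{H}}\mathcal{G}(v)$ by \eqref{GateauxDiff-H}, substituting $v:=\sum_{j\in\mathbb{N}_m}c_jK(\alpha_j)$ collapses the product $\rho\,\mathcal{G}^{*}(v)$ to $v$ itself, giving the desired kernel expansion \eqref{representer-theorem-regularization-FRKHS-formula}.

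The routine bulk (verifying the hypotheses and the duality bookkeeping) is inherited from the already-proved Theorem \ref{representer-regularization-Gateaux-explicit} and from Corollary \ref{representer-theorem-regularization-RKHS}, so I expect the only point requiring real care to be the uniqueness claim, where one must note that strict monotonicity of $\varphi$ combined with the strict convexity of the Hilbert norm is what upgrades mere convexity of the objective to a single minimizer; the identification of $\nu_{\alpha_j}$ with $K(\alpha_j)$ through \eqref{reproducing-property-FRKHS} is then the conceptual step that turns the abstract explicit representer theorem into a concrete expansion in kernel sections.
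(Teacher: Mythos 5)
Your proposal follows the paper's own proof essentially verbatim: existence and uniqueness are obtained exactly as in Corollary \ref{representer-theorem-regularization-RKHS} (the paper simply cites that proof), the explicit representer theorem of Theorem \ref{representer-regularization-Gateaux-explicit} is applied with $\mathcal{B}:=\mathcal{H}$ and each $\nu_{\alpha_j}$ identified with $K(\alpha_j)$ through the reproducing property \eqref{reproducing-property-FRKHS}, and the factor $\rho\,\mathcal{G}^*(\cdot)$ is collapsed to the kernel expansion via \eqref{GateauxDiff-H}. The one caveat --- shared equally by the paper, which asserts uniqueness just as tersely --- is that your claim that $\varphi(\|\cdot\|_{\mathcal{H}})$ is strictly convex along every nontrivial segment is imprecise (along a ray through the origin the norm is affine, so strict monotonicity of $\varphi$ alone does not give strict convexity there), but this does not distinguish your argument from the paper's.
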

\begin{proof}
As pointed out in the proof of Corollary \ref{representer-theorem-regularization-RKHS}, the assumptions about $\mathcal{Q}_{\mathbf{y}_0}$ and $\varphi$ guarantee that the regularization problem \eqref{regularization} with $\mathbf{y}:=\mathbf{y}_0$ has a unique solution $f_0$. Theorem \ref{representer-regularization-Gateaux-explicit} with $\mathcal{B}:=\mathcal{H}$ and $\nu_{\alpha_j}:=K(\alpha_j), j\in\mathbb{N}_m$, allows to express $f_0$ in the form
\eqref{representer-regularization-Gateaux-explicit-formula} for some $c_j\in\mathbb{R}$, $j\in\mathbb{N}_m$. Substituting \eqref{GateauxDiff-H} with $f:=\sum_{j\in\mathbb{N}_m}c_jK(\alpha_j)$ into \eqref{representer-regularization-Gateaux-explicit-formula}, we get \eqref{representer-theorem-regularization-FRKHS-formula}.
\end{proof}

The representer theorem stated in Corollary \ref{representer-theorem-regularization-FRKHS} was also established in \cite{WX1}, where the result was proved by making use of ideas of the Tikhonov regularization.

We now turn to considering regularization problems in a uniformly Fr\'{e}chet smooth and uniformly convex Banach space $\mathcal{B}$. Recall that in such a Banach space, the semi-inner-product may be taken as a substitute of the inner product in a Hilbert space. Accordingly, each continuous linear functional on $\mathcal{B}$ can be represented by the dual element of a unique element in $\mathcal{B}$, which is defined via the semi-inner-product. In particular, the G\^{a}teaux derivative of the norm $\|\cdot\|_{\mathcal{B}}$ has the form \eqref{relation-gateaux-dual1}.

Applying Proposition \ref{representer-regularization-Gateaux} to the regularization problem \eqref{regularization} in a uniformly Fr\'{e}chet smooth and uniformly convex Banach space, we get the representer theorem as follows. Note that  in this case, the linearly independent functionals has the form $\nu_j:=g_j^{\sharp}$ for $g_j\in\mathcal{B}$, $j\in\mathbb{N}_m$.
\begin{thm}\label{representer-theorem-regularization-ufuc}
Suppose that $\mathcal{B}$ is a uniformly Fr\'{e}chet smooth and uniformly convex Banach space and $g_j\in\mathcal{B},$ $j\in\mathbb{N}$. For a given  $\mathbf{y}_0\in\mathbb{R}^m$, let $\mathcal{Q}_{\mathbf{y}_0}:\mathbb{R}^m\rightarrow\mathbb{R}_{+}$ be a loss function, $\varphi:\mathbb{R}_{+}\rightarrow\mathbb{R}_{+}$ be a regularizer and $\lambda>0$.

(1) If $\varphi$ is increasing, then there exists a solution $f_0$ of the regularization problem \eqref{regularization} with $\mathbf{y}:=\mathbf{y}_0$ such that
\begin{equation}\label{representer-theorem-regularization-formula-ufuc}
f_0^{\sharp}=\sum_{j\in\mathbb{N}_m}c_jg_j^{\sharp}
\end{equation}
for some $c_j\in\mathbb{R},\ j\in\mathbb{N}_m.$

(2) If $\varphi$ is strictly increasing, then every solution $f_0$ of the regularization problem \eqref{regularization} with $\mathbf{y}:=\mathbf{y}_0$ satisfies \eqref{representer-theorem-regularization-formula-ufuc} for some $c_j\in\mathbb{R}$, $j\in\mathbb{N}_m$.
\end{thm}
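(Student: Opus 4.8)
The plan is to deduce this result directly from the implicit representer theorem for the regularization problem stated in Proposition~\ref{representer-regularization-Gateaux}, by converting the G\^{a}teaux-derivative representation into a dual-element representation exactly as was done for the minimum norm interpolation problem in Theorem~\ref{representer-theorem-ufuc}. Since $\mathcal{B}$ is uniformly Fr\'{e}chet smooth, its norm is in particular G\^{a}teaux differentiable at every nonzero point, so $\mathcal{B}$ is smooth and Proposition~\ref{representer-regularization-Gateaux} applies with the linearly independent functionals taken to be $\nu_j:=g_j^{\sharp}$, $j\in\mathbb{N}_m$.

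First I would treat statement (1). Assuming $\varphi$ is increasing, Proposition~\ref{representer-regularization-Gateaux} furnishes a solution $f_0$ of the regularization problem \eqref{regularization} with $\mathbf{y}:=\mathbf{y}_0$ together with coefficients $\tilde{c}_j\in\mathbb{R}$, $j\in\mathbb{N}_m$, such that $\mathcal{G}(f_0)=\sum_{j\in\mathbb{N}_m}\tilde{c}_j g_j^{\sharp}$. The crucial step is to pass from $\mathcal{G}(f_0)$ to the dual element $f_0^{\sharp}$. For this I would invoke the identity \eqref{relation-gateaux-dual1}, namely $g^{\sharp}=\|g\|_{\mathcal{B}}\mathcal{G}(g)$ for all $g\in\mathcal{B}$, with $g:=f_0$. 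Substituting the G\^{a}teaux-derivative representation then yields $f_0^{\sharp}=\|f_0\|_{\mathcal{B}}\sum_{j\in\mathbb{N}_m}\tilde{c}_j g_j^{\sharp}$, and setting $c_j:=\|f_0\|_{\mathcal{B}}\tilde{c}_j$ produces the desired formula \eqref{representer-theorem-regularization-formula-ufuc}. The trivial solution $f_0=0$ is covered automatically, since then $\mathcal{G}(f_0)=0$, $f_0^{\sharp}=0$, and the choice $c_j=0$ works.

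For statement (2), I would run the same argument but now starting from part (2) of Proposition~\ref{representer-regularization-Gateaux}: when $\varphi$ is strictly increasing, every solution $f_0$ admits coefficients $\tilde{c}_j$ with $\mathcal{G}(f_0)=\sum_{j\in\mathbb{N}_m}\tilde{c}_j g_j^{\sharp}$, and the same rescaling via \eqref{relation-gateaux-dual1} shows that every such $f_0$ satisfies \eqref{representer-theorem-regularization-formula-ufuc}.

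I do not anticipate a genuine obstacle here, since the proof is a faithful transcription of that of Theorem~\ref{representer-theorem-ufuc} with the minimum norm interpolation representer theorem replaced by its regularization counterpart. The only point requiring a little care is the scaling by $\|f_0\|_{\mathcal{B}}$ when converting $\mathcal{G}(f_0)$ into $f_0^{\sharp}$, together with the verification that \eqref{relation-gateaux-dual1} remains valid at $g=0$ so that the trivial solution needs no separate treatment.
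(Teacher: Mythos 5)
Your proposal is correct and follows essentially the same route as the paper's own proof: apply Proposition \ref{representer-regularization-Gateaux} with $\nu_j:=g_j^{\sharp}$, then convert $\mathcal{G}(f_0)$ into $f_0^{\sharp}$ via the identity \eqref{relation-gateaux-dual1} and rescale the coefficients by $\|f_0\|_{\mathcal{B}}$. Your explicit observations that uniform Fr\'{e}chet smoothness gives the smoothness required by that proposition, and that \eqref{relation-gateaux-dual1} covers $g=0$ so the trivial solution needs no separate case, are exactly the points the paper relies on implicitly.
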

\begin{proof}
Note that the Banach space $\mathcal{B}$ is smooth. If $\varphi$ is increasing, by Proposition \ref{representer-regularization-Gateaux} with
$$
\nu_j:=g_j^{\sharp},\ \ j\in\mathbb{N}_m,
$$
we have that there exists a solution $f_0$ of the regularization problem \eqref{regularization} with $\mathbf{y}:=\mathbf{y}_0$ such that
\begin{equation}\label{representer-theorem-regularization-formula-ufuc0}
\mathcal{G}(f_0)=\sum_{j\in\mathbb{N}_m}\hat{c}_jg_j^{\sharp},
\end{equation}
for some $\hat{c}_j\in\mathbb{R}$, $j\in\mathbb{N}_m$.
Substituting \eqref{relation-gateaux-dual1} with $g:=f_0$ into \eqref{representer-theorem-regularization-formula-ufuc0} and choosing $c_j:=\|f_0\|_{\mathcal{B}}\hat{c}_j$, $j\in\mathbb{N}_m$, we obtain \eqref{representer-theorem-regularization-formula-ufuc}.

We now consider the case when $\varphi$ is strictly increasing. In this case, Proposition \ref{representer-regularization-Gateaux} ensures that every solution $f_0$ of \eqref{regularization} with $\mathbf{y}:=\mathbf{y}_0$ has the form \eqref{representer-theorem-regularization-formula-ufuc0}. Combining \eqref{relation-gateaux-dual1} with \eqref{representer-theorem-regularization-formula-ufuc0}, we obtain \eqref{representer-theorem-regularization-formula-ufuc} for $c_j:=\|f_0\|_{\mathcal{B}}\hat{c}_j$, $j\in\mathbb{N}_m$.
\end{proof}

It is desirable to have a representation for $f_0$ in addition to that for $f_0^{\sharp}$.
Since the uniformly Fr\'{e}chet smooth and uniformly convex Banach space $\mathcal{B}$ has the smooth dual space $\mathcal{B}^*$, Theorem \ref{representer-regularization-Gateaux-explicit} allows us to have a representation for $f_0$ in the uniformly Fr\'{e}chet smooth and uniformly convex Banach space $\mathcal{B}$.

\begin{thm}\label{representer-theorem-regularization-ufuc1}
Suppose that $\mathcal{B}$ is a uniformly Fr\'{e}chet smooth and uniformly convex Banach space and $g_j\in\mathcal{B},$ $j\in\mathbb{N}$. For a given  $\mathbf{y}_0\in\mathbb{R}^m$, let $\mathcal{Q}_{\mathbf{y}_0}:\mathbb{R}^m\rightarrow\mathbb{R}_{+}$ be a loss function, $\varphi:\mathbb{R}_{+}\rightarrow\mathbb{R}_{+}$ be a regularizer and $\lambda>0$.

(1) If $\varphi$ is increasing, then there exists a solution $f_0$ of the regularization problem \eqref{regularization} with $\mathbf{y}:=\mathbf{y}_0$ such that
\begin{equation}\label{representer-theorem-regularization-formula-ufuc1}
f_0=\left(\sum_{j\in\mathbb{N}_m}c_jg_j^{\sharp}\right)^{\sharp}
\end{equation}
for some $c_j\in\mathbb{R},\ j\in\mathbb{N}_m.$

(2) If $\varphi$ is strictly increasing, then every solution $f_0$ of the regularization problem \eqref{regularization} with $\mathbf{y}:=\mathbf{y}_0$ satisfies \eqref{representer-theorem-regularization-formula-ufuc1} for some $c_j\in\mathbb{R}$, $j\in\mathbb{N}_m$.
\end{thm}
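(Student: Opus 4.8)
The plan is to derive Theorem \ref{representer-theorem-regularization-ufuc1} from the explicit representer theorem already established for Banach spaces with a smooth dual space, exactly paralleling how Theorem \ref{representer-theorem-ufuc1} was obtained from Theorem \ref{representer-theorembyGateaux-explicit} in the minimum norm interpolation setting. First I would observe that, since $\mathcal{B}$ is uniformly Fr\'{e}chet smooth and uniformly convex, the duality between uniform convexity and uniform Fr\'{e}chet smoothness recalled earlier guarantees that the dual space $\mathcal{B}^*$ is likewise uniformly Fr\'{e}chet smooth and uniformly convex, and in particular smooth. This is precisely the hypothesis needed to invoke Theorem \ref{representer-regularization-Gateaux-explicit}.

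Next, I would apply Theorem \ref{representer-regularization-Gateaux-explicit} with the identification $\nu_j := g_j^{\sharp}$, $j\in\mathbb{N}_m$. In case (1), where $\varphi$ is increasing, this yields a solution $f_0$ of the regularization problem \eqref{regularization} with $\mathbf{y}:=\mathbf{y}_0$ of the form $f_0 = \rho\,\mathcal{G}^*\left(\sum_{j\in\mathbb{N}_m} c_j g_j^{\sharp}\right)$ with $\rho := \left\|\sum_{j\in\mathbb{N}_m} c_j g_j^{\sharp}\right\|_{\mathcal{B}^*}$; in case (2), where $\varphi$ is strictly increasing, it yields that every solution $f_0$ has this form. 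The remaining step converts the G\^{a}teaux-derivative representation into the dual-element representation \eqref{representer-theorem-regularization-formula-ufuc1}. Here I would apply relation \eqref{relation-gateaux-dual1}, with $\mathcal{B}$ replaced by $\mathcal{B}^*$, to the functional $\nu := \sum_{j\in\mathbb{N}_m} c_j g_j^{\sharp}\in\mathcal{B}^*$. Since $\mathcal{B}^*$ is itself uniformly Fr\'{e}chet smooth and uniformly convex, its semi-inner-product and the associated dual-element map $\nu\mapsto\nu^{\sharp}\in\mathcal{B}$ (given through \eqref{semi-inner-product-B^*} and \eqref{dual-element-B^*}, using the reflexivity of $\mathcal{B}$) are available, and that relation reads $\nu^{\sharp} = \|\nu\|_{\mathcal{B}^*}\mathcal{G}^*(\nu) = \rho\,\mathcal{G}^*(\nu)$. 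Substituting this into the expression for $f_0$ gives $f_0 = \left(\sum_{j\in\mathbb{N}_m} c_j g_j^{\sharp}\right)^{\sharp}$, as desired, for both cases (1) and (2).

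I expect the only delicate point to be the bookkeeping with the two directions of the $\sharp$ operation: $g\mapsto g^{\sharp}$ maps $\mathcal{B}$ into $\mathcal{B}^*$, while $\nu\mapsto\nu^{\sharp}$ maps $\mathcal{B}^*$ back into $\mathcal{B}$, and one must verify that the second is exactly the normalized G\^{a}teaux derivative of the dual norm. As an alternative, cleaner route that sidesteps the G\^{a}teaux derivative entirely, I could instead invoke the already-proved Theorem \ref{representer-theorem-regularization-ufuc}, which gives $f_0^{\sharp} = \sum_{j\in\mathbb{N}_m} c_j g_j^{\sharp}$, and then apply the $\sharp$ operation to both sides, using the lemma establishing $f^{\sharp\sharp}=f$ in a uniformly Fr\'{e}chet smooth and uniformly convex Banach space to conclude $f_0 = f_0^{\sharp\sharp} = \left(\sum_{j\in\mathbb{N}_m} c_j g_j^{\sharp}\right)^{\sharp}$. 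Either route is routine once the existence and uniqueness structure from the implicit representer theorems is in place, so the proof will be kept concise, as announced at the start of the section.
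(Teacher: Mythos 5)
Your primary route is exactly the paper's proof: invoke Theorem \ref{representer-regularization-Gateaux-explicit} with $\nu_j:=g_j^{\sharp}$ (justified because $\mathcal{B}^*$ is smooth), then convert $f_0=\rho\,\mathcal{G}^*\bigl(\sum_{j\in\mathbb{N}_m}c_jg_j^{\sharp}\bigr)$ into \eqref{representer-theorem-regularization-formula-ufuc1} via \eqref{relation-gateaux-dual1} applied in $\mathcal{B}^*$, handling cases (1) and (2) in parallel. Your alternative route, applying $f^{\sharp\sharp}=f$ to the implicit representation of Theorem \ref{representer-theorem-regularization-ufuc}, is also sound and mirrors the paper's own remark on deriving Theorem \ref{representer-theorem-ufuc1} from Theorem \ref{representer-theorem-ufuc} in the interpolation setting.
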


\begin{proof}
Note that the uniformly Fr\'{e}chet smooth and uniformly convex Banach space $\mathcal{B}$ has the smooth dual space $\mathcal{B}^{*}$. If $\varphi$ is increasing, Theorem \ref{representer-regularization-Gateaux-explicit} with
$\nu_j:=g_j^{\sharp},\ \ j\in\mathbb{N}_m,$ shows that there exists a solution $f_0$ of the regularization problem \eqref{regularization} with $\mathbf{y}:=\mathbf{y}_0$ such that there exist $c_j\in\mathbb{R},\ j\in\mathbb{N}_m,$ satisfying
\begin{equation}\label{representer-theorem-regularization-formula-ufuc10}
f_0=\rho\mathcal{G}^*\left(\sum_{j\in\mathbb{N}_m}c_jg_j^{\sharp}\right).
\end{equation}
with $\rho:=\left\|\sum_{j\in\mathbb{N}_m}c_jg_j^{\sharp}\right\|_{\mathcal{B}^*}$. Substituting \eqref{relation-gateaux-dual1} with $\mathcal{B}$ being replaced by $\mathcal{B}^*$ and $g$ by $\sum_{j\in\mathbb{N}_m}c_jg_j^{\sharp}$ into \eqref{representer-theorem-regularization-formula-ufuc10} leads to \eqref{representer-theorem-regularization-formula-ufuc1}.

Moreover, if $\varphi$ is strictly increasing, Theorem \ref{representer-regularization-Gateaux-explicit} ensures that every solution $f_0$ of \eqref{regularization} with $\mathbf{y}:=\mathbf{y}_0$ has the form
\eqref{representer-theorem-regularization-formula-ufuc10}. Arguments similar to those presented above leads to \eqref{representer-theorem-regularization-formula-ufuc1}.
\end{proof}
A special uniformly Fr\'{e}chet smooth and uniformly convex Banach space is the space $\ell_p(\mathbb{N})$, with $1<p<+\infty$. We now apply Theorem \ref{representer-theorem-regularization-ufuc1} to the regularization problem in $\ell_p(\mathbb{N})$.

\begin{cor}\label{representer-theorem-regularization-lp}
Suppose that $\mathbf{u}_j\in \ell_q(\mathbb{N})$, $j\in\mathbb{N}_m$, with $1/p+1/q=1$, and operator $\mathcal{L}$ is defined by \eqref{functional-operator-lp}. If for a given $\mathbf{y}_0\in\mathbb{R}^m$, $\mathcal{Q}_{\mathbf{y}_0}$ and $\varphi$ are continuous and convex and moreover, $\varphi$ is strictly increasing and coercive, then there exists a unique solution $\hat{\mathbf{x}}:=(\hat x_j:j\in\mathbb{N})$ of the regularization problem \eqref{regularization} in $\ell_p(\mathbb{N})$ with $\mathbf{y}=\mathbf{y}_0$. If $\hat{\mathbf{x}}\neq0$, then
\begin{equation}\label{representer-theorem-regularization-formula-lp}
\hat{x}_j=\frac{u_j|u_j|^{q-2}}{\|\mathbf{u}\|_{q}^{q-2}},
\end{equation}
where $\mathbf{u}:=(u_j:j\in\mathbb{N})$ is defined by \eqref{definition-of-u} for some $c_j\in\mathbb{R}$, $j\in\mathbb{N}_m$.
\end{cor}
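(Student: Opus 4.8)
The plan is to derive this corollary as a direct specialization of Theorem \ref{representer-theorem-regularization-ufuc1} to the concrete space $\ell_p(\mathbb{N})$, after first settling existence and uniqueness and then importing the explicit form of the dual element (the $\sharp$ map) from $\ell_q(\mathbb{N})$ to $\ell_p(\mathbb{N})$ that was already computed in the proof of Corollary \ref{representer-theorem-lp}.

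First I would establish existence and uniqueness of the solution, in the same manner as the proof of Corollary \ref{representer-theorem-regularization-RKHS}. Since $1<p<+\infty$, the space $\ell_p(\mathbb{N})$ is uniformly Fr\'{e}chet smooth and uniformly convex, hence reflexive by the Milman-Pettis theorem. The continuity (and therefore lower semi-continuity) of $\mathcal{Q}_{\mathbf{y}_0}$ and $\varphi$, together with the coercivity and monotonicity of $\varphi$, place us squarely in the hypotheses of Corollary \ref{existence2}, which yields at least one solution. Uniqueness then follows from the convexity of $\mathcal{Q}_{\mathbf{y}_0}$, the strict monotonicity of $\varphi$, and the strict convexity of $\ell_p(\mathbb{N})$, which together render the objective functional strictly convex.

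Next I would invoke the representer theorem. The functionals appearing in \eqref{functional-operator-lp} are the elements $\mathbf{u}_j\in\ell_q(\mathbb{N})=(\ell_p(\mathbb{N}))^*$, which I identify with $g_j^{\sharp}$ by setting $g_j:=\mathbf{u}_j^{\sharp}\in\ell_p(\mathbb{N})$. Because $\varphi$ is strictly increasing, part (2) of Theorem \ref{representer-theorem-regularization-ufuc1} applies and guarantees that the unique solution satisfies $\hat{\mathbf{x}}=\left(\sum_{j\in\mathbb{N}_m}c_j\mathbf{u}_j\right)^{\sharp}=\mathbf{u}^{\sharp}$, with $\mathbf{u}$ as in \eqref{definition-of-u}, for some coefficients $c_j\in\mathbb{R}$, $j\in\mathbb{N}_m$.

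Finally, to make the representation explicit, I would substitute the closed form of the dual element in $\ell_p(\mathbb{N})$, namely $\mathbf{u}^{\sharp}=\left(u_j|u_j|^{q-2}/\|\mathbf{u}\|_q^{q-2}:j\in\mathbb{N}\right)$, which is precisely the computation carried out in the proof of Corollary \ref{representer-theorem-lp}. This directly produces formula \eqref{representer-theorem-regularization-formula-lp}. The hypothesis $\hat{\mathbf{x}}\neq0$ is exactly what permits writing this formula, since it is equivalent to $\mathbf{u}\neq0$ and thus to $\|\mathbf{u}\|_q\neq0$, avoiding the indeterminate denominator. The only point requiring a little care, rather than a genuine obstacle, is the bookkeeping between the abstract $\nu_j=g_j^{\sharp}$ in Theorem \ref{representer-theorem-regularization-ufuc1} and the concrete $\mathbf{u}_j$, along with the parallel treatment of the possible trivial solution $\hat{\mathbf{x}}=0$; all the substantive work has already been discharged in Theorem \ref{representer-theorem-regularization-ufuc1} and Corollary \ref{representer-theorem-lp}.
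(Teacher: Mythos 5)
Your proposal is correct and follows essentially the same route as the paper's own proof: existence and uniqueness from the reflexivity and strict convexity of $\ell_p(\mathbb{N})$ combined with the convexity assumptions on $\mathcal{Q}_{\mathbf{y}_0}$ and $\varphi$, then Theorem \ref{representer-theorem-regularization-ufuc1} to obtain $\hat{\mathbf{x}}=\mathbf{u}^{\sharp}$, and finally substitution of the explicit dual-element formula already computed in the proof of Corollary \ref{representer-theorem-lp}. Your extra bookkeeping step, identifying $\nu_j=\mathbf{u}_j$ with $g_j^{\sharp}$ via $g_j:=\mathbf{u}_j^{\sharp}$ (justified by the bijectivity of the $\sharp$ map and $f^{\sharp\sharp}=f$), is a point the paper leaves implicit and is handled correctly.
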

\begin{proof}
The assumptions on $\mathcal{Q}_{\mathbf{y}_0}$ and $\varphi$ with the fact that $\ell_p(\mathbb{N})$ is reflexive and strictly convex ensure that the regularization problem \eqref{regularization} in $\ell_p(\mathbb{N})$ with $\mathbf{y}=\mathbf{y}_0$ has a unique solution $\hat{\mathbf{x}}$. The space $\ell_p(\mathbb{N})$ satisfies the hypothesis of Theorem \ref{representer-theorem-regularization-ufuc1}, since it has $\ell_q(\mathbb{N})$ as its smooth pre-dual space. Hence, by Theorem \ref{representer-theorem-regularization-ufuc1} the solution $\hat{\mathbf{x}}:=(\hat{x}_j:j\in\mathbb{N})$ has the form
\begin{equation}\label{representer-theorem-regularization-formula-lp1}
\hat{\mathbf{x}}=\left(\sum_{j\in\mathbb{N}_m}c_j\mathbf{u}_j\right)^{\sharp},
\end{equation}
for some $c_j\in\mathbb{R}$, $j\in\mathbb{N}_m$. Let $\mathbf{u}:=(u_j:j\in\mathbb{N})$ be defined by \eqref{definition-of-u}. Equation \eqref{representer-theorem-regularization-formula-lp1} can be represented as $\hat{\mathbf{x}}=\mathbf{u}^{\sharp}$. If $\hat{\mathbf{x}}\neq0$, then
$\mathbf{u}\neq0$. Substituting
$$
\mathbf{u}^{\sharp}
=\left(\frac{u_j|u_j|^{q-2}}{\|\mathbf{u}\|_{q}^{q-2}}:j\in\mathbb{N}\right)
$$
into \eqref{representer-theorem-regularization-formula-lp1} leads to the representation  \eqref{representer-theorem-regularization-formula-lp} of $\hat{\mathbf{x}}:=(\hat x_j:j\in\mathbb{N})$.
\end{proof}

The semi-inner-product RKBS is uniformly Fr\'{e}chet smooth and uniformly convex. A regularization problem in such a space is considered to learn a function from the sample data produced by point-evaluation functionals $\nu_j:=\delta_{x_j},$ $j\in\mathbb{N}_m$, where $x_j, j\in\mathbb{N}_m,$ are a finite number of points in an input set $X$. By the reproducing property \eqref{reproducing-property-ZXZ}, the dual element $G(x_j,\cdot)^{\sharp}$ of $G(x_j,\cdot)$ coincides exactly with the point-evaluation functional $\delta_{x_j}$ for $j\in\mathbb{N}_m$.
Applying Theorem \ref{representer-theorem-regularization-ufuc} to the regularization problem \eqref{regularization} in a semi-inner-product RKBS, we get the representer theorem as follows.

\begin{cor}\label{representer-theorem-regularization-ZXZ}
Suppose that $\mathcal{B}$ is the semi-inner-product RKBS with the semi-inner-product reproducing kernel $G$ and $x_j\in X$, $j\in\mathbb{N}_m$. For a given  $\mathbf{y}_0\in\mathbb{R}^m$, let $\mathcal{Q}_{\mathbf{y}_0}:\mathbb{R}^m\rightarrow\mathbb{R}_{+}$ be a loss function, $\varphi:\mathbb{R}_{+}\rightarrow\mathbb{R}_{+}$ be a regularizer and $\lambda>0$.

(1) If $\varphi$ is increasing, then there exists a solution $f_0$ of the regularization problem \eqref{regularization} with $\mathbf{y}:=\mathbf{y}_0$ such that
\begin{equation}\label{representer-theorem-regularization-formula-ZXZ}
f_0^{\sharp}=\sum_{j\in\mathbb{N}_m}c_jG(x_j,\cdot)^{\sharp}
\end{equation}
for some $c_j\in\mathbb{R},\ j\in\mathbb{N}_m.$

(2) If $\varphi$ is strictly increasing, then every solution $f_0$ of the regularization problem \eqref{regularization} with $\mathbf{y}:=\mathbf{y}_0$ satisfies \eqref{representer-theorem-regularization-formula-ZXZ} for some $c_j\in\mathbb{R}$, $j\in\mathbb{N}_m$.
\end{cor}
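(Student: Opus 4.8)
The plan is to obtain this corollary as a direct specialization of Theorem \ref{representer-theorem-regularization-ufuc} to the semi-inner-product RKBS setting, mirroring exactly how Corollary \ref{representer-theorem-ZXZ} was deduced from Theorem \ref{representer-theorem-ufuc} in the minimum norm interpolation case. The first observation I would record is that, by its very definition, a semi-inner-product RKBS is uniformly Fr\'{e}chet smooth and uniformly convex, so $\mathcal{B}$ satisfies the hypotheses of Theorem \ref{representer-theorem-regularization-ufuc}; in particular the semi-inner-product $[\cdot,\cdot]_{\mathcal{B}}$ and the dual-element map $g\mapsto g^{\sharp}$ are both available.

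Next I would make the identification of the point-evaluation functionals with dual elements. Taking $\nu_j:=\delta_{x_j}$ and $g_j:=G(x_j,\cdot)$ for $j\in\mathbb{N}_m$, the reproducing property \eqref{reproducing-property-ZXZ} gives $f(x_j)=[f,G(x_j,\cdot)]_{\mathcal{B}}$ for all $f\in\mathcal{B}$, while the definition \eqref{dual-element} of the dual element reads $\langle G(x_j,\cdot)^{\sharp},f\rangle_{\mathcal{B}}=[f,G(x_j,\cdot)]_{\mathcal{B}}$. Comparing the two yields $\delta_{x_j}=G(x_j,\cdot)^{\sharp}$, so the functionals appearing in the regularization problem are precisely the $g_j^{\sharp}$, which are linearly independent by the hypothesis on $G(x_j,\cdot)^{\sharp}$.

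With these two facts in place, the conclusion follows by invoking Theorem \ref{representer-theorem-regularization-ufuc} with this choice of $g_j$. For statement (1), the increasing hypothesis on $\varphi$ produces a solution $f_0$ of the regularization problem \eqref{regularization} with $\mathbf{y}:=\mathbf{y}_0$ satisfying $f_0^{\sharp}=\sum_{j\in\mathbb{N}_m}c_jg_j^{\sharp}=\sum_{j\in\mathbb{N}_m}c_jG(x_j,\cdot)^{\sharp}$, which is exactly \eqref{representer-theorem-regularization-formula-ZXZ}; for statement (2), the strictly increasing hypothesis upgrades this to hold for \emph{every} solution $f_0$.

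Since each step is a verbatim application of an already-established result, I do not anticipate a genuine obstacle: the only point requiring care is the clean identification $\delta_{x_j}=G(x_j,\cdot)^{\sharp}$ through the reproducing property, which is precisely the device already used in Corollary \ref{representer-theorem-ZXZ}. Everything else, including the dichotomy between increasing and strictly increasing regularizers, is inherited directly from Theorem \ref{representer-theorem-regularization-ufuc}.
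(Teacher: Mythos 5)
Your proposal is correct and follows essentially the same route as the paper: the paper's proof likewise obtains the corollary by applying Theorem \ref{representer-theorem-regularization-ufuc} with $g_j:=G(x_j,\cdot)$, $j\in\mathbb{N}_m$, relying on the identification $\delta_{x_j}=G(x_j,\cdot)^{\sharp}$ via the reproducing property \eqref{reproducing-property-ZXZ} exactly as in Corollary \ref{representer-theorem-ZXZ}. Your write-up is in fact slightly more explicit than the paper's one-line proof, spelling out the comparison of \eqref{reproducing-property-ZXZ} with \eqref{dual-element}; the only cosmetic caveat is that the linear independence of the $G(x_j,\cdot)^{\sharp}$ you invoke is not stated in this corollary's hypotheses (though it is implicitly assumed throughout, as in the interpolation analogue).
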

\begin{proof}
Note that the semi-inner-product RKBS $\mathcal{B}$ satisfies the hypothesis of Theorem \ref{representer-theorem-regularization-ufuc}. Thus, by employing Theorem \ref{representer-theorem-regularization-ufuc} with $g_j:=G(x_j,\cdot)$, $j\in\mathbb{N}_m$, we get the desired result.
\end{proof}

The representer theorem stated in Corollary \ref{representer-theorem-regularization-ZXZ} was observed in \cite{ZXZ,ZZ}. An explicit representation for the solution of the regularization problem in the semi-inner-product RKBS $\mathcal{B}$ can also be obtained.

\begin{thm}\label{representer-theorem-regularization-ZXZ1}
Suppose that $\mathcal{B}$ is the semi-inner-product RKBS with the semi-inner-product reproducing kernel $G$ and $x_j\in X$, $j\in\mathbb{N}_m$. For a given  $\mathbf{y}_0\in\mathbb{R}^m$, let $\mathcal{Q}_{\mathbf{y}_0}:\mathbb{R}^m\rightarrow\mathbb{R}_{+}$ be a loss function, $\varphi:\mathbb{R}_{+}\rightarrow\mathbb{R}_{+}$ be a regularizer and $\lambda>0$.

(1) If $\varphi$ is increasing, then there exists a solution $f_0$ of the regularization problem \eqref{regularization} with $\mathbf{y}:=\mathbf{y}_0$ such that
\begin{equation}\label{representer-theorem-regularization-formula-ZXZ1}
f_0=\left(\sum_{j\in\mathbb{N}_m}c_jG(x_j,\cdot)^{\sharp}\right)^{\sharp}
\end{equation}
for some $c_j\in\mathbb{R},\ j\in\mathbb{N}_m.$

(2) If $\varphi$ is strictly increasing, then every solution $f_0$ of the regularization problem \eqref{regularization} with $\mathbf{y}:=\mathbf{y}_0$ satisfies \eqref{representer-theorem-regularization-formula-ZXZ1} for some $c_j\in\mathbb{R}$, $j\in\mathbb{N}_m$.
\end{thm}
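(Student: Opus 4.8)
The plan is to obtain Theorem \ref{representer-theorem-regularization-ZXZ1} as an immediate corollary of the general result Theorem \ref{representer-theorem-regularization-ufuc1}, exactly paralleling how Theorem \ref{representer-theorem-ZXZ1} was deduced from Theorem \ref{representer-theorem-ufuc1} in the minimum norm interpolation setting. The key observation is that a semi-inner-product RKBS is, by its very definition, uniformly Fr\'{e}chet smooth and uniformly convex; consequently the standing geometric hypotheses of Theorem \ref{representer-theorem-regularization-ufuc1} are automatically satisfied, and the semi-inner-product together with its associated dual-element map $(\cdot)^{\sharp}$ is available on $\mathcal{B}$.

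Concretely, I would set $g_j:=G(x_j,\cdot)$ for $j\in\mathbb{N}_m$, so that $g_j^{\sharp}=G(x_j,\cdot)^{\sharp}$. Invoking part (1) of Theorem \ref{representer-theorem-regularization-ufuc1} with this choice, together with the assumption that $\varphi$ is increasing, yields a solution $f_0$ of the regularization problem \eqref{regularization} with $\mathbf{y}:=\mathbf{y}_0$ of the form $f_0=\left(\sum_{j\in\mathbb{N}_m}c_jg_j^{\sharp}\right)^{\sharp}$ for some $c_j\in\mathbb{R}$. Since $g_j^{\sharp}=G(x_j,\cdot)^{\sharp}$, this is precisely the claimed representation \eqref{representer-theorem-regularization-formula-ZXZ1}. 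For part (2), the stronger hypothesis that $\varphi$ is strictly increasing lets me apply part (2) of Theorem \ref{representer-theorem-regularization-ufuc1}, which guarantees that \emph{every} solution $f_0$ admits this same form, again just by substituting $g_j:=G(x_j,\cdot)$.

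Since the heavy lifting has already been done in establishing Theorem \ref{representer-theorem-regularization-ufuc1}, this argument carries no genuine obstacle. The only points that warrant an explicit sentence are: first, confirming that the definitional properties of a semi-inner-product RKBS supply exactly the uniform Fr\'{e}chet smoothness and uniform convexity demanded by the general theorem; and second, noting that the dual-element operation $(\cdot)^{\sharp}$ appearing in the general theorem is the same one induced by the semi-inner-product of $\mathcal{B}$. If desired, I would add a remark that the reproducing property \eqref{reproducing-property-ZXZ} identifies $G(x_j,\cdot)^{\sharp}$ with the point-evaluation functional $\delta_{x_j}$, which clarifies that the representation is built from the point-evaluation data defining the problem; this identification, however, is not needed for the proof itself, which proceeds purely by specialization.
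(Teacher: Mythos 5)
Your proposal is correct and coincides with the paper's own proof, which likewise obtains the result as an immediate consequence of Theorem \ref{representer-theorem-regularization-ufuc1} by setting $g_j:=G(x_j,\cdot)$ for $j\in\mathbb{N}_m$, relying on the fact that a semi-inner-product RKBS is by definition uniformly Fr\'{e}chet smooth and uniformly convex. Your additional remarks (the identification of $G(x_j,\cdot)^{\sharp}$ with $\delta_{x_j}$ via \eqref{reproducing-property-ZXZ}) are accurate but, as you note, not needed for the specialization argument.
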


\begin{proof}
The desired result is an immediate consequence of Theorem \ref{representer-theorem-regularization-ufuc1} with $g_j:=G(x_j,\cdot)$, for $j\in\mathbb{N}_m$.
\end{proof}

We turn to the regularization problem in a right-sided RKBS $\mathcal{B}$ with the right-sided reproducing kernel $K$. As a consequence of Proposition \ref{representer-regularization-Gateaux}, we have the following representer theorem.

\begin{cor}\label{representer-theorem-regularization-XY}
Let $\mathcal{B}$ be a right-sided RKBS with a right-sided reproducing kernel $K$ and $x_j\in X$, $j\in\mathbb{N}_m$. Suppose that $\mathcal{B}$ is reflexive, strictly convex and smooth. If for a given $\mathbf{y}_0\in\mathbb{R}^m$, $\mathcal{Q}_{\mathbf{y}_0}$ and $\varphi$ are continuous and convex and moreover, $\varphi$ is strictly increasing and coercive, then the regularization problem \eqref{regularization} with $\mathbf{y}:=\mathbf{y}_0$ has a unique solution $f_0$ and it has the form
\begin{equation}\label{representer-theorem-regularization-formula-XY}
\mathcal{G}(f_0)=\sum_{j\in\mathbb{N}_m}c_jK(x_j,\cdot),
\end{equation}
for some $c_j\in\mathbb{R},\ j\in\mathbb{N}_m.$
\end{cor}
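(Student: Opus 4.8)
The plan is to mirror the proof of the minimum norm interpolation analogue, Corollary~\ref{representer-theorem-XY}, substituting the regularization representer theorem, Proposition~\ref{representer-regularization-Gateaux}, for its interpolation counterpart. Throughout I identify the functionals $\nu_j$ of the general framework with the point-evaluation functionals $\delta_{x_j}$; by the reproducing property \eqref{reproducing-property-XY} these satisfy $\delta_{x_j}=K(x_j,\cdot)\in\mathcal{B}^{*}$, and I take them to be linearly independent as in the standing setup so that $\mathcal{L}$ is well defined through \eqref{functional-operator}.

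First I would settle existence. Since $\mathcal{Q}_{\mathbf{y}_0}$ and $\varphi$ are continuous they are lower semi-continuous, and since $\varphi$ is strictly increasing it is in particular increasing; combined with the coercivity of $\varphi$ and the reflexivity of $\mathcal{B}$, the hypotheses of Corollary~\ref{existence2} are met with $\nu_j:=\delta_{x_j}$, so the regularization problem \eqref{regularization} with $\mathbf{y}:=\mathbf{y}_0$ has at least one solution. I would then read off the representation: because $\mathcal{B}$ is smooth and $\varphi$ is strictly increasing, part (2) of Proposition~\ref{representer-regularization-Gateaux}, applied with $\nu_j:=\delta_{x_j}$, gives that every solution $f_0$ satisfies $\mathcal{G}(f_0)=\sum_{j\in\mathbb{N}_m}c_j\delta_{x_j}$ for some real $c_j$. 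Invoking once more the reproducing property \eqref{reproducing-property-XY}, which supplies the closed-form representation $\delta_{x_j}=K(x_j,\cdot)$, this identity becomes exactly \eqref{representer-theorem-regularization-formula-XY}. Both of these steps are essentially direct citations of earlier results, so they should be routine.

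The step demanding genuine care, and the one I expect to be the main obstacle, is uniqueness. I would argue it as asserted for the RKHS and FRKHS versions (Corollaries~\ref{representer-theorem-regularization-RKHS} and \ref{representer-theorem-regularization-FRKHS}): given two solutions $f_1,f_2$, their midpoint is again a solution because $f\mapsto\mathcal{Q}_{\mathbf{y}_0}(\mathcal{L}(f))+\lambda\varphi(\|f\|_{\mathcal{B}})$ is convex, being a convex $\mathcal{Q}_{\mathbf{y}_0}$ composed with the linear $\mathcal{L}$ plus the convex, increasing $\varphi$ of the norm. Tracing the equalities that this forces through the triangle inequality $\|f_1+f_2\|_{\mathcal{B}}\le\|f_1\|_{\mathcal{B}}+\|f_2\|_{\mathcal{B}}$ and the monotonicity and convexity of $\varphi$ yields $\|f_1+f_2\|_{\mathcal{B}}=\|f_1\|_{\mathcal{B}}+\|f_2\|_{\mathcal{B}}$, whereupon the strict convexity of $\mathcal{B}$ makes $f_1$ and $f_2$ positively proportional, say $f_1=cf_2$ with $c>0$.

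The delicate point is to conclude $c=1$. Restricting the objective to the ray $t\mapsto tf_2$ produces a convex function of $t$ that is constant on the interval between $1$ and $c$, hence affine there, which in turn forces both the fidelity and regularization terms to be affine on a nondegenerate subinterval of $[0,\infty)$. Excluding this cleanly appears to need more than the strict monotonicity of $\varphi$ alone, since a purely affine $\varphi$ paired with a non-strictly-convex loss can admit a whole segment of minimizers along a common ray. I therefore expect the crux of the argument to be pinning down exactly how the stated hypotheses are used here, most plausibly by invoking strict convexity of $\varphi$ or of $\mathcal{Q}_{\mathbf{y}_0}$ in the motivating cases; once $c=1$ is secured, $f_1=f_2$ follows and the proof is complete.
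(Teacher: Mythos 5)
Your existence and representation steps coincide exactly with the paper's proof: the paper likewise obtains existence from Corollary \ref{existence2} (continuity of $\mathcal{Q}_{\mathbf{y}_0}$ and $\varphi$ giving lower semi-continuity, $\varphi$ increasing and coercive, $\mathcal{B}$ reflexive), and then derives \eqref{representer-theorem-regularization-formula-XY} by applying Proposition \ref{representer-regularization-Gateaux} with $\nu_j:=\delta_{x_j}$ and using the reproducing property \eqref{reproducing-property-XY} to rewrite $\delta_{x_j}$ as $K(x_j,\cdot)$. Where you diverge is uniqueness, and there your instincts are sharper than the source: the paper disposes of uniqueness in a single unproved sentence (``the assumptions that $\mathcal{B}$ is strictly convex, $\mathcal{Q}_{\mathbf{y}_0}$ and $\varphi$ are convex and $\varphi$ is strictly increasing ensure the uniqueness''), so the detailed argument it tacitly relies on is precisely the one you sketched, and it stalls exactly where you said it does.

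Your suspicion that the stated hypotheses do not suffice is correct; the obstruction you isolated (an affine stretch of the objective along a common ray) is realized concretely. Take $X$ a singleton, $\mathcal{B}=\mathbb{R}$ (reflexive, strictly convex and smooth), $m=1$ with $\mathcal{L}(f)=f$, $\lambda=1$, $\varphi(t):=t$ (convex, strictly increasing, coercive), and $\mathcal{Q}_{\mathbf{y}_0}(z):=\max\{|z-2|-1,0\}$, the $\epsilon$-insensitive loss \eqref{SVMR} with $\epsilon=1$, $y_1=2$. For every $f\in[0,1]$ the objective equals $(1-f)+f=1$, its global minimum, so the entire segment $[0,1]$ consists of minimizers even though all hypotheses of the corollary hold; note also that the minimizer $f_1=0$ shows the positive-proportionality step from strict convexity of $\mathcal{B}$ itself requires both minimizers to be nonzero. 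Uniqueness is restored exactly as you predicted by strengthening $\varphi$ to be strictly convex (e.g.\ $\varphi(t)=t^2$): then equality in $\varphi\bigl(\tfrac{1}{2}(\|f_1\|+\|f_2\|)\bigr)=\tfrac{1}{2}\bigl(\varphi(\|f_1\|)+\varphi(\|f_2\|)\bigr)$ forces $\|f_1\|=\|f_2\|$, which together with positive proportionality (or with $f_1=f_2=0$ in the degenerate case) yields $f_1=f_2$; strict convexity of $\mathcal{Q}_{\mathbf{y}_0}$ composed with injectivity considerations would serve similarly. In short, your proposal faithfully reproduces everything the paper actually proves, and the one step you could not close is a genuine gap in the corollary as stated (and equally in Corollaries \ref{representer-theorem-regularization-RKHS} and \ref{representer-theorem-regularization-FRKHS}, which assert uniqueness under the same insufficient hypotheses), not a defect of your argument.
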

\begin{proof}
Since $\mathcal{B}$ is reflexive, both $\mathcal{Q}_{\mathbf{y}_0}$ and $\varphi$ are continuous and $\varphi$ is strictly increasing and coercive, Corollary \ref{existence2} guarantees the existence of the solutions of \eqref{regularization} with $\mathbf{y}:=\mathbf{y}_0$. Moreover, the assumptions that $\mathcal{B}$ is strictly convex, $\mathcal{Q}_{\mathbf{y}_0}$ and $\varphi$ are convex and $\varphi$ is strictly increasing ensure the uniqueness of the solution \eqref{regularization} with $\mathbf{y}:=\mathbf{y}_0$. Hence, we conclude that the regularization problem \eqref{regularization} with $\mathbf{y}:=\mathbf{y}_0$ has a unique solution $f_0$.

By Proposition \ref{representer-regularization-Gateaux}, we may express $f_0$ as in  \eqref{representer-regularization-Gateaux-formula}. Note that the right-sided reproducing kernel $K$ provides the closed-from function representation for the point-evaluation functionals. Substituting $\nu_j:=K(x_j,\cdot),$ $j\in\mathbb{N}_m$, into \eqref{representer-regularization-Gateaux-formula}, we get \eqref{representer-theorem-regularization-formula-XY}.
\end{proof}

We note that the representer theorem described in Corollary \ref{representer-theorem-regularization-XY} was established in \cite{XY} by a different method from ours. In fact, an explicit reprsentation for $f_0$ can be obtained as follows.

\begin{thm}\label{representer-theorem-regularization-XY1}
Let $\mathcal{B}$ be a right-sided RKBS with the right-sided reproducing kernel $K$ and $x_j\in X$, $j\in\mathbb{N}_m$. Suppose that $\mathcal{B}$ is reflexive and strictly convex. If for a given $\mathbf{y}_0\in\mathbb{R}^m$, $\mathcal{Q}_{\mathbf{y}_0}$ and $\varphi$ are continuous and convex and moreover, $\varphi$ is strictly increasing and coercive, then the regularization problem \eqref{regularization} with $\mathbf{y}:=\mathbf{y}_0$ has a unique solution $f_0$ and it has the form
\begin{equation*}\label{representer-theorem-regularization-formula-XY1}
f_0=\sigma\mathcal{G}^*
\left(\sum_{j\in\mathbb{N}_m}c_jK(x_j,\cdot)\right),
\end{equation*}
for some $c_j\in\mathbb{R},\ j\in\mathbb{N}_m,$ where $\sigma:=\left\|\sum_{j\in\mathbb{N}_m}c_jK(x_j,\cdot)\right\|_{\mathcal{B}^*}$.
\end{thm}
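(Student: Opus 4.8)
The plan is to mirror the argument of Theorem~\ref{representer-theorem-XY1}, replacing the appeal to Theorem~\ref{representer-theorembyGateaux-explicit} with its regularization counterpart, Theorem~\ref{representer-regularization-Gateaux-explicit}. First I would settle existence and uniqueness of the solution. Since $\mathcal{B}$ is reflexive and $\mathcal{Q}_{\mathbf{y}_0}$, $\varphi$ are continuous (hence lower semi-continuous) with $\varphi$ increasing and coercive, Corollary~\ref{existence2} guarantees that the regularization problem \eqref{regularization} with $\mathbf{y}:=\mathbf{y}_0$ has at least one solution. For uniqueness I would invoke the strict convexity of $\mathcal{B}$ together with the convexity of $\mathcal{Q}_{\mathbf{y}_0}$ and $\varphi$ and the strict monotonicity of $\varphi$: if two distinct minimizers existed, passing to their midpoint would, by strict convexity of the norm applied through the strictly increasing regularization term, strictly lower the objective, contradicting optimality. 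This yields a unique solution $f_0$.

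The second step is to verify that the hypotheses of Theorem~\ref{representer-regularization-Gateaux-explicit} are met, namely that the dual space $\mathcal{B}^*$ is smooth. Here I would use the standard duality relation between strict convexity and smoothness: for a reflexive Banach space $\mathcal{B}$, strict convexity of $\mathcal{B}$ entails smoothness of $\mathcal{B}^*$, which is exactly the fact already exploited in the proof of Theorem~\ref{representer-theorem-XY1}. With $\mathcal{B}^*$ smooth and $\varphi$ strictly increasing, statement (2) of Theorem~\ref{representer-regularization-Gateaux-explicit}, applied with the point-evaluation functionals $\nu_j:=\delta_{x_j}$, shows that the unique solution $f_0$ has the form
$$
f_0=\sigma\mathcal{G}^*\left(\sum_{j\in\mathbb{N}_m}c_j\delta_{x_j}\right),
$$
for some $c_j\in\mathbb{R}$, with $\sigma:=\left\|\sum_{j\in\mathbb{N}_m}c_j\delta_{x_j}\right\|_{\mathcal{B}^*}$.

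Finally, I would convert this intrinsic representation into one expressed through the kernel. Because $\mathcal{B}$ is a right-sided RKBS, the reproducing property \eqref{reproducing-property-XY} identifies each point-evaluation functional $\delta_{x_j}$ with the element $K(x_j,\cdot)\in\mathcal{B}^*$. Substituting $\delta_{x_j}=K(x_j,\cdot)$ into the displayed equation for $f_0$ and into the definition of $\sigma$ yields the asserted formula. I do not anticipate a serious obstacle here, since the result runs parallel to the minimum norm interpolation case; the only points requiring genuine care are the justification of dual smoothness from reflexivity and strict convexity, and ensuring that the strict monotonicity of $\varphi$ is actually used, so that the representation holds for \emph{every} solution and hence, by uniqueness, pins down $f_0$ itself.
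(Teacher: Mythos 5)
Your proposal is correct and follows essentially the same route as the paper's proof: existence from Corollary \ref{existence2}, uniqueness from strict convexity of $\mathcal{B}$ together with convexity of $\mathcal{Q}_{\mathbf{y}_0}$ and $\varphi$ and strict monotonicity of $\varphi$, smoothness of $\mathcal{B}^*$ from the reflexivity--strict-convexity duality, and then statement (2) of Theorem \ref{representer-regularization-Gateaux-explicit} applied with $\nu_j:=K(x_j,\cdot)$ via the reproducing property \eqref{reproducing-property-XY}. Your only departures are cosmetic---you spell out the midpoint uniqueness argument that the paper merely asserts, and you substitute $\delta_{x_j}=K(x_j,\cdot)$ at the end rather than at the outset.
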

\begin{proof}
As pointed out in the proof of Corollary \ref{representer-theorem-regularization-XY}, the regularization problem \eqref{regularization} with $\mathbf{y}:=\mathbf{y}_0$ has a unique solution $f_0$. It is known that the reflexive Banach space $\mathcal{B}$ is strict convexity if and only if the dual space $\mathcal{B}^*$ is smooth. Thus, the hypotheses of Theorem \ref{representer-regularization-Gateaux-explicit} are satisfied. Hence, by Theorem \ref{representer-regularization-Gateaux-explicit} with $\nu_j:=K(x_j,\cdot),$ $j\in\mathbb{N}_m,$ we get the desired conclusion.
\end{proof}

Finally, we specialize Theorem \ref{representer-regularization-subdifferential-predual} to the regularization problem in the space $\ell_1(\mathbb{N})$.

\begin{thm}\label{representer-theorem-regularization-l1}
Suppose that $\mathbf{u}_j\in c_0$, $j\in\mathbb{N}_m$, and operator $\mathcal{L}$ is defined by \eqref{functional-operator-l1}. For a given $\mathbf{y}_0\in\mathbb{R}^m$, let $\mathcal{Q}_{\mathbf{y}_0}:\mathbb{R}^m\rightarrow\mathbb{R}_{+}$ be a loss function, $\varphi:\mathbb{R}_{+}\rightarrow\mathbb{R}_{+}$ be a regularizer and $\lambda>0$.

(1) If $\varphi$ is increasing, then there exists a solution $\hat{\mathbf{x}}\in\ell_1$ of the regularization problem \eqref{regularization} in $\ell_1(\mathbb{N})$ with $\mathbf{y}:=\mathbf{y}_0$ such that
\begin{equation}\label{representer-theorem-regularization-formula-l1}
\hat{\mathbf{x}}\in\|\mathbf{u}\|_{\infty}\mathrm{co}(\mathcal{V}
(\mathbf{u})),
\end{equation}
for some $c_j\in\mathbb{R}$, $j\in\mathbb{N}_m,$ where $\mathbf{u}:=\sum_{j\in\mathbb{N}_m}c_j\mathbf{u}_j.$

(2) If $\varphi$ is strictly increasing, then every solution $\hat{\mathbf{x}}$ of the regularization problem \eqref{regularization} in $\ell_1(\mathbb{N})$ with $\mathbf{y}:=\mathbf{y}_0$ satisfies \eqref{representer-theorem-regularization-formula-l1} for some $c_j\in\mathbb{R}$, $j\in\mathbb{N}_m.$
\end{thm}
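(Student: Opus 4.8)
The plan is to obtain this result as the $\ell_1(\mathbb{N})$ specialization of the predual representer theorem for regularization, Theorem \ref{representer-regularization-subdifferential-predual}, in exactly the way that Theorem \ref{representer-theorem-l1} was derived from its minimum-norm-interpolation counterpart Theorem \ref{representer-theorem-subdifferential-predual}. The key structural facts are that $\ell_1(\mathbb{N})$ has $c_0$ as its pre-dual space, that the pre-dual norm $\|\cdot\|_{\mathcal{B}_*}$ is here the norm $\|\cdot\|_{\infty}$ of $c_0$, and that the functionals $\mathbf{u}_j$ defining $\mathcal{L}$ through \eqref{functional-operator-l1} lie in $c_0=\mathcal{B}_*$. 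Thus the hypotheses of Theorem \ref{representer-regularization-subdifferential-predual} are satisfied with $\mathcal{B}:=\ell_1(\mathbb{N})$ and $\nu_j:=\mathbf{u}_j$.

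First I would invoke Theorem \ref{representer-regularization-subdifferential-predual}. For part (1), with $\varphi$ increasing, it produces a solution $\hat{\mathbf{x}}$ of \eqref{regularization} and coefficients $c_j\in\mathbb{R}$ such that, writing $\mathbf{u}:=\sum_{j\in\mathbb{N}_m}c_j\mathbf{u}_j$ and $\gamma:=\|\mathbf{u}\|_{\infty}$, there holds $\hat{\mathbf{x}}\in\gamma\,\partial\|\cdot\|_{\infty}(\mathbf{u})$. For part (2), with $\varphi$ strictly increasing, the same theorem guarantees that every solution $\hat{\mathbf{x}}$ admits such a representation. The remaining task is to turn the abstract subdifferential $\partial\|\cdot\|_{\infty}(\mathbf{u})$ into the concrete convex hull $\mathrm{co}(\mathcal{V}(\mathbf{u}))$. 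For this I would substitute the explicit formula \eqref{subdifferentials-formula-c0} of Lemma \ref{subdifferentials-c0}, valid for nonzero $\mathbf{u}\in c_0$, which converts $\hat{\mathbf{x}}\in\gamma\,\partial\|\cdot\|_{\infty}(\mathbf{u})$ into $\hat{\mathbf{x}}\in\|\mathbf{u}\|_{\infty}\,\mathrm{co}(\mathcal{V}(\mathbf{u}))$, the desired \eqref{representer-theorem-regularization-formula-l1}.

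The one delicate point, and the main obstacle, is the trivial case where the relevant functional $\mathbf{u}$ vanishes, since Lemma \ref{subdifferentials-c0} is stated only for nonzero $\mathbf{u}$ and the objects $\mathbb{N}(\mathbf{u})$, $\mathcal{V}(\mathbf{u})$ are not well-defined at $\mathbf{u}=0$. I would dispose of it as follows. If the solution produced in part (1) is $\hat{\mathbf{x}}=0$, I would simply take $c_j=0$ for all $j$, so that $\mathbf{u}=0$, $\|\mathbf{u}\|_{\infty}=0$, and \eqref{representer-theorem-regularization-formula-l1} holds trivially. For part (2), given any solution $\hat{\mathbf{x}}$: if $\hat{\mathbf{x}}=0$ the same choice $c_j=0$ works, while if $\hat{\mathbf{x}}\neq0$ the inclusion $\hat{\mathbf{x}}\in\gamma\,\partial\|\cdot\|_{\infty}(\mathbf{u})$ forces $\gamma=\|\mathbf{u}\|_{\infty}\neq0$, hence $\mathbf{u}\neq0$, and Lemma \ref{subdifferentials-c0} then applies directly. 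This mirrors the trivial-solution bookkeeping already carried out in the proofs of Theorems \ref{representer-theorem-l1} and \ref{representer-regularization-subdifferential-predual}, and I expect no computation beyond carefully tracking when $\mathbf{u}$ can be guaranteed nonzero.
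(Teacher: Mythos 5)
Your proposal is correct and follows essentially the same route as the paper's proof: invoke Theorem \ref{representer-regularization-subdifferential-predual} with $\mathcal{B}:=\ell_1(\mathbb{N})$, $\mathcal{B}_*=c_0$ and $\nu_j:=\mathbf{u}_j$, substitute the explicit subdifferential formula \eqref{subdifferentials-formula-c0} of Lemma \ref{subdifferentials-c0}, and handle the trivial solution $\hat{\mathbf{x}}=0$ separately by taking $c_j=0$. Your explicit remark that $\hat{\mathbf{x}}\neq0$ forces $\mathbf{u}\neq0$ (so that Lemma \ref{subdifferentials-c0} applies) makes precise a step the paper leaves implicit, but it is the same argument.
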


\begin{proof}
Suppose that $\varphi$ is increasing. If the regularization problem \eqref{regularization} in $\ell_1(\mathbb{N})$ with $\mathbf{y}:=\mathbf{y}_0$ has $\hat{\mathbf{x}}=0$ as a solution, then the trival solution has the form \eqref{representer-theorem-regularization-formula-l1} for some $c_j=0$, $j\in\mathbb{N}_m.$ We consider the case that the regularization problem \eqref{regularization} in $\ell_1(\mathbb{N})$ with $\mathbf{y}:=\mathbf{y}_0$ has no trival solution.
Since $\ell_1(\mathbb{N})$ has $c_0$ as its pre-dual space, by Theorem \ref{representer-regularization-subdifferential-predual} with $\nu_j:=\mathbf{u}_j$, $j\in\mathbb{N}_m,$ there exists a nonzero solution $\hat{\mathbf{x}}$ of \eqref{regularization} such that
\begin{equation}\label{subdifferential-l1}
\hat{\mathbf{x}}\in\|\mathbf{u}\|_{\infty}\partial\|\cdot\|_{\infty}(\mathbf{u}),
\end{equation}
for some $c_j\in\mathbb{R}$, $j\in\mathbb{N}_m,$ where $\mathbf{u}:=\sum_{j\in\mathbb{N}_m}c_j\mathbf{u}_j$. Substituting equation \eqref{subdifferentials-formula-c0} of Lemma \ref{subdifferentials-c0} into above equation we get \eqref{representer-theorem-regularization-formula-l1}.

If $\varphi$ is strictly increasing, the trival solution $\hat{\mathbf{x}}=0$, provided its existence, has the form \eqref{representer-theorem-regularization-formula-l1} for some $c_j=0$, $j\in\mathbb{N}_m.$
Moreover, Theorem \ref{representer-regularization-subdifferential-predual} ensures that any nontrival solution satisfies \eqref{subdifferential-l1}. This together with
\eqref{subdifferentials-formula-c0} completes the proof of this theorem.
\end{proof}

The representer theorems established in this subsection will serve as a theoretical foundation for solution methods to be developed in the next subsection for solving the regularization problem.

\subsection{Solution Methods for Regularization Problems}

The representer theorems presented in the last subsection for the regularization problem give only forms of the solutions for the problem, the same as those for the minimum morn interpolation developed in section 3. To provide solution methods for the regularization problem, one has to determine the coefficients $c_j$ involved in the solution representations. We develop in this subsection approaches to determine these coefficients, leading to solution methods for solving the regularization problem. We also consider in the next subsection solving directly the regularization problem \eqref{regularization} by the fixed-point approach, which has been discussed in section 6 for the minimum norm interpolation problem.

We derive in this subsection solution methods based on the representer theorems for the regularization problem.

We begin with considering the case that the Banach space $\mathcal{B}$ has the smooth pre-dual space $\mathcal{B}_{*}$ and $\nu_j\in\mathcal{B}_*$, for $j\in\mathbb{N}_m$. In this case, Theorem \ref{representer-regularization-Gateaux-predual} provides a simple and explicit representation for the solutions of the regularization problem \eqref{regularization}. By employing this solution representation, the regularization problem \eqref{regularization} can be converted to a finite dimensional minimization problem about the coefficients appearing in the representation.

\begin{thm}\label{solution-regu-smooth-B_*}
Suppose that $\mathcal{B}$ is a Banach space having the smooth pre-dual space $\mathcal{B}_{*}$, and $\nu_j\in\mathcal{B}_*$, $j\in\mathbb{N}_m$. Let $\mathcal{L}$ be the linear operator defined by \eqref{functional-operator} and $\mathcal{L}^*$ be the adjoint operator. For a given  $\mathbf{y}_0\in\mathbb{R}^m$, let $\mathcal{Q}_{\mathbf{y}_0}:\mathbb{R}^m\rightarrow\mathbb{R}_{+}$ be a loss function, $\varphi:\mathbb{R}_{+}\rightarrow\mathbb{R}_{+}$ be a strictly increasing regularizer and $\lambda>0$. Then
\begin{equation}\label{rt-Gateaux-predual-solve-regularization}
f_0:=\|\mathcal{L}^*(\hat{\mathbf{c}})\|_{\mathcal{B}_*}
\mathcal{G}_*(\mathcal{L}^*(\hat{\mathbf{c}})), \ \ \mathbf{\hat c}\in\mathbb{R}^m,
\end{equation}
is a solution of the regularization problem \eqref{regularization} with $\mathbf{y}:=\mathbf{y}_0$ if and only if $\hat{\mathbf{c}}\in\mathbb{R}^m$ is a solution of the minimization problem
\begin{equation}\label{optimization-c-smooth-B_*}
\inf\{\mathcal{Q}_{\mathbf{y}_0}(\|\mathcal{L}^*(\mathbf{c})\|_{\mathcal{B}_*}
\mathcal{L}(\mathcal{G}_*(\mathcal{L}^*(\mathbf{c}))))
+\lambda\varphi(\|\mathcal{L}^*(\mathbf{c})\|_{\mathcal{B}_*})
:\mathbf{c}\in\mathbb{R}^m\}.
\end{equation}
\end{thm}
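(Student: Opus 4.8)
The plan is to exploit the explicit representer theorem already proved, namely Theorem \ref{representer-regularization-Gateaux-predual}, which handles the strictly increasing case in part (2). Since $\varphi$ is strictly increasing, that theorem guarantees that \emph{every} solution $f_0$ of the regularization problem \eqref{regularization} with $\mathbf{y}:=\mathbf{y}_0$ has the form $f_0=\rho\,\mathcal{G}_*\!\left(\sum_{j\in\mathbb{N}_m}c_j\nu_j\right)$ with $\rho=\left\|\sum_{j\in\mathbb{N}_m}c_j\nu_j\right\|_{\mathcal{B}_*}$. Writing $\sum_{j\in\mathbb{N}_m}c_j\nu_j=\mathcal{L}^*(\mathbf{c})$ via the adjoint representation \eqref{adjoint-operator}, this is exactly the parametrized form \eqref{rt-Gateaux-predual-solve-regularization}. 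So the key structural fact is that the map $\mathbf{c}\mapsto \|\mathcal{L}^*(\mathbf{c})\|_{\mathcal{B}_*}\mathcal{G}_*(\mathcal{L}^*(\mathbf{c}))$ surjects onto the solution set of \eqref{regularization}, and the task reduces to matching the objective values.

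The core of the argument is to substitute the ansatz \eqref{rt-Gateaux-predual-solve-regularization} into the regularization objective $\mathcal{R}(f)=\mathcal{Q}_{\mathbf{y}_0}(\mathcal{L}(f))+\lambda\varphi(\|f\|_{\mathcal{B}})$ and show it equals the objective of \eqref{optimization-c-smooth-B_*}. For the fidelity term this is immediate: $\mathcal{L}(f_0)=\|\mathcal{L}^*(\mathbf{c})\|_{\mathcal{B}_*}\mathcal{L}(\mathcal{G}_*(\mathcal{L}^*(\mathbf{c})))$, which is precisely the argument of $\mathcal{Q}_{\mathbf{y}_0}$ in \eqref{optimization-c-smooth-B_*}. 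For the regularization term I would use that $\mathcal{G}_*$ is the G\^ateaux derivative of $\|\cdot\|_{\mathcal{B}_*}$, so by \eqref{norm-GateauxDiff} applied in $\mathcal{B}_*$ one has $\|\mathcal{G}_*(\nu)\|_{\mathcal{B}}=1$ for $\nu\neq0$ (since $\mathcal{G}_*(\nu)\in\mathcal{B}=(\mathcal{B}_*)^*$ and the norm of the G\^ateaux derivative is one). Hence $\|f_0\|_{\mathcal{B}}=\|\mathcal{L}^*(\mathbf{c})\|_{\mathcal{B}_*}\|\mathcal{G}_*(\mathcal{L}^*(\mathbf{c}))\|_{\mathcal{B}}=\|\mathcal{L}^*(\mathbf{c})\|_{\mathcal{B}_*}$, matching $\varphi(\|\mathcal{L}^*(\mathbf{c})\|_{\mathcal{B}_*})$. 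With both terms identified, $\mathcal{R}(f_0)$ coincides with the objective of \eqref{optimization-c-smooth-B_*} evaluated at $\mathbf{c}$.

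With this identity of objectives in hand, both directions follow. For the forward direction: if $f_0$ of the form \eqref{rt-Gateaux-predual-solve-regularization} is a solution of \eqref{regularization}, then its objective value is minimal among all $f\in\mathcal{B}$; since every admissible $\mathbf{c}'\in\mathbb{R}^m$ produces via \eqref{rt-Gateaux-predual-solve-regularization} a feasible element of $\mathcal{B}$ whose $\mathcal{R}$-value equals the \eqref{optimization-c-smooth-B_*}-objective at $\mathbf{c}'$, minimality of $\mathcal{R}(f_0)$ forces $\mathbf{c}$ to minimize \eqref{optimization-c-smooth-B_*}. Conversely, if $\hat{\mathbf{c}}$ solves \eqref{optimization-c-smooth-B_*}, then the corresponding $f_0$ attains the infimum of $\mathcal{R}$ over the parametrized family; because Theorem \ref{representer-regularization-Gateaux-predual}(2) ensures every genuine solution of \eqref{regularization} lies in this family, the infimum of \eqref{optimization-c-smooth-B_*} equals the infimum of \eqref{regularization} over all of $\mathcal{B}$, whence $f_0$ is a true solution. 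I would phrase this as: the two infima agree, and the parametrization carries minimizers to minimizers.

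The main obstacle I anticipate is handling the degenerate case $\mathcal{L}^*(\mathbf{c})=0$, i.e.\ $\mathbf{c}=0$ (possible by linear independence of the $\nu_j$ only when $\mathbf{c}=0$), where $\mathcal{G}_*$ is undefined pointwise but set to $0$ by the stated convention, giving $f_0=0$. One must check the objective identity still holds there, using $\mathcal{Q}_{\mathbf{y}_0}(\mathcal{L}(0))=\mathcal{Q}_{\mathbf{y}_0}(0)$ and $\varphi(\|0\|_{\mathcal{B}})=\varphi(0)$; this is a direct verification since both sides collapse consistently. The only other point requiring care is confirming that the surjectivity claim of Theorem \ref{representer-regularization-Gateaux-predual}(2) genuinely covers \emph{all} solutions (not merely the existence of one), which is exactly what the strictly increasing hypothesis on $\varphi$ buys us through Proposition \ref{relation-two-optimization}(2). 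I would cite these two ingredients and keep the computation brief, since the arithmetic is routine once the norm-one property of $\mathcal{G}_*$ is invoked.
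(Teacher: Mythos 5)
Your proposal is correct and follows essentially the same route as the paper's proof: the paper likewise invokes Theorem \ref{representer-regularization-Gateaux-predual}(2) (strict monotonicity of $\varphi$) to conclude that every solution lies in the parametrized family $\mathcal{A}:=\{\|\mathcal{L}^*(\mathbf{c})\|_{\mathcal{B}_*}\mathcal{G}_*(\mathcal{L}^*(\mathbf{c})):\mathbf{c}\in\mathbb{R}^m\}$, restricts the optimization to $\mathcal{A}$, and then matches objectives using $\|\mathcal{G}_*(\mathcal{L}^*(\mathbf{c}))\|_{\mathcal{B}}=1$ from \eqref{norm-GateauxDiff} to get $\|f\|_{\mathcal{B}}=\|\mathcal{L}^*(\mathbf{c})\|_{\mathcal{B}_*}$, exactly your objective-identity argument. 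Your explicit treatment of the degenerate case $\mathcal{L}^*(\mathbf{c})=0$ is a small point of extra care the paper leaves implicit, but it does not change the argument.
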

\begin{proof}
Since $\varphi$ is strictly increasing, Theorem \ref{representer-regularization-Gateaux-predual} ensures that every solution $f_0$ of the regularization problem \eqref{regularization} with $\mathbf{y}:=\mathbf{y}_0$ has the form \eqref{rt-Gateaux-predual-solve-regularization}, for some $\mathbf{\hat c}\in\mathbb{R}^m$. It suffices to show that $f_0$ in the form \eqref{rt-Gateaux-predual-solve-regularization} is a solution of \eqref{regularization} if and only if $\mathbf{\hat c}$ is a solution of the minimization problem \eqref{optimization-c-smooth-B_*}. To this end, we define a subset $\mathcal{A}$ of $\mathcal{B}$ by
$$
\mathcal{A}:=\{f\in\mathcal{B}: f=\|\mathcal{L}^*(\mathbf{c})\|_{\mathcal{B}_*}
\mathcal{G}_*(\mathcal{L}^*(\mathbf{c})), \mathbf{c}\in\mathbb{R}^m\}.
$$
Clearly, the regularization problem \eqref{regularization} with $\mathbf{y}:=\mathbf{y}_0$ is equivalent to
\begin{equation}\label{optimization-c-smooth-B_*-1}
\inf\{\mathcal{Q}_{\mathbf{y}_0}(\mathcal{L}(f))+\lambda\varphi(\|f\|_{\mathcal{B}}):f\in\mathcal{A}\}.
\end{equation}
Note that each $f\in\mathcal{A}$ has the form
\begin{equation}\label{special-representation-of-f}
f:=\|\mathcal{L}^*(\mathbf{c})\|_{\mathcal{B}_*}
\mathcal{G}_*(\mathcal{L}^*(\mathbf{c})).
\end{equation}
By equation \eqref{norm-GateauxDiff} we have that $\|\mathcal{G}_*(\mathcal{L}^*(\mathbf{c}))\|_{\mathcal{B}}=1$ and then
\begin{equation}\label{B-norm-of-f}
\|f\|_{\mathcal{B}}=\|\mathcal{L}^*(\mathbf{c})\|_{\mathcal{B}_*}.
\end{equation}
Substituting the representation \eqref{special-representation-of-f} of $f\in\mathcal{A}$ and the norm representation \eqref{B-norm-of-f} into the first term and second term of the objective function of the minimization problem \eqref{optimization-c-smooth-B_*-1}, respectively, we observe that the minimization problem \eqref{optimization-c-smooth-B_*-1} is equivalent to \eqref{optimization-c-smooth-B_*}, proving the desired result.
\end{proof}

As a special case, we consider the regularization problem \eqref{regularization} in a Hilbert space $\mathcal{H}$, that is  $\mathcal{B}:=\mathcal{H}$. In this case, the regularizer $\varphi$ has the form $\varphi(t):=t^2,$ $t\in\mathbb{R}_+$, which is strictly increasing on $\mathbb{R}_+$, $\mathcal{B}_*=\mathcal{H}$ and the linearly independent functionals $\nu_j$, $j\in\mathbb{N}_m$, are identified with $g_j\in\mathcal{H}$.

\begin{cor}\label{solution-regu-H}
Suppose that $\mathcal{H}$ is a Hilbert space and $g_j\in\mathcal{H}$, $j\in\mathbb{N}_m$. Let $\mathcal{L}$ be the linear operator defined by \eqref{functional-operator} with $\nu_j:=g_j$, $j\in\mathbb{N}_m$, $\mathcal{L}^*$ be the adjoint operator and $\mathbf{G}$ be the Gram matrix defined by \eqref{Gram-matrix}. For a given $\mathbf{y}_0\in\mathbb{R}^m$, let $\mathcal{Q}_{\mathbf{y}_0}:\mathbb{R}^m\rightarrow\mathbb{R}_{+}$ be a loss function and $\lambda>0$. Then
\begin{equation}\label{rt-regu-H-solve}
f_0:=\sum_{j\in\mathbb{N}_m}\hat{c}_jg_j
\end{equation}
is a solution of the regularization problem \eqref{regularization} with $\mathbf{y}:=\mathbf{y}_0$
if and only if $\hat{\mathbf{c}}:=[\hat{c}_j:j\in\mathbb{N}_m]\in\mathbb{R}^m$ is a solution of the minimization problem
\begin{equation}\label{optimization-c-Hilbert}
\inf\{\mathcal{Q}_{\mathbf{y}_0}(\mathbf{G}\mathbf{c})+
\lambda\mathbf{c}^{\top}\mathbf{G}\mathbf{c}
:\mathbf{c}\in\mathbb{R}^m\}.
\end{equation}
\end{cor}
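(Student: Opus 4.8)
The plan is to obtain this corollary as a direct specialization of Theorem \ref{solution-regu-smooth-B_*} to the case $\mathcal{B} := \mathcal{H}$. Since every Hilbert space is its own smooth pre-dual space, we have $\mathcal{H}_* = \mathcal{H}$, and the regularizer $\varphi(t) := t^2$ is strictly increasing on $\mathbb{R}_{+}$; hence the hypotheses of Theorem \ref{solution-regu-smooth-B_*} are met once the functionals $\nu_j$ are identified with $g_j$, $j \in \mathbb{N}_m$. It then remains only to show that, in this special setting, the general solution representation \eqref{rt-Gateaux-predual-solve-regularization} collapses to \eqref{rt-regu-H-solve} and that the finite dimensional minimization problem \eqref{optimization-c-smooth-B_*} becomes \eqref{optimization-c-Hilbert}.

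First I would simplify the solution form. In a Hilbert space the G\^{a}teaux derivative of the norm satisfies $f = \|f\|_{\mathcal{H}}\mathcal{G}(f)$ for all $f \in \mathcal{H}$, as recorded in \eqref{GateauxDiff-H}. Applying this with $f := \mathcal{L}^*(\mathbf{c})$ yields $\|\mathcal{L}^*(\mathbf{c})\|_{\mathcal{H}}\mathcal{G}_*(\mathcal{L}^*(\mathbf{c})) = \mathcal{L}^*(\mathbf{c})$, so that \eqref{rt-Gateaux-predual-solve-regularization} reduces to $f_0 = \mathcal{L}^*(\hat{\mathbf{c}})$. By the representation \eqref{adjoint-operator} of the adjoint operator, $\mathcal{L}^*(\hat{\mathbf{c}}) = \sum_{j\in\mathbb{N}_m}\hat{c}_j g_j$, which is precisely \eqref{rt-regu-H-solve}. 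Moreover the same vector $\hat{\mathbf{c}}$ appears on both sides, so the equivalence asserted in the corollary inherits the ``if and only if'' of Theorem \ref{solution-regu-smooth-B_*} without any re-scaling of coefficients.

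Next I would rewrite the objective in \eqref{optimization-c-smooth-B_*}. For the fidelity term, the linearity of $\mathcal{L}$ lets me pull the scalar inside, and the identity $\|\mathcal{L}^*(\mathbf{c})\|_{\mathcal{H}}\mathcal{G}_*(\mathcal{L}^*(\mathbf{c})) = \mathcal{L}^*(\mathbf{c})$ then gives $\|\mathcal{L}^*(\mathbf{c})\|_{\mathcal{H}}\mathcal{L}(\mathcal{G}_*(\mathcal{L}^*(\mathbf{c}))) = \mathcal{L}(\mathcal{L}^*(\mathbf{c}))$; using \eqref{functional-operator} together with the definition \eqref{Gram-matrix} of the Gram matrix one computes $\mathcal{L}(\mathcal{L}^*(\mathbf{c})) = \mathbf{G}\mathbf{c}$, whence the first term becomes $\mathcal{Q}_{\mathbf{y}_0}(\mathbf{G}\mathbf{c})$. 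For the regularization term, since $\varphi(t) = t^2$ and $\|\mathcal{L}^*(\mathbf{c})\|_{\mathcal{H}}^2 = \langle \mathcal{L}^*(\mathbf{c}), \mathcal{L}^*(\mathbf{c})\rangle_{\mathcal{H}} = \mathbf{c}^{\top}\mathbf{G}\mathbf{c}$, the second term equals $\lambda\mathbf{c}^{\top}\mathbf{G}\mathbf{c}$. Substituting these two evaluations shows that \eqref{optimization-c-smooth-B_*} is identical to \eqref{optimization-c-Hilbert}, and the corollary follows.

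I expect no genuine obstacle here: the argument is a routine specialization, with the only computations being the two Gram-matrix evaluations and the Hilbert space identity $f = \|f\|_{\mathcal{H}}\mathcal{G}(f)$. It is worth noting that, unlike Corollary \ref{solution-mni-H}, the present statement asserts only an equivalence between the solution set of \eqref{regularization} and that of \eqref{optimization-c-Hilbert} and makes no uniqueness claim, so positive definiteness of $\mathbf{G}$ plays no role in the proof and need not be invoked.
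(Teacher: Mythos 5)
Your proposal is correct and takes essentially the same route as the paper's own proof: specialize Theorem \ref{solution-regu-smooth-B_*} with $\mathcal{H}_*=\mathcal{H}$ and $\varphi(t):=t^2$, use the identity \eqref{GateauxDiff-H} to collapse \eqref{rt-Gateaux-predual-solve-regularization} to $f_0=\mathcal{L}^*(\hat{\mathbf{c}})=\sum_{j\in\mathbb{N}_m}\hat{c}_jg_j$, and evaluate $\mathcal{L}\mathcal{L}^*(\mathbf{c})=\mathbf{G}\mathbf{c}$ and $\|\mathcal{L}^*(\mathbf{c})\|_{\mathcal{H}}^2=\mathbf{c}^{\top}\mathbf{G}\mathbf{c}$ to turn \eqref{optimization-c-smooth-B_*} into \eqref{optimization-c-Hilbert}. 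Your closing remark that positive definiteness of $\mathbf{G}$ is not needed here is also consistent with the paper, which invokes it only for the uniqueness claim in Corollary \ref{solution-mni-H}.
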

\begin{proof}
We conclude from Theorem \ref{solution-regu-smooth-B_*} that $f_0$ in the form \eqref{rt-Gateaux-predual-solve-regularization} is a solution of the regularization problem \eqref{regularization} with $\mathbf{y}:=\mathbf{y}_0$ if and only if $\hat{\mathbf{c}}$ is a solution of the minimization problem \eqref{optimization-c-smooth-B_*}. It surfaces to represent $f_0$ in the form \eqref{rt-regu-H-solve} and to reformulate the minimization problem \eqref{optimization-c-smooth-B_*} in the form \eqref{optimization-c-Hilbert}. Note that $\mathcal{H}_*=\mathcal{H}$. According to equation \eqref{GateauxDiff-H} we get that
\begin{equation}\label{optimization-c-Hilbert1}
\|\mathcal{L}^*(\mathbf{c})\|_{\mathcal{H}}
\mathcal{G}(\mathcal{L}^*(\mathbf{c}))=\mathcal{L}^*(\mathbf{c}),
\ \ \mbox{for all}\ \ \mathbf{c}\in\mathbb{R}^m.
\end{equation}
Substituting equation \eqref{optimization-c-Hilbert1} with $\mathbf{c}:=\hat{\mathbf{c}}$ and the representation \eqref{adjoint-operator} of $\mathcal{L}^*$ into \eqref{rt-Gateaux-predual-solve-regularization}, we get the form \eqref{rt-regu-H-solve} of $f_0$. Again by equation \eqref{optimization-c-Hilbert1}, we rewrite \eqref{optimization-c-smooth-B_*} as
$$
\inf\{\mathcal{Q}_{\mathbf{y}_0}(\mathcal{L}\mathcal{L}^*(\mathbf{c}))
+\lambda\|\mathcal{L}^*(\mathbf{c})\|_{\mathcal{H}}^2
:\mathbf{c}\in\mathbb{R}^m\}.
$$
According to the definition of the Gram matrix $\mathbf{G}$, we have that
$$
\mathcal{L}\mathcal{L}^*(\mathbf{c})=\mathbf{G}\mathbf{c}
\ \ \mbox{and}\ \
\|\mathcal{L}^*(\mathbf{c})\|_{\mathcal{H}}^2=
\mathbf{c}^{\top}\mathbf{G}\mathbf{c},
\ \ \mbox{for all}\ \ \mathbf{c}\in\mathbb{R}^m.
$$
Substituting these equations into the above minimization problem leads to \eqref{optimization-c-Hilbert}.
\end{proof}

Below, we discuss how to solve the finite dimensional minimization problem \eqref{optimization-c-Hilbert}.
Approaches that may be adopted to solve the problem \eqref{optimization-c-Hilbert} depend on the smoothness of the function $\mathcal{Q}_{\mathbf{y}_0}$ appearing in the fidelity term of \eqref{optimization-c-Hilbert}. When $\mathcal{Q}_{\mathbf{y}_0}$ is differentiable, the minimization problem \eqref{optimization-c-Hilbert} may be solved by using standard methods such as the gradient descent method. When $\mathcal{Q}_{\mathbf{y}_0}$ is not differentiable, standard methods for solving minimization problems are not applicable to problem \eqref{optimization-c-Hilbert} and it requires special treatment. We will pay a special attention to the case when $\mathcal{Q}_{\mathbf{y}_0}$ is not differentiable.

We now consider solving the finite dimensional minimization problem \eqref{optimization-c-Hilbert}.

\begin{rem}
For a given $\mathbf{y}_0\in\mathbb{R}^m$, let $\mathcal{Q}_{\mathbf{y}_0}$ be a convex loss function. If $\mathcal{Q}_{\mathbf{y}_0}$ is non-differentiable, then $\hat{\mathbf{c}}\in\mathbb{R}^m$ is the unique solution of the minimization problem
\eqref{optimization-c-Hilbert} if and only if $\hat{\mathbf{c}}$ satisfies
\begin{equation}\label{fixed-point-H}
\hat{\mathbf{c}}=\frac{1}{-2\lambda}\mathrm{prox}_{\mathcal{Q}_{\mathbf{y}_0}^*}
(-2\lambda\hat{\mathbf{c}}+\mathbf{G}\hat{\mathbf{c}}).
\end{equation}
\end{rem}
\begin{proof}
Note that the assumption that $\mathcal{Q}_{\mathbf{y}_0}$ is convex ensures the uniqueness of the solution of the minimization problem \eqref{optimization-c-Hilbert}. Because of the linear independence of $g_j\in\mathcal{H}$, $j\in\mathbb{N}_m$, the Gram matrix $\mathbf{G}$ is symmetric and positive definite. Then by the Fermat rule and the chain rule \eqref{chain-rule}, $\hat{\mathbf{c}}$ is the solution of \eqref{optimization-c-Hilbert} if and only if
$$
0\in\mathbf{G}\partial\mathcal{Q}_{\mathbf{y}_0}(\mathbf{G}\hat{\mathbf{c}})
+2\lambda\mathbf{G}\hat{\mathbf{c}},
$$
which is equivalent to
\begin{equation}\label{optimization-c-Hilbert-inclusion}
-2\lambda\hat{\mathbf{c}}\in\partial
\mathcal{Q}_{\mathbf{y}_0}(\mathbf{G}\hat{\mathbf{c}}).
\end{equation}
If $\mathcal{Q}_{\mathbf{y}_0}$ is non-differentiable, we can characterize the solution of \eqref{optimization-c-Hilbert} via a fixed-point equation. According to \eqref{conjugate}, the inclusion relation \eqref{optimization-c-Hilbert-inclusion} holds if and only if
$$
\mathbf{G}\hat{\mathbf{c}}\in\partial
\mathcal{Q}_{\mathbf{y}_0}^*(-2\lambda\hat{\mathbf{c}}).
$$
Hence, by Lemma \ref{subdiff-prox-Rm} we obtain the equivalence between this inclusion relation and the fixed-point equation \eqref{fixed-point-H}.
\end{proof}

In the case that the loss function is differentiable, the solution of the finite dimensional minimization problem \eqref{optimization-c-Hilbert} satisfies a system which usually is nonlinear. For the loss function with a special form, the nonlinear system reduces to a linear one.

\begin{rem}
If the convex function $\mathcal{Q}_{\mathbf{y}_0}$ is differentiable, then $\hat{\mathbf{c}}$ is the unique solution of the minimization problem \eqref{optimization-c-Hilbert} if and only if $\hat{\mathbf{c}}$ satisfies the system
\begin{equation}\label{system-regu-H}
-2\lambda\hat{\mathbf{c}}=
\nabla\mathcal{Q}_{\mathbf{y}_0}(\mathbf{G}\hat{\mathbf{c}}).
\end{equation}
Particularly, if $\mathcal{Q}_{\mathbf{y}_0}$ has the form \eqref{RN}, then the system \eqref{system-regu-H} reduces to the linear system
\begin{equation}\label{system-regu-RN}
(\mathbf{G}+\lambda\mathbf{I})\hat{\mathbf{c}}=\mathbf{y}_0.
\end{equation}
\end{rem}
\begin{proof}
Note that $\hat{\mathbf{c}}$ is the solution of \eqref{optimization-c-Hilbert} if and only if it satisfies \eqref{optimization-c-Hilbert-inclusion}. If $\mathcal{Q}_{\mathbf{y}_0}$ is differentiable, then we have that
$$
\mathcal{Q}_{\mathbf{y}_0}(\mathbf{G}\hat{\mathbf{c}})
=\{\nabla\mathcal{Q}_{\mathbf{y}_0}(\mathbf{G}\hat{\mathbf{c}})\}.
$$
Substituting the above equation into \eqref{optimization-c-Hilbert-inclusion}, we obtain the system \eqref{system-regu-H}. If $\mathcal{Q}_{\mathbf{y}_0}$ has the form \eqref{RN}, then there holds
$$
\nabla\mathcal{Q}_{\mathbf{y}_0}(\mathbf{G}\hat{\mathbf{c}})
=2(\mathbf{G}\hat{\mathbf{c}}-\mathbf{y}_0),
$$
which together with \eqref{system-regu-H} leads to the linear system \eqref{system-regu-RN}.
\end{proof}

Our second example concerns the regularization problem in a uniformly Fr\'{e}chet smooth and uniformly convex Banach space $\mathcal{B}$. In such a space, there exists a unique semi-inner-product $[\cdot,\cdot]_{\mathcal{B}}$ that induces the norm $\|\cdot\|_{\mathcal{B}}$. Moreover, for each $\nu\in\mathcal{B}^*$, there exists a unique $g\in\mathcal{B}$ such that
$\nu=g^{\sharp}$. Thus, in this case, the linear functional $\nu_j$ appearing in  \eqref{regularization} is identified with $g_j^{\sharp}$, for $g_j\in\mathcal{B}$, $j\in\mathbb{N}_m$. With respect to the sequence $g_j\in\mathcal{B}$, $j\in\mathbb{N}_m$, we introduce a nonlinear operator $\mathbf{G}_{\mathrm{s.i.p}}$ from $\mathbb{R}^m$ to itself. Specifically, set
\begin{equation}\label{Gram-semi-inner}
\mathbf{G}_{\mathrm{s.i.p}}(\mathbf{c})
:=\Bigg[\bigg[g_j^{\sharp},\sum_{k\in\mathbb{N}_m}
c_kg_k^{\sharp}\bigg]_{\mathcal{B}^*}:j\in\mathbb{N}_m\Bigg],
\ \ \mbox{for all}\ \ \mathbf{c}:=[c_k:k\in\mathbb{N}_m]\in\mathbb{R}^m.
\end{equation}
Here, $[\cdot,\cdot]_{\mathcal{B}^*}$ is the semi-inner-product of $\mathcal{B}^*$ defined by \eqref{semi-inner-product-B^*}.

\begin{cor}\label{solution-regu-ufuc}
Suppose that $\mathcal{B}$ is a uniformly Fr\'{e}chet smooth and uniformly convex Banach space
and $g_j\in\mathcal{B}$, $j\in\mathbb{N}_m$. Let $\mathcal{L}$ be the linear operator defined by \eqref{functional-operator} with $\nu_j:=g_j^{\sharp}$, $j\in\mathbb{N}_m$, $\mathcal{L}^*$ be the adjoint operator of $\mathcal{L}$ and $\mathbf{G}_{\mathrm{s.i.p}}$ be the operator defined by \eqref{Gram-semi-inner}. For a given  $\mathbf{y}_0\in\mathbb{R}^m$, let $\mathcal{Q}_{\mathbf{y}_0}:\mathbb{R}^m\rightarrow\mathbb{R}_{+}$ be a loss function, $\varphi:\mathbb{R}_{+}\rightarrow\mathbb{R}_{+}$ be a strictly increasing regularizer and $\lambda>0$. Then
\begin{equation}\label{rt-regu-ufuc-solve}
f_0:=\left(\sum_{j\in\mathbb{N}_m}\hat{c}_jg_j^{\sharp}\right)^{\sharp},
\end{equation}
is a solution of the regularization problem \eqref{regularization} with $\mathbf{y}:=\mathbf{y}_0$
if and only if $\hat{\mathbf{c}}:=[\hat{c}_j:j\in\mathbb{N}_m]\in\mathbb{R}^m$ is a solution of the minimization problem
\begin{equation}\label{optimization-c-ufuc}
\inf\left\{\mathcal{Q}_{\mathbf{y}_0}
(\mathbf{G}_{\mathrm{s.i.p}}(\mathbf{c}))
+\lambda\varphi\left((\mathbf{c}^{\top}\mathbf{G}_{\mathrm{s.i.p}}(\mathbf{c}))^{1/2}\right)
:\mathbf{c}\in\mathbb{R}^m\right\}.
\end{equation}
\end{cor}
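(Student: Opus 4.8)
The plan is to specialize Theorem \ref{solution-regu-smooth-B_*} to the uniformly Fr\'{e}chet smooth and uniformly convex setting, in exact parallel with the way Corollary \ref{solution-regu-H} was obtained for the Hilbert space case. First I would record that such a space $\mathcal{B}$ is uniformly convex and hence reflexive, so that its dual $\mathcal{B}^*$ may be taken as its pre-dual space $\mathcal{B}_*$. Since $\mathcal{B}$ is uniformly convex, its dual $\mathcal{B}^*$ is uniformly Fr\'{e}chet smooth and therefore smooth. Thus the hypotheses of Theorem \ref{solution-regu-smooth-B_*} are satisfied with $\mathcal{B}_*=\mathcal{B}^*$, and that theorem asserts that $f_0$ of the form \eqref{rt-Gateaux-predual-solve-regularization} is a solution of \eqref{regularization} with $\mathbf{y}:=\mathbf{y}_0$ if and only if $\hat{\mathbf{c}}$ solves the finite dimensional problem \eqref{optimization-c-smooth-B_*}. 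It then remains to rewrite every ingredient of \eqref{rt-Gateaux-predual-solve-regularization} and \eqref{optimization-c-smooth-B_*} in terms of the semi-inner-product and the operator $\mathbf{G}_{\mathrm{s.i.p}}$.

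Next I would invoke the identity \eqref{equation-c-ufuc-1}, that is $\|\mathcal{L}^*(\mathbf{c})\|_{\mathcal{B}^*}\mathcal{G}^*(\mathcal{L}^*(\mathbf{c}))=(\mathcal{L}^*(\mathbf{c}))^{\sharp}$, together with the representation \eqref{adjoint-operator} of $\mathcal{L}^*$, to convert \eqref{rt-Gateaux-predual-solve-regularization} directly into $f_0=\left(\sum_{j\in\mathbb{N}_m}\hat{c}_jg_j^{\sharp}\right)^{\sharp}$, which is the asserted form \eqref{rt-regu-ufuc-solve}. The same identity turns the fidelity argument $\|\mathcal{L}^*(\mathbf{c})\|_{\mathcal{B}^*}\mathcal{L}(\mathcal{G}^*(\mathcal{L}^*(\mathbf{c})))$ into $\mathcal{L}((\mathcal{L}^*(\mathbf{c}))^{\sharp})$. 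To evaluate this vector I would set $\nu:=\mathcal{L}^*(\mathbf{c})=\sum_{k\in\mathbb{N}_m}c_kg_k^{\sharp}$ and compute its $j$th component $\langle g_j^{\sharp},\nu^{\sharp}\rangle_{\mathcal{B}}$; the dual-element relation \eqref{dual-element-B^*} on $\mathcal{B}^*$ gives $\langle g_j^{\sharp},\nu^{\sharp}\rangle_{\mathcal{B}}=[g_j^{\sharp},\nu]_{\mathcal{B}^*}$, which is precisely the $j$th entry of $\mathbf{G}_{\mathrm{s.i.p}}(\mathbf{c})$ by the definition \eqref{Gram-semi-inner}. Hence the fidelity argument equals $\mathbf{G}_{\mathrm{s.i.p}}(\mathbf{c})$.

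For the regularizer term I would compute $\|\mathcal{L}^*(\mathbf{c})\|_{\mathcal{B}^*}^2=[\nu,\nu]_{\mathcal{B}^*}$ and expand using the fact that $[\cdot,\cdot]_{\mathcal{B}^*}$ is linear in its first argument, so that $[\sum_{j}c_jg_j^{\sharp},\nu]_{\mathcal{B}^*}=\sum_{j}c_j[g_j^{\sharp},\nu]_{\mathcal{B}^*}=\mathbf{c}^{\top}\mathbf{G}_{\mathrm{s.i.p}}(\mathbf{c})$; therefore $\|\mathcal{L}^*(\mathbf{c})\|_{\mathcal{B}^*}=(\mathbf{c}^{\top}\mathbf{G}_{\mathrm{s.i.p}}(\mathbf{c}))^{1/2}$. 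Substituting these two identities into \eqref{optimization-c-smooth-B_*} yields the minimization problem \eqref{optimization-c-ufuc}, which completes the argument.

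I expect the main delicate point to be the fidelity-term computation, where one must keep in mind that the semi-inner-product is linear only in its first variable and must apply the dual-element formula \eqref{dual-element-B^*} for $\mathcal{B}^*$ rather than for $\mathcal{B}$ in order to pass between $\langle g_j^{\sharp},\nu^{\sharp}\rangle_{\mathcal{B}}$ and $[g_j^{\sharp},\nu]_{\mathcal{B}^*}$. Once this bookkeeping is handled correctly, the remainder is a routine substitution mirroring the Hilbert-space proof of Corollary \ref{solution-regu-H}.
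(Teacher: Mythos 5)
Your proposal is correct and takes essentially the same route as the paper's own proof: both specialize Theorem \ref{solution-regu-smooth-B_*} with $\mathcal{B}_*=\mathcal{B}^*$, pass to dual elements via the identity $\|\mathcal{L}^*(\mathbf{c})\|_{\mathcal{B}^*}\mathcal{G}^*(\mathcal{L}^*(\mathbf{c}))=(\mathcal{L}^*(\mathbf{c}))^{\sharp}$ (the paper invokes \eqref{relation-gateaux-dual1} with $\mathcal{B}$ replaced by $\mathcal{B}^*$, which is the same identity you cite as \eqref{equation-c-ufuc-1}), and then use \eqref{dual-element-B^*} together with the first-variable linearity of $[\cdot,\cdot]_{\mathcal{B}^*}$ to identify the fidelity argument with $\mathbf{G}_{\mathrm{s.i.p}}(\mathbf{c})$ and the norm with $(\mathbf{c}^{\top}\mathbf{G}_{\mathrm{s.i.p}}(\mathbf{c}))^{1/2}$. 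Your bookkeeping, including the caution that the semi-inner-product is linear only in its first argument, matches the paper's proof step for step.
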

\begin{proof}
Theorem \ref{solution-regu-smooth-B_*} ensures that $f_0$ with the form \eqref{rt-Gateaux-predual-solve-regularization} is a solution of the regularization problem \eqref{regularization} with $\mathbf{y}:=\mathbf{y}_0$ if and only if $\hat{\mathbf{c}}$ is a solution of the minimization problem \eqref{optimization-c-smooth-B_*}. By making use of the semi-inner-product, we will represent $f_0$ in the form \eqref{rt-regu-ufuc-solve} and the minimization problem \eqref{optimization-c-smooth-B_*} in the form \eqref{optimization-c-ufuc}.
Note that for the uniformly Fr\'{e}chet smooth and uniformly convex Banach space $\mathcal{B}$,  its dual space $\mathcal{B}^*$ is identified with the pre-dual space $\mathcal{B}_*$. Substituting \eqref{relation-gateaux-dual1} with $\mathcal{B}$ being replaced by $\mathcal{B}^*$ and $g$ by $\sum_{j\in\mathbb{N}_m}\hat{c}_jg_j^{\sharp}$ into \eqref{rt-Gateaux-predual-solve-regularization}, $f_0$ may be rewritten as \eqref{rt-regu-ufuc-solve}. Again by \eqref{relation-gateaux-dual1},
we rewrite \eqref{optimization-c-smooth-B_*} as
\begin{equation}\label{optimization-c-ufuc1}
\inf\{\mathcal{Q}_{\mathbf{y}_0}(\mathcal{L}(\mathcal{L}^*(\mathbf{c}))^{\sharp})
+\lambda\varphi(\|\mathcal{L}^*(\mathbf{c})\|_{\mathcal{B}^*})
:\mathbf{c}\in\mathbb{R}^m\}.
\end{equation}
It follows from \eqref{dual-element-B^*} that
$$
\left\langle g_j^{\sharp},(\mathcal{L}^*(\mathbf{c}))^{\sharp}
\right\rangle_{\mathcal{B}}
=\left[g_j^{\sharp},\mathcal{L}^*(\mathbf{c})\right]_{\mathcal{B}^*},
\ \ \mbox{for all}\ \ j\in\mathbb{N}_m,
$$
which together with the representations of $\mathcal{L}^*$ and $\mathbf{G}_{\mathrm{s.i.p}}$ leads to
\begin{equation}\label{optimization-c-ufuc2}
\mathcal{L}(\mathcal{L}^*(\mathbf{c}))^{\sharp}
=\mathbf{G}_{\mathrm{s.i.p}}(\mathbf{c}).
\end{equation}
There holds for all $\mathbf{c}\in\mathbb{R}^m$ that
\begin{equation*}
\|\mathcal{L}^*(\mathbf{c})\|_{\mathcal{B}^*}^2
=[\mathcal{L}^*(\mathbf{c}),\mathcal{L}^*(\mathbf{c})]_{\mathcal{B}^*}
=\sum_{j\in\mathbb{N}_m}c_j\left[g_j^{\sharp},\mathcal{L}^*(\mathbf{c})\right]_{\mathcal{B}^*}.
\end{equation*}
By the definition of the nonlinear operator $\mathbf{G}_{\mathrm{s.i.p}}$, the above equation leads to
\begin{equation}\label{optimization-c-ufuc3}
\|\mathcal{L}^*(\mathbf{c})\|_{\mathcal{B}^*}^2
=\mathbf{c}^{\top}\mathbf{G}_{\mathrm{s.i.p}}(\mathbf{c}).
\end{equation}
Substituting equations \eqref{optimization-c-ufuc2} and \eqref{optimization-c-ufuc3} into the minimization problem \eqref{optimization-c-ufuc1}, we get the equivalent form \eqref{optimization-c-ufuc}.
\end{proof}

In the following, we show that the finite dimensional minimization problem \eqref{optimization-c-ufuc} reduces to a nonlinear system in a special case that both $\mathcal{Q}_{\mathbf{y}_0}$ and $\varphi$ are convex and differentiable.

\begin{rem}
For a given  $\mathbf{y}_0\in\mathbb{R}^m$, let $\mathcal{Q}_{\mathbf{y}_0}$ be a convex loss function and $\varphi$ be a strictly increasing and convex regularizer. If $\mathcal{Q}_{\mathbf{y}_0}$ and $\varphi$ are both differentiable, then $\hat{\mathbf{c}}\neq 0$ is the unique solution of the minimization problem \eqref{optimization-c-ufuc} if and only if $\hat{\mathbf{c}}$ is the solution of the nonlinear system
\begin{equation}\label{regu-solve-nonlinear-c}
\nabla\mathcal{Q}_{\mathbf{y}_0}
(\mathbf{G}_{\mathrm{s.i.p}}(\hat{\mathbf{c}}))
+\lambda\frac{\varphi^{'}\left((\hat{\mathbf{c}}^{\top}
\mathbf{G}_{\mathrm{s.i.p}}(\hat{\mathbf{c}}))^{1/2}\right)}
{(\hat{\mathbf{c}}^{\top}\mathbf{G}_{\mathrm{s.i.p}}(\hat{\mathbf{c}}))^{1/2}}
\hat{\mathbf{c}}=0.
\end{equation}
\end{rem}
\begin{proof}
The assumptions about $\mathcal{Q}_{\mathbf{y}_0}$ and $\varphi$ ensure the uniqueness of the solution of the minimization problem \eqref{optimization-c-ufuc}. Note that $\hat{\mathbf{c}}\neq0$ is the solution of the minimization problem \eqref{optimization-c-ufuc} if and only if
\begin{equation}\label{regu-solve-inclusion-relation-c}
0\in\mathcal{L}^*\partial\mathcal{Q}_{\mathbf{y}_0}
\left(\mathcal{L}(\mathcal{L}^*(\hat{\mathbf{c}}))^{\sharp}\right)
+\lambda\partial(\varphi\circ\|\cdot\|_{\mathcal{B}})
\left((\mathcal{L}^*(\hat{\mathbf{c}}))^{\sharp}\right).
\end{equation}
Since $\mathcal{Q}_{\mathbf{y}_0}$ is differentiable, there holds
\begin{equation}\label{regu-solve-nonlinear-c1}
\partial\mathcal{Q}_{\mathbf{y}_0}\left(\mathcal{L}
(\mathcal{L}^*(\hat{\mathbf{c}}))^{\sharp}\right)
=\left\{\nabla\mathcal{Q}_{\mathbf{y}_0}\left(\mathcal{L}
(\mathcal{L}^*(\hat{\mathbf{c}}))^{\sharp}\right)\right\}.
\end{equation}
The linear independence of $g_j^{\sharp}$, $j\in\mathbb{N}_m,$ leads to  $\mathcal{L}^*(\hat{\mathbf{c}})\neq0$. Then by the differentiability of $\varphi$ and equation \eqref{relation-gateaux-dual1} with $g:=(\mathcal{L}^*(\hat{\mathbf{c}}))^{\sharp}$, we have that
\begin{equation}\label{regu-solve-nonlinear-c2}
\partial(\varphi\circ\|\cdot\|_{\mathcal{B}})
\left((\mathcal{L}^*(\hat{\mathbf{c}}))^{\sharp}\right)
=\left\{\frac{\varphi^{'}(\|(\mathcal{L}^*
(\hat{\mathbf{c}}))^{\sharp}\|_{\mathcal{B}})}
{\|(\mathcal{L}^*(\hat{\mathbf{c}}))^{\sharp}\|_{\mathcal{B}}}
\mathcal{L}^*(\hat{\mathbf{c}})\right\}.
\end{equation}
Note that the two sets in the right hand side of \eqref{regu-solve-nonlinear-c1} and \eqref{regu-solve-nonlinear-c2} are singleton. Substituting \eqref{regu-solve-nonlinear-c1} and \eqref{regu-solve-nonlinear-c2} into the \eqref{regu-solve-inclusion-relation-c}, with noticing that
$$
\|(\mathcal{L}^*(\hat{\mathbf{c}}))^{\sharp}\|_{\mathcal{B}}
=\|\mathcal{L}^*(\hat{\mathbf{c}})\|_{\mathcal{B}^*},
$$
we get that $\hat{\mathbf{c}}\neq 0$ is the solution of the minimization problem \eqref{optimization-c-ufuc} if and only if $\hat{\mathbf{c}}$ is the solution of the nonlinear system
$$
\mathcal{L}^*\nabla\mathcal{Q}_{\mathbf{y}_0}
(\mathcal{L}(\mathcal{L}^*(\hat{\mathbf{c}}))^{\sharp})
+\lambda\frac{\varphi^{'}(\|\mathcal{L}^*(\hat{\mathbf{c}})
\|_{\mathcal{B}^*})}{\|\mathcal{L}^*(\hat{\mathbf{c}})
\|_{\mathcal{B}^*}}\mathcal{L}^*(\hat{\mathbf{c}})=0.
$$
Combining \eqref{optimization-c-ufuc2} with \eqref{optimization-c-ufuc3}, we rewrite the above system as
$$
\mathcal{L}^*\nabla\mathcal{Q}_{\mathbf{y}_0}
(\mathbf{G}_{\mathrm{s.i.p}}(\hat{\mathbf{c}}))
+\lambda\frac{\varphi^{'}\left((\hat{\mathbf{c}}^{\top}
\mathbf{G}_{\mathrm{s.i.p}}(\hat{\mathbf{c}}))^{1/2}\right)}
{(\hat{\mathbf{c}}^{\top}\mathbf{G}_{\mathrm{s.i.p}}(\hat{\mathbf{c}}))^{1/2}}
\mathcal{L}^*(\hat{\mathbf{c}})=0,
$$
which together with the linearity of $\mathcal{L}^*$ leads to
$$
\mathcal{L}^*\left(\nabla\mathcal{Q}_{\mathbf{y}_0}
(\mathbf{G}_{\mathrm{s.i.p}}(\hat{\mathbf{c}}))
+\lambda\frac{\varphi^{'}\left((\hat{\mathbf{c}}^{\top}
\mathbf{G}_{\mathrm{s.i.p}}(\hat{\mathbf{c}}))^{1/2}\right)}
{(\hat{\mathbf{c}}^{\top}\mathbf{G}_{\mathrm{s.i.p}}(\hat{\mathbf{c}}))^{1/2}}
\hat{\mathbf{c}}\right)=0.
$$
By the linear independence of $g_j^{\sharp}$, $j\in\mathbb{N}_m$, the above system is equivalent to \eqref{regu-solve-nonlinear-c}.
\end{proof}

The nonlinear system was established in \cite{ZZ} in the case that $\mathcal{B}$ is a semi-inner-product RKBS and for $\mathbf{y}_0:=[y_j:j\in\mathbb{N}_m]\in\mathbb{R}^m$, the loss function $\mathcal{Q}_{\mathbf{y}_0}$ has the form
$$
\mathcal{Q}_{\mathbf{y}_0}(\mathbf{z}):=\sum_{j\in\mathbb{N}_m}\mathcal{Q}_j(z_j,y_i),
\ \ \mbox{for all}\ \ \mathbf{z}:=[z_j:j\in\mathbb{N}_m]\in\mathbb{R}^m,
$$
where $\mathcal{Q}_j:\mathbb{R}\times\mathbb{R}\rightarrow\mathbb{R}_+$, $j\in\mathbb{N}_m,$ are a finite number of bivariate loss functions.

\subsection{Fixed-Point Approach for Regularization Problems}
We develop in this subsection a fixed-point approach for solving the regularization problem in a Banach space.

Following the idea in section 6, we now consider solving directly the regularization problem \eqref{regularization} in a Banach space $\mathcal{B}$. We will characterize the solutions of the problem via fixed-point equations. Again, we need to consider both cases when the loss function $\mathcal{Q}_{\mathbf{y}_0}$ is differentiable and when it is not differentiable.

\begin{thm}\label{characterize-prox-regularization}
Suppose that $\mathcal{B}$ is a Banach space with the dual space $\mathcal{B}^*$, $\nu_j\in\mathcal{B}^*$, $j\in\mathbb{N}_m$ and that $\mathcal{L}$ is defined by \eqref{functional-operator}, $\mathcal{L}^*$ is the adjoint operator of $\mathcal{L}$ and $\mathcal{V}$ is defined by \eqref{linear-span-nu-j}. Let $\mathcal{H}$ be a Hilbert space and $\mathcal{T}$ a bounded linear operator from $\mathcal{B}$ to $\mathcal{H}$ such that $\mathcal{T}^*\mathcal{T}$ is a one-to-one mapping from the inverse image of $\mathcal{V}$ onto $\mathcal{V}$. For a given $\mathbf{y}_0\in\mathbb{R}^m$, let $\mathcal{Q}_{\mathbf{y}_0}:\mathbb{R}^m\rightarrow\mathbb{R}_{+}$ be a convex loss function, $\varphi:\mathbb{R}_{+}\rightarrow\mathbb{R}_{+}$ be a convex regularizer and $\lambda>0$. Then $f_0\in\mathcal{B}$ is a solution of the regularization problem \eqref{regularization} with $\mathbf{y}:=\mathbf{y}_0$ if and only if there exists $\hat{\mathbf{c}}\in\mathbb{R}^m$ such that
\begin{equation}\label{characterize-prox-regularization-formula1}
\hat{\mathbf{c}}=\mathrm{prox}_{\mathcal{Q}_{\mathbf{y}_0}^*}
(\hat{\mathbf{c}}+\mathcal{L}(f_0))
\end{equation}
and
\begin{equation}\label{characterize-prox-regularization-formula2}
f_0=\mathrm{prox}_{\varphi\circ\|\cdot\|_{\mathcal{B}},
\mathcal{H},\mathcal{T}}\left(f_0
-\frac{1}{\lambda}(\mathcal{T}^*\mathcal{T})^{-1}
\mathcal{L}^*(\hat{\mathbf{c}})\right).
\end{equation}
\end{thm}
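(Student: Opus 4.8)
The plan is to mirror the argument of Theorem \ref{characterize-prox-f}, replacing the pair (indicator function, norm) by the pair (loss function, composed regularizer) and carrying an extra factor $\lambda$ throughout. Write $F:=\mathcal{Q}_{\mathbf{y}_0}\circ\mathcal{L}+\lambda(\varphi\circ\|\cdot\|_{\mathcal{B}})$ for the objective in \eqref{regularization} with $\mathbf{y}:=\mathbf{y}_0$. First I would invoke the Fermat rule: $f_0$ minimizes the convex functional $F$ if and only if $0\in\partial F(f_0)$. To split this into the subdifferentials of the two summands I would apply the subdifferential sum rule, which is legitimate here because $\mathcal{Q}_{\mathbf{y}_0}$ is a finite-valued convex function on $\mathbb{R}^m$, hence continuous, so that $\mathcal{Q}_{\mathbf{y}_0}\circ\mathcal{L}$ is continuous on $\mathcal{B}$. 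The chain rule \eqref{chain-rule}, applied with $\mathcal{T}:=\mathcal{L}$ and the continuous $\mathcal{Q}_{\mathbf{y}_0}$ playing the role of the outer function, then gives $\partial(\mathcal{Q}_{\mathbf{y}_0}\circ\mathcal{L})(f_0)=\mathcal{L}^*\partial\mathcal{Q}_{\mathbf{y}_0}(\mathcal{L}(f_0))$. Combining these, $f_0$ is a solution if and only if $0\in\mathcal{L}^*\partial\mathcal{Q}_{\mathbf{y}_0}(\mathcal{L}(f_0))+\lambda\,\partial(\varphi\circ\|\cdot\|_{\mathcal{B}})(f_0)$, i.e. there is a vector $\hat{\mathbf{c}}\in\mathbb{R}^m$ with $\hat{\mathbf{c}}\in\partial\mathcal{Q}_{\mathbf{y}_0}(\mathcal{L}(f_0))$ and $-\tfrac{1}{\lambda}\mathcal{L}^*(\hat{\mathbf{c}})\in\partial(\varphi\circ\|\cdot\|_{\mathcal{B}})(f_0)$.

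Next I would convert each inclusion into the advertised fixed-point equation. For the first inclusion, the conjugate relation \eqref{conjugate} rewrites $\hat{\mathbf{c}}\in\partial\mathcal{Q}_{\mathbf{y}_0}(\mathcal{L}(f_0))$ as $\mathcal{L}(f_0)\in\partial\mathcal{Q}_{\mathbf{y}_0}^*(\hat{\mathbf{c}})$, and Lemma \ref{subdiff-prox-Rm} then yields exactly \eqref{characterize-prox-regularization-formula1}. For the second inclusion, I would exploit the operator hypothesis precisely as in the proof of Theorem \ref{characterize-prox-f}: by the representation \eqref{adjoint-operator} one has $\mathcal{L}^*(\hat{\mathbf{c}})=\sum_{j\in\mathbb{N}_m}\hat{c}_j\nu_j\in\mathcal{V}$, so the assumption that $\mathcal{T}^*\mathcal{T}$ maps the inverse image of $\mathcal{V}$ bijectively onto $\mathcal{V}$ permits writing $\mathcal{L}^*(\hat{\mathbf{c}})=(\mathcal{T}^*\mathcal{T})(\mathcal{T}^*\mathcal{T})^{-1}\mathcal{L}^*(\hat{\mathbf{c}})$. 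Hence the inclusion $-\tfrac{1}{\lambda}\mathcal{L}^*(\hat{\mathbf{c}})\in\partial(\varphi\circ\|\cdot\|_{\mathcal{B}})(f_0)$ reads $(\mathcal{T}^*\mathcal{T})\bigl(-\tfrac{1}{\lambda}(\mathcal{T}^*\mathcal{T})^{-1}\mathcal{L}^*(\hat{\mathbf{c}})\bigr)\in\partial(\varphi\circ\|\cdot\|_{\mathcal{B}})(f_0)$, and Proposition \ref{subdiff-prox-Banach} applied with $\psi:=\varphi\circ\|\cdot\|_{\mathcal{B}}$, $g:=-\tfrac{1}{\lambda}(\mathcal{T}^*\mathcal{T})^{-1}\mathcal{L}^*(\hat{\mathbf{c}})$ and $f:=f_0$ produces \eqref{characterize-prox-regularization-formula2}. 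Since each step is an equivalence, reading them backwards establishes the converse direction as well.

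The point requiring genuine care, rather than being a verbatim transcription of the interpolation case, is the legitimacy of the convex-analytic calculus on the regularization term $\psi=\varphi\circ\|\cdot\|_{\mathcal{B}}$. The Fermat rule, the sum rule, and Proposition \ref{subdiff-prox-Banach} all presuppose that $\psi$ is convex on $\mathcal{B}$; this is where I would use that $\varphi$, besides being convex, is nondecreasing (as is every regularizer considered in this paper, and as is needed already for the existence result Proposition \ref{existence1}), so that the composition of the nondecreasing convex $\varphi$ with the convex norm is again convex, and $\psi$ is finite everywhere, whence $f_0\in\mathrm{dom}(\psi)$ as required by Proposition \ref{subdiff-prox-Banach}. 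I would state this monotonicity explicitly at the outset. The remaining differences from Theorem \ref{characterize-prox-f} are purely bookkeeping: the indicator $\iota_{\mathbf{y}}$ there is replaced by a continuous loss $\mathcal{Q}_{\mathbf{y}_0}$, which is in fact a simplification, since the chain rule now applies directly and one avoids the ad hoc device of Lemma \ref{chain-rule-indicator}; and the parameter $\lambda>0$ merely rescales the Lagrange-type multiplier, dividing through cleanly because $\lambda\neq0$.
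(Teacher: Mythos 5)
Your proposal follows essentially the same route as the paper's proof: the Fermat rule, the subdifferential sum and chain rules to split the optimality condition into the two inclusions $\hat{\mathbf{c}}\in\partial\mathcal{Q}_{\mathbf{y}_0}(\mathcal{L}(f_0))$ and $-\tfrac{1}{\lambda}\mathcal{L}^*(\hat{\mathbf{c}})\in\partial(\varphi\circ\|\cdot\|_{\mathcal{B}})(f_0)$, then relation \eqref{conjugate} with Lemma \ref{subdiff-prox-Rm} for the first and the $(\mathcal{T}^*\mathcal{T})(\mathcal{T}^*\mathcal{T})^{-1}$ device with Proposition \ref{subdiff-prox-Banach} for the second. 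Your extra observation that $\varphi$ must be nondecreasing for $\varphi\circ\|\cdot\|_{\mathcal{B}}$ to be convex (so that the sum rule and Proposition \ref{subdiff-prox-Banach} apply) is a legitimate refinement, since the theorem's hypotheses state only convexity of $\varphi$ and the paper applies the convex calculus without comment.
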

\begin{proof}
By employing the Fermat rule, we have that $f_0\in\mathcal{B}$ is a solution of the regularization problem \eqref{regularization} with $\mathbf{y}:=\mathbf{y}_0$ if and only if
$$
0\in\partial(\mathcal{Q}_{\mathbf{y}_0}(\mathcal{L}(\cdot))
+\lambda\varphi\circ\|\cdot\|_{\mathcal{B}})(f_0).
$$
According to the chain rule \eqref{chain-rule} of the subdifferential, the above inclusion relation can be rewritten as
\begin{equation*}
0\in\mathcal{L}^*\partial\mathcal{Q}_{\mathbf{y}_0}(\mathcal{L}(f_0))
+\lambda\partial(\varphi\circ\|\cdot\|_{\mathcal{B}})(f_0).
\end{equation*}
This is equivalent to that there exists $\hat{\mathbf{c}}\in\mathbb{R}^m$ such that
\begin{equation}\label{characterize-prox-regularization-1}
\hat{\mathbf{c}}\in\partial\mathcal{Q}_{\mathbf{y}_0}(\mathcal{L}(f_0))
\end{equation}
and
\begin{equation}\label{characterize-prox-regularization-2}
-\frac{1}{\lambda}\mathcal{L}^*(\hat{\mathbf{c}})
\in\partial(\varphi\circ\|\cdot\|_{\mathcal{B}})(f_0).
\end{equation}
Relation \eqref{conjugate} ensures that the inclusion relation \eqref{characterize-prox-regularization-1} holds if and only if
$$
\mathcal{L}(f_0)\in\partial\mathcal{Q}_{\mathbf{y}_0}^*(\hat{\mathbf{c}}),
$$
which is equivalent to \eqref{characterize-prox-regularization-formula1}. Since
$-\frac{1}{\lambda}\mathcal{L}^*(\hat{\mathbf{c}})\in\mathcal{V}$, we represent the inclusion relation \eqref{characterize-prox-regularization-2} as
\begin{equation*}
(\mathcal{T}^*\mathcal{T})(\mathcal{T}^*\mathcal{T})^{-1}
\left(-\frac{1}{\lambda}\mathcal{L}^*(\hat{\mathbf{c}})\right)
\in\partial(\varphi\circ\|\cdot\|_{\mathcal{B}})(f_0).
\end{equation*}
By Proposition \ref{subdiff-prox-Banach}, we conclude that the above relation is equivalent to \eqref{characterize-prox-regularization-formula2}. Therefore, $f_0\in\mathcal{B}$ is a solution of the regularization problem \eqref{regularization} with $\mathbf{y}:=\mathbf{y}_0$ if and only if there exists $\hat{\mathbf{c}}\in\mathbb{R}^m$ satisfying the fixed-point equations \eqref{characterize-prox-regularization-formula1} and \eqref{characterize-prox-regularization-formula2}.
\end{proof}

In the special case when the loss function $\mathcal{Q}_{\mathbf{y}_0}$ is differentiable, the fixed-point equations \eqref{characterize-prox-regularization-formula1} and \eqref{characterize-prox-regularization-formula2} can reduce to only one fixed-point equation.

\begin{cor}\label{characterize-prox-regularization-RN}
Suppose that the hypotheses of Theorem \ref{characterize-prox-regularization} hold. If in addition the loss function $\mathcal{Q}_{\mathbf{y}_0}:\mathbb{R}^m\rightarrow\mathbb{R}_{+}$ is differentiable, then $f_0\in\mathcal{B}$ is a solution of the regularization problem \eqref{regularization} with $\mathbf{y}:=\mathbf{y}_0$ if and only if it satisfies the fixed-point equation
\begin{equation}\label{characterize-prox-regularization-formula-smooth}
f_0=\mathrm{prox}_{\varphi\circ\|\cdot\|_{\mathcal{B}},
\mathcal{H},\mathcal{T}}\left(f_0
-\frac{1}{\lambda}(\mathcal{T}^*\mathcal{T})^{-1}
\mathcal{L}^*\nabla\mathcal{Q}_{\mathbf{y}_0}\mathcal{L}(f_0)\right).
\end{equation}
\end{cor}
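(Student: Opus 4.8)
The plan is to specialize Theorem \ref{characterize-prox-regularization} by eliminating the auxiliary vector $\hat{\mathbf{c}}$ using the differentiability of $\mathcal{Q}_{\mathbf{y}_0}$. Recall from the proof of that theorem that $f_0\in\mathcal{B}$ is a solution of the regularization problem \eqref{regularization} with $\mathbf{y}:=\mathbf{y}_0$ if and only if there exists $\hat{\mathbf{c}}\in\mathbb{R}^m$ satisfying the pair of inclusion relations \eqref{characterize-prox-regularization-1} and \eqref{characterize-prox-regularization-2}, namely $\hat{\mathbf{c}}\in\partial\mathcal{Q}_{\mathbf{y}_0}(\mathcal{L}(f_0))$ together with $-\frac{1}{\lambda}\mathcal{L}^*(\hat{\mathbf{c}})\in\partial(\varphi\circ\|\cdot\|_{\mathcal{B}})(f_0)$. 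I would start directly from this intermediate characterization rather than from the coupled fixed-point form, since the differentiability hypothesis acts most transparently on the subdifferential inclusion \eqref{characterize-prox-regularization-1}.

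First I would observe that when $\mathcal{Q}_{\mathbf{y}_0}$ is differentiable, its subdifferential at any point is the singleton consisting of its gradient, so that the inclusion \eqref{characterize-prox-regularization-1} collapses to the equation $\hat{\mathbf{c}}=\nabla\mathcal{Q}_{\mathbf{y}_0}(\mathcal{L}(f_0))$. This determines $\hat{\mathbf{c}}$ uniquely in terms of $f_0$, which is precisely what permits the elimination: the existence of a suitable $\hat{\mathbf{c}}$ becomes equivalent to the single inclusion \eqref{characterize-prox-regularization-2} with $\hat{\mathbf{c}}$ replaced by $\nabla\mathcal{Q}_{\mathbf{y}_0}(\mathcal{L}(f_0))$.

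Next I would reinstate the proximity-operator formulation on the $\mathcal{B}$-side. As established in the proof of Theorem \ref{characterize-prox-regularization}, the inclusion \eqref{characterize-prox-regularization-2} is equivalent, via Proposition \ref{subdiff-prox-Banach} applied with $\psi:=\varphi\circ\|\cdot\|_{\mathcal{B}}$ and $g:=-\frac{1}{\lambda}(\mathcal{T}^*\mathcal{T})^{-1}\mathcal{L}^*(\hat{\mathbf{c}})$, to the fixed-point equation \eqref{characterize-prox-regularization-formula2}; here the hypothesis that $\mathcal{T}^*\mathcal{T}$ maps the inverse image of $\mathcal{V}$ bijectively onto $\mathcal{V}$ is what gives meaning to $(\mathcal{T}^*\mathcal{T})^{-1}\mathcal{L}^*(\hat{\mathbf{c}})$, noting that $\mathcal{L}^*(\hat{\mathbf{c}})\in\mathcal{V}$. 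Substituting $\hat{\mathbf{c}}=\nabla\mathcal{Q}_{\mathbf{y}_0}(\mathcal{L}(f_0))$ into this equation yields exactly \eqref{characterize-prox-regularization-formula-smooth}. Conversely, any $f_0$ satisfying \eqref{characterize-prox-regularization-formula-smooth} produces, upon setting $\hat{\mathbf{c}}:=\nabla\mathcal{Q}_{\mathbf{y}_0}(\mathcal{L}(f_0))$, a pair satisfying both \eqref{characterize-prox-regularization-formula1} and \eqref{characterize-prox-regularization-formula2}.

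I do not anticipate a genuine obstacle, as the result is essentially a clean specialization; the one point requiring care is the bookkeeping of the equivalence in both directions, in particular verifying that the reconstructed $\hat{\mathbf{c}}$ from a solution of \eqref{characterize-prox-regularization-formula-smooth} indeed satisfies the first fixed-point equation \eqref{characterize-prox-regularization-formula1}. This holds because, with $\hat{\mathbf{c}}$ defined as the gradient, the relation $\mathcal{L}(f_0)\in\partial\mathcal{Q}_{\mathbf{y}_0}^*(\hat{\mathbf{c}})$ follows from the conjugate relation \eqref{conjugate}, which by Lemma \ref{subdiff-prox-Rm} is equivalent to \eqref{characterize-prox-regularization-formula1}.
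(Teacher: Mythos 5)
Your proposal is correct and follows essentially the same route as the paper's proof: both invoke the pair of subdifferential inclusions behind Theorem \ref{characterize-prox-regularization}, use differentiability of $\mathcal{Q}_{\mathbf{y}_0}$ to collapse $\partial\mathcal{Q}_{\mathbf{y}_0}(\mathcal{L}(f_0))$ to the singleton $\{\nabla\mathcal{Q}_{\mathbf{y}_0}(\mathcal{L}(f_0))\}$, and substitute $\hat{\mathbf{c}}=\nabla\mathcal{Q}_{\mathbf{y}_0}(\mathcal{L}(f_0))$ into the second fixed-point equation to obtain \eqref{characterize-prox-regularization-formula-smooth}. Your explicit verification of the converse direction (that the reconstructed $\hat{\mathbf{c}}$ satisfies \eqref{characterize-prox-regularization-formula1} via \eqref{conjugate} and Lemma \ref{subdiff-prox-Rm}) is a minor refinement of bookkeeping the paper leaves implicit, not a different argument.
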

\begin{proof}
Theorem \ref{characterize-prox-regularization} ensures that $f_0\in\mathcal{B}$ is a solution of the regularization problem \eqref{regularization} with $\mathbf{y}:=\mathbf{y}_0$ if and only if  there exists $\hat{\mathbf{c}}\in\mathbb{R}^m$ satisfying \eqref{characterize-prox-regularization-formula1} and \eqref{characterize-prox-regularization-formula2}. Note that equation \eqref{characterize-prox-regularization-formula1} is equivalent to the inclusion relation \eqref{characterize-prox-regularization-1}. Since $\mathcal{Q}_{\mathbf{y}_0}$ is  differentiable, the subdifferential of $\mathcal{Q}_{\mathbf{y}_0}$ at $\mathcal{L}(f_0)$ is the singleton $\nabla\mathcal{Q}_{\mathbf{y}_0}(\mathcal{L}(f_0))$. That is, there holds
$$
\hat{\mathbf{c}}=\nabla\mathcal{Q}_{\mathbf{y}_0}(\mathcal{L}(f_0)).
$$
Substituting this equation into \eqref{characterize-prox-regularization-formula2} leads to \eqref{characterize-prox-regularization-formula-smooth}.
\end{proof}

In particular, for the learning network problem, in which the loss function $\mathcal{Q}_{\mathbf{y}_0}$ has the form \eqref{RN}, we have the following special result.

\begin{rem}
If for a given $\mathbf{y}_0\in\mathbb{R}^m$, the loss function $\mathcal{Q}_{\mathbf{y}_0}$ has the form \eqref{RN}, the fixed-point equation \eqref{characterize-prox-regularization-formula-smooth} reduces to
\begin{equation}\label{characterize-prox-regularization-formula-RN}
f_0=\mathrm{prox}_{\varphi\circ\|\cdot\|_{\mathcal{B}},
\mathcal{H},\mathcal{T}}\left(f_0
-\frac{2}{\lambda}(\mathcal{T}^*\mathcal{T})^{-1}
\mathcal{L}^*(\mathcal{L}(f_0)-\mathbf{y}_0)\right).
\end{equation}
\end{rem}
\begin{proof}
Since $\mathcal{Q}_{\mathbf{y}_0}$ has the form \eqref{RN}, there holds
$$
\nabla\mathcal{Q}_{\mathbf{y}_0}(\mathcal{L}(f_0))=2(\mathcal{L}(f_0)-\mathbf{y}_0).
$$
Substituting the equation above into \eqref{characterize-prox-regularization-formula-smooth} leads to \eqref{characterize-prox-regularization-formula-RN}.
\end{proof}

According to Theorem \ref{characterize-prox-regularization} the solution of the regularization problem \eqref{regularization} can be obtained by solving fixed-point equations  \eqref{characterize-prox-regularization-formula1} and \eqref{characterize-prox-regularization-formula2} or \eqref{characterize-prox-regularization-formula-smooth}. Note that either equations \eqref{characterize-prox-regularization-formula2} or  \eqref{characterize-prox-regularization-formula-smooth} is of infinite dimension. In section 6, we have demonstrated that a solution of the minimum norm interpolation \eqref{mni} with $\mathcal{B}=\ell_1(\mathbb{N})$ can be formulated as finite dimensional fixed-point equations. We next show that a solution of the regularization problem \eqref{regularization} with $\mathcal{B}=\ell_1(\mathbb{N})$ can also be formulated as finite dimensional fixed-point equations. The regularization problem in the space $\ell_1(\mathbb{N})$ has the form
\begin{equation}\label{regularized-learning-l1}
\inf\{\mathcal{Q}_{\mathbf{y}}(\mathcal{L}(\mathbf{x}))
+\lambda\|\mathbf{x}\|_1:\mathbf{x}\in\ell_1(\mathbb{N})\}.
\end{equation}

With the help of Lemma \ref{improtant-character2}, a solution of the regularization problem \eqref{regularized-learning-l1} can be characterized via finite dimensional fixed-point equations as follows.

\begin{thm}\label{characterize-prox-regularization-l1}
Suppose that $\mathbf{u}_j\in c_0$, $j\in\mathbb{N}_m,$, $\mathcal{L}$ is defined by \eqref{functional-operator-l1} and $\mathcal{L}^*$ is the adjoint operator. Let $\mathcal{T}_0$ be defined by \eqref{T-l1} and $\mathcal{S}$ be defined by \eqref{truncation} and \eqref{truncation-vector}. For a given $\mathbf{y}_0\in\mathbb{R}^m$, let $\mathcal{Q}_{\mathbf{y}_0}:\mathbb{R}^m\rightarrow\mathbb{R}_{+}$ be a convex loss function and $\lambda>0$. Then $\mathbf{x}_0\in\ell_1(\mathbb{N})$ is a solution of the regularization problem \eqref{regularized-learning-l1} with $\mathbf{y}:=\mathbf{y}_0$ if and only if there exists $\hat{\mathbf{c}}\in\mathbb{R}^m$ such that
\begin{equation}\label{characterize-prox-regularization-formula-l1-1}
\hat{\mathbf{c}}=\mathrm{prox}_{\mathcal{Q}_{\mathbf{y}_0}^*}
(\hat{\mathbf{c}}+\mathcal{L}(\mathbf{x}_0))
\end{equation}
and
\begin{equation}\label{characterize-prox-regularization-formula-l1-2}
\mathbf{x}_0=\mathrm{prox}_{\|\cdot\|_1,
\ell_2(\mathbb{N}),\mathcal{T}_0}\left(\mathbf{x}_0
-\frac{1}{\lambda}\mathcal{S}\mathcal{L}^*(\hat{\mathbf{c}})\right).
\end{equation}
\end{thm}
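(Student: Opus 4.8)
The plan is to mirror the argument of Theorem \ref{characterize-prox-regularization}, specializing to $\mathcal{B}=\ell_1(\mathbb{N})$ with regularizer $\varphi\circ\|\cdot\|_{\mathcal{B}}=\|\cdot\|_1$, Hilbert space $\mathcal{H}=\ell_2(\mathbb{N})$ and operator $\mathcal{T}=\mathcal{T}_0$, while the role played by $(\mathcal{T}^*\mathcal{T})^{-1}$ in the general theorem will instead be taken over by the truncation operator $\mathcal{S}$. First I would apply the Fermat rule to the objective $\mathcal{Q}_{\mathbf{y}_0}(\mathcal{L}(\cdot))+\lambda\|\cdot\|_1$ of \eqref{regularized-learning-l1}. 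Since $\mathcal{Q}_{\mathbf{y}_0}$ is a finite convex function on $\mathbb{R}^m$, hence continuous, the chain rule \eqref{chain-rule} applies to $\mathcal{Q}_{\mathbf{y}_0}\circ\mathcal{L}$ and the subdifferential sum rule splits the optimality condition. Thus $\mathbf{x}_0$ is a solution if and only if there exists $\hat{\mathbf{c}}\in\mathbb{R}^m$ with $\hat{\mathbf{c}}\in\partial\mathcal{Q}_{\mathbf{y}_0}(\mathcal{L}(\mathbf{x}_0))$ and $-\frac{1}{\lambda}\mathcal{L}^*(\hat{\mathbf{c}})\in\partial\|\cdot\|_1(\mathbf{x}_0)$. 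The first inclusion is converted to the fixed-point equation \eqref{characterize-prox-regularization-formula-l1-1} exactly as in the proof of Theorem \ref{characterize-prox-regularization}: rewrite it via the conjugate relation \eqref{conjugate} as $\mathcal{L}(\mathbf{x}_0)\in\partial\mathcal{Q}_{\mathbf{y}_0}^*(\hat{\mathbf{c}})$ and invoke Lemma \ref{subdiff-prox-Rm}.

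The heart of the proof lies in the second inclusion, which is exactly why the general Theorem \ref{characterize-prox-regularization} cannot be cited verbatim: because $\mathbf{u}_j\in c_0$, the element $\mathcal{L}^*(\hat{\mathbf{c}})$ lies in $c_0$, which is in general not a member of $\ell_1(\mathbb{N})$, so the $\ell_1$ proximity relation \eqref{subdiff-prox-l1} cannot be applied to $-\frac{1}{\lambda}\mathcal{L}^*(\hat{\mathbf{c}})$ directly. Here I would invoke Lemma \ref{improtant-character2}, which states that for any $\mathbf{u}\in c_0$ the inclusions $\mathbf{u}\in\partial\|\cdot\|_1(\mathbf{x})$ and $\mathcal{S}(\mathbf{u})\in\partial\|\cdot\|_1(\mathbf{x})$ are equivalent. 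Applying this with $\mathbf{u}:=-\frac{1}{\lambda}\mathcal{L}^*(\hat{\mathbf{c}})$, and observing that the extremal index set $\mathbb{N}(\mathbf{u})$ of \eqref{extremal-index-set} is unchanged under multiplication by the nonzero scalar $-1/\lambda$, so that $\mathcal{S}$ commutes with this scaling, the second inclusion becomes $-\frac{1}{\lambda}\mathcal{S}\mathcal{L}^*(\hat{\mathbf{c}})\in\partial\|\cdot\|_1(\mathbf{x}_0)$.

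Finally, since $\mathcal{S}\mathcal{L}^*(\hat{\mathbf{c}})\in c_c\subset\ell_1(\mathbb{N})$, the displacement $-\frac{1}{\lambda}\mathcal{S}\mathcal{L}^*(\hat{\mathbf{c}})$ is now an admissible argument for the $\ell_1$ proximity relation \eqref{subdiff-prox-l1}; applying that relation with $\psi:=\|\cdot\|_1$ converts the truncated inclusion into the fixed-point equation \eqref{characterize-prox-regularization-formula-l1-2}. Combining the two equivalences then yields the stated characterization. The main obstacle is precisely the range mismatch in the second inclusion — $\mathcal{L}^*$ maps into $c_0$ while the $\ell_1$ proximity operator requires its displacement to sit in $\ell_1(\mathbb{N})$ — and it is exactly this gap that the truncation operator $\mathcal{S}$, together with Lemma \ref{improtant-character2}, is designed to close, thereby rendering the fixed-point system finite dimensional.
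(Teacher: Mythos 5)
Your proposal is correct and takes essentially the same route as the paper's own proof: the Fermat rule and chain rule split the optimality condition into the two inclusions $\hat{\mathbf{c}}\in\partial\mathcal{Q}_{\mathbf{y}_0}(\mathcal{L}(\mathbf{x}_0))$ and $-\frac{1}{\lambda}\mathcal{L}^*(\hat{\mathbf{c}})\in\partial\|\cdot\|_1(\mathbf{x}_0)$, the first is converted via \eqref{conjugate} and Lemma \ref{subdiff-prox-Rm}, and the second via Lemma \ref{improtant-character2} followed by relation \eqref{subdiff-prox-l1}, exactly as in the paper. Your explicit observation that $\mathcal{S}$ commutes with multiplication by the nonzero scalar $-1/\lambda$ (since $\mathbb{N}(t\mathbf{u})=\mathbb{N}(\mathbf{u})$ for $t\neq 0$) supplies a small step the paper leaves implicit, and your diagnosis of the range mismatch that blocks a verbatim application of Theorem \ref{characterize-prox-regularization} matches the paper's motivation for introducing $\mathcal{S}$.
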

\begin{proof}
As has been shown in the proof of Theorem \ref{characterize-prox-regularization}, $\mathbf{x}_0\in\ell_1(\mathbb{N})$ is a solution of the regularization problem \eqref{regularized-learning-l1} with $\mathbf{y}:=\mathbf{y}_0$ if and only if
there exists $\hat{\mathbf{c}}\in\mathbb{R}^m$ such that
\begin{equation}\label{characterize-prox-regularization-l1-1}
\hat{\mathbf{c}}\in\partial\mathcal{Q}_{\mathbf{y}_0}(\mathcal{L}(\mathbf{x}_0))
\end{equation}
and
\begin{equation}\label{characterize-prox-regularization-l1-2}
-\frac{1}{\lambda}\mathcal{L}^*(\hat{\mathbf{c}})\in\partial\|\cdot\|_1(\mathbf{x}_0).
\end{equation}
By relation \eqref{conjugate} between the subdifferentials of $\mathcal{Q}_{\mathbf{y}_0}$ and its conjugate $\mathcal{Q}_{\mathbf{y}_0}^*$, we rewrite the inclusion relation \eqref{characterize-prox-regularization-l1-1} as
$$
\mathcal{L}(\mathbf{x}_0)\in\partial\mathcal{Q}_{\mathbf{y}_0}^*(\hat{\mathbf{c}}),
$$
which is equivalent to \eqref{characterize-prox-regularization-formula-l1-1}. Lemma \ref{improtant-character2} ensures that the inclusion relation \eqref{characterize-prox-regularization-l1-2} is equivalent to
\begin{equation*}
-\frac{1}{\lambda}\mathcal{S}(\mathcal{L}^*(\hat{\mathbf{c}}))
\in\partial\|\cdot\|_1(\mathbf{x}_0).
\end{equation*}
Relation \eqref{subdiff-prox-l1} with $\psi:=\|\cdot\|_1$ leads to the equivalence between the above relation and  equation \eqref{characterize-prox-regularization-formula-l1-2}.
\end{proof}

If the loss function $\mathcal{Q}_{\mathbf{y}_0}$ is differentiable, the solution of the regularization problem \eqref{regularized-learning-l1} with $\mathbf{y}:=\mathbf{y}_0$ can be characterized via a single fixed-point equation. We present this result in the next corollary.

\begin{cor}\label{characterize-prox-regularization-l1-RN}
Suppose that the hypotheses of Theorem \ref{characterize-prox-regularization-l1} hold. If in addition the loss function $\mathcal{Q}_{\mathbf{y}_0}:\mathbb{R}^m\rightarrow\mathbb{R}_{+}$ is differentiable, then $\mathbf{x}_0\in\ell_1(\mathbb{N})$ is a solution of the regularization problem \eqref{regularized-learning-l1} with $\mathbf{y}:=\mathbf{y}_0$ if and only if
\begin{equation}\label{characterize-prox-regularization-formula-l1-smooth}
\mathbf{x}_0=\mathrm{prox}_{\|\cdot\|_1,
\ell_2(\mathbb{N}),\mathcal{T}_0}\left(\mathbf{x}_0
-\frac{1}{\lambda}\mathcal{S}\mathcal{L}^*
\nabla\mathcal{Q}_{\mathbf{y}_0}\mathcal{L}(\mathbf{x}_0)\right).
\end{equation}
\end{cor}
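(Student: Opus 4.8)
The plan is to derive this corollary as a direct specialization of Theorem \ref{characterize-prox-regularization-l1} to the case of a differentiable loss function, mirroring exactly the passage from Theorem \ref{characterize-prox-regularization} to Corollary \ref{characterize-prox-regularization-RN} in the general Banach space setting. The whole argument rests on the observation that differentiability of $\mathcal{Q}_{\mathbf{y}_0}$ collapses the first fixed-point equation \eqref{characterize-prox-regularization-formula-l1-1} into an explicit closed-form expression for $\hat{\mathbf{c}}$, which can then be substituted into the second equation \eqref{characterize-prox-regularization-formula-l1-2} to eliminate the auxiliary vector.

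First I would invoke Theorem \ref{characterize-prox-regularization-l1}, which applies verbatim since its hypotheses (namely $\mathbf{u}_j\in c_0$ linearly independent, convexity of $\mathcal{Q}_{\mathbf{y}_0}$, and $\lambda>0$) are all contained in the hypotheses of the present corollary. This yields that $\mathbf{x}_0\in\ell_1(\mathbb{N})$ is a solution of \eqref{regularized-learning-l1} with $\mathbf{y}:=\mathbf{y}_0$ if and only if there exists $\hat{\mathbf{c}}\in\mathbb{R}^m$ satisfying both \eqref{characterize-prox-regularization-formula-l1-1} and \eqref{characterize-prox-regularization-formula-l1-2}. As noted in the proof of Theorem \ref{characterize-prox-regularization-l1}, equation \eqref{characterize-prox-regularization-formula-l1-1} is equivalent to the inclusion relation $\hat{\mathbf{c}}\in\partial\mathcal{Q}_{\mathbf{y}_0}(\mathcal{L}(\mathbf{x}_0))$. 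The key step is then to use the differentiability of $\mathcal{Q}_{\mathbf{y}_0}$: its subdifferential at the point $\mathcal{L}(\mathbf{x}_0)$ reduces to the singleton $\{\nabla\mathcal{Q}_{\mathbf{y}_0}(\mathcal{L}(\mathbf{x}_0))\}$, forcing
\begin{equation*}
\hat{\mathbf{c}}=\nabla\mathcal{Q}_{\mathbf{y}_0}(\mathcal{L}(\mathbf{x}_0)).
\end{equation*}
Substituting this explicit expression for $\hat{\mathbf{c}}$ into \eqref{characterize-prox-regularization-formula-l1-2} immediately gives \eqref{characterize-prox-regularization-formula-l1-smooth}, completing the forward direction; the converse follows by reading the same substitution in reverse, since from any $\mathbf{x}_0$ satisfying \eqref{characterize-prox-regularization-formula-l1-smooth} one recovers a compatible $\hat{\mathbf{c}}:=\nabla\mathcal{Q}_{\mathbf{y}_0}(\mathcal{L}(\mathbf{x}_0))$ satisfying the pair of equations.

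I do not anticipate any genuine obstacle here, as this corollary is an essentially mechanical transcription of Corollary \ref{characterize-prox-regularization-RN} into the $\ell_1(\mathbb{N})$ setting, with the proximity operator $\mathrm{prox}_{\varphi\circ\|\cdot\|_{\mathcal{B}},\mathcal{H},\mathcal{T}}$ replaced by $\mathrm{prox}_{\|\cdot\|_1,\ell_2(\mathbb{N}),\mathcal{T}_0}$ and the operator $(\mathcal{T}^*\mathcal{T})^{-1}\mathcal{L}^*$ replaced by the truncated operator $\mathcal{S}\mathcal{L}^*$. The only point requiring a moment's care is simply to cite the singleton property of the subdifferential of a differentiable convex function, exactly as was done in the proof of Corollary \ref{characterize-prox-regularization-RN}. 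Accordingly, I expect the proof to be short, consisting of the appeal to Theorem \ref{characterize-prox-regularization-l1}, the reduction of the subdifferential inclusion to the gradient identity, and the substitution yielding \eqref{characterize-prox-regularization-formula-l1-smooth}.
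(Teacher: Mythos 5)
Your proposal matches the paper's own proof essentially step for step: invoke Theorem \ref{characterize-prox-regularization-l1}, use differentiability to reduce the subdifferential inclusion $\hat{\mathbf{c}}\in\partial\mathcal{Q}_{\mathbf{y}_0}(\mathcal{L}(\mathbf{x}_0))$ to the singleton identity $\hat{\mathbf{c}}=\nabla\mathcal{Q}_{\mathbf{y}_0}(\mathcal{L}(\mathbf{x}_0))$, and substitute into \eqref{characterize-prox-regularization-formula-l1-2} to obtain \eqref{characterize-prox-regularization-formula-l1-smooth}, with the converse recovered by defining $\hat{\mathbf{c}}$ via the gradient. The argument is correct and complete, and your observation that it mirrors the passage from Theorem \ref{characterize-prox-regularization} to Corollary \ref{characterize-prox-regularization-RN} is exactly how the paper proceeds.
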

\begin{proof}
By Theorem \ref{characterize-prox-regularization-l1}, $\mathbf{x}_0\in\ell_1(\mathbb{N})$ is a solution of the regularization problem \eqref{regularized-learning-l1} with $\mathbf{y}:=\mathbf{y}_0$ if and only if  there exists $\hat{\mathbf{c}}\in\mathbb{R}^m$ satisfying \eqref{characterize-prox-regularization-formula-l1-1} and \eqref{characterize-prox-regularization-formula-l1-2}. As has been shown in the proof of Theorem \ref{characterize-prox-regularization-l1}, equation \eqref{characterize-prox-regularization-formula-l1-1} is equivalent to the inclusion relation \eqref{characterize-prox-regularization-l1-1}. Since $\mathcal{Q}_{\mathbf{y}_0}$ is differentiable, we have that
$$
\partial\mathcal{Q}_{\mathbf{y}_0}(\mathcal{L}(\mathbf{x}_0))
=\{\nabla\mathcal{Q}_{\mathbf{y}_0}(\mathcal{L}(\mathbf{x}_0))\}.
$$
Substituting the above equation into \eqref{characterize-prox-regularization-l1-1} leads to
$$
\hat{\mathbf{c}}=\nabla\mathcal{Q}_{\mathbf{y}_0}(\mathcal{L}(\mathbf{x}_0)),
$$
which together with \eqref{characterize-prox-regularization-formula-l1-2} leads to \eqref{characterize-prox-regularization-formula-l1-smooth}.
\end{proof}

Once again, for the learning network problem, in which the loss function $\mathcal{Q}_{\mathbf{y}_0}$ has the form \eqref{RN}, we have the following special result.

\begin{rem}
If for a given $\mathbf{y}_0\in\mathbb{R}^m$, the loss function $\mathcal{Q}_{\mathbf{y}_0}$ has the form \eqref{RN}, the fixed-point equation  \eqref{characterize-prox-regularization-formula-l1-smooth} reduces to
\begin{equation}\label{characterize-prox-regularization-formula-l1-RN}
\mathbf{x}_0=\mathrm{prox}_{\|\cdot\|_1,
\ell_2(\mathbb{N}),\mathcal{T}_0}\left(\mathbf{x}_0
-\frac{2}{\lambda}\mathcal{S}\mathcal{L}^*(\mathcal{L}(\mathbf{x}_0)-\mathbf{y}_0)\right).
\end{equation}
\end{rem}

Below, we point out the finite dimensional component of the fixed-points equations \eqref{characterize-prox-regularization-formula-l1-1} and \eqref{characterize-prox-regularization-formula-l1-2}, the same as those for the minimum morn interpolation stated in Theorem \ref{finite-dimension-fixed-point}. To see this, we rewrite the fixed-point equations \eqref{characterize-prox-regularization-formula-l1-1} and \eqref{characterize-prox-regularization-formula-l1-2} in the following compact form
\begin{equation}\label{fixed-point-l1-regularization}
\mathbf{s}_r=(\mathcal{P}_r\circ\mathcal{R}_r)(\mathbf{s}_r),
\end{equation}
where $\mathbf{s}_r$ denotes the vector
\begin{equation*}
\mathbf{s}_r:=\left[\begin{array}{c}
\hat{\mathbf{c}}\\ \mathbf{x}_0
\end{array}
\right].
\end{equation*}
and two matrices $\mathcal{P}_r$ and $\mathcal{R}_r$ of operators have the form
\begin{equation}\label{definitionofP-regularization}
\mathcal{P}_r:=\left[\begin{array}{c}
\mathrm{prox}_{\mathcal{Q}_{\mathbf{y}_0}^*}\\
\mathrm{prox}_{\|\cdot\|_1,\ell_2(\mathbb{N}),\mathcal{T}_0}
\end{array}
\right]
\end{equation}
and
\begin{equation}\label{definitionofR-regularization}
\mathcal{R}_r:=\left[\begin{array}{cc}
\mathcal{I}& \mathcal{L}\\
-\frac{1}{\lambda}\mathcal{S}\mathcal{L}^* &\mathcal{I}
\end{array}
\right].
\end{equation}

We show in the following theorem that the fixed-point equation \eqref{fixed-point-l1-regularization} (or equivalently the system of the fixed-points equations \eqref{characterize-prox-regularization-formula-l1-1} and \eqref{characterize-prox-regularization-formula-l1-2} is of finite dimension.

\begin{thm}\label{finite-dimension-fixed-point-regularization}
If operators $\mathcal{P}_r$ and $\mathcal{R}_r$ are defined respectively by \eqref{definitionofP-regularization} and \eqref{definitionofR-regularization}, then $\mathcal{P}_r\circ\mathcal{R}_r$ is an operator from $(\mathbb{R}^m,\ell_1(\mathbb{N}))$ to $(\mathbb{R}^m, c_c)$ and its fixed-point $\mathbf{s}_r=\left[\begin{array}{c}
\hat{\mathbf{c}}\\ \mathbf{x}_0\end{array}\right]
\in(\mathbb{R}^m,\ell_1(\mathbb{N}))$  satisfies
\begin{equation}\label{support1-regularization}
\mathbf{x}_0\in c_c\ \ \mbox{and}\ \ \mathrm{supp}(\mathbf{x}_0)\subseteq
\mathrm{supp}(\mathcal{S}(\mathcal{L}^*(\hat{\mathbf{c}})).
\end{equation}
\end{thm}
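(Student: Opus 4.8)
The plan is to transcribe the proof of Theorem \ref{finite-dimension-fixed-point} verbatim in structure, making only the replacements forced by the regularization setting: the conjugate indicator $\iota_{\mathbf{y}}^*$ becomes the conjugate loss $\mathcal{Q}_{\mathbf{y}_0}^*$, and the operator $\mathcal{R}_r$ carries the extra factor $1/\lambda$ in its lower-left block. Accordingly, the argument falls into two parts. First I would establish that $\mathcal{P}_r\circ\mathcal{R}_r$ sends $(\mathbb{R}^m,\ell_1(\mathbb{N}))$ into $(\mathbb{R}^m,c_c)$, and second I would show that every fixed-point enjoys the finite-support property \eqref{support1-regularization}. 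Essentially all the analytic content has been isolated into the preceding lemmas, so the proof is a bookkeeping exercise.

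For the mapping property I would work row by row. Given $\mathbf{s}_r=\left[\begin{array}{c}\hat{\mathbf{c}}\\ \mathbf{x}_0\end{array}\right]\in(\mathbb{R}^m,\ell_1(\mathbb{N}))$, the lower row of $\mathcal{R}_r$ produces $\mathbf{x}_0-\frac{1}{\lambda}\mathcal{S}\mathcal{L}^*(\hat{\mathbf{c}})$. Since $\mathcal{L}^*(\hat{\mathbf{c}})=\sum_{j\in\mathbb{N}_m}\hat c_j\mathbf{u}_j\in c_0$ and $\mathcal{S}$ truncates to the finite index set $\mathbb{N}(\mathcal{L}^*(\hat{\mathbf{c}}))$ of \eqref{extremal-index-set}, we have $\mathcal{S}\mathcal{L}^*(\hat{\mathbf{c}})\in c_c\subset\ell_1(\mathbb{N})$, so this row lands in $\ell_1(\mathbb{N})$; Proposition \ref{prox-l1-l0} then forces $\mathrm{prox}_{\|\cdot\|_1,\ell_2(\mathbb{N}),\mathcal{T}_0}$ to send it into $c_c$. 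The upper row of $\mathcal{R}_r$ yields $\hat{\mathbf{c}}+\mathcal{L}(\mathbf{x}_0)\in\mathbb{R}^m$, and $\mathrm{prox}_{\mathcal{Q}_{\mathbf{y}_0}^*}$ maps $\mathbb{R}^m$ to itself. Combining the two rows gives the claimed range $(\mathbb{R}^m,c_c)$.

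For the support property I would take a fixed-point $\mathbf{s}_r$, so that $\hat{\mathbf{c}}$ and $\mathbf{x}_0$ satisfy the fixed-point equations \eqref{characterize-prox-regularization-formula-l1-1} and \eqref{characterize-prox-regularization-formula-l1-2}. As established in the proof of Theorem \ref{characterize-prox-regularization-l1}, equation \eqref{characterize-prox-regularization-formula-l1-2} is equivalent to the inclusion \eqref{characterize-prox-regularization-l1-2}, namely $-\frac{1}{\lambda}\mathcal{L}^*(\hat{\mathbf{c}})\in\partial\|\cdot\|_1(\mathbf{x}_0)$. If $\mathbf{x}_0=0$ the conclusion is immediate, so I take $\mathbf{x}_0\neq 0$. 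Lemma \ref{subdifferential-solvable-thm} then converts this inclusion into $\frac{\mathbf{x}_0}{\|\mathbf{x}_0\|_1}\in\partial\|\cdot\|_{\infty}\left(-\frac{1}{\lambda}\mathcal{L}^*(\hat{\mathbf{c}})\right)$, and Lemma \ref{support-x-su} applied with $\mathbf{u}:=-\frac{1}{\lambda}\mathcal{L}^*(\hat{\mathbf{c}})$ delivers both $\mathbf{x}_0\in c_c$ and $\mathrm{supp}(\mathbf{x}_0)\subseteq\mathrm{supp}(\mathcal{S}(\mathbf{u}))$.

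The one point requiring a moment's care, rather than a genuine obstacle, is the identification $\mathrm{supp}(\mathcal{S}(-\frac{1}{\lambda}\mathcal{L}^*(\hat{\mathbf{c}})))=\mathrm{supp}(\mathcal{S}(\mathcal{L}^*(\hat{\mathbf{c}})))$, which is what lets me drop the factor $-1/\lambda$ in the final statement. This holds because multiplying a nonzero element of $c_0$ by the nonzero scalar $-1/\lambda$ leaves the extremal index set $\mathbb{N}(\cdot)$ of \eqref{extremal-index-set} unchanged, while by the definition \eqref{truncation}--\eqref{truncation-vector} of $\mathcal{S}$ one has $\mathrm{supp}(\mathcal{S}(\mathbf{u}))=\mathbb{N}(\mathbf{u})$; with $\lambda>0$ fixed the identification is exact, exactly as the sign change $-\mathcal{L}^*(\mathbf{c})$ was absorbed in the interpolation case. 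Substituting this identity into the support inclusion yields \eqref{support1-regularization} and completes the argument.
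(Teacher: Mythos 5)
Your proof is correct and is exactly the argument the paper intends: the paper's own proof merely notes that $\mathrm{prox}_{\mathcal{Q}_{\mathbf{y}_0}^*}$, like $\mathrm{prox}_{\iota_{\mathbf{y}}^*}$, maps $\mathbb{R}^m$ to itself and then defers to ``similar arguments'' from the proof of Theorem \ref{finite-dimension-fixed-point}, which is precisely what you have transcribed (mapping property row by row via Proposition \ref{prox-l1-l0}, then the support property via the inclusion $-\frac{1}{\lambda}\mathcal{L}^*(\hat{\mathbf{c}})\in\partial\|\cdot\|_1(\mathbf{x}_0)$, Lemma \ref{subdifferential-solvable-thm} and Lemma \ref{support-x-su}). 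Your explicit check that the nonzero scalar $-1/\lambda$ leaves the extremal index set $\mathbb{N}(\cdot)$, and hence $\mathrm{supp}(\mathcal{S}(\cdot))$, unchanged supplies the only detail the paper leaves implicit.
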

\begin{proof}
Note that similar to the proximity operator $\mathrm{prox}_{\iota_{\mathbf{y}}^*}$, the proximity  operator $\mathrm{prox}_{\mathcal{Q}_{\mathbf{y}_0}^*}$ is a mapping from $\mathbb{R}^m$ to itself. Then by similar arguments in the proof of Theorem \ref{finite-dimension-fixed-point}, we get the desired conclusion for the operator $\mathcal{P}_r\circ\mathcal{R}_r$.
\end{proof}

A solution of the regularization problem \eqref{regularized-learning-l1} with $\mathcal{B}:=\ell_1(\mathbb{N})$ guaranteed by Theorems \ref{characterize-prox-regularization-l1} and \ref{finite-dimension-fixed-point-regularization} has an additional property.

\begin{rem}\label{Remark-on-support-regularization}
Each solution $\mathbf{x}_0\in\ell_1(\mathbb{N})$ of the regularization problem \eqref{regularized-learning-l1} together with $\hat{\mathbf{c}}\in\mathbb{R}^m$ satisfying the fixed-point equations \eqref{characterize-prox-regularization-formula-l1-1} and \eqref{characterize-prox-regularization-formula-l1-2} is of finite dimension, that is, it satisfies \eqref{support1-regularization}.
\end{rem}

Theorem \ref{characterize-prox-regularization-l1} provides a theoretical foundation for algorithmic development for solving the regularization problem \eqref{regularization} with $\mathcal{B}:=\ell_1(\mathbb{N})$. Specifically, the fixed-point equations \eqref{characterize-prox-regularization-formula-l1-1} and \eqref{characterize-prox-regularization-formula-l1-2} on the finite dimensional space will serve as a starting point to design efficient fixed-point iterative algorithms. We postpone further algorithmic development for a future project.

Below, we comment on closed-form formulas for the proximity operator of loss functions, required to find a fixed-point according to equations \eqref{characterize-prox-regularization-formula-l1-1} and \eqref{characterize-prox-regularization-formula-l1-2}. The closed-form of $\mathrm{prox}_{\|\cdot\|_1,\ell_2(\mathbb{N}),\mathcal{T}_0}$ has been given in \eqref{prox-1norm}. When the loss function is not differentiable, we also need a closed-form formula for its proximity operator.  The proximity operator of certain commonly used loss functions can also be computed explicitly \cite{Li-Song-Xu2019}. For example, if $\mathcal{Q}_{\mathbf{y}}$ is defined as in \eqref{SVMC}, the proximity operator $\mathrm{prox}_{\mathcal{Q}_{\mathbf{y}}}$ at $\mathbf{a}:=[a_j:j\in\mathbb{N}_m]$ has the form
$$
\mathrm{prox}_{\mathcal{Q}_{\mathbf{y}}}(\mathbf{a}):=[b_j:j\in\mathbb{N}_m],
$$
where for all $j\in\mathbb{N}_m$
$$
b_j:=\left\{\begin{array}{ll}
y_j^2a_j,& \mbox{if}\ \ 1\leq y_ja_j,\\
y_j,& \mbox{if}\ \ 0\leq y_ja_j\leq 1,\\
y_j(y_ja_j+1),& \mbox{if}\ \ y_ja_j< 0.
\end{array}
\right.
$$
If $\mathcal{Q}_{\mathbf{y}}$ is defined as in \eqref{SVMR}, we present the proximity operator $\mathrm{prox}_{\mathcal{Q}_{\mathbf{y}}}$ at $\mathbf{a}:=[a_j:j\in\mathbb{N}_m]$ as follows.
If $\epsilon\geq1/2$, then for all $j\in\mathbb{N}_m$
$$
b_j:=\left\{\begin{array}{ll}
a_j-1,& \mbox{if}\ \ \epsilon+1+y_j\leq a_j,\\
\epsilon+y_j,& \mbox{if}\ \ \epsilon+y_j\leq a_j<\epsilon+1+y_j,\\
a_j,& \mbox{if}\ \ \epsilon-1+y_j\leq a_j<\epsilon+y_j,\\
a_j+1& \mbox{if}\ \ -\epsilon+y_j\leq a_j<\epsilon-1+y_j,\\
-\epsilon+y_j,& \mbox{if}\ \ -\epsilon-1+y_j\leq a_j<-\epsilon+y_j,\\
a_j+1,&\mbox{if}\ \ a_j< -\epsilon-1+y_j.
\end{array}
\right.
$$
If $\epsilon<1/2$, then for all $j\in\mathbb{N}_m$
$$
b_j:=\left\{\begin{array}{ll}
a_j-1,& \mbox{if}\ \ \epsilon+1+y_j\leq a_j,\\
\epsilon+y_j,& \mbox{if}\ \ \epsilon+y_j\leq a_j<\epsilon+1+y_j,\\
a_j,& \mbox{if}\ \ -\epsilon+y_j\leq a_j<\epsilon+y_j,\\
-\epsilon+y_j,& \mbox{if}\ \ -\epsilon-1+y_j\leq a_j<-\epsilon+y_j,\\
a_j+1,&\mbox{if}\ \ a_j< -\epsilon-1+y_j.
\end{array}
\right.
$$

As we have explained earlier, whether the fixed-point equations defined via the proximity operator will lead to efficient iterative algorithms pretty much depends on whether the proximity operators involved have closed-form formulas. However, it is not realistic to expect that the proximity operator of an arbitrary function has a closed-form formula. Although we have closed-form formulas for the proximity operators of a class of simple functions, it requires further research to establish such formulas for proximity operators of various functions which appear in practical applications. Along this line, calculus of the proximity operator is interesting and useful. Furthermore, numerical computation of the proximity operators of functions which appear in important applications but have no closed-form formulas deserves investigation.


\section{Conclusions}

To conclude this paper, we briefly summarize the major mathematical contributions made in this paper toward solutions of the minimum norm interpolation problem and the related regularization problem in a Banach space.
The main contributions of this paper include the following five aspects:
\begin{itemize}
\item We have provided a systematic study of the representer theorems for a solution of the minimum norm interpolation problem, and the regularization problem in a Banach space. Both functional analytic and convex analytic approaches are used to achieve this goal.
\item We have established explicit solution representations for the minimum norm interpolation problem and the regularization problem in a Banach space which has a dual space in both cases smooth or non-smooth.
\item We have developed approaches to determine the coefficients appearing in solution representations of these problems, leading to solution methods for solving these problems. Specifically, the coefficients appearing in the solution representations can be determined by solving a linear system, a nonlinear system, or a finite dimensional optimization problem.
\item We have expressed the infimum of the minimum norm interpolation in a Banach space in terms of the interpolation functionals. This is established by using its solution representations and properties of the subdifferential of a norm function of the Banach space.
\item We have observed that although the representer theorems for a solution of these problems in a Banach space convert the originally infinite dimensional problems to a finite dimensional problem, unlike in a Hilbert space where the resulting linear system is truly finite dimensional, the resulting finite dimensional problem has certain hidden infinite dimensional components. We have demonstrated a way to overcome this challenge in the special case when the Banach space is $\ell_1(\mathbb{N})$.
\end{itemize}

Developing efficient computational algorithms based on the solution representations provided by this paper requires further investigation. Nevertheless, the theory established here furnishes a solid mathematical foundation for this practical goal.

\bigskip

\noindent{\bf Acknowledgment:}
R. Wang is supported by the Fundamental Research Funds for the Central Universities and by the Opening Project of Guangdong Province Key Laboratory of Computational Science at the Sun Yat-sen University under grant 2018005.
Y. Xu is supported by US National Science Foundation under grant DMS-1912958 and by Natural Science Foundation of China under grant 11771464. He is a Professor Emeritus of Mathematics at Syracuse University, Syracuse, New York 13244, USA.

\end{document}